\documentclass{amsart}

\usepackage{url}
\usepackage{amsmath}
\usepackage{amsfonts}
\usepackage{amssymb}
\usepackage{xcolor}
\usepackage{graphicx}
\usepackage{float}
\usepackage{enumerate}
\usepackage{multirow}
\newcommand{\Second}{\textup{I}\!\textup{I}}
\newcommand{\norm}[1]{\left\lVert#1\right\rVert}
\newcommand{\abs}[1]{\left |#1\right |}
\usepackage{mathtools}
\usepackage[export]{adjustbox}
\usepackage{tabulary}
\usepackage{booktabs}
\usepackage{algorithmic}

\newtheorem{theorem}{Theorem}
\newtheorem{corollary}{Corollary}
\newtheorem{assumption}{Assumption}
\newtheorem{definition}{Definition}
\newtheorem{lemma}{Lemma}
\newtheorem{proposition}{Proposition}
\newtheorem{remark}{Remark}

\begin{document}

\title[Roseland]{Scalability and robustness of spectral embedding:  landmark diffusion is all you need}

\author{Chao Shen}
\address{Department of Mathematics, Duke University, Durham, NC, USA}
\email{chao.shen@duke.edu}

\author{Hau-Tieng Wu}
\address{Department of Mathematics and Department of Statistical Science, Duke University, Durham, NC, USA; Mathematics Division, National Center for Theoretical Sciences, Taipei, Taiwan}
\email{hauwu@math.duke.edu}

\maketitle

\begin{abstract}
	{While spectral embedding is a widely applied dimension reduction technique in various fields, so far it is still challenging to make it {\em scalable} to handle ``big data''. On the other hand, the {\em robustness} property is less explored and there exists only limited theoretical results.
	Motivated by the need of handling such data, recently we proposed a novel spectral embedding algorithm, which we coined {\em Robust and Scalable Embedding via Landmark Diffusion} (ROSELAND). In short, we measure the affinity between two points via a set of landmarks, which is composed of a small number of points, and ``diffuse'' on the dataset via the landmark set to achieve a spectral embedding. 
	Roseland can be viewed as a generalization of the commonly applied spectral embedding algorithm, the {\em diffusion map} (DM), in the sense that it shares various properties of DM.
	In this paper, we show that Roseland is not only numerically scalable, but also preserves the geometric properties via its diffusion nature under the manifold setup; that is, we theoretically explore the asymptotic behavior of Roseland under the manifold setup, including handling the U-statistics-like quantities, and provide a $L^\infty$ spectral convergence with a rate. Moreover, we offer a high dimensional noise analysis and show that Roseland is robust to noise. 
	We also compare Roseland with other existing algorithms with numerical simulations.}
  Graph Laplacian, Diffusion Maps, Nystr\"om, Landmark, Scalability, Robustness, Roseland\\
  2000 Math Subject Classification: 53Z50, 65D18, 62H99, 68U01, 58Z05
\end{abstract}

\section{Introduction}\label{Sect:Introduction}

Unsupervised learning is arguably the holy grail in the field of artificial intelligence, and it is arguably that the more data we have, the better we can learn. So far there have been many unsupervised learning algorithms proposed and it is still an active studying field. 
In general, those algorithms share a common ground. The learner designer constructs an optimization framework that captures the intended properties of the learning process, and then designs an algorithm to solve the optimization problem.
Based on the nature of an algorithm, it can be roughly classified into two classes -- spectral or not. Spectral algorithms include ISOMAP \cite{tenenbaum2000global}, locally linear embedding (LLE) \cite{roweis2000nonlinear}, Hessian LLE \cite{Donoho_Grimes:2003}, eigenmap \cite{belkin2003laplacian}, diffusion map (DM) \cite{Coifman2006} and {its variations like with bi-stochastic kernel \cite{marshall2019manifold} and PHATE \cite{moon2019visualizing}}, vector diffusion map (VDM) \cite{singer2012vector}, to name but a few. 
Those algorithms have been widely applied to various scientific fields and various theoretical foundations have been established to support those algorithms in the past decades. Under the manifold setup, we have had a rich knowledge about the geometric and asymptotic behavior of some of those algorithms. For example, DM and VDM are both based on the diffusion process \cite{Coifman2006,singer2012vector}, and asymptotically they converge to the Laplace-Beltrami operator or connection Laplacian so that the spectral geometry theory can be applied; the LLE algorithm is not diffusion-based and the underlying kernel is asymmetric and depends on the geometry of the dataset \cite{Wu_Wu:2017}.
However, there are still various open problems remaining toward a better unsupervised learning framework. One critical challenge is how to make an algorithm scalable, which is always a numerical challenge and is critical in this ``big data era''. Another critical challenge is how to handle the inevitable noise, particularly when the noise is large and high dimensional. 

Take the DM algorithm as an example. DM is based on the eigendecomposition of the graph Laplacian (GL) matrix, and the GL matrix is constructed by determining the {\em affinity} between each pair of points in the database. The algorithm has been shown to perform well when the database is ``tiny'', like in the order of $10^3\sim 10^4$. 
However, when the database gets larger, like in the order of $10^6$ or above, the algorithm needs a modification. Specifically, if the GL is dense, a full eigendecomposition is not feasible, and the k-nearest neighbor (kNN) scheme is an usually applied trick. However, kNN is in general not robust to noise. 
Specifically, when the dataset is noisy and the neighboring information is {\em not} provided, obtaining a reliable kNN information is challenged by the noise.
Another practical solution is subsampling the dataset, and then recovering the information of interest by the {\em Nystr\"om extension} \cite{czaja2017overview,belabbas2009landmark, williams2001using}. This approach is also called the {\em Nystr\"om low-rank approximation} \cite{chang2013asymptotic}, the {\em kernel extension method} \cite{fowlkes2004spectral}, or in general the {\em interpolative decomposition} \cite{martinsson2011randomized}. This approach has been widely applied, and it has various theoretical backups, for example  \cite{chang2013asymptotic}. While it works well for some missions, this approach is limited by the information loss during the subsampling process. To the best of our knowledge, how it performs when combined with the above-mentioned spectral based unsupervised learning algorithms is not yet well explored, not to mention how it impacts the algorithm under the manifold setup, or when the data is noisy and nonuniformly sampled. 
Yet another approach is directly speeding up the matrix decomposition by taking randomization into account \cite{halko2011finding}. For example, we can construct a thin matrix by taking a random subset of columns of the GL matrix and speed up the algorithm by taking the singular value decomposition (SVD) into account.  \cite{martinsson2011randomized} provides an efficient algorithm to approximate the SVD. It is also possible to randomly select a few points out of the $K$ nearest neighbors \cite{linderman2017randomized} to construct a sparse matrix. While this approach has been widely applied, to the best of our knowledge, we have limited knowledge about how it helps the spectral embedding algorithms, and how robust it is to the inevitable noise.  

In this paper, we consider a novel algorithm proposed in \cite{lin2020robust} that is both robust and scalable. 
The algorithm is intuitive and can be summarized in three steps. First, we find a ``small'' subset of points from the dataset, either randomly or by design, or collect a separate clean point cloud of small size. We call this set a {\em landmark set}. Second, we construct an affinity matrix recording the affinities between points in the whole dataset and the landmark set, and normalize it properly. This normalized affinity matrix is thin; that is, there are fewer columns than rows. Third, evaluate the singular vectors and singular values of the normalized affinity matrix, and embed the dataset with the singular vectors and singular values. 
Since this algorithm has an interpretable geometric meaning, and is directly related to the diffusion process, we coined the proposed algorithm the {\em RObust and Scalable Embedding via LANdmark Diffusion} (Roseland). An application of Roseland to analyze long-time physiological signal and liver transplant blood pressure analysis, which is also a motivation of designing this algorithm, can be found in the accompanying applied paper \cite{lin2020robust}.

\subsection{Related work -- scalability}

To better position our contribution, we summarize various related work in this section. 
The review paper by \cite{czaja2017overview} contains a comprehensive categorization of numerical acceleration techniques for nonlinear dimension reduction. 
The acceleration algorithms are roughly classified into three categories.
In the first category, the dataset  is compressed so that the relationship between pairs of points are well preserved. 
For example, the random projection can be applied, or   
the dataset is converted to a well-designed basis under the compressed sensing framework. This step can save us a little bit of time when computing pairwise distances used for neighborhood search. 
In the second category, we may try to accelerate the kNN search step. A brutal force method for computing the exact kNN graph requires $\Theta(n^{2})$. Many faster algorithms, deterministic or randomized, exact or approximate, have been developed in the past decades. 
In the third category, we may accelerate the eigen-decomposition step. 
For example, the kernel decomposition is approximated by classical iteration-based algorithms and the matrix decomposition can be evaluated by a randomized algorithm, where rigorous analyses have been developed  \cite{rokhlin2009randomized}. However, it is indicated in \cite{czaja2017overview} that the error bounds are usually pessimistic when compared with results of numerical experiments. A summary with citations of methods in each category can be found in \cite{czaja2017overview}.

We now review various algorithms that are directly related to our work.
The closest algorithm to Roseland is the one introduced  in \cite{haddad2014texture} to handle the texture separation problem. We call this algorithm {\em HKC}, which stands for initials of three authors in \cite{haddad2014texture}. The authors first convert an image into a collection of small patches, and choose a collection of specific patterns of interest as the {\em reference set}. Then one can construct an affinity matrix associated with the set of patches, where the affinity between patches is based on the landmark set. However, the normalization in HKC is different from Roseland, and this difference turns out to be significant. Moreover, it is not clear how HKC performs under the manifold setup. HKC can be classified as the third category.
Another directly related algorithm is the common Nystr\"{o}m extension \cite{belabbas2009landmark, fowlkes2004spectral, williams2001using}. We run eigen-decomposition on a small subset of the whole database, and then extend the eigenvectors to the whole dataset. This Nystr\"om extension approach can be classified as the third category. 
{There are various extensions or refinements of the Nystr\"om extension method as well as applications to spectral embeddings, for example, \cite{bermanis2013multiscale, de2004sparse, kushnir2012anisotropic, long2019landmark, platt2005fastmap}}. From the theoretical perspective, to the best of our knowledge,  \cite{chang2013asymptotic} is the only existing literature in this field. The authors analyzed the asymptotic spectral error bounds between the ground truth spectrum of the kernel function, full kernel matrix and the Nystr\"om low-rank approximation of the full kernel matrix.

Yet another and fundamental approach is speeding up the basic eigendecomposition or SVD themselves. But this direction is out of the scope of our work.
There are some closely related but different algorithms in the field, for example, CUR decomposition \cite{mahoney2009cur}, ``UBV'' decomposition \cite{cheng2005compression}, or some studies focusing solely on accelerating the spectral clustering. For the readers' convenience, we summarize them in Appendix \ref{Appendix:MoreRelatedWork}. 
To the best of our knowledge, none of the above-mentioned work, except \cite{chang2013asymptotic}, provides theoretical analysis to answer questions like what is the {\em asymptotic behavior} of the algorithm? However, even in \cite{chang2013asymptotic}, it is not clear how much geometric information is lost. In general, if we model the nonlinearity of the dataset by a manifold, we would like to know if we still have a convergence to the Laplace-Beltrami operator. Without this understanding, we cannot answer questions like how to choose, or even design, a landmark set so that the performance is guaranteed to some extent.

\subsection{Related work -- robustness}\label{section: related work: robustness}

Compared with the scalability issue, there exists less work focusing on the robustness issue. 
One intuitive idea is ``denoising'' the dataset before applying the spectral embedding algorithm. However, it is in general a challenging problem since we usually do not know the structure of the dataset, and extracting the structure of the dataset is the main target. Under the manifold setup, there have been several proposals to denoise the dataset {(or sometimes called the ``manifold denoising'' problem)}; for example, the ``reverse diffusion'' scheme \cite{NIPS2006_2997}, {fitting scheme \cite{fefferman2018fitting,Shira2020,Yariv2020}, or nonlocal median approach \cite{Alagapan_Shin_Frohlich_Wu:2018}.} While these approaches might work for practical problems, they might not be scalable.

A commonly applied acceleration tool is kNN (the second category of acceleration) \cite{czaja2017overview}. However, stability is an issue. Particularly, when the neighboring information is not provided, we apply the kNN to construct a sparse affinity graph on the dataset, which also help accelerate the algorithm. However, it is well known that finding neighbors via the kNN is noise-sensitive unless the pairwise distance is robust to noise. Usually, unless the data point has extra structure so that a robust metric can be applied, for example, in the image analysis \cite{cheng2009learning}, it is challenging to achieve a robust pairwise distance. Some authors propose to take the tangent plane structure to determine neighbors \cite{wang2005adaptive}; however, it is well known that in the high dimensional setup, the tangent space estimation via the principle component analysis is biased \cite{johnstone2007high} and the benefit might be limited in the practical setup. 
Another approach is taking the label into account to improve the stability of the kNN scheme \cite{ROHBAN20121363}, but this approach is out of the scope of this work.
When the edge information is given, in \cite{Steinerburger2016}, the author proposes to design a self-consistency Markov chain before the spectral embedding by modifying the non-lazy random walk via diffusion.

Besides the above efforts to alleviate the impact of noise, to the best of our knowledge, the robustness of the GL-based algorithms was first studied in \cite{ElKaroui:2010a} under the random matrix framework. The result was later extended to handle a large noise setup \cite{el2016graph}, where the authors suggest taking a complete non-lazy random walk to stabilize the spectral embedding methods. Obviously this approach is not scalable. In that paper, it is also shown that if we want to stabilize the kNN approach, a large number of nearest neighbors should be chosen.

\subsection{Our contribution}

We provide a series of theoretical support for Roseland. First, we provide the spectral convergence of eigenvalues/eigenvectors of Roseland to the eigenvalues/eigenfunctions of the Laplace-Beltrami operator in the $L^\infty$ sense, extending the argument shown in \cite{DunsonWuWu2019}. As a result, the geometry recovery is guaranteed, including the geodesic distance. A convergence rate is also provided. We argue that the convergence rate is controlled by the size of the landmark set.

We also provide a robustness theory describing why Roseland is robust to noise by extending the arguments used in \cite{ElKaroui:2010a,el2016graph}. We also show its robustness to high dimensional (possibly colored and different from sample to sample) noise, either Gaussian or non-Gaussian. 
We thus conclude that Roseland is useful when we want to recover the intrinsic Laplace-Beltrami operator of the manifold from a noisy point cloud.

On the way toward the analysis, we observe a peculiar kernel behavior of Roseland; specifically, the ``effective kernel'' associated with Roseland is {\em not} fixed but adaptive to the chosen landmark set. This is different from the ordinary kernel method, where the applied kernel is universal.
Based on the theoretical results, we show a series of numerical simulations.
We also propose a {\em design-based} landmark set sampling scheme to handle the inevitable non-uniform sampling in the real world data.

\subsection{Organization of the paper}
In Section \ref{algs} we summarize Roseland and DM. In Sections \ref{Section:PointConvergence} and \ref{Section:RobustAnalysis}, we state our main theoretical results. In Section \ref{numerical}, we provide numerical results and analysis. In Section \ref{discuss}, discussion and conclusion are provided.

\section{Summary of the Roseland algorithm}\label{algs}

In this section, we assume we have a data set or point cloud $\mathcal{X}=\{x_{i}\}_{i=1}^{n}\subseteq\mathbb{R}^{q}$.
Take a set $\mathcal{Y}=\{y_{k}\}_{k=1}^{m}$, which might or might not be a subset of $\mathcal{X}$. We call $\mathcal{Y}$ the {\em landmark set}.
Fix a non-negative kernel function $K:\mathbb{R}_{\geq 0}\to \mathbb{R}_+$ with proper decay and regularity; for example, a Gaussian function $K(t)=\frac{1}{\sqrt{2\pi}}e^{-t^2/2}$.

\subsection{The Roseland algorithm}\label{roseland alg}
We now detail the Roseland algorithm \cite{lin2020robust}. 
Construct a {\em landmark-set affinity matrix} $W^{(\textup{r})}\in\mathbb{R}^{n\times m}$, which is defined as
\begin{equation}\label{landmark affinity matrix in Roseland}
W^{(\textup{r})}_{ik} = K_{\epsilon}(x_{i},y_{k}) := K\left(\frac{\|x_{i}-y_{k}\|_{\mathbb{R}^{q}}}{\sqrt{\epsilon}}\right)\,.
\end{equation}
That is, the $(i,k)$-th entry of $W^{(\textup{r})}$ is the affinity between $x_i\in \mathcal{X}$ and $y_k\in \mathcal{Y}$. Construct a diagonal matrix $D^{(\textup{r})}$ as
\begin{equation}
D^{(\textup{R})}_{ii}:=e_i^\top W^{(\textup{r})}(W^{(\textup{r})})^{\top}\boldsymbol{1}\,,
\end{equation}
where $\boldsymbol{1}$ is a $n\times 1$ vector with all entries $1$, and $e_i$ is the unit vector with $1$ in the $i$-th entry. With $W^{(\textup{r})}$ and $D^{(\textup{R})}$, we evaluate the SVD of $(D^{(\textup{R})})^{-1/2}W^{(\textup{r})}$:
\begin{equation}\label{DR-1/2Wr SVD in Roseland}
(D^{(\textup{R})})^{-1/2}W^{(\textup{r})}=U\Lambda V^\top, 
\end{equation}
where the singular values are $\sigma_1\geq \sigma_2\geq\ldots\geq\sigma_m\geq 0$. Set $\bar{U}:=(D^{(\textup{R})})^{-1/2}U$, and set $\bar{U}_m\in \mathbb{R}^{n\times m}$ to be a matrix consisting of the second to the $(m+1)$-th left singular vectors. Also set $L_m:=\texttt{diag}(\sigma^2_2,\ldots,\sigma^2_{m+1})$. The Roseland embedding is defined by  
\begin{equation}\label{embedding in Roseland}
\Phi^{(\textup{R})}_t:x_{i}\mapsto e_i^\top \bar{U}_{m}L_{m}^t\,,
\end{equation}
where $t>0$ is the chosen diffusion time. With the Roseland embedding, we have the associated Roseland diffusion distance (RDD)
\begin{equation}\label{DD in Roseland}
D^{(\textup{R})}_t(x_i,x_j):=\|\Phi^{(\textup{R})}_t(x_{i})-\Phi^{(\textup{R})}_t(x_j)\|_{\mathbb{R}^{m}}\,.
\end{equation} 

\begin{figure}[htb!]\centering
	\includegraphics[trim=10 120 10 100, clip, width=0.75\textwidth]{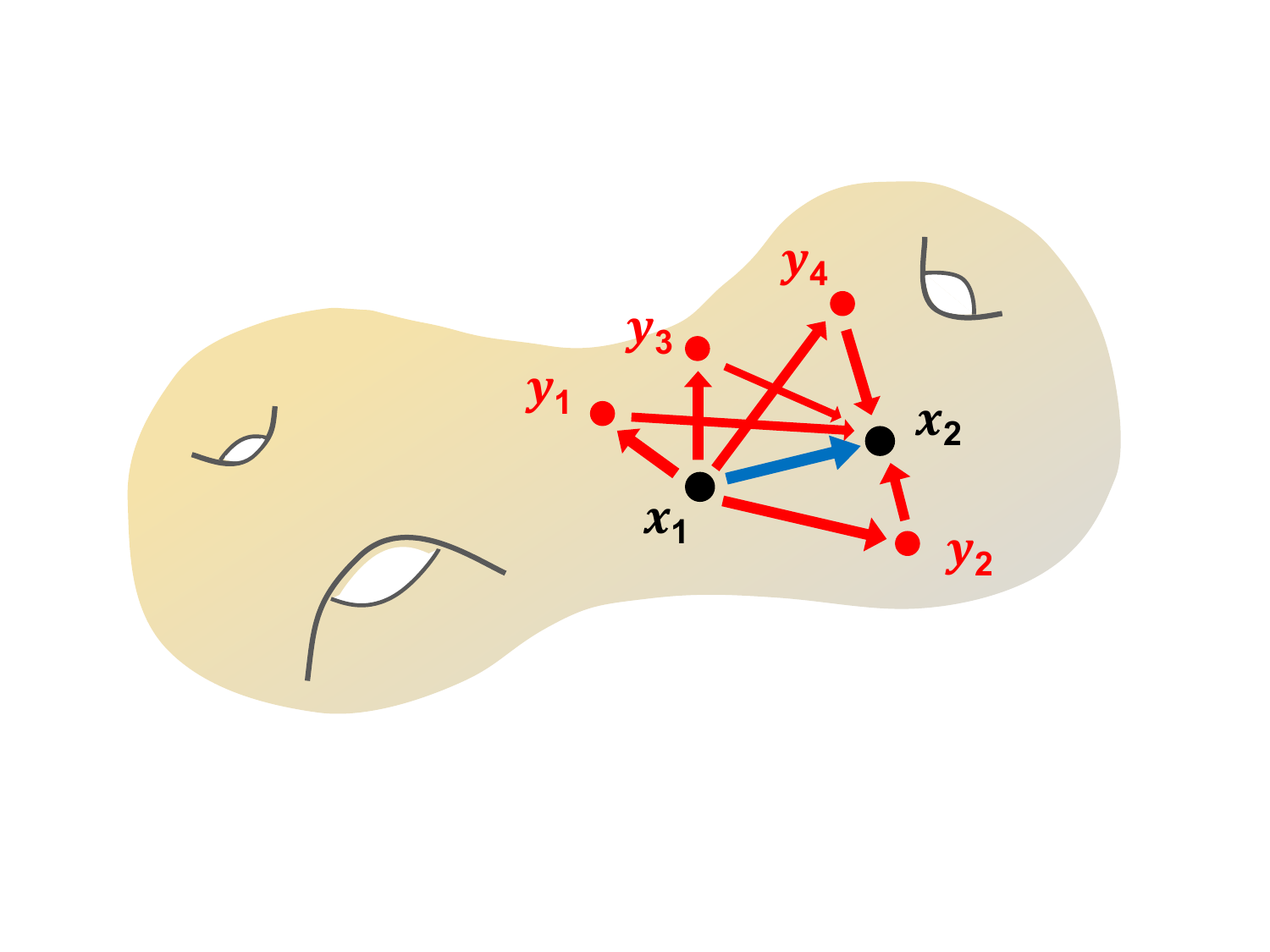}
	\caption{Main idea of Roseland: to measure the similarity between $x_{1}$ to $x_{2}$, instead of diffuse from $x_{1}$ to $x_{2}$ directly, we take a detour and first diffuse $x_{1}$ to the landmarks $y_{1},y_{2},y_{3}, y_4$, and then diffuse from the landmarks to $x_{2}$.\label{Figure:refDMillustration}}
\end{figure}

We now take a closer look at Roseland. Given $W^{(\textup{r})}\in \mathbb{R}^{n\times m}$, construct a new matrix     
\begin{equation}
W^{(\textup{R})} := W^{(\textup{r})}[W^{(\textup{r})}]^{\top}\in \mathbb{R}^{n\times n}\,,
\end{equation}
which can be viewed as a new affinity matrix. 
Indeed, since the kernel is chosen to be a non-negative function, $W^{(\textup{r})}$ is a matrix with non-negative entries, and so is $W^{(\textup{R})}$. Therefore, we can view $W^{(\textup{R})}$ as an affinity matrix defined on $\mathcal{X}$, where the affinity between $x_{i}$, $x_{j}$ via the landmark set $\mathcal{Y}$ is
\begin{equation}\label{definition WR in Roseland}
W^{(\textup{R})}_{ij}=\sum_{k=1}^{m}K_{\epsilon}(x_{i},y_{k})K_{\epsilon}(y_{k},x_{j}).
\end{equation}

We call it the {\em landmark-affinity matrix}. We mention that unlike the traditional affinity matrix, in general we cannot find a fixed kernel $\bar{K}$ and a bandwidth $\bar \epsilon$ so that $W^{(\textup{R})}_{ij}= \bar K_{\bar \epsilon}(x_i,x_j)$ for $i,j=1,\ldots,n$. Later, we will see how this ``new'' kernel function depends on $\mathcal{Y}$.
Thus, by construction, $D^{(\textup{R})}$ is nothing but the degree matrix associated with the landmark-affinity matrix $W^{(\textup{R})}$. Clearly, 
\begin{equation}\label{transition matrix definition of Roseland}
A^{(\textup{R})}:=(D^{(\textup{R})})^{-1}W^{(\textup{R})}
\end{equation} 
is a transition matrix on $\mathcal{X}$. In this sense, we have a Markov process, or diffusion, on $\mathcal{X}$, where if we want to diffuse from $x_i$ to $x_j$, we always go through $\mathcal{Y}$. Moreover, note that we have
\begin{align}
&(D^{(\textup{R})})^{-1/2}W^{(\textup{r})}[(D^{(\textup{R})})^{-1/2}W^{(\textup{r})}]^\top
=(D^{(\textup{R})})^{-1/2}W^{(\textup{R})}(D^{(\textup{R})})^{-1/2}\,,
\end{align}
where the right hand side is symmetric. If $(D^{(\textup{R})})^{-1/2}W^{(\textup{R})}(D^{(\textup{R})})^{-1/2}$ is non-negative definite, the SVD of $(D^{(\textup{R})})^{-1/2}W^{(\textup{r})}$ recovers the eigen-structure of the non-negative definite matrix $(D^{(\textup{R})})^{-1/2}W^{(\textup{R})}(D^{(\textup{R})})^{-1/2}$. 

\subsection{Relationship with the Diffusion Map}
Note that Roseland and a well-known algorithm, DM, are very close, except the diffusion via the landmark set. Specifically, in DM, the affinity matrix $W\in \mathbb{R}^{n\times n}$ is defined directly via
\begin{equation}\label{Definition affinity matrix W}
W_{ij} := 
K\left(\frac{\|x_{i}-x_{j}\|_{\mathbb{R}^{q}}}{\sqrt{\epsilon}}\right)\,
\end{equation}
and the corresponding {\em degree matrix} $D\in \mathbb{R}^{n\times n}$, is defined as 
\begin{equation}
D_{ii}:=\sum_{j=1}^nW_{ij}.
\end{equation}
The transition matrix associated with a Markov process on the dataset $\mathcal{X}$ is defined by
\begin{equation} \label{Eq A ordinary DM}
A:=D^{-1}W.
\end{equation}
Note that the landmark-affinity matrix $W^{(\textup{R})}$ defined in \eqref{definition WR in Roseland}, and the associated transition matrix $(D^{(\textup{R})})^{-1}W^{(\textup{R})}$ can be viewed as a different way of constructing a Markov process on the dataset.
Next, we run the eigen-decomposition of the transition matrix. Suppose the eigenvalues are ordered by $\lambda_1\geq\lambda_2\geq\ldots\geq\lambda_n$, and
the right eigenvectors of $A$ are denoted as $\phi_i$. Note that the decomposition in \eqref{DR-1/2Wr SVD in Roseland} is a parallel step of this eigendecomposition.
For a chosen {\em diffusion time $t$}, DM embeds $\mathcal{X}$ via the map
\begin{equation}
\Phi_t:x_{i}\mapsto (\lambda_2^t\phi_2(i),\ldots,\lambda_{q'+1}^t\phi_{q'+1}(i))\in \mathbb{R}^{q'},
\end{equation}
where $q'$ is the dimension chosen by the user. With DM, the diffusion distance (DD) with the diffusion time $t>0$ is defined as
\begin{equation}
D_t(x_i,x_j):=\|\Phi_t(x_{i})-\Phi_t(x_j)\|_{\mathbb{R}^{q'}}\,.
\end{equation}
Clearly, the Roseland embedding \eqref{embedding in Roseland} and RDD \eqref{DD in Roseland} are closely related to DM and DD.

\section{Asymptotic behavior of Roseland under the manifold setup}\label{Section:PointConvergence}

In this long section we show that the eigenvalues and eigenvectors of Roseland converge to the eigenvalues and eigenfunctions of the Laplace-Beltrami operator, and quantify the convergence rate, both {\em pointwisely} and {\em spectrally}. 
Before stating our main asymptotic results of Roseland, we briefly summarize existing literature about DM.

A celebrated spectral embedding  \cite{berard1994embedding} gives a solid foundation of various spectral based unsupervised learning algorithms, particular DM. It says that one can embed any given smooth closed $n$-dimensional Riemannian manifold by the eigenfunctions of its Laplace--Beltrami operator, and the embedding can be ``tuned'' to be as isometric as possible. However, this spectral embedding needs {\em all} eigenfunctions, which is not numerically affordable. 
To resolve this issue, it is proved in \cite{portegies2016embeddings} that for a given tolerable metric recovery error, we can achieve an almost isometric embedding with that tolerable error with a finite number of eigenfunctions of the Laplace--Beltrami operator, where the number only depends on the geometric bounds and the dimension. 
The above two results are on the continuous setup. To utilize these results, we need to link numerical finite sampling dataset to the continuous manifold setup. Specifically, if we are able to prove how the eigenvectors of the GL converges in the spectral sense to the eigenfunctions of the Laplace-Beltrami operator, we can apply the above-mentioned results in the continuous setup.
In \cite{singer2016spectral, trillos2018error,calder2019improved,DunsonWuWu2019}, the authors provide the spectral convergence of the GL constructed from random samples to the Laplace--Beltrami operator. A convergence rate of the eigenfunction in the $L^2$ sense is provided in \cite{trillos2018error,calder2019improved}, and a convergence rate of the eigenfunction in the $L^\infty$ sense is provided in \cite{DunsonWuWu2019}. 

At the first glance, it might be expected that the proofs are similar to those shown in \cite{singer2016spectral, trillos2018error,calder2019improved,DunsonWuWu2019}. However, as we will see below, we run into the dependence issue due to the landmark diffusion, so extra efforts and new technical tools are needed.

\subsection{Manifold model}\label{dm_setup}
Denote our observed data set the point cloud $\mathcal{X} = \{x_{i}\}_{i=1}^{n}\subseteq\mathbb{R}^{D}$, which are independent and identically distributed (i.i.d.) sampled from a random vector $X:(\Omega,\mathcal{F},\mathbb{P})\rightarrow\mathbb{R}^{D}$. We assume that the range $X$ is supported on a $d$-dimensional compact smooth 
Riemannian manifold $(M^d, g)$ without boundary that is isometrically embedded in $\mathbb{R}^D$ via $\iota:M^{d}\hookrightarrow \mathbb{R}^D$. Hence, $X$ induces a probability measure on $\iota(M^{d})$, denoted by $\widetilde{\mathbb{P}}_{X}$. Further assume $\widetilde{\mathbb{P}}_{X}$ is absolutely continuous with respect to the Riemannian measure on $\iota(M)$, denoted by $\iota_{\ast}dV(x)$. Then, by the Radon Nikodym theorem, we have $d\widetilde{\mathbb{P}}_{X}(x)=p_{X}(\iota^{-1}(x))\iota_{\ast}dV(x)$. Clearly, $p_X$ is a function defined on $M^{d}$. 

\begin{definition}
	We call $p_{X}$ defined above the probability density function (p.d.f.) associated with $X$. When $p_{X}$ is constant, $X$ is called uniform; otherwise non-uniform.
\end{definition}

We assume $p_{X}$ satisfies $p_{X} \in \mathcal{C}^4(M^{d})$ and $0 < \inf_{x\in M^{d}}p_{X}(x)\leq \sup_{x\in M^{d}}p_{X}(x)$.

\begin{definition}\label{Definition:kernel}
	A kernel function is any non-negative function $K:[0,\infty)\rightarrow\mathbb{R}^{+}$ so that it is $C^{3}$, $K(0)>0$ and decays exponentially fast. Denote $\mu_{r,l}^{(k)}\coloneqq \int_{\mathbb{R}^{d}}\|x\|^{l}\partial_{k}K^{r}(\|x\|)\,dx$, for $r,l,k=0,1,2,\ldots$, and assume $K$ is normalized so that $\mu_{1,0}^{0}=1.$
\end{definition}

For the landmark set $\mathcal{Y} = \{y_{j}\}_{j=1}^{m}\subseteq\mathbb{R}^{D}$, we assume that $y_{j}$'s are i.i.d. samples from a random vector $Y:(\Omega,\mathcal{F},\mathbb{P})\rightarrow\mathbb{R}^{D}$, whose range is supported on the same manifold $M^{d}$, and has p.d.f. $p_{Y}$ on $M^{d}$. Moreover, we assume that $Y$ is independent of $X$. We also assume $p_{Y}$ satisfies $p_{Y} \in \mathcal{C}^4(M^{d})$. Clearly, we have $0 < \inf_{x\in M^{d}}p_{Y}(x)\leq \sup_{x\in M^{d}}p_{Y}(x)$.

In Roseland, the ``affinity'' between two data points $x_{i}$ and $x_{j}$ is measured via the landmark set $\mathcal{Y} = \{y_{j}\}_{j=1}^{m}$. Specifically, note that the affinity matrix $W^{(\textup{R})}=W^{(\textup{r})}[W^{(\textup{r})}]^{\top}\in\mathbb{R}^{n\times n}$, where $W^{(\textup{r})}\in\mathbb{R}^{n\times m}$ such that $W^{(\textup{r})}(i,k)= K_{\epsilon}(x_{i},y_{k})$ and $[W^{(\textup{r})}]^{\top}$ is the transpose of $W^{(\textup{r})}$. 

\begin{definition}\label{ref_affinity}
	Take the kernel function $K$. The affinity between any two points $x_{i}$ and $x_{j}$ via a landmark set $\mathcal{Y} = \{y_{j}\}_{j=1}^{m}$ is defined by
	\begin{equation}
	K_{\textup{ref},\epsilon,n}(x_{i},x_{j})\coloneqq \frac{1}{m}\sum_{k=1}^{m}K_{\epsilon}(x_{i},y_{k})K_{\epsilon}(y_{k},x_{j}).
	\end{equation}
\end{definition}
Note that compared with \eqref{definition WR in Roseland}, here is a $\frac{1}{m}$ factor in $K_{\textup{ref},\epsilon,n}(x_{i},x_{j})$. Due to the normalization, there is no difference if we put $\frac{1}{m}$ in front or not. 
See Figure \ref{Figure:refDMillustration} for an illustration of how this affinity is determined.
To study the asymptotic behavior of Roseland, we take the following expansion into account. 
For $f\in C(M)$, denote its discretization on $\{x_i\}_{i=1}^n$ as $\textbf{\textit{f}}\in\mathbb{R}^{n}$ such that $\textbf{\textit{f}}_{i}=f(x_{i})$. By a direct expansion, we have
\begin{eqnarray}\label{discrete_op2}
\big[(D^{(\textup{R})})^{-1}W^{(\textup{R})}\textbf{\textit{f}}\;\big](i)=\frac{\sum_{j=1}^{n}W^{(\textup{R})}_{ij}\textbf{\textit{f}}_{j}}{\sum_{j=1}^{n}W^{(\textup{R})}_{ij}}\,.
\end{eqnarray}
The denominator and numerator ring the bell of the law of large numbers. We thus expect that \eqref{discrete_op2} converges to the following integral operators when $n\to\infty$:

\begin{definition}\label{GL_cts}
	Take $f\in C(M), \epsilon>0$, define
	\begin{equation}
	T_{\textup{ref},\epsilon}f(x)\coloneqq\int_{M}\frac{K_{\textup{ref},\epsilon}(x,y)}{d_{\textup{ref},\epsilon}(x)}f(y)p_{X}(y)\,dV(y),
	\end{equation}
	where $K_{\textup{ref},\epsilon}:M\times M\to \mathbb{R}$ defined as
	\begin{align}
	K_{\textup{ref},\epsilon}(x,y)&\coloneqq\int_{M}K_{\epsilon}(x,z)K_{\epsilon}(z,y)p_{Y}(z)\,dV(z)
	\end{align}
	is called the {\em landmark-kernel} induced by the landmark set, and
	\begin{align}
	d_{\textup{ref},\epsilon}(x)&\coloneqq\int_{M}K_{\textup{ref},\epsilon}(x,y)p_{X}(y)\,dV(y)\,.
	\end{align}
\end{definition}

\subsection{Kernel behavior with the landmark set}

It is worth a bit more discussion of the landmark-kernel induced by the landmark set. Recall Definition \ref{ref_affinity}. The affinity between two points is now determined by passing through the landmark set. A direct consequence is that the kernel function may vary from point to point, depending on how two points are geologically related to the landmark set. The affinity of a point to itself might be smaller than the the affinity between a point and its close neighbor.

%
To have a closer look at the landmark-kernel $K_{\textup{ref},\epsilon}$, we assume $K_{\epsilon}(y,x_{j})=\exp(-\|x_i-y\|_{\mathbb{R}^{D}}^2/\epsilon)$; that is, the kernel is Gaussian. {In this case, 
\[
K_{\epsilon}(x_{i},y)K_{\epsilon}(y,x_{j})=\exp\left(-\frac{2\|y-(x_i+x_j)/2\|_{\mathbb{R}^{D}}^2}{\epsilon}\right)\exp\left(-\frac{\|x_i-x_j\|_{\mathbb{R}^{D}}^2}{2\epsilon}\right)\,,
\]
and hence
\begin{align}
K_{\textup{ref},\epsilon}(x_i,x_j)&\,=\int_M  K_{\epsilon}(x_{i},y)K_{\epsilon}(y,x_{j})p_Y(y)dV(y)\nonumber\\
&\,=\exp\left(-\frac{\|x_i-x_j\|_{\mathbb{R}^{D}}^2}{2\epsilon}\right)\int_M \exp\left(-\frac{2\|y-(x_i+x_j)/2\|_{\mathbb{R}^{D}}^2}{\epsilon}\right)p_Y(y)dV(y)\,.\label{expansion landmark kernel}
\end{align}
Thus, we have 
\begin{equation}
\epsilon^{-d/2}K_{\textup{ref},\epsilon}(x_i,x_j)=c(x_i,x_j)\exp\left(-\frac{\|x_i-x_j\|_{\mathbb{R}^{D}}^2}{2\epsilon}\right)\,, \label{expansion landmark kernel2}
\end{equation}
where $c$ is smooth, depends on $x_i$ and $x_j$ and is of order $1$ since $M$ is compact.
This formula tells us that the landmark-kernel is not isotropic nor fixed for any pair of points.} Next, take a set of equally spaced samples from $S^{1}$, denoted as $\mathcal{X}$, and order them by their angles. Let the landmark set $\mathcal{Y}\subseteq\mathcal{X}$ contains $5\%$ equally spaced points of $\mathcal{X}$, also ordered by angle to its center. See Figure \ref{S1ker} $(a)$. By the symmetry of $\mathcal{X}$ and $\mathcal{Y}$, $|\mathcal{Y}|=5\%\times|\mathcal{X}|$ means we have $20$ difference kernel functions. Indeed, we have 20 data points between 2 consecutive landmarks, so there are totally 20 different geological relationships between the dataset and the landmark set.
See Figure \ref{S1ker} $(b)$ for plots of the $20$ landmark-kernels at the data points (in order) between two consecutive landmarks. Similarly, we plot the kernel functions when $|\mathcal{Y}|=a\times|\mathcal{X}|$, for $a=10\%,20\%$, see Figure \ref{S1ker} $(c)$ and $(d)$.
Note that when $N$ is fixed and $M$ increases, it is not surprising that the kernel looks more like a Gaussian. 

\begin{figure}[bht!]\centering
	\includegraphics[width=4.5cm]{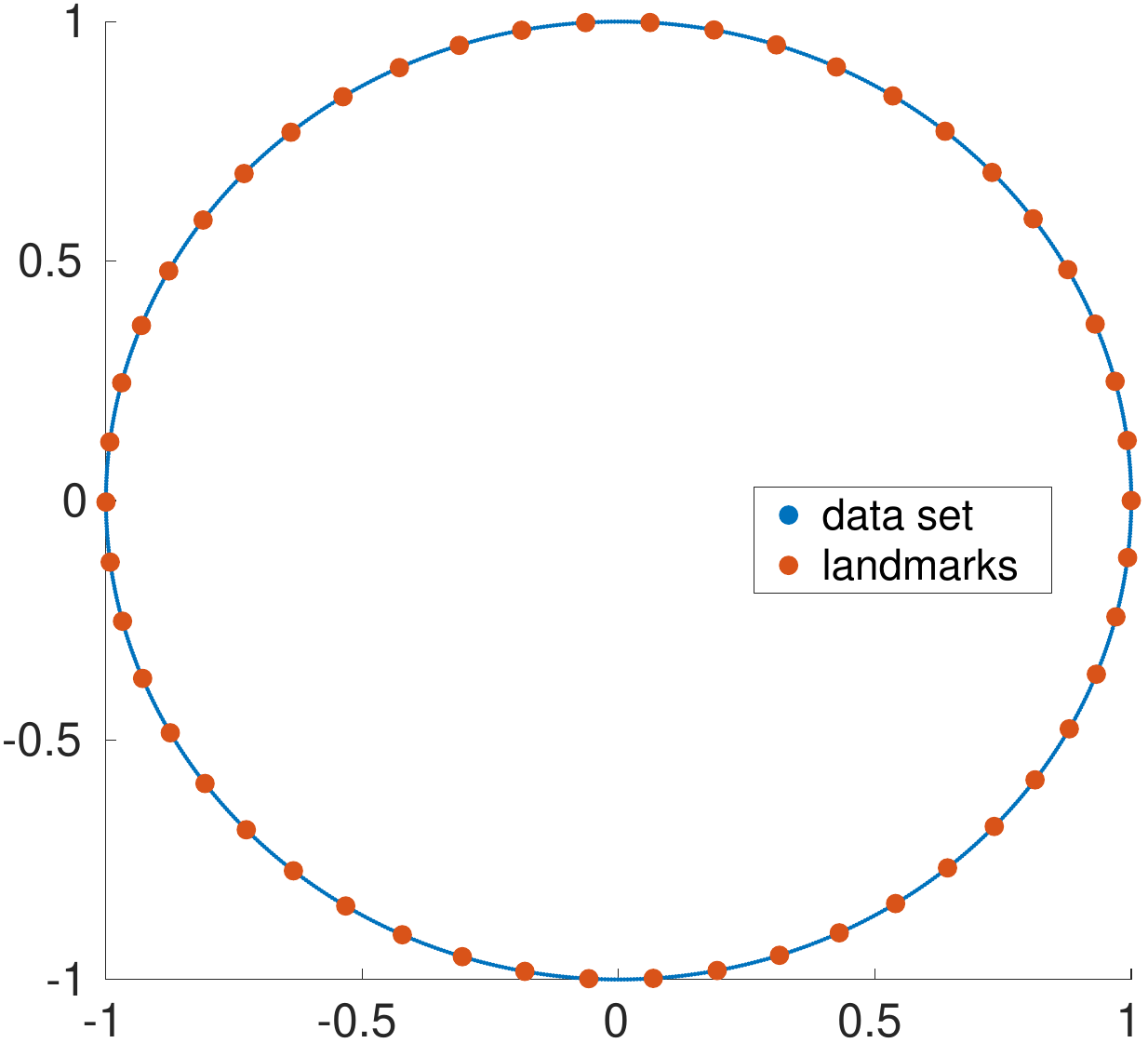}
	\includegraphics[width=5.5cm]{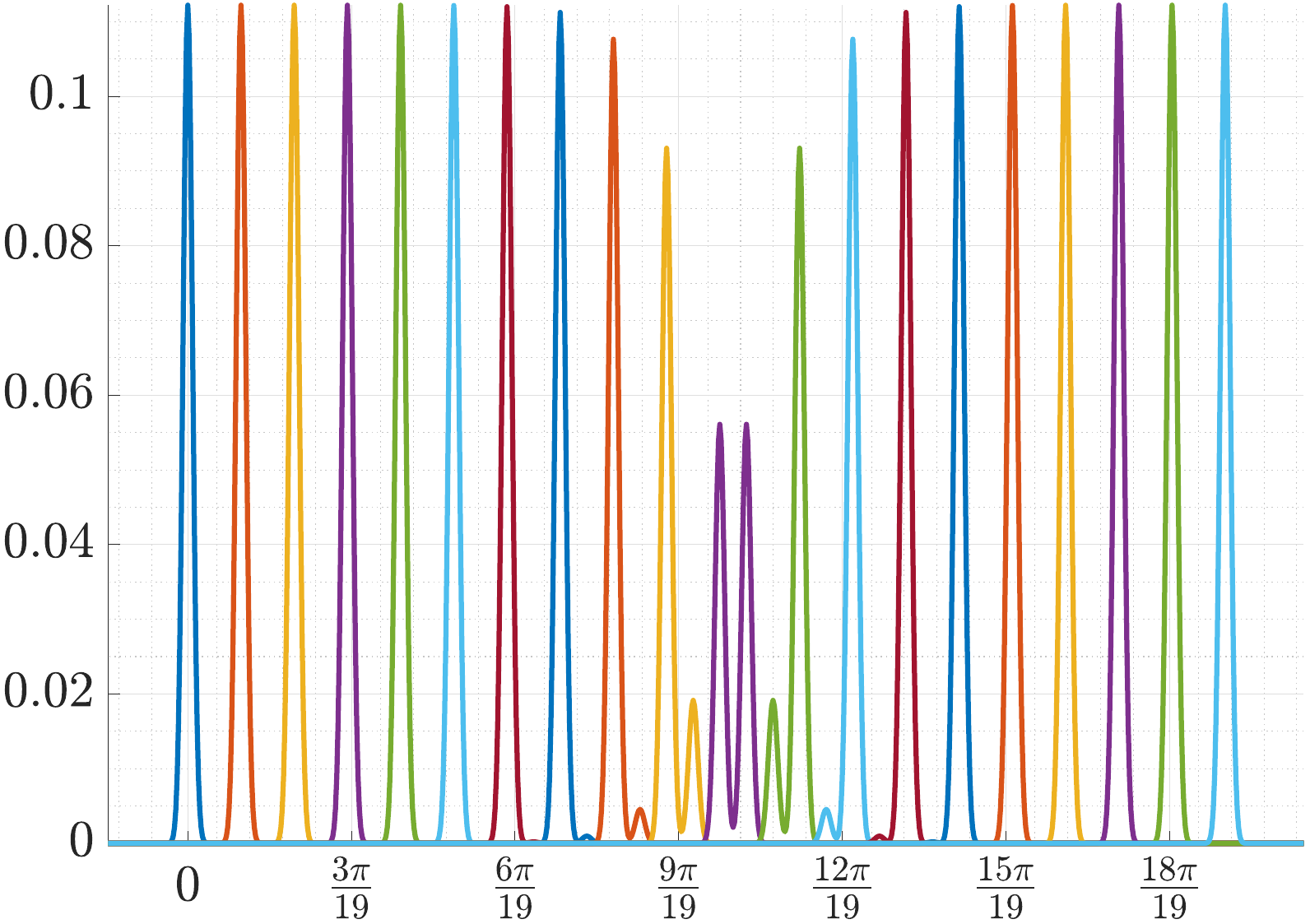}\\
	\includegraphics[width=5cm]{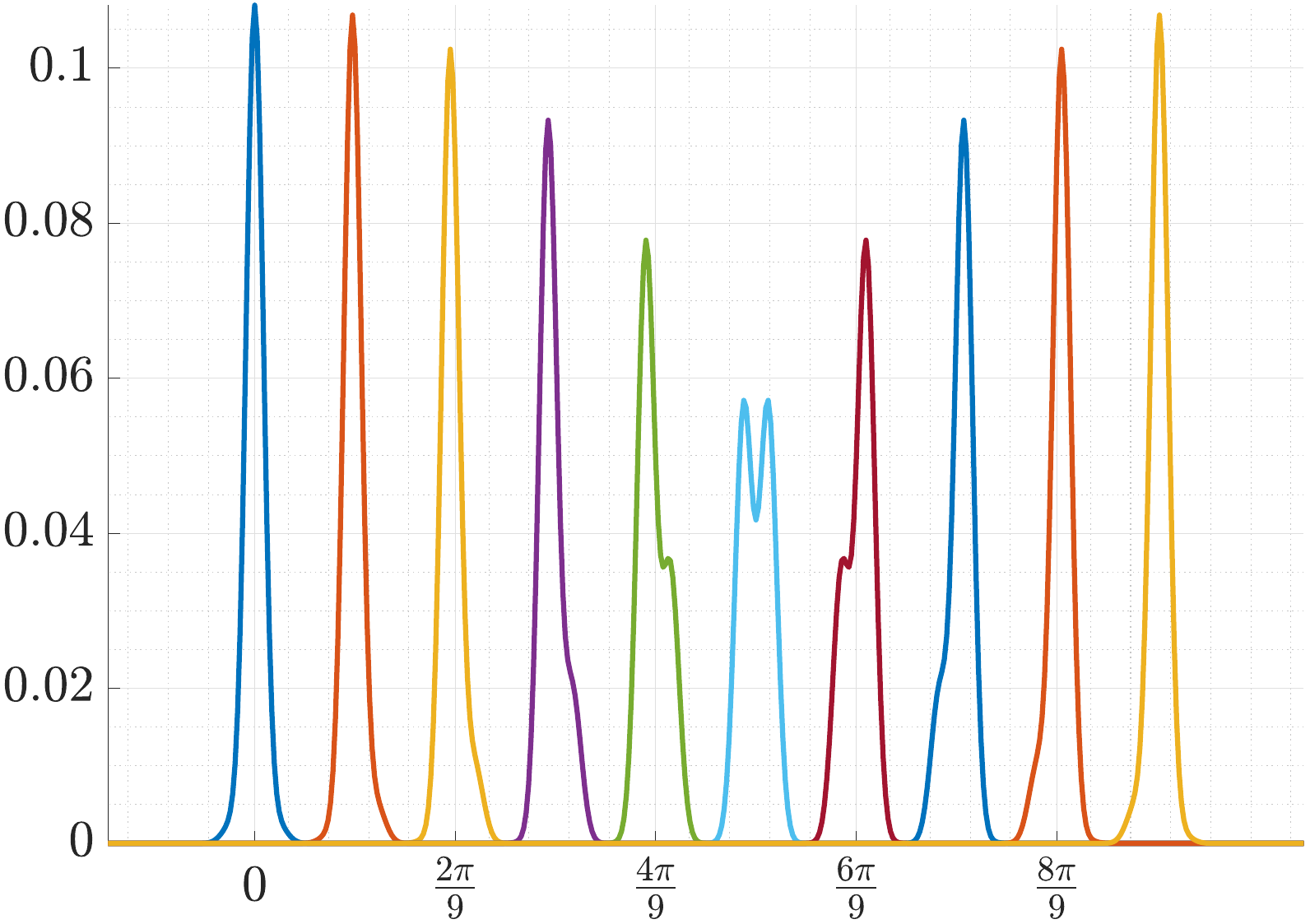}
	\includegraphics[width=5cm]{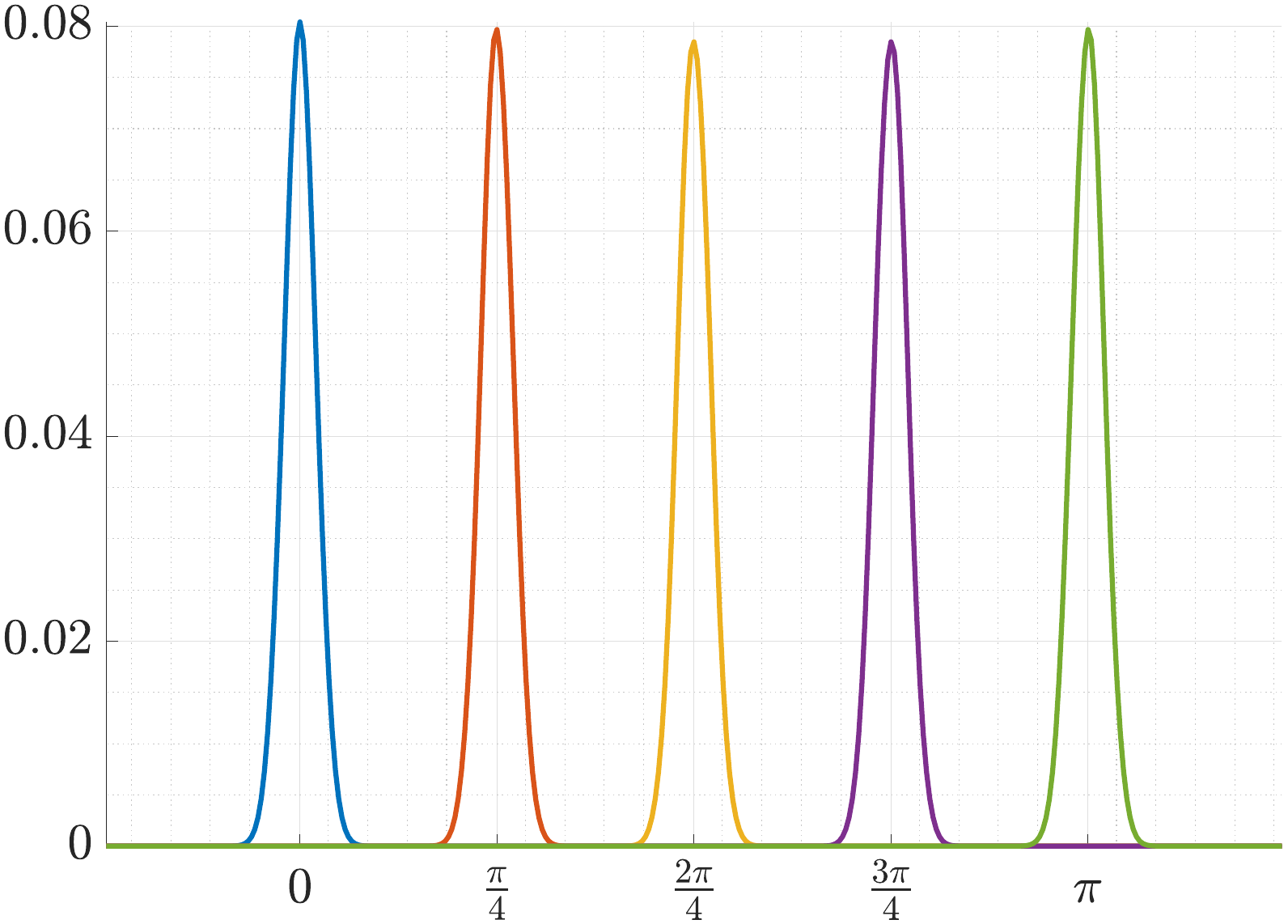}
	\caption{Top left: $S^{1}$ with 5\% equally spaced landmarks. Top right: $20$ different landmark-kernel functions from 5\% equal spaced landmarks as in top left. Bottom left: $10$ different landmark-kernels if choose 10\% equally spaced landmarks. Bottom right: $5$ different landmark-kernels if choose 20\% equal spaced landmarks.}\label{S1ker}
\end{figure}

\subsection{Pointwise convergence}

We first state the bias analysis of Roseland.
\begin{theorem}[Bias analysis]\label{biasthm}
	Take $f\in C^{4}(M^{d})$. Then, for all $x\in M^{d}$ we have
	\begin{align}
	T_{\textup{ref},\epsilon}f(x) - f(x) =&\, \frac{\epsilon\mu_{1,2}^{(0)}}{d}\left(\frac{2\nabla p_{X}(x)}{p_{X}(x)}+\frac{\nabla p_{Y}(x)}{p_{Y}(x)}\right)\cdot\nabla f(x) 
	+ \frac{\epsilon\mu_{1,2}^{(0)}}{d}\Delta f(x) + \mathcal{O}(\epsilon^{2})\,.\nonumber
	\end{align}
\end{theorem}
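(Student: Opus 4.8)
The plan is to compute the local behavior of the kernel $K_{\textup{ref},\epsilon}(x,y)$ by a two-stage Laplace-type expansion and then feed the result into the ratio defining $T_{\textup{ref},\epsilon}$. The key observation is that $K_{\textup{ref},\epsilon}$ is itself an integral of a product of two ordinary Gaussian-like kernels against $p_Y$, so the ``inner'' integration over the landmark variable $z$ can be handled first by the standard manifold Laplace expansion, exactly as in the references \cite{Coifman2006,singer2016spectral,DunsonWuWu2019}. First I would fix normal coordinates around a point $x_0$ and expand $K_\epsilon(x,z)K_\epsilon(z,y)p_Y(z)$ in the variable $z$, using that on the manifold the Euclidean distance $\|x-z\|_{\mathbb{R}^D}$ differs from the geodesic distance only at third order, and that the Riemannian volume form has the expansion $dV = (1 - \tfrac16\mathrm{Ric}_{jk}u^ju^k + \cdots)\,du$. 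Integrating over $z$ with the moments $\mu_{1,2}^{(0)}$ etc., one obtains
\begin{equation}\label{eq:inner}
K_{\textup{ref},\epsilon}(x,y) = c\,\epsilon^{d/2}\Big[ \bar K_{2\epsilon}(x,y)\, p_Y\big(\tfrac{x+y}{2}\big) + \epsilon\,(\cdots) + \mathcal{O}(\epsilon^2)\Big],
\end{equation}
i.e. an effective kernel of bandwidth $2\epsilon$ centered at the midpoint, weighted by $p_Y$, plus an $\epsilon$-order correction involving $\nabla p_Y$ and curvature. This is where the ``adaptive kernel'' phenomenon emphasized in the excerpt shows up explicitly; I would keep the midpoint evaluation of $p_Y$ since it contributes a $\nabla p_Y/p_Y$ term at the relevant order.

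Next I would substitute \eqref{eq:inner} into both $d_{\textup{ref},\epsilon}(x) = \int_M K_{\textup{ref},\epsilon}(x,y)p_X(y)\,dV(y)$ and the numerator $\int_M K_{\textup{ref},\epsilon}(x,y)f(y)p_X(y)\,dV(y)$, and perform the ``outer'' Laplace expansion in $y$ around $x$. Because the effective bandwidth is $2\epsilon$ rather than $\epsilon$, one must track that factor carefully — it is the reason the final answer has the clean form with $\mu_{1,2}^{(0)}$ and not a rescaled moment, provided the kernel moments are organized consistently (the paper's normalization $\mu_{1,0}^0=1$ and the definition of $\mu_{r,l}^{(k)}$ over $\mathbb{R}^d$ should make the bookkeeping uniform). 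Expanding $f(y)p_X(y)$, $p_X(y)$ and $p_Y(\tfrac{x+y}{2})$ to second order in the geodesic displacement, collecting the $\mathcal{O}(1)$ terms (which give $f(x) d_{\textup{ref},\epsilon}(x)$ in the numerator, hence cancel in the ratio), and then collecting the $\mathcal{O}(\epsilon)$ terms, yields a numerator of the form $f(x)\,[\,\mathrm{degree\ terms}\,] + \epsilon(\cdots)$ over a denominator of the same degree terms. Dividing and using the first-order Taylor expansion $\frac{a_0 + \epsilon a_1}{b_0+\epsilon b_1} = \frac{a_0}{b_0} + \epsilon\big(\frac{a_1}{b_0} - \frac{a_0 b_1}{b_0^2}\big) + \mathcal{O}(\epsilon^2)$ isolates the stated bias, with the $\frac{2\nabla p_X}{p_X}$ coming from the outer integration against $p_X$ appearing twice (once in numerator-type terms, once implicitly through the denominator's $\nabla p_X$) and the $\frac{\nabla p_Y}{p_Y}$ coming from the midpoint weight in \eqref{eq:inner}.

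The main obstacle I anticipate is keeping the two-scale expansion honest: the inner integral produces an effective kernel that is not a single translate but carries both a midpoint argument and an $\epsilon$-order residual, and when this is plugged into the outer integral the $\epsilon$-order residual interacts with the $\mathcal{O}(1)$ part of the outer expansion to contribute at the final $\mathcal{O}(\epsilon)$ level — so one cannot simply treat $K_{\textup{ref},\epsilon}$ as $\bar K_{2\epsilon}$ and quote a known DM formula. Concretely, the coefficient in front of $\nabla p_Y/p_Y$ is $1$ rather than $2$ precisely because $p_Y$ enters only through the inner integration (via the midpoint), whereas $p_X$ enters through the outer integration in a way that, combined with the ratio normalization, doubles it; verifying that asymmetry is the delicate accounting step. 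A secondary technical point is justifying that all geodesic-vs-Euclidean and volume-form corrections are genuinely $\mathcal{O}(\epsilon^2)$ after integration (odd-moment cancellations), which is routine given $K\in C^3$ with exponential decay and $p_X,p_Y\in C^4$, but must be invoked to control the error term uniformly in $x$. I would organize the write-up as: (i) inner Laplace expansion lemma giving \eqref{eq:inner}; (ii) outer Laplace expansion of numerator and denominator; (iii) ratio expansion and term collection; (iv) uniformity of the $\mathcal{O}(\epsilon^2)$ remainder.
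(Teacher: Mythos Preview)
Your plan is workable but takes a genuinely different route from the paper's. You integrate over the landmark variable $z$ first, producing an effective two-point kernel \eqref{eq:inner}, and then expand in $y$. The paper does the opposite: by Fubini it rewrites the numerator as
\[
\int_{M}\Big(\int_{M}K_{\epsilon}(z,y)f(y)p_{X}(y)\,dV(y)\Big)K_{\epsilon}(x,z)p_{Y}(z)\,dV(z),
\]
so that the inner integral is over the data variable $y$ with the single reference point $z$, and the outer integral is over $z$ with the single reference point $x$. Each is then a verbatim application of the standard one-point Laplace lemma (their Lemma~\ref{numerator_est}); the denominator is the same with $f\equiv 1$, and the ratio expansion immediately yields the claim.

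The advantage of the paper's order is exactly that it dissolves what you identify as ``the main obstacle'': there is never a two-basepoint expansion, no midpoint, no effective bandwidth $2\epsilon$, and no interaction between an inner $\epsilon$-residual and an outer leading term to track. Your route has the conceptual payoff of exhibiting the adaptive kernel explicitly, but the factorization behind \eqref{eq:inner} is literally correct only for the Gaussian kernel; for the general kernels of Definition~\ref{Definition:kernel} the product $K_\epsilon(x,z)K_\epsilon(z,y)$ does not split cleanly and you would need a Laplace expansion around the maximizer in $z$, which is extra work for the same answer. If you want the shortest proof, swap the order of integration first.
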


The proof is postponed to Section \ref{biasproof}.

\begin{remark} \label{remark: dm_thm}
	We compare the obtained result with the existing theorems for DM shown in \cite{Coifman2006}. Take $f\in C^{4}(M^{d})$. Recall the definition:
	$$
	T_{\epsilon,\alpha}f(x)\coloneqq\int_{M}\frac{K_{\epsilon,\alpha}(x,y)}{d_{\epsilon,\alpha}(x)}f(y)p_{X}(y)\,dV(y)\,,
	$$
	where 
	$K_{\epsilon,\alpha}(x,y)\coloneqq\frac{K_{\epsilon}(x,y)}{p_{X,\epsilon}^{\alpha}(x)p_{X,\epsilon}^{\alpha}(y)}$, $p_{X,\epsilon}(x)\coloneqq\int_{M}K_{\epsilon}(x,y)p_{X}(y)\,dV(y)$,
	and $d_{\epsilon,\alpha}(x)\coloneqq\int_{M}K_{\epsilon,\alpha}(x,y)p_{X}(y)\,dV(y)$.
	Then for all $x\in M^{d}$ we have:
	\begin{equation}
	T_{\epsilon,\alpha}f(x) - f(x) = \frac{\epsilon\mu_{1,2}^{(0)}}{2d}\left(\Delta f(x) + \frac{2\nabla f(x)\cdot\nabla p_{X}^{1-\alpha}(x)}{p_{X}^{1-\alpha}(x)}\right) + \mathcal{O}(\epsilon^{2})\,.
	\end{equation}
	Based on this result, we see that in DM, we can remove the impact of the non-uniformly sampling of the data set by letting $\alpha=1$ in the $\alpha$-normalization step. In Roseland, if $\frac{2\nabla p_{X}(x)}{p_{X}(x)}+\frac{\nabla p_{Y}(x)}{p_{Y}(x)}=0$, then we remove the impact of the non-uniformly sampling and recover the Laplace-Beltrami operator.
	Note that $\frac{2\nabla p_{X}(x)}{p_{X}(x)}+\frac{\nabla p_{Y}(x)}{p_{Y}(x)}=0$ suggests that we may want to have the landmark set following $p_{Y}(x)\propto\frac{1}{p^{2}_{X}(x)}$. This serves us as the guidance of how to design the landmark set.
	{While pursuing systematically how to handle the influence of the density function is not the focus of this paper, we mention that when we cannot design the landmark, we may consider to combine the $\alpha$ normalization in the DM framework and Roseland to remove the influence of the probability density of the data set and/or the landmark set (e.g. the two stage normalization similar to
equation (10) in \cite{haddad2014texture}). We will leave this possibility to our future work.}
\end{remark}

\begin{theorem}[Variance analysis]\label{variancethm}
	Take $\mathcal{X}=\{x_i\}_{i=1}^n$ and $\mathcal{Y}=\{y_j\}_{i=1}^m$, where $m=[n^{\beta}]$ for some $0<\beta\leq 1$ and $[x]$ is the nearest integer of $x\in\mathbb{R}$. 
	Take $f\in C(M^{d})$ and denote $\textbf{\textit{f}}\in\mathbb{R}^{n}$ such that $\textbf{\textit{f}}_{i}=f(x_{i})$. Let $\epsilon = \epsilon(n)$ so that $\frac{\sqrt{\log n}}{n^{\beta/2}\epsilon^{d/2+1/2}}\rightarrow 0$ and $\epsilon\rightarrow 0$ when $n\rightarrow\infty$. Then with probability higher than $1 - \mathcal{O}(1/n^{2})$, we have
	\begin{equation}
	\big[(I-(D^{(\textup{R})})^{-1}W^{(\textup{R})})\textbf{\textit{f}}\;\big](i) = f(x_{i}) - T_{\textup{ref},\epsilon}f(x_{i}) + \mathcal{O}\Big(\frac{\sqrt{\log n}}{n^{\beta/2}\epsilon^{d/2-1/2}}\Big)
	\end{equation}
	for all $i=1,2,\ldots,n$.
\end{theorem}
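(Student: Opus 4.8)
The plan is to recognize the left-hand side as $f(x_i)-\widehat{N}_i/\widehat{D}_i$, where, after multiplying the numerator and denominator of $\big[(D^{(\textup{R})})^{-1}W^{(\textup{R})}\textbf{\textit{f}}\big](i)$ by $\tfrac1{nm}$,
\[
\widehat{D}_i:=\frac1{nm}\sum_{j,k}K_{\epsilon}(x_i,y_k)K_{\epsilon}(y_k,x_j),\qquad
\widehat{N}_i:=\frac1{nm}\sum_{j,k}K_{\epsilon}(x_i,y_k)K_{\epsilon}(y_k,x_j)\,\textbf{\textit{f}}_j .
\]
Fix $x_i$. Then $\widehat{N}_i$ and $\widehat{D}_i$ are empirical averages over the two \emph{independent} samples $\mathcal{X}$ and $\mathcal{Y}$, and (up to a diagonal correction of size $\mathcal{O}(\epsilon^{d/2}/n)$ coming from the $j=i$ terms) their conditional expectations are $d_{\textup{ref},\epsilon}(x_i)T_{\textup{ref},\epsilon}f(x_i)$ and $d_{\textup{ref},\epsilon}(x_i)$, by Definition~\ref{GL_cts}. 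It therefore suffices to bound, with failure probability $\mathcal{O}(n^{-3})$, the fluctuations of $\widehat{N}_i$ and $\widehat{D}_i$ about these means and then take a union bound over $i=1,\dots,n$. Following \cite{DunsonWuWu2019}, I would first replace $\textbf{\textit{f}}$ by $\textbf{\textit{f}}-f(x_i)\boldsymbol{1}$ — legitimate since $(D^{(\textup{R})})^{-1}W^{(\textup{R})}\boldsymbol{1}=\boldsymbol{1}$ — so that the numerator involves $g_i:=f-f(x_i)$, which vanishes at $x_i$ and is small on the kernel support; this removes the diagonal term from the numerator and forces the bias $T_{\textup{ref},\epsilon}f(x_i)-f(x_i)$ to enter the final bound only through the denominator.

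The heart of the matter is a concentration estimate for a two-sample, $U$-statistic-like double sum. I would take the Hoeffding/ANOVA decomposition of $\widehat{D}_i$, and of the centered numerator, into a $\mathcal{Y}$-marginal $\tfrac1m\sum_k A(y_k)$, an $\mathcal{X}$-marginal $\tfrac1n\sum_j B(x_j)$, and a completely degenerate bilinear remainder, where $A(y)=K_\epsilon(x_i,y)\int_{M}K_\epsilon(y,x)p_X(x)\,dV(x)$, $B(x)=K_{\textup{ref},\epsilon}(x_i,x)$, and similarly for the numerator. The kernel-moment expansions of Definition~\ref{Definition:kernel}, made uniform in $x_i\in M^d$ using compactness of $M^d$ and the boundedness above and below of $p_X$ and $p_Y$, give $d_{\textup{ref},\epsilon}(x_i)=\Theta(\epsilon^d)$ (so $\widehat{D}_i\geq c\,\epsilon^d$ on the good event) and control the variances and sup-norms of $A$, $B$ and of the degenerate summand. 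Applying Bernstein's inequality to each marginal, and invoking the hypothesis $\sqrt{\log n}/(n^{\beta/2}\epsilon^{d/2+1/2})\to0$ to place Bernstein in its variance-dominated regime and to keep $\widehat{D}_i$ away from $0$, yields marginal fluctuations which, after division by $\widehat{D}_i=\Theta(\epsilon^d)$, contribute $\mathcal{O}\!\big(\sqrt{\log n}\,n^{-\beta/2}\epsilon^{-(d/2-1/2)}\big)$. The degenerate remainder is handled by conditioning on $\mathcal{Y}$, whereupon it becomes a sum of i.i.d. mean-zero terms over $\mathcal{X}$ to which Bernstein applies once more in $\mathcal{X}$; its contribution is of strictly smaller order.

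It remains to combine the two pieces through the elementary ratio bound: if $\widehat{D}_i\geq c\,\epsilon^d$, $|\widehat{N}_i-\bar{N}_i|\leq\delta_N$ and $|\widehat{D}_i-\bar{D}_i|\leq\delta_D$, then $\big|\widehat{N}_i/\widehat{D}_i-\bar{N}_i/\bar{D}_i\big|\lesssim\epsilon^{-d}\big(\delta_N+|T_{\textup{ref},\epsilon}f(x_i)-f(x_i)|\,\delta_D\big)$ once the centering has been applied, the last factor being $\mathcal{O}(\epsilon)$ when $f\in C^4$ (Theorem~\ref{biasthm}) and $o(1)$ in general. Together with the diagonal correction, this gives $\big[(I-(D^{(\textup{R})})^{-1}W^{(\textup{R})})\textbf{\textit{f}}\big](i)=f(x_i)-T_{\textup{ref},\epsilon}f(x_i)+\mathcal{O}\!\big(\sqrt{\log n}\,n^{-\beta/2}\epsilon^{-(d/2-1/2)}\big)$, uniformly in $i$. \emph{The main obstacle} is exactly the dependence that the landmark diffusion creates: the entries $\{W^{(\textup{R})}_{ij}\}_j$ are \emph{not} independent even after conditioning on $x_i$, because they all pass through $y_1,\dots,y_m$, so one cannot reduce to a sum of independent terms and must instead concentrate a degenerate bilinear form in two independent samples — for which a naive second-moment/Markov estimate is far too weak to survive a union bound over the $n$ points; the conditioning step in the previous paragraph is the device that circumvents this. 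A secondary but indispensable point is the uniformity in $x_i\in M^d$ of all kernel-moment expansions, which is what makes the union bound over $i$ cost only the factor $\sqrt{\log n}$ already present in the statement.
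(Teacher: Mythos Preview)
Your approach is correct in spirit but \emph{genuinely different} from the paper's. The paper does not perform a Hoeffding/ANOVA decomposition at all. Instead, it regards the double sum $\tfrac{1}{nm}\sum_{j,k}F_{k,j}$ as a sum of \emph{dependent} random variables and applies Janson's Bernstein-type inequality (Theorem~\ref{bernstein}) directly: the index set $\mathcal{A}=\{(j,k)\}_{j,k}$ admits a proper cover of size $\chi(\mathcal{A})\leq m+n-1=\mathcal{O}(n)$ (the ``diagonal'' cover in the grid), and plugging $S=nm\,\textup{Var}(F)$ with $\textup{Var}(F)\asymp\epsilon^{1-d}$ into Janson's bound and solving for~$t$ gives the stated rate in one stroke. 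No splitting into marginals, no degenerate remainder, no conditioning on $\mathcal{Y}$.

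Your route --- decomposing into $\mathcal{Y}$-marginal $+$ $\mathcal{X}$-marginal $+$ degenerate part and applying standard Bernstein to each marginal --- is the classical two-sample $U$-statistic argument and is perfectly viable; in fact it can yield a \emph{tighter} rate than the paper's, because $\textup{Var}(A)\asymp\epsilon^{1-d/2}\ll\textup{Var}(F)\asymp\epsilon^{1-d}$ (the paper itself concedes in Section~\ref{Discussion optimal variance convergence discrepancy} that its bound is likely not optimal and that the loss comes from the dependence structure). The trade-off is that your proof has more moving parts, whereas the Janson approach is a single hammer. One point in your sketch deserves more care: after conditioning on $\mathcal{Y}$, the degenerate remainder becomes $\tfrac{1}{n}\sum_j\tilde R(x_j)$ with $\tilde R(x)=\tfrac{1}{m}\sum_k R(y_k,x)$, and Bernstein requires a bound on $\textup{Var}_X[\tilde R(X)]$ and $\|\tilde R\|_\infty$ that holds with high probability over $\mathcal{Y}$, not merely in expectation; you would need an additional concentration argument (or a crude sup-norm bound) for the random conditional variance before the final Bernstein step goes through with the $\mathcal{O}(n^{-3})$ failure probability you need for the union bound.
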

The proof is postponed to Section \ref{biasproof}.

\begin{remark}\label{Remark about the variance convergence rate}
	We compare the obtained result with the existing theorems shown in \cite{Coifman2006, singer2016spectral}. 
	For the variance analysis for DM, we have the following result when there are $n$ data points $\mathcal{X}=\{x_i\}_{i=1}^n$ from the manifold.
	Take $f\in C(M^{d})$.
	\begin{itemize}
		\item For $0<\alpha\leq 1$, let $\epsilon = \epsilon(n)$ so that $\frac{\sqrt{\log n}}{n^{1/2}\epsilon^{d/4+1/2}}\rightarrow 0$ and $\epsilon\rightarrow 0$ when $n\rightarrow\infty$. Then with probability higher than $1 - \mathcal{O}(1/n^{2})$, for all $i=1,2,\ldots,n$, we have
		\begin{equation}
		\big[(I-(D^{(\alpha)})^{-1}W^{(\alpha)})\textbf{\textit{f}}\;\big](i) =  f(x_{i})-T_{\epsilon,\alpha}f(x_{i}) + \mathcal{O}\Big(\frac{\sqrt{\log n}}{n^{1/2}\epsilon^{d/4}}\Big)\,.
		\end{equation}

		\item For $\alpha=0$, let $\epsilon = \epsilon(n)$ so that $\frac{\sqrt{\log n}}{n^{1/2}\epsilon^{d/4+1/2}}\rightarrow 0$ and $\epsilon\rightarrow 0$, when $n\rightarrow\infty$. Then with probability higher than $1 - \mathcal{O}(1/n^{2})$, for all $i=1,2,\ldots,n$, we have
		\begin{equation}\label{alpha=0 DM error bound}
		\big[(I-(D^{(0)})^{-1}W^{(0)})\textbf{\textit{f}}\;\big](i) =  f(x_{i})-T_{\epsilon,\alpha}f(x_{i}) + \mathcal{O}\Big(\frac{\sqrt{\log n}}{n^{1/2}\epsilon^{d/4-1/2}}\Big)\,.
		\end{equation}

	\end{itemize}
	Clearly, unlike DM, in Roseland, its convergence rate depends on $n^{\beta}$, which is the size of the landmark set. This pointwise convergence result tells us that the smaller the landmark set is, the faster the algorithm, but the slower the convergence rate to the Laplace-Beltrami operator. We should compare the rate of Roseland with the rate of DM when the alpha normalization is $0${; that is, $\alpha=0$}. The error term in DM is of order $ \mathcal{O}\Big(\frac{\sqrt{\log n}}{n^{1/2}\epsilon^{d/4-1/2}}\Big)$ while the error term in Roseland is of order $\mathcal{O}\Big(\frac{\sqrt{\log n}}{n^{\beta/2}\epsilon^{d/2-1/2}}\Big)$, where $n$ is the size of data, $n^{\beta}$ is the size of landmark set. 
	Note that even when we let $\beta=1$, the convergence rate of Roseland still does not recover the convergence rate of DM, where they differ by a factor of $\epsilon^{\frac{d}{4}}$. This is because Roseland introduces dependence among data points by diffusing through the landmark set, and this dependence relation results in a larger variance of the random variable to be analyzed. This fact gives a slower convergence rate when we apply the large deviation bound. See Section \ref{Discussion optimal variance convergence discrepancy} for more details.
\end{remark}

\subsection{Idea of analyzing the variance}\label{grid_samp_def}
Let $X$ and $Y$ be two independent random variables and $f:(X,Y)\rightarrow\mathbb{R}$. One way to compute $\mathbb{E}(f(X,Y))$ numerically is by i.i.d. sampling $n$ pairs of points $\{(x_{i},y_{i})\}_{i=1}^{n}$ from the joint distribution of $(X,Y)$. Then we have $\frac{1}{n}\sum_{i=1}^{n}f(x_{i},y_{i})\rightarrow\mathbb{E}(f(X,Y))$ almost surely by the law of large numbers. And there are standard techniques available to compute its convergence rate. Due to the nature of landmark set, this approach does not hold. Indeed, note that if we expand \eqref{discrete_op2}, we have
\begin{align}\label{Expansion:refDM in the summation formate}
\big[D^{(\textup{R})})^{-1}W^{(\textup{R})}\textbf{\textit{f}}\big](i)=\frac{\frac{1}{n}\sum_{j=1}^{n}\big[\sum_{k=1}^{m}K_{\epsilon}(x_{i},y_{k})K_{\epsilon}(y_{k},x_{j})\big]\,\,\textbf{\textit{f}}_{j}}{\frac{1}{n}\sum_{j=1}^{n}\big[\sum_{k=1}^{m}K_{\epsilon}(x_{i},y_{k})K_{\epsilon}(y_{k},x_{j})\big]}\,,
\end{align}
which generates dependence among the summands. 
We have the following definition:
\begin{definition}\label{gridsamp}
	Let $X$ and $Y$ be two independent random variables. 
	We call $\{(x_i,y_j)|\;i=1,\ldots,n,\, j=1,\ldots,m\}$ a grid sampling if $\{x_{i}\}_{i=1}^{n}$ is i.i.d. sampled from $X$ and $\{y_{j}\}_{j=1}^{m}$ is i.i.d. sampled from $Y$. 
\end{definition}
Clearly, the grid samples are not independent, 
and we know that $(x_{i_{1}},y_{j_{1}})$ is independent of $(x_{i_{2}},y_{j_{2}})$ if and only if $i_{1}\neq i_{2}$ and $j_{1}\neq j_{2}$. 
In general, \eqref{Expansion:refDM in the summation formate} can be formulated in the following way. Given $f:(X,Y)\rightarrow\mathbb{R}$ and a grid sampling $\{(x_i,y_j)|\;i=1,\ldots,n,\, j=1,\ldots,m\}$, we ask how well we can approximate $\mathbb{E}(f(X,Y))$ from the sampling grid; that is, what is the convergence rate of 
\begin{eqnarray}\label{gridconvg}
\frac{1}{mn}\sum_{i=1}^{n}\sum_{j=1}^{m}f(x_{i},y_{j})\longrightarrow\mathbb{E}(f(X,Y))\,.
\end{eqnarray}
Clearly, we need to handle the dependence on the grid sampling.

To answer this question, we consider the work in \cite{janson2004large}, which provides a method of computing the convergence rate of this kind of sampling.
In general, we consider the random variable of the form
\begin{align}\label{dep_sum}
X=\sum_{\alpha\in\mathcal{A}}Y_{\alpha}
\end{align}
where $Y_{\alpha}$ are random variables, independent of not, and $\alpha$ ranging over some index set $\mathcal{A}$. We have the following definition.
\begin{definition}
	Given an index set $\mathcal{A}$ and $\{Y_{\alpha}\}_{\alpha\in\mathcal{A}}$, we make the following definitions.
	\begin{itemize}
		\item A subset $\mathcal{A}'$ of $\mathcal{A}$ is independent if the corresponding random variables $\{Y_{\alpha}\}_{\alpha\in\mathcal{A}'}$ are independent.
		\item A family $\{\mathcal{A}_{j}\}_{j}$ of subsets of $\mathcal{A}$ is a cover of $\mathcal{A}$ if $\bigcup_{j}\mathcal{A}_{j}=\mathcal{A}$.
		\item A cover is proper if each set $\mathcal{A}_{j}$ in it is independent.
		\item $\chi(\mathcal{A})$ is the size of the smallest proper cover of $\mathcal{A}$.
	\end{itemize}
\end{definition}
\noindent Then we have the first Hoeffding-like concentration inequality.
\begin{theorem}\label{hoeffding}
	Suppose X is defined in \eqref{dep_sum} with $a_{\alpha}\leq Y_{\alpha}\leq b_{\alpha}$ for every $\alpha\in\mathcal{A}$, where $a_{\alpha},b_{\alpha}\in \mathbb{R}$. Then for all  $t>0$,
	\begin{eqnarray}
	\nonumber\mathbb{P}(X-\mathbb{E}(X)\geq t)\leq\textup{exp}\left(\frac{-2t^{2}}{\chi(\mathcal{A})\sum_{\alpha\in\mathcal{A}}(b_{\alpha}-a_{\alpha})^{2}}\right).
	\end{eqnarray}
	The same estimate holds for $\mathbb{P}(X-\mathbb{E}(X)\leq -t)$.
\end{theorem}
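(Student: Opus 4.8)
The plan is to reduce the partly-dependent sum to $\chi(\mathcal{A})$ genuinely independent sums and then combine the classical Hoeffding lemma with H\"older's inequality applied to moment generating functions. Throughout write $\chi:=\chi(\mathcal{A})$ and $S:=\sum_{\alpha\in\mathcal{A}}(b_\alpha-a_\alpha)^2$.

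First I would observe that a smallest proper cover may be taken to be a partition: given a proper cover $\{\mathcal{A}_j\}_{j=1}^{\chi}$, assign every $\alpha\in\mathcal{A}$ to exactly one of the $\mathcal{A}_j$ containing it; since any subset of an independent set is independent, this produces disjoint independent sets with $\mathcal{A}=\bigsqcup_{j=1}^{\chi}\mathcal{A}_j$. Setting $X_j:=\sum_{\alpha\in\mathcal{A}_j}Y_\alpha$ and $\widetilde X_j:=X_j-\mathbb{E}X_j$, we then have $\widetilde X:=X-\mathbb{E}X=\sum_{j=1}^{\chi}\widetilde X_j$, where each $\widetilde X_j$ is a centered sum of independent bounded random variables. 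For fixed $s>0$, the next step is to bound the moment generating function: the generalized H\"older inequality, applied with each of the $\chi$ exponents taken to be $\chi$ (so that their reciprocals sum to $1$), gives
\[
\mathbb{E}\big[e^{s\widetilde X}\big]=\mathbb{E}\Big[\prod_{j=1}^{\chi}e^{s\widetilde X_j}\Big]\leq\prod_{j=1}^{\chi}\Big(\mathbb{E}\big[e^{s\chi\widetilde X_j}\big]\Big)^{1/\chi}\,.
\]
Since within $\mathcal{A}_j$ the variables $Y_\alpha-\mathbb{E}Y_\alpha$ are independent and supported in intervals of length $b_\alpha-a_\alpha$, Hoeffding's lemma bounds each factor by $\exp\big(\tfrac{1}{8}s^2\chi^2\sum_{\alpha\in\mathcal{A}_j}(b_\alpha-a_\alpha)^2\big)$, and multiplying over $j$ while using that the $\mathcal{A}_j$ partition $\mathcal{A}$ yields $\mathbb{E}[e^{s\widetilde X}]\leq\exp\big(\tfrac{1}{8}s^2\chi S\big)$.

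Finally I would apply the Chernoff bound $\mathbb{P}(\widetilde X\geq t)\leq e^{-st}\mathbb{E}[e^{s\widetilde X}]\leq\exp\big(\tfrac{1}{8}s^2\chi S-st\big)$ and optimize over $s$; the minimum is attained at $s=4t/(\chi S)$ and equals $\exp\big(-2t^2/(\chi S)\big)$, which is the asserted upper-tail inequality. The lower-tail estimate follows by applying the same argument to $-X$, whose dependency structure and interval lengths coincide with those of $X$. I do not expect a real obstacle here: the only step beyond routine Hoeffding bookkeeping is the H\"older maneuver that converts the expectation of a product over \emph{dependent} blocks into a product of expectations over \emph{independent} blocks, together with the observation that a proper cover can be thinned to a proper partition making this legitimate; both are short once the chromatic-cover formalism of the preceding definition is available.
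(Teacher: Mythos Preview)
Your proof is correct and is essentially the argument given by Janson in \cite{janson2004large}, from which the paper quotes this theorem without proof: thin a minimal proper cover to a proper partition, apply the generalized H\"older inequality with all exponents equal to $\chi(\mathcal{A})$ to decouple the blocks, bound each block's moment generating function via Hoeffding's lemma and independence, and optimize the Chernoff bound. Since the paper itself does not supply a proof but merely cites the result, there is nothing further to compare.
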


When $Y_{\alpha}$'s have variances that are substantially smaller than $(b_{\alpha}-a_{\alpha})^{2}/4$, we can improve theorem \ref{hoeffding} to the Bernstein-like concentration inequality.

\begin{theorem}\label{bernstein}
	Suppose X is defined in \eqref{dep_sum} with $Y_{\alpha}-\mathbb{E}(Y_{\alpha})\leq b$ for some $b>0$ for all $\alpha\in\mathcal{A}$. Suppose $S\coloneqq\sum_{\alpha\in\mathcal{A}}\textup{Var}\,Y_{\alpha}<\infty$. Then, for all $t>0$,
	\begin{eqnarray}
	\nonumber\mathbb{P}(X-\mathbb{E}(X)\geq t)\leq\textup{exp}\left(\frac{-8t^{2}}{25\chi(\mathcal{A})(S+bt/3)}\right).
	\end{eqnarray}
	The same estimates holds for $\mathbb{P}(X-\mathbb{E}(X)\leq -t)$ if also $Y_{\alpha}-\mathbb{E}(Y_{\alpha})\geq -b$ for $b>0$.
\end{theorem}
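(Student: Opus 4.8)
Following \cite{janson2004large}, my plan is to run the classical Chernoff--Bernstein argument, using H\"older's inequality to absorb the dependence encoded by $\chi(\mathcal{A})$; this parallels the proof of Theorem \ref{hoeffding}. First I would reduce to a disjoint cover: start from a minimal proper cover $\{\mathcal{A}_j\}_{j=1}^{\chi}$ of $\mathcal{A}$ with $\chi=\chi(\mathcal{A})$, and replace $\mathcal{A}_j$ by $\mathcal{A}_j\setminus\bigcup_{i<j}\mathcal{A}_i$ so that the $\mathcal{A}_j$ partition $\mathcal{A}$; each remains independent because a subset of an independent set is independent. Writing $\bar Y_\alpha:=Y_\alpha-\mathbb{E}(Y_\alpha)$, $Z:=X-\mathbb{E}(X)=\sum_{j=1}^{\chi}Z_j$ with $Z_j:=\sum_{\alpha\in\mathcal{A}_j}\bar Y_\alpha$, each $Z_j$ is a sum of independent, centered random variables, each bounded above by $b$.

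Next, for $\lambda>0$ I would bound the moment generating function. Chernoff's inequality gives $\mathbb{P}(Z\ge t)\le e^{-\lambda t}\,\mathbb{E}[e^{\lambda Z}]$, and since $e^{\lambda Z}=\prod_j e^{\lambda Z_j}$ with positive factors, H\"older's inequality with all exponents equal to $\chi$ yields $\mathbb{E}[e^{\lambda Z}]\le\prod_{j=1}^{\chi}\big(\mathbb{E}[e^{\chi\lambda Z_j}]\big)^{1/\chi}$; independence within $\mathcal{A}_j$ then factors $\mathbb{E}[e^{\chi\lambda Z_j}]=\prod_{\alpha\in\mathcal{A}_j}\mathbb{E}[e^{\chi\lambda\bar Y_\alpha}]$. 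The crucial gain is that raising to the $\chi$-th power and then taking the $1/\chi$-th power leaves only a \emph{single} factor of $\chi$ in the final exponent, so the dependence costs only a linear-in-$\chi$ loss.

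For the single-variable factors I would invoke the Bennett--Bernstein moment bound: for a centered $\bar Y\le b$ with variance $v$, the elementary estimate $e^u-1-u\le\tfrac{u^2/2}{1-u/3}$ for $0\le u<3$ gives $\mathbb{E}[e^{s\bar Y}]\le\exp\!\big(\tfrac{s^2 v/2}{1-sb/3}\big)$ for $0<s<3/b$. Applying this with $s=\chi\lambda$ to each $\bar Y_\alpha$, multiplying out, and using $\sum_\alpha\mathrm{Var}(Y_\alpha)=S$, I obtain $\mathbb{E}[e^{\lambda Z}]\le\exp\!\big(\tfrac{\chi\lambda^2 S/2}{1-\chi\lambda b/3}\big)$ for $0<\lambda<3/(\chi b)$. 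Plugging into Chernoff and optimizing over $\lambda$ in the admissible range --- e.g.\ taking $\lambda$ of order $t/(\chi(S+bt/3))$, which is admissible --- delivers the claimed tail, and the explicit constant $8/25$ falls out of this optimization. The lower-tail statement follows by applying the result to $-X=\sum_\alpha(-Y_\alpha)$, which is exactly where the hypothesis $\bar Y_\alpha\ge-b$ enters; and if $\mathcal{A}$ is infinite one applies the finite case to partial sums over finite $\mathcal{F}\uparrow\mathcal{A}$ and passes to the limit using $S<\infty$.

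The main obstacle is the H\"older step, or rather getting its bookkeeping right: one must match the H\"older exponents to the cover size and insist that the cover be a genuine partition, so that the exponent degrades only linearly (not quadratically) in $\chi$ --- this is what keeps the bound useful in the grid-sampling application, where $\chi$ is large. The remaining work --- the one-dimensional Bernstein estimate and chasing the constant through the final optimization --- is routine.
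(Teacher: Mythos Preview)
The paper does not prove this result; it is quoted without proof from \cite{janson2004large}. Your Chernoff--H\"older argument on a disjoint proper cover is precisely Janson's technique and is correct. Two small remarks. First, the one-variable Bernstein moment bound is not obtained by applying $e^u-1-u\le\tfrac{u^2/2}{1-u/3}$ directly at $u=s\bar Y_\alpha$ (which may be negative), but via the monotonicity of $x\mapsto(e^x-1-x)/x^2$ followed by that inequality at $u=sb$; the conclusion $\mathbb{E}[e^{s\bar Y_\alpha}]\le\exp\!\big(\tfrac{s^2\operatorname{Var}Y_\alpha/2}{1-sb/3}\big)$ you state is correct. Second, carrying out the optimization with $\lambda=t/\big(\chi(S+bt/3)\big)$ actually produces the exponent $-t^2/\big(2\chi(S+bt/3)\big)$, i.e.\ constant $\tfrac12$ rather than $\tfrac{8}{25}$; since $\tfrac12>\tfrac{8}{25}$ the stated bound follows a fortiori, so your claim that ``$8/25$ falls out'' is a harmless overstatement.
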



With this general theory, we now come back to our setup. In our grid sampling scheme in Roseland, $\mathcal{A} = \{(j,k)\}_{j=1,k=1}^{n,m}$. We now claim that
\begin{equation}\label{Optimal bound of the independent set A}
\chi(\mathcal{A})\leq m+n-1=\mathcal{O}(\max(m,\,n))\,. 
\end{equation}
The easiest way of seeing it is by the following grid shown in Figure \ref{cover number plot for independent}, where the coordinate $(j_{a}, k_{b})$ corresponds to the random variable $(x_{j_{a}}, y_{k_{b}})$. Clearly, we know that $\{\mathcal{A}_{x}\}_{x=1}^{n+m-1}$ is a proper cover of $\mathcal{A}$. 
That means the convergence rate in (\ref{gridconvg}) should be the same as that of $\frac{1}{\min(n,m)}\sum_{i=1}^{\min(n,m)}f(X_{i},Y_{i})\rightarrow\mathbb{E}(f(X,Y))$, and hence the rate is dominated by $\min(m,n)$. 
%

\begin{figure}[bht!]
	\includegraphics[width=11cm]{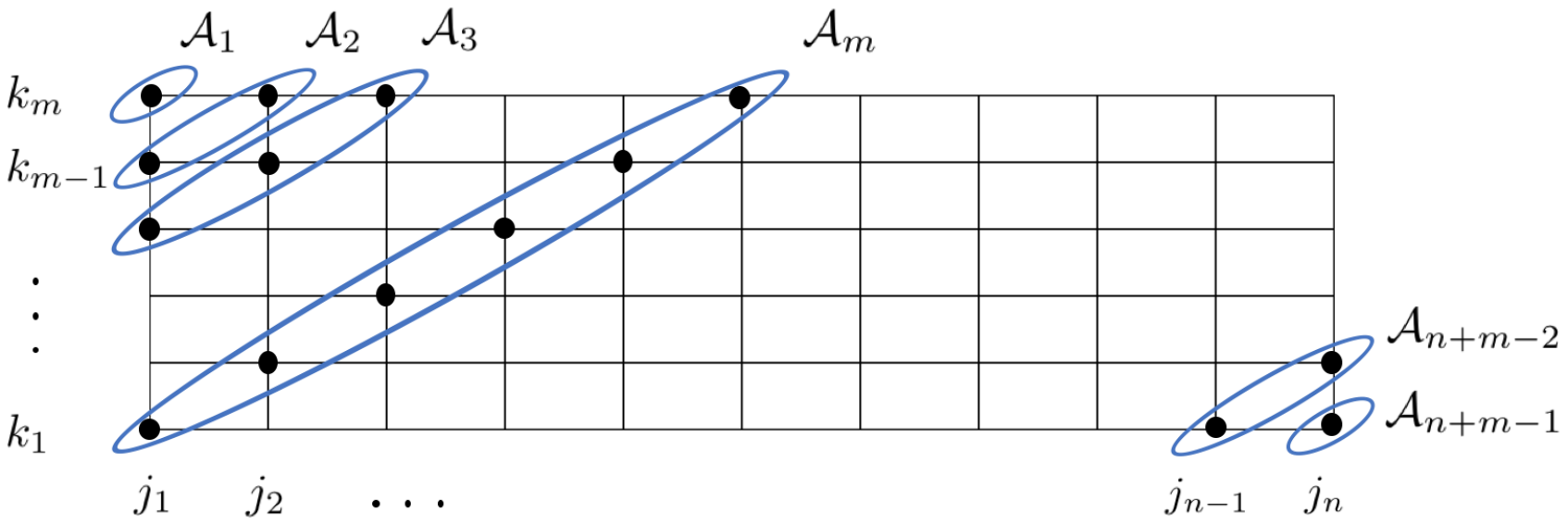}
	\centering
	\caption{Illustration of the grid argument, note the grid samples within each circle are independent. \label{cover number plot for independent}}
\end{figure}

The challenge we encounter with the grid sampling is directly related to the U-statistic or V-statistic. Take the ``kernel'' $h$ of $r$ variables, where $r\in \mathbb{N}$. For the dataset $x_{1},\ldots,x_{n}$, where $n\geq r$, an U-statistic of order $r\in \mathbb{N}$ is defined as
\begin{eqnarray}
U_{r}: = \frac{1}{{n\choose r}}\sum_{(i_1, i_2,\ldots, i_r)\in \langle n\rangle}h(x_{i_{1}},\ldots,x_{i_{r}})\,,\label{definition U statistics}
\end{eqnarray}
where $\langle n\rangle$ is the set of all permutations of $\{1,\ldots,n\}$ and $h$ is symmetric in its arguments. For example, when the kernel $h$ is of $1$ variable and $h(x)=x$, then the U-statistic $U_{1}(x)=(x_{1}+\ldots+x_{n})/n$ is the sample mean $\overline{x}$.
In practice, this statistic has a representation as the V-statistic:
\begin{eqnarray}
V_{m,n} := \frac{1}{n^m}\sum_{i_{1}=1}^{n}\cdot\cdot\cdot\sum_{i_{m}=1}^{n}h(x_{i_{1}},\ldots,x_{i_{m}})\,,
\end{eqnarray}
where $h$ is a symmetric kernel function. We call $V_{mn}$ a V-statistic of degree $m$. A typical example of a degree-2 V-statistic is the second central moment; that is, take $h(x,y):=(x-y)^{2}/2$, then
$V_{2,n} = \frac{1}{n^{2}}\sum_{i=1}^{n}\sum_{j=1}^{n}\frac{1}{2}(x_{i}-x_{j})^{2} = \frac{1}{n}\sum_{i=1}^{n}(x_{i}-\overline{x})^{2}$.
We should notice the difference between U-statistic, V-statistic and grid sampling. In the U-statistic, we need to take average over distinct ordered samples of size $r$ taken from $\{1,\ldots,n\}$; in the V-statistic, each argument of the function $h$ will run over all sample points. Clearly, both are not the case in the grid sampling we run into in Roseland. Moreover, in the grid sampling, $h$ need not to be symmetric. However, the U-statistic and the V-statistic are both special cases of the form in equation (\ref{dep_sum}).

\subsection{Spectral Convergence}\label{Section:spectral convergence}
The point-wise convergence of Roseland to the Laplace-Beltrami operator in Section \ref{Section:PointConvergence} does not guarantee the spectral convergence. To fully understand the spectral based methods, we need to establish the spectral convergence, and this is the focus of this section. 
Let $\{v_{n}\}_{n\in\mathbb{N}}$ be a set of eigenvectors of the transition matrix $(D^{(\textup{R})})^{-1}W^{(\textup{R})}$ associated with the point cloud $\mathcal{X}=\{x_{i}\}_{i=1}^{n}\subseteq\mathbb{R}^{D}$. We would like to study when $n\to \infty$, how will the eigenvectors $\{v_{n}\}_{n\in\mathbb{N}}$ converge to the eigenfunctions of the Laplace-Beltrami operator. 
Note that the vectors $v_{n}$ are in difference Euclidean spaces for different $n$, and the eigenfunctions of the Laplace-Beltrami operator are smooth functions on $M$. Clearly, they cannot be compared directly and we need to manipulate those quantities a bit so that we can compare them. In brief, we will find a sequence of functions $f_{n}\in C(M)$, such that the restriction of $f_{n}$ on the point cloud $\mathcal{X}$ equals to entries of $v_{n}$; that is, $f_{n}(x_{i}) = v_{n}(i)$, for $i=1,\ldots,n$. Then, we study the convergence of $\{f_{n}\}$ as $n\rightarrow\infty$. 
To state our spectral convergence theorem, we need the following definitions and results. %

\begin{definition}\label{Definition various operators discrete}
	Take $\mathcal{X}=\{x_{i}\}_{i=1}^{n}\subset M$. 
	Define the following functions
	\begin{align}
	\widehat{K}_{\textup{ref},\epsilon,n}(x,y)&\,:=\frac{1}{m}\sum_{j}^{m}K_{\epsilon}(x,z_{j})K_{\epsilon}(z_{j},y)\in C(M\times M),\nonumber\\
	\widehat{d}_{\textup{ref},\epsilon,n}(x)&\,:=\frac{1}{n}\sum_{i=1}^{n}\widehat{K}_{\textup{ref},\epsilon,n}(x,x_{i})\in C(M),\\
	\widehat{M}_{\textup{ref},\epsilon,n}(x,y)&\,:=\frac{\widehat{K}_{\textup{ref},\epsilon,n}(x,y)}{\widehat{d}_{\textup{ref},\epsilon,n}(x)}\in C(M\times M)\,.\nonumber
	\end{align}
	Also define the following operator:
	\begin{align}
	\widehat{T}_{\textup{ref},\epsilon,n}f(x)&\,:=\frac{1}{n}\sum_{i=1}^{n}\widehat{M}_{\textup{ref},\epsilon,n}(x,x_{i})f(x_{i})\,.
	\end{align}
	Moreover, define
	the restriction operator $\rho_{n}:C(M)\rightarrow\mathbb{R}^{n}$ 
	\begin{equation}
	\rho_{n}:f\mapsto [f(x_{1}), f(x_{2}),\ldots, f(x_{n})]^{\top}\,.
	\end{equation}
\end{definition}

The following Lemma describes the relationship between $(D^{(\textup{R})})^{-1}W^{(\textup{R})}$ and the integral operator $\widehat{T}_{\textup{ref},\epsilon,n}$.

\begin{proposition}\label{one to one corresp}
	Let $U_{n}:=(D^{(\textup{R})})^{-1}W^{(\textup{R})}$, then $U_{n}\circ\rho_{n}=\rho_{n}\circ\widehat{T}_{\textup{ref},\epsilon,n}$. 
	Moreover, we have the following one to one correspondence.
	\begin{enumerate}
		\item If $f\in C(M)$ is an eigenfunction of $\widehat{T}_{\textup{ref},\epsilon,n}$ with the eigenvalue $\lambda$, then the vector $v\coloneqq\rho_{n}f$ is an eigenvector of $U_{n}$ with the eigenvalue $\lambda$.
		Moreover, suppose $\lambda\neq 0$ is an eigenvalue of $\widehat{T}_{\textup{ref},\epsilon,n}$ with the eigenfunction $f$. If we let $v\coloneqq\rho_{n}f$, then $f$ satisfies
		\begin{equation}\label{extrapolation of eigenvector to the whole manifold}
		f(x)=\frac{\sum_{j=1}^{n}\widehat{K}_{\textup{ref},\epsilon,n}(x,x_{j})v_{j}}{\lambda\sum_{j=1}^{n}\widehat{K}_{\textup{ref},\epsilon,n}(x,x_{j})}\,.
		\end{equation}
		\item If $v$ is an eigenvector of $U_{n}$ with the eigenvalue $\lambda\neq 0$, then $f$ defined in \eqref{extrapolation of eigenvector to the whole manifold} is an eigenfunction of $\widehat{T}_{\textup{ref},\epsilon,n}$ with the eigenvalue $\lambda$.
	\end{enumerate}
\end{proposition}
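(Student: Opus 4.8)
The plan is to verify the intertwining identity $U_n\circ\rho_n=\rho_n\circ\widehat T_{\textup{ref},\epsilon,n}$ by direct computation, and then to use it together with the explicit extrapolation formula \eqref{extrapolation of eigenvector to the whole manifold} to establish the two-way correspondence. First I would unwind the definitions: for $f\in C(M)$ and any $i$,
\[
(\rho_n\circ\widehat T_{\textup{ref},\epsilon,n}f)(i)=\widehat T_{\textup{ref},\epsilon,n}f(x_i)=\frac{1}{n}\sum_{j=1}^n\widehat M_{\textup{ref},\epsilon,n}(x_i,x_j)f(x_j)=\frac{\sum_{j=1}^n\widehat K_{\textup{ref},\epsilon,n}(x_i,x_j)f(x_j)}{\sum_{j=1}^n\widehat K_{\textup{ref},\epsilon,n}(x_i,x_j)}\,,
\]
where the last equality uses $\widehat d_{\textup{ref},\epsilon,n}(x_i)=\frac1n\sum_j\widehat K_{\textup{ref},\epsilon,n}(x_i,x_j)$. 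On the other hand, since $\widehat K_{\textup{ref},\epsilon,n}(x_i,x_j)=\frac1m\sum_k K_\epsilon(x_i,y_k)K_\epsilon(y_k,x_j)$ agrees (up to the harmless $\frac1m$ factor, which cancels in the ratio) with $W^{(\textup{R})}_{ij}$, and $D^{(\textup{R})}_{ii}=\sum_j W^{(\textup{R})}_{ij}$, one sees from \eqref{discrete_op2} that $(U_n\rho_n f)(i)=\frac{\sum_j W^{(\textup{R})}_{ij}f(x_j)}{\sum_j W^{(\textup{R})}_{ij}}$, which is the same expression. Hence $U_n\circ\rho_n=\rho_n\circ\widehat T_{\textup{ref},\epsilon,n}$.

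For part (1): if $\widehat T_{\textup{ref},\epsilon,n}f=\lambda f$, then applying $\rho_n$ and using the intertwining identity gives $U_n(\rho_n f)=\rho_n(\widehat T_{\textup{ref},\epsilon,n}f)=\lambda\,\rho_n f$, so $v:=\rho_n f$ is an eigenvector of $U_n$ with eigenvalue $\lambda$. When $\lambda\neq 0$, evaluating the eigenfunction equation $\widehat T_{\textup{ref},\epsilon,n}f=\lambda f$ at a general point $x\in M$ (not just at sample points) and solving for $f(x)$ yields
\[
f(x)=\frac{1}{\lambda}\,\widehat T_{\textup{ref},\epsilon,n}f(x)=\frac{1}{\lambda}\cdot\frac{\sum_{j=1}^n\widehat K_{\textup{ref},\epsilon,n}(x,x_j)f(x_j)}{\sum_{j=1}^n\widehat K_{\textup{ref},\epsilon,n}(x,x_j)}=\frac{\sum_{j=1}^n\widehat K_{\textup{ref},\epsilon,n}(x,x_j)v_j}{\lambda\sum_{j=1}^n\widehat K_{\textup{ref},\epsilon,n}(x,x_j)}\,,
\]
which is \eqref{extrapolation of eigenvector to the whole manifold}; note the denominator is strictly positive since $K(0)>0$ forces $\widehat K_{\textup{ref},\epsilon,n}(x,x_j)>0$ whenever some landmark is near both $x$ and $x_j$, and more carefully $\widehat d_{\textup{ref},\epsilon,n}$ is bounded below on $M$, so $f$ is well-defined and continuous.

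For part (2): given an eigenvector $v$ of $U_n$ with $\lambda\neq0$, define $f$ by \eqref{extrapolation of eigenvector to the whole manifold}; it is in $C(M)$ by the positivity and continuity of $\widehat d_{\textup{ref},\epsilon,n}$. The key consistency check is that $\rho_n f=v$: evaluating \eqref{extrapolation of eigenvector to the whole manifold} at $x=x_i$ gives $f(x_i)=\frac{1}{\lambda}(U_n v)(i)=\frac{1}{\lambda}\cdot\lambda v_i=v_i$. Then $\widehat T_{\textup{ref},\epsilon,n}f(x)=\frac{1}{n}\sum_i\widehat M_{\textup{ref},\epsilon,n}(x,x_i)f(x_i)=\frac{\sum_i\widehat K_{\textup{ref},\epsilon,n}(x,x_i)v_i}{\sum_i\widehat K_{\textup{ref},\epsilon,n}(x,x_i)}=\lambda f(x)$ by the very definition of $f$, so $f$ is an eigenfunction with eigenvalue $\lambda$. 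I do not expect a serious obstacle here; the only point requiring a little care is confirming that $\widehat d_{\textup{ref},\epsilon,n}$ is bounded away from zero on all of $M$ (not merely at sample points) so that the extrapolation formula defines a genuine continuous function, and keeping track of the $\frac1m$ normalization factor so that $\widehat K_{\textup{ref},\epsilon,n}$ and $W^{(\textup{R})}$ are matched consistently in the ratios.
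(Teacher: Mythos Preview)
Your proof is correct and is precisely the standard argument the paper has in mind; the paper in fact omits the proof entirely, citing \cite{von2008consistency,singer2016spectral} for this routine verification. Your care about the positivity of $\widehat d_{\textup{ref},\epsilon,n}$ is warranted and is handled by the paper's standing assumption that the kernel maps into $\mathbb{R}^+$ (so $\min K_\epsilon>0$ on the compact manifold), as recorded later in Lemma~\ref{unif-bd}.
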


The proof of this proposition is standard, and can be found in, for example \cite{von2008consistency,singer2016spectral}, so we omit it.
With this Lemma, we now can simply study how the eigenstructure of $T_{\textup{ref},\epsilon,n}$ converges to that of $-\Delta$.

Denote $(\lambda_{i}, u_{i})$ to be the $i$-th eigenpair of $-\Delta$, where $\lambda_i$ is the $i$-th smallest eigenvalue. Note that under our manifold setup, by the well known elliptic theory, the spectrum of $-\Delta$ is discrete with $\infty$ as the only accumulation point, and each eigenspace is of finite dimension. { Also,
	denote $(\lambda_{\epsilon,n,i}, u_{\epsilon, n,i})$ to be the $i$-th eigenpair of $\frac{I-\widehat{T}_{\textup{ref},\epsilon,n}}{\epsilon}$, where $\lambda_{\epsilon,n,i}$ is the $i$-th smallest eigenvalue.}
We assume that both $u_{i}$ and $u_{\epsilon, n,i}$ are normalized in the $L^2$ norm.
With the above preparation,
we are now ready to state the main theorem.

\begin{theorem}[Spectral convergence]\label{spec_cong}
	Fix $K\in \mathbb{N}$. Suppose the kernel is Gaussian; that is, $K_{\epsilon}(x,y)=e^{-\norm{x-y}^2/\epsilon}$.
	Suppose $\lambda_{i}$ is simple. Suppose $m=n^\beta$, where $\beta\in (0,1)$, $\epsilon=\epsilon(n)$ so that $\epsilon \to 0$ and $\frac{\sqrt{-\log\epsilon}+\sqrt{\log m}}{\sqrt{m}\epsilon^{d}} \rightarrow 0$, as $n \rightarrow \infty$, and $\sqrt{\epsilon} \leq \mathcal{K}_1 \min \Bigg(\bigg(\frac{\min(\Gamma_K,1)}{\mathcal{K}_2+\lambda_K^{d/2+5}}\bigg)^2, \frac{1}{(2+\lambda_K^{d+1})^2}\Bigg)$, where $\Gamma_K$, $\mathcal{K}_1$ and $\mathcal{K}_2>1$  are introduced in Proposition \ref{T epsilon and Delta}.
	Then, when $p_Y$ is properly chosen so that $\frac{2\nabla p_{X}(x)}{p_{X}(x)}+\frac{\nabla p_{Y}(x)}{p_{Y}(x)}=0$, there exists a sequence of signs $\{a_{n}\}$ such that with probability $1-\mathcal{O}(m^{-2})$, for all $i<K$, we have
	\begin{align}
	\norm{a_{n}u_{\epsilon,n,i}-u_{i}}_{L^\infty}&=\mathcal{O}(\epsilon^{1/2})+\mathcal{O}\left(\frac{\sqrt{-\log \epsilon}+\sqrt{\log m}}{\sqrt{m}\epsilon^{2d+3/2}}\right)\,,\\
	\abs{\lambda_{\epsilon,n,i}-\lambda_{i}}&=\mathcal{O}(\epsilon^{3/4})+\mathcal{O}\left(\frac{\sqrt{-\log \epsilon}+\sqrt{\log m}}{\sqrt{m}\epsilon^{2d+2}}\right)\nonumber\,,
	\end{align}
	where the implied constants depend on the kernel, the curvature of $M$, $p_X$ and $p_Y$
\end{theorem}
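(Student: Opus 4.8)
The plan is to follow the by-now-standard two-step route for spectral convergence—first an operator-level comparison between the continuous reference operator $\frac{I-T_{\textup{ref},\epsilon}}{\epsilon}$ and $-\Delta$, then a discretization step controlling $\frac{I-T_{\textup{ref},\epsilon,n}}{\epsilon}$ versus $\frac{I-T_{\textup{ref},\epsilon}}{\epsilon}$—but with the novelty that the discretization error must be handled via the grid-sampling concentration inequalities of Section \ref{grid_samp_def} rather than ordinary i.i.d.\ large deviation bounds. Concretely, I would first invoke Proposition \ref{T epsilon and Delta} (the continuous comparison alluded to in the hypotheses, providing $\Gamma_K$, $\mathcal{K}_1$, $\mathcal{K}_2$) to get, under the vanishing-gradient condition $\frac{2\nabla p_X}{p_X}+\frac{\nabla p_Y}{p_Y}=0$, that the first $K$ eigenpairs of $\frac{I-T_{\textup{ref},\epsilon}}{\epsilon}$ are $\mathcal{O}(\epsilon^{1/2})$-close in $L^\infty$ (eigenvectors) and $\mathcal{O}(\epsilon^{3/4})$-close (eigenvalues) to those of $-\Delta$; this is where the $\epsilon^{1/2}$ and $\epsilon^{3/4}$ terms in the final bounds come from, and it rests on Theorem \ref{biasthm} together with elliptic regularity of $-\Delta$ and a perturbation argument (e.g.\ Davis–Kahan plus the one-to-one correspondence machinery).

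Next I would bound $\norm{T_{\textup{ref},\epsilon,n}-T_{\textup{ref},\epsilon}}$ as operators on $C(M)$ (or in a suitable $L^\infty\to L^\infty$ sense), uniformly over the relevant function class, using the grid-sampling structure. The point is that $\widehat{K}_{\textup{ref},\epsilon,n}(x,y)=\frac1m\sum_j K_\epsilon(x,z_j)K_\epsilon(z_j,y)$ depends only on the landmark samples, so it concentrates at rate $\frac{\sqrt{\log m}}{\sqrt m\,\epsilon^{?}}$ by a union bound over an $\epsilon$-net of $M\times M$ (the $\sqrt{-\log\epsilon}$ factor being the net cardinality cost); then $\widehat d_{\textup{ref},\epsilon,n}(x)=\frac1n\sum_i \widehat K_{\textup{ref},\epsilon,n}(x,x_i)$ introduces the genuine grid dependence, which is exactly the $\mathcal{A}=\{(i,j)\}$ situation with $\chi(\mathcal{A})=\mathcal{O}(\max(m,n))=\mathcal{O}(n)$ from \eqref{Optimal bound of the independent set A}, so Theorem \ref{bernstein} gives a concentration rate governed by $\sqrt{m}$ (the $\min(m,n)$ effect) rather than $\sqrt{n}$. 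Dividing, controlling the denominator away from zero using the lower bounds on $p_X,p_Y$, and tracking how the kernel-smoothness constants blow up like negative powers of $\epsilon$ yields an operator error of order $\frac{\sqrt{-\log\epsilon}+\sqrt{\log m}}{\sqrt m\,\epsilon^{2d+2}}$; the hypothesis $\frac{\sqrt{-\log\epsilon}+\sqrt{\log m}}{\sqrt m\,\epsilon^{d}}\to 0$ is what makes this a genuine $o(1)$ perturbation (note the exponent gap: $\epsilon^{2d+2}$ in the final bound versus $\epsilon^d$ in the hypothesis is the usual loss from passing through $C^2$-type norms in the $L^\infty$ perturbation theory of \cite{DunsonWuWu2019}).

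Finally I would combine the two steps: since $\frac{I-T_{\textup{ref},\epsilon,n}}{\epsilon}$ is a perturbation of $\frac{I-T_{\textup{ref},\epsilon}}{\epsilon}$ of size $\frac1\epsilon\cdot\frac{\sqrt{-\log\epsilon}+\sqrt{\log m}}{\sqrt m\,\epsilon^{2d+2}}$—wait, more precisely one absorbs the $1/\epsilon$ into the stated exponent and applies an $L^\infty$ eigenvalue/eigenvector perturbation theorem (of the type in \cite{DunsonWuWu2019}, using the spectral gap at $\lambda_i$ guaranteed by simplicity of $\lambda_i$ and the smallness conditions on $\sqrt\epsilon$ to keep the first $K$ eigenvalues separated from the rest of the spectrum), then composes with Step 1 via the triangle inequality and the one-to-one correspondence of Proposition \ref{one to one corresp} to transfer back to the discrete eigenvectors $u_{\epsilon,n,i}$. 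The sign sequence $\{a_n\}$ arises because eigenvectors are only defined up to sign. I expect the main obstacle to be Step 2: getting the grid-sampling concentration to interact correctly with the ratio $\widehat K/\widehat d$ and with the $\epsilon$-net argument while keeping all the $\epsilon$-power bookkeeping honest—in particular justifying that the effective sample size is $m$ not $n$ (so that shrinking the landmark set genuinely costs convergence rate, matching the pointwise Theorem \ref{variancethm}), and controlling the $C^2$-norms of the extrapolated eigenfunctions \eqref{extrapolation of eigenvector to the whole manifold} uniformly in $n$, which is what forces the rather large exponent $2d+2$.
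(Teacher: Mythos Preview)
Your two-step skeleton (continuous bias via Proposition~\ref{T epsilon and Delta}, then discretization, then triangle inequality) matches the paper exactly, and your identification of the $\mathcal{O}(\epsilon^{1/2})$ and $\mathcal{O}(\epsilon^{3/4})$ terms as coming from the continuous step is correct. However, the discretization step in the paper differs from your proposal in two substantive ways.

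First, the paper does \emph{not} use the grid-sampling Bernstein inequality (Theorem~\ref{bernstein}) for the spectral convergence; that tool is used only for the pointwise result (Theorem~\ref{variancethm}). Instead, the paper introduces an intermediate operator $T_{\textup{ref},\epsilon,n}$ built from the \emph{continuous} landmark kernel $K_{\textup{ref},\epsilon}$ but discrete data samples, and splits $\widehat{T}_{\textup{ref},\epsilon,n}-T_{\textup{ref},\epsilon}$ through it. The piece $\widehat{T}_{\textup{ref},\epsilon,n}-T_{\textup{ref},\epsilon,n}$ involves only landmark randomness (size $m$), and $T_{\textup{ref},\epsilon,n}-T_{\textup{ref},\epsilon}$ involves only data randomness (size $n$); each is then handled by standard i.i.d.\ entropy bounds over Glivenko-Cantelli classes (Lemmas~\ref{cover_num_bd}, \ref{cant_class_rate}, \ref{tech_bd}), with the $\sqrt{-\log\epsilon}$ factor coming from the Gaussian covering number (Lemma~\ref{cover_num_gau}). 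The dominant $\sqrt{m}$ rate then arises simply because the landmark piece is the bottleneck, not from $\chi(\mathcal{A})$.

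Second, and more importantly, your plan to ``bound $\norm{T_{\textup{ref},\epsilon,n}-T_{\textup{ref},\epsilon}}$ as operators on $C(M)$'' and apply Davis--Kahan would not go through as stated: the empirical operator $T_{\textup{ref},\epsilon,n}$ does \emph{not} converge to $T_{\textup{ref},\epsilon}$ in $L^\infty\to L^\infty$ operator norm (this is the classical obstruction that motivates collectively compact operator theory). The paper instead establishes collectively compact convergence (Lemma~\ref{Proposition: Tepsn conv to Teps compactly}) and applies Atkinson's resolvent bound (Theorem~\ref{atkinson1967bound}), which requires only $\norm{(\widehat{T}_{\textup{ref},\epsilon,n}-T_{\textup{ref},\epsilon})u_{\epsilon,i}}_\infty$ for the \emph{specific} eigenfunction and the product norm $\norm{(T_{\textup{ref},\epsilon}-\widehat{T}_{\textup{ref},\epsilon,n})\widehat{T}_{\textup{ref},\epsilon,n}}$---both of which \emph{are} controllable (Lemma~\ref{tech_bd}). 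The $\epsilon^{2d+2}$ exponent comes from combining the $\epsilon^{-2d}$ in \eqref{Proposition SI3 bound Tnhat and Tn} (from two factors of $\widehat{d}_{\textup{ref},\epsilon,n}^{-1}\asymp\epsilon^{-d}$) with the resolvent blow-up $\epsilon^{-1}$ in \eqref{Proof spectral rate bound1} and the control \eqref{controls of u infty u/Gamma and 1/Gamma}, not from $C^2$-norm bookkeeping.
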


Based on this theorem, if $\epsilon\to 0$ and $\frac{\sqrt{-\log \epsilon}+\sqrt{\log m}}{\sqrt{m}\epsilon^{2d+2}}\to 0$ when $n\to \infty$, the error term converges to zero. Note that if we want $\frac{\sqrt{-\log \epsilon}+\sqrt{\log m}}{\sqrt{m}\epsilon^{2d+2}}$ to be of the same order of $\epsilon^{3/4}$, we can choose $\epsilon=\big(\frac{\log m}{m}\big)^{1/(4d+11/2)}$. Thus, $\norm{a_{n}u_{\epsilon,n,i}-u_{i}}_{L^\infty}=O\big(\big(\frac{\log m}{m}\big)^{3/(16d+22)}\big)$ and $\abs{\lambda_{\epsilon,n,i}-\lambda_{i}}=O\big(\big(\frac{\log m}{m}\big)^{1/(8d+11)}\big)$.
We mention that the obtained convergence rate should not be the optimal bound. In fact, it is much slower than what we observed numerically. Also, as we will show below, it seems that the eigenvector convergence should be faster than the eigenvalue convergence, but this is not reflected by the above spectral convergence rate. How to obtain the ``correct'' convergence rate is however out of the scope of this paper, and will be explored in our future work.

\section{Noise analysis of Roseland under the manifold setup}\label{Section:RobustAnalysis}

In this section, we show the robustness of Roseland. This section is a companion of Section \ref{Section:PointConvergence}. 
Again, we briefly summarize existing literature about the robustness of GL and DM.
From the statistical perspective, it is interesting to study the spectral behavior of the GL under the null assumption; that is, the data is purely noise. There have been several work in this direction, like \cite{el2010spectrum,cheng2013spectrum,do2013spectrum} and several others. However, to our knowledge, there are limited work studying how the inevitable noise impacts the performance of DM, and more generally the GL, except \cite{ElKaroui:2010a,el2016graph}. 
The proof strategy for Roseland is an extension of \cite{ElKaroui:2010a,el2016graph}, while extra efforts and technical tools will be applied to better quantify the convergence behavior. 

On top of the manifold model, we assume that the data set and the landmark set from the ambient space $\mathcal{X} = \{x_{i}\}_{i=1}^{n}$ are corrupted by additive ambient space noise. That is, we observe $\widetilde{\mathcal{X}} = \{\tilde{x_{i}}\}_{i=1}^{n}$ and $\widetilde{\mathcal{Y}} = \{\tilde{y_{j}}\}_{j=1}^{m}$ :
$$\tilde{x_{i}} = x_{i} + \xi_{i},\quad\tilde{y_{j}} = y_{j} + \eta_{j}\,,
$$
where $\xi_{i}$ and $\eta_j$ are noise. We assume that $x_i$, $y_j$, $\xi_i$ and $\eta_j$ are independent. 
For practical purpose, we consider the case when the ambient space dimension grows asymptotically as the dataset size $n$. To capture the manifold structure for the high dimensional dataset, we consider the following model.
\begin{assumption}[High dimensional model]\label{Assumption manifoldH}
	Fix a compact smooth $d$-dim Riemannian manifold $M$ isometrically embedded into $\mathbb{R}^D$ via $\iota$. Assume $K=\max_{x,y}\|\iota(x)-\iota(y)\|_{\mathbb{R}^D}>0$. Assume $q=q(n)\asymp n$ when $n\to \infty$. Fix an isometrical embedding $\bar{\iota}_q:\mathbb{R}^D\to \mathbb{R}^q$. Further, assume that $x_i$ and $y_j$ are i.i.d. sampled from $\iota_q(M)$, where $\iota_q:=\bar{\iota}_q\circ \iota$. 
\end{assumption}

There are several possibilities to model a high dimensional data. For example, we can embed $\mathbb{R}^D$ into the first $D$ axes of $\mathbb{R}^q$ via $\bar{\iota}_q$, and sample the data. But depending on the problem, we may need a different model. For example, if the point cloud represents images, and the ambient space dimension represents the image resolution, then obviously the manifold representing the clean image is not embedded in only the first few axes. 
The model in Assumption \ref{Assumption manifoldH} is suitable for that purpose. 
We now consider a noise model on top of Assumption \ref{Assumption manifoldH}.

\begin{assumption}[Noise model]\label{Assumption Noise}
	Suppose the noise contaminating the dataset is $\xi_{i}\in \mathbb{R}^q$ with mean $0$ and covariance $\Sigma$
	and the noise contaminating the landmark set is $\eta_{j}\in \mathbb{R}^q$ with mean $0$ and covariance $\overline{\Sigma}$, which is independent of $\xi_i$. 	Suppose $\norm{\Sigma}_{2}\leq\sigma_{q}^{2}$ and $\|\overline{\Sigma}\|_{2}\leq\overline{\sigma}_{q}^{2}$, where $\sigma_q\geq 0$ and $\overline{\sigma}_q\geq 0$.
	Assume for all convex $1$-Lipschitz function $f$, $\mathbb{P}(|f(\xi_{i}-\eta_j)-m_{f(\xi_{i}-\eta_j)}|>t)\leq 2\exp(-c_{ij}t^{2})$, where $m_{f(\xi_{i}-\eta_j)}$ is the median of $f(\xi_{i}-\eta_j)$ and $c_{ij}>0$.	
\end{assumption}

The commonly considered white Gaussian noise satisfies the previous assumptions with $c_{ij} = 1/\sqrt{2}$, independently of the dimension. 
Under the Gaussian noise model, the noisy data can be viewed as sampled from a Gaussian mixture model $\tilde{X}$, where the law $p_{\tilde{X}}$ can be written as:
\begin{align}
p_{\tilde{X}}(\tilde{x}) =&\, \int_{M}\mathcal{N}(\tilde{x}-\iota(z),\Sigma)p_{X}(z)\,dV(z) 
=  \frac{1}{(2\pi)^{-q/2}\sqrt{\text{det}(\Sigma)}}\int_{M}e^{-\frac{1}{2}(\tilde{x}-\iota(z))\Sigma^{-1}(\tilde{x}-\iota(z))}p_{X}(z)\,dV(z)\,.\nonumber 
\end{align}

Recall the Roseland algorithm in Section \ref{roseland alg}. Denote the Roseland embedding with diffusion time $t$ from clean data $\mathcal{X}$ and noisy data $\widetilde{\mathcal{X}}$ by $\Phi L^{t}\subset\mathbb{R}^{n\times q'}$ and $\widetilde{\Phi}\tilde{L}^{t}\subset\mathbb{R}^{n\times q'}$ respectively. We now investigate the discrepancy between the two embeddings. Since the embeddings are free up to rotations and reflections, we quantify $\|\Phi L^{t}O - \widetilde{\Phi}\widetilde{L}^{t}\|_{F},$ where $O\in\mathbb{R}^{q'\times q'}$ is some orthogonal matrix {that aligns the column subspaces spanned by $\Phi$ and $\widetilde{\Phi}$}, and $\norm{\cdot}_{F}$ is the Frobenius norm. 
We have the following main theorem.

\begin{theorem}[Robustness of Roseland]\label{robust thm roseland}
	Assume the point clouds $\{x_{i}\}_{i=1}^{n}$ and $\{y_{j}\}_{j=1}^{m}\subseteq\mathbb{R}^{q}$ are i.i.d. sampled from the high dimensional model satisfying Assumption \ref{Assumption manifoldH}. 
	Let $\tilde{x_{i}} = x_{i} + \xi_{i}$ and $\tilde{y}_{j} = y_{j} + \eta_{j},$ where the noises $\xi_i$ and $\eta_j$ satisfy Assumption \ref{Assumption Noise} and are independent of $x_i$ and $y_j$.
	Assume $\sup_{i,j}\sqrt{(\sigma^2_{q}+\overline{\sigma}^2_{q})/c_{ij}}\sqrt{\log nm}\to 0$.
	Set
	\begin{align}
	\delta_q:=\sqrt{\log nm}\sqrt{\sigma_{q}^{2}+\overline{\sigma}_{q}^{2}}\left[   \sup_{i,j}  \sqrt{c^{-1}_{ij}}\left(\sqrt{q(\sigma_{q}^{2}+\overline{\sigma}_{q}^{2})}\vee 1\right)+K\right]\,.\label{Assumption:CorollarySI34}
	\end{align}
	Fix $q'\in \mathbb{N}$. According to Theorem \ref{spec_cong}, pick a sufficiently small $\epsilon=\epsilon(q)>0$ so that the first $q'$ non-trivial singular values are sufficiently away from zero when $n$ is sufficiently large. 
	Denote $W^{(\textup{r})}$ and $\widetilde{W}^{(\textup{r})}$ to be the landmark-set affinity matrices from clean and noisy datasets respectively. Denote $\Phi L^{t}\in\mathbb{R}^{n\times q'}$ and $\widetilde{\Phi}\widetilde{L}^{t}\in\mathbb{R}^{n\times q'}$ to be Roseland embeddings from $W^{(\textup{r})}$ and $\widetilde{W}^{(\textup{r})}$ respectively. 
	Then, 
	for fixed $t>0$ and $q'\in \mathbb{N}$, we have
	\begin{align}
	\norm{\Phi L^{t}O - \widetilde{\Phi}\widetilde{L}^{t}}_{F}
	=\mathcal{O}_{P}\left(\frac{\delta_q}{\sqrt{m}} \frac{1+ts_2^{2t-2}+s_2^{2t}}{\epsilon^{2d+1} }
	\right)	\,,\nonumber
	\end{align}
	where {the implied constant depends on $q'$}, $O\in\mathbb{R}^{q'\times q'}$ is an orthogonal matrix {that aligns the column subspaces spanned by $\Phi$ and $\widetilde{\Phi}$}, and $s_{2}$ are the largest non-trivial singular value of Roseland from the clean data.
\end{theorem}

Note that the term $q(\sigma_q^2+\overline{\sigma}_q^2)$ in $\delta_q$ can be viewed as the total energy of noise. Thus, the result in the Theorem says that the bandwidth $\epsilon$ should chose ``large'' enough so that embedding is less impacted. This result can be intuitively understood as ``noise can be canceled when we have sufficient noise information''.

Next, suppose the noise levels of dataset and landmark set are the same and $\sup_{i,j}\sqrt{c^{-1}_{ij}}$ is bounded. In this case, if $\frac{\delta_q}{\sqrt{m}\epsilon^{2d+1}}\to 0$, we have $\norm{\Phi L^{t}O - \widetilde{\Phi}\widetilde{L}^{t}}_{F}\to 0$. 
This fact indicates that Roseland can tolerate big noise. To take a closer look at this claim, note that the choice of $\epsilon=\epsilon(q')$ in the theorem should satisfies $\epsilon\to 0$ and $\frac{\sqrt{-\log\epsilon}+\sqrt{\log m}}{\sqrt{m}\epsilon^{2d+2}} \rightarrow 0$ so that the spectral convergence in Theorem \ref{spec_cong} holds.
Suppose we choose $\epsilon=\big(\frac{\log m}{m}\big)^{2/(8d+11)}$. Therefore, if $\delta_q=O(1)$, we have $\frac{\delta_q}{\sqrt{m}\epsilon^{2d+1}}\to 0$.
Since $\sqrt{\log(nm)}\sqrt{q}\sigma_{q}^{2}\to 0$ holds when $\sigma_{q}=q^{-(1/4+a)}$ for any small constant $a>0$, and the total noise energy in this setup satisfies $\sigma_{q}^{2}q\to\infty$, we know that $\delta_q\to 0$. The relationships $\sigma_{q}=q^{-(1/4+a)}$ and $\sigma_{q}^{2}q\to\infty$ mean that while entrywisely the noise shrinks, the total noise energy blows up.

Moreover, when the landmark set is clean, for example, we can collect a clean landmark set by a high quality equipment, we could achieve a better convergence since $\overline{\sigma}_q=0$. See Section \ref{numerical} for numerical results.

When $t=0$, we recover the eigenmap algorithm \cite{belkin2003laplacian}. In this case, we know $ts_2^{2t-2}=0$ and $s_2^{2t}=1$. One the other hand, when $t\to \infty$, all non-trivial eigenvalues are gone, and we only receive the topological information of the manifold, which in our setup is the connectivity. Indeed, when the manifold is connected, since the trivial singular value and singular vector are not considered in the Roseland embedding, we get $\Phi L^tO\to 0$ and $\widetilde{\Phi}\widetilde{L}^t\to 0$, and $ts_2^{2t-2}\to 0$ and $s_2^{2t}\to 0$ when $t\to \infty$.

\section{Numerical Results}\label{numerical}

{
	
To illustrate how Roseland performs, in addition to showing the dimension reduction and geometric recovery results, we also compare the results with the Nystr\"om extension and HKC. 
For a fair comparison, in all the following simulations, the subset used in the Nystr\"om extension and the reference set used in HKC to embed the dataset are the same as the landmark set used in Roseland. As a result, the ranks of the matrices associated with Roseland, HKC and the Nystr\"om extension are the same. {All of the simulations were done on a Linux machine with 4-core 3.5Ghz i5 CPUs and 16GB memory.}
{The Matlab code used in this paper can be found in \url{https://github.com/shenchaojerry/Roseland_code} for the reproducibility issue.}

\subsection{Related algorithms and complexity analysis}
\subsubsection{The HKC algorithm}

HKC was proposed for the texture separation problem. The authors proposed to first divide an image into a collection of small patches, from which to choose a subset consists of specific patterns of interest as the {\em reference set}. 
Note that in \cite{haddad2014texture}, the reference set plays the same role as the landmark set in Roseland.
Then one can construct an affinity matrix associated of the patches based on the landmark set. HKC is the closest algorithm to Roseland among others. However, the normalization in HKC is different from Roseland, and this difference turns out to be significant. Moreover, it is not clear how does HKC performs under the manifold setup. 

We now summarize the HKC algorithm \cite{haddad2014texture}. Firstly, form the affinity matrix between the data set and the landmark set just like \eqref{landmark affinity matrix in Roseland} 
in Roseland; that is, set $W^{(\textup{HKC})}=W^{(\textup{r})}$. 
HKC then compute a $n\times n$ diagonal matrix by
$D^{(\textup{HKC})}_{ii}=\sum_{j=1}^m W^{(\textup{HKC})}_{i,j}$,
where $i=1,\ldots,n$. Then, convert $W^{(\textup{HKC})}$ to be row stochastic by:
\begin{align}\label{HKC affinity}
A^{(\textup{HKC})} = (D^{(\textup{HKC})})^{-1} W^{(\textup{HKC})}\in \mathbb{R}^{n\times m}\,.
\end{align}
One should notice the difference between Roseland and HKC when computing the degree matrix. In Roseland, the degree matrix is computed from the row sum of the matrix $W^{(\textup{HKC})}(W^{(\textup{HKC})})^{\top}$ instead of the row sum of $W^{(\textup{HKC})}$. Therefore, Roseland defines a Markov process on the data set, but HKC does not.
This normalization step plays a significant role.
Finally, HKC embeds the data via the eigenvectors $\psi_{j}$ of the matrix $\bar W^{(\textup{HKC})} = A^{(\textup{HKC})}(A^{(\textup{HKC})})^{\top}\in\mathbb{R}^{n\times n}$, which can be computed efficiently by
\begin{align}\label{HKC extend}
\psi^{(\textup{HKC})}_{j}=(\lambda^{(\textup{HKC})}_{j})^{-1/2}A^{(\textup{HKC})}\phi^{(\textup{HKC})}_{j}
\end{align}
where $\phi^{(\textup{HKC})}_{j}$ is the $j$-th eigenvectors of the matrix $\tilde W^{(\textup{HKC})}:=(A^{(\textup{HKC})})^{\top}A^{(\textup{HKC})}\in\mathbb{R}^{m\times m}$ associated with the eigenvalue $\lambda^{(\textup{HKC})}_{j}$. The denominator $(\lambda^{(\textup{HKC})}_{j})^{1/2}$ is to ensure that $\|\psi^{(\textup{HKC})}_{j}\|_{2}=\|\phi^{(\textup{HKC})}_{j}\|_{2}$.
In summary, we see that HKC is close to Roseland with a different normalization.

\subsubsection{Nystr\"{o}m Extension}

Another widely applied algorithm aiming to scale up spectral embedding is the Nystr\"{o}m extension \cite{belabbas2009landmark,fowlkes2004spectral,williams2001using,belabbas2009spectral, coifman2006geometric}. The idea is simple but effective. First, run the eigen-decomposition on a subset of the given dataset. Then, extend the eigenvectors to the whole dataset. There are some variants of the Nystr\"om extension method, for example, \cite{bermanis2013multiscale}.
A direct application is out of sample embedding.

We are interested in applying the Nystr\"om extension to the eigen-decomposition of the transition matrix defined in \eqref{Eq A ordinary DM} for the spectral embedding purpose. Similarly, we consider 
the symmetric kernel matrix $M=D^{-1/2}WD^{-1/2}$. Note that if we want to apply Nystr\"om extension directly on $M$, we would have to compute the affinity matrix $W$ and the degree matrix $D$, which is expensive and the kNN scheme is needed. 
We thus follow the existing literature \cite{lafon2006data,singer2012vector} and apply the modified Nystr\"om extension. 

Suppose we have $n$ data points. First, run DM on a chosen subset, also called the landmark set, which is of size $L=n^\beta$, where $\beta\in (0, 1)$. Denote the affinity matrix associated with this landmark set as $W_{L}$, and run the eigen-decomposition of the matrix $D^{-1/2}_{L}W_{L}D_L^{-1/2} = V_{L}\mathcal{L}_{L}V_{L}^{\top}$, where $D_{L}$ is the degree matrix associated with $W_{L}$ and $\mathcal{L}_L=\texttt{diag}\begin{bmatrix}\ell_1 &\ldots &\ell_L \end{bmatrix}\in \mathbb{R}^{L\times L}$. Let $\widetilde{U}_{L}=D_{L}^{-1/2}V_{L}$ to be the eigenvectors of $D^{-1}_{L}W_{L}$. We then extend it to the rest $n-L$ points by:
\begin{equation}
\check{U}_{\textup{ext}} = D^{-1}_{n-L}E\widetilde{U}_{L}\mathcal{L}_{L}^{-1}\in \mathbb{R}^{(n-L)\times L}\,,
\end{equation}
where $E\in \mathbb{R}^{(n-L)\times L}$ is the affinity matrix between the remaining $n-L$ data points and the landmark set. In other words, $E_{i,j}$ is the similarity between point $x_i$ in the remaining dataset and $x_j$ in the landmark set; $D_{n-L}$ is a $(n-L)\times (n-L)$ diagonal matrix such that $D_{n-L}(i,i) = \sum_{j=1}^{L}E(i,j)$. Hence the eigenvectors used to embed the whole dataset is:
\begin{align}
\check{U} &= 
\begin{bmatrix} 
\widetilde{U}_{L}\\
\check{U}_{\textup{ext}}
\end{bmatrix} 
=\begin{bmatrix} 
D^{-1}_{L} &  \\
& D^{-1}_{n-L}
\end{bmatrix}
\begin{bmatrix} 
W_{L}  \\
E
\end{bmatrix}
\widetilde{U}_{L}\mathcal{L}^{-1}_{L}\,.
\end{align}
Roughly speaking, the embedding coordinates of a data point $x$ outside the landmark set is simply the average of all of the landmarks' embeddings, weighted by the similarity between $x$ and all the landmarks.

While it is slightly different from the original Nystr\"om extension, we still call it the Nystr\"om extension. Note that in practice, we only need to calculate $W_{L}$ and $E$ instead of $W$ and $D$, which is more efficient in the sense of both time and spatial complexities.
With the estimated eigenvectors on the whole dataset, we can define the associated embedding and hence the distance just as in Roseland and DM. Specifically, suppose we have $\check{U}=\begin{bmatrix} \tilde{u}_1 & \ldots& \tilde{u}_L \end{bmatrix}\in \mathbb{R}^{N\times L}$ and $\check{\mathcal{L}}=\texttt{diag}\begin{bmatrix}\ell_1 &\ldots &\ell_L &0&\ldots &0\end{bmatrix}\in \mathbb{R}^{n\times n}$. Then we can define the associated embedding by 
\begin{equation}
\Phi^{(\textup{Nystr\"om})}_t:x_{i}\mapsto e_i^\top \check{U}_{q'}\check{\mathcal{L}}_{q'}^t\,,
\end{equation}
where $t>0$ is the chosen diffusion time, $\check{U}_{q'}\in \mathbb{R}^{n\times q'}$ to be a matrix consisting of $\tilde{u}_2,\ldots,\tilde{u}_{q'+1}$ and $\check{\mathcal{L}}_{q'}:=\texttt{diag}(\ell_2,\ldots,\ell_{q'+1})$.

\subsubsection{Complexity analysis}

Let $n$ be the size of the dataset and $n^\beta$ the size of the landmark set, where $\beta\leq 1$ throughout. We set $k=n^\beta$ whenever kNN is applied.
For a dense affinity matrix, the { space} complexity of DM is $O(n^2)$. If kNN is applied, the { space} complexity of DM is $O(n^{1+\beta})$. 
On the other hand, no matter what kernel is chosen, compactly supported or not, the { space} complexity of Roseland and the Nystr\"om extension is $O(n^{1+\beta})$.

For the computational complexity, it can be divided into two parts. The first part is forming the affinity matrix and the corresponding degree matrix; the second part is performing the eigen-decomposition or SVD. In the ordinary DM, the construction of the affinity matrix and the degree matrix is $O(n^2)$. If the kNN construction is considered and the k-d tree based algorithm is applied, the averaged time complexity of constructing the affinity matrix and the degree matrix is $O(n\log(n)+n^{1+\beta})=O(n^{1+\beta})$. In the Nystr\"om extension, the construction of the  $W_L$ and hence its degree matrix is $O(n^{2\beta})$ when $L=n^\beta$ for $\beta\leq 1$, while the construction of $E$ and $D_{n-L}$ is {  $O(n^{1+2\beta})$}. Thus, the first part complexity for the Nystr\"om extension is {  $O(n^{1+2\beta})$}.
In Roseland, the construction of the landmark-set affinity matrix and its associated degree matrix is $O(n^{1+\beta})$.
For the second part, it falls in the discussion of the complexity of the general eigen-decomposition and SVD. For a symmetric kernel matrix $M\in\mathbb{R}^{N\times N}$, the eigen-decomposition complexity is usually $O(N^3)$,\footnote{Theoretically, it can reach ${O}(N^{\omega+\eta})$, where the $N^{\omega}$ part comes from the algorithm of matrix multiplication, and an arbitrary $\eta>0$ \cite{Demmel2007}. Note that when $M$ is dense, $\omega=\omega_0\approx 2.376$ \cite{Coppersmith1990}. However, the implied constant in these asymptotic is too large and cannot be practical \cite{le2012faster}.} and when $M$ is $k$ sparse, where $k\leq n$, the complexity can be improved to $O(N^{2+\eta'})$ for an arbitrary $\eta'>0$ when $k\leq N^{0.14}$ \cite{Yuster2004}. In our application, even if we make $k\leq N^{0.14}$, the eigen-decomposition of the $M$ is roughly ${O}(N^{2+\eta'})$. On the other hand, for a matrix of size $N\times N'$, where $N\geq N'$, then the complexity of the SVD for  is ${O}(NN'^2)$. 
Hence, the overall computational complexity for the ordinary DM is $O(n^3)$ and is $O(n^\omega)$ for DM with the kNN scheme, where $\omega>2$ depends on the chosen $\beta$,  {  $O(n^{1+\beta}+n^{3\beta})$}  for the Nystr\"om extension, and $O(n^{1+2\beta})$ for Roseland. The complexity of HKC is the same as that of Roseland. 
To summarize, both the Nystr\"om extension and Roseland are more efficient than the ordinary DM with or without the kNN scheme. While Roseland is not faster than the traditional Nystr\"om extension approach, it is comparable, particularly for small $\beta$.

}

\subsection{Scalability of Roseland}

We take the dataset consisting of random projections of the two-dimensional Shepp-Logan phantom \cite{singer2013two}. 
A phantom is a $2$-dim image function $\psi$ compactly supported on $\mathbb{R}^{2}$ without any symmetry assumption. It is commonly applied in medical imaging society as a benchmark. 
Suppose we uniformly sample $n$ points from $S^{1}$, $\theta_1\ldots \theta_n\in S^1$, as the projection angles.
Then we generate a high dimensional data set by taking the Radon transform of $\psi$, denoted as $R_{\psi}:S^1\to L^2(\mathbb{R})$, followed by discretizing the projection image into $p\in \mathbb{N}$ points; that is, we have the dataset $\mathcal{X}:=\{D_pR_{\psi}(\theta_i)\}_{i=1}^n\subset \mathbb{R}^p$, where $D_p$ is the discretization operator. We refer readers with interest to \cite{singer2013two} for details. 
In this simulation, we fix the number of discretization points $p=128$ and let the number of projections $n$ vary. 
We run DM, Roseland, HKC and the Nystr\"om extension with $n=10,000$ and $m=n^\beta$, where $\beta=0.5$, and show the 3-dim embedding of $\mathcal{X}$ in Figure \ref{Figure:phantom}. 
Clearly, both DM and Roseland recover the $S^1$ structure, while Roseland is distorted, and HKC and the Nystr\"om extension are confused and lead to erroneous embeddings.
The computational times of different algorithms with $\beta=0.3$ are shown for a comparison. When $n=1.28\times 10^6$, Roseland finishes in about 2.5 minutes.

\begin{figure}[hbt!]\centering
	\includegraphics[width=2.95cm]{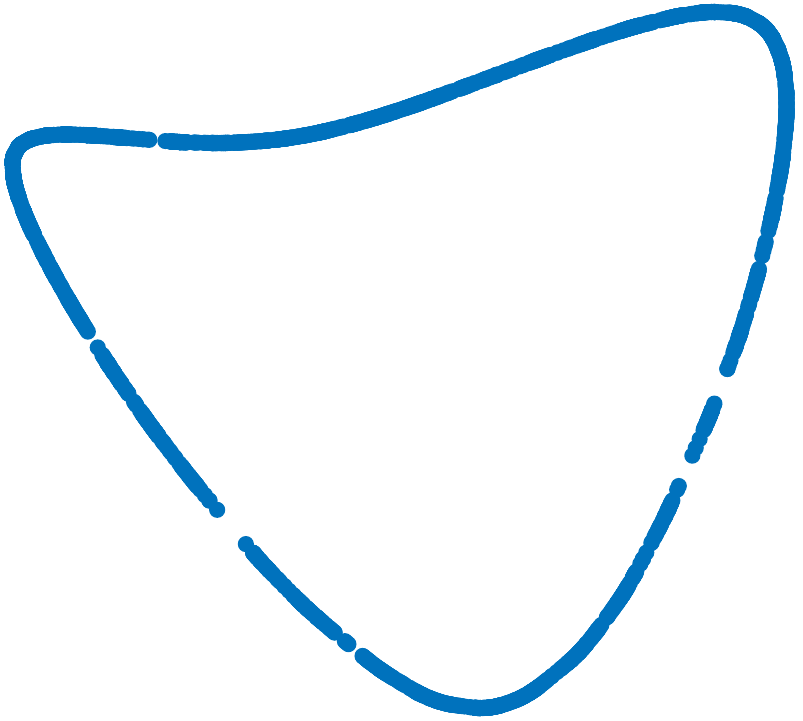}
	\includegraphics[width=2.95cm]{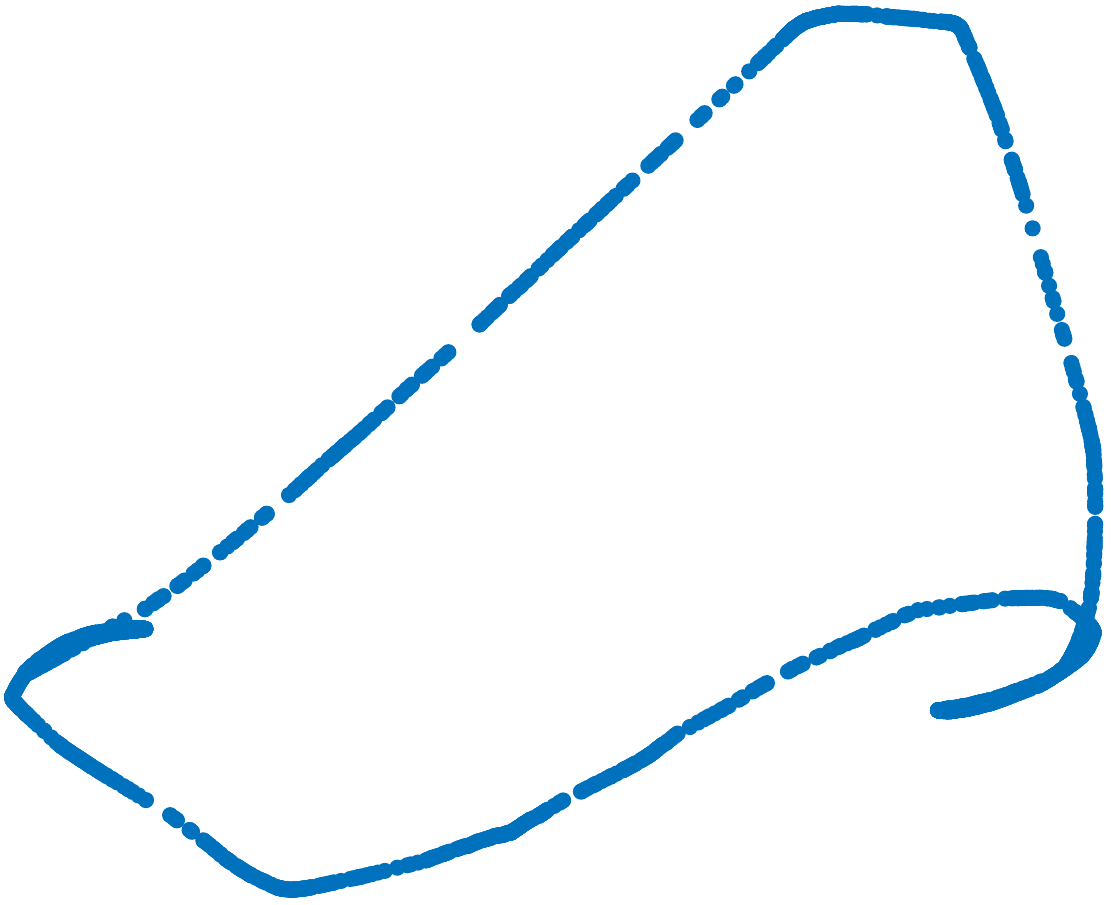}
	\includegraphics[width=2.95cm]{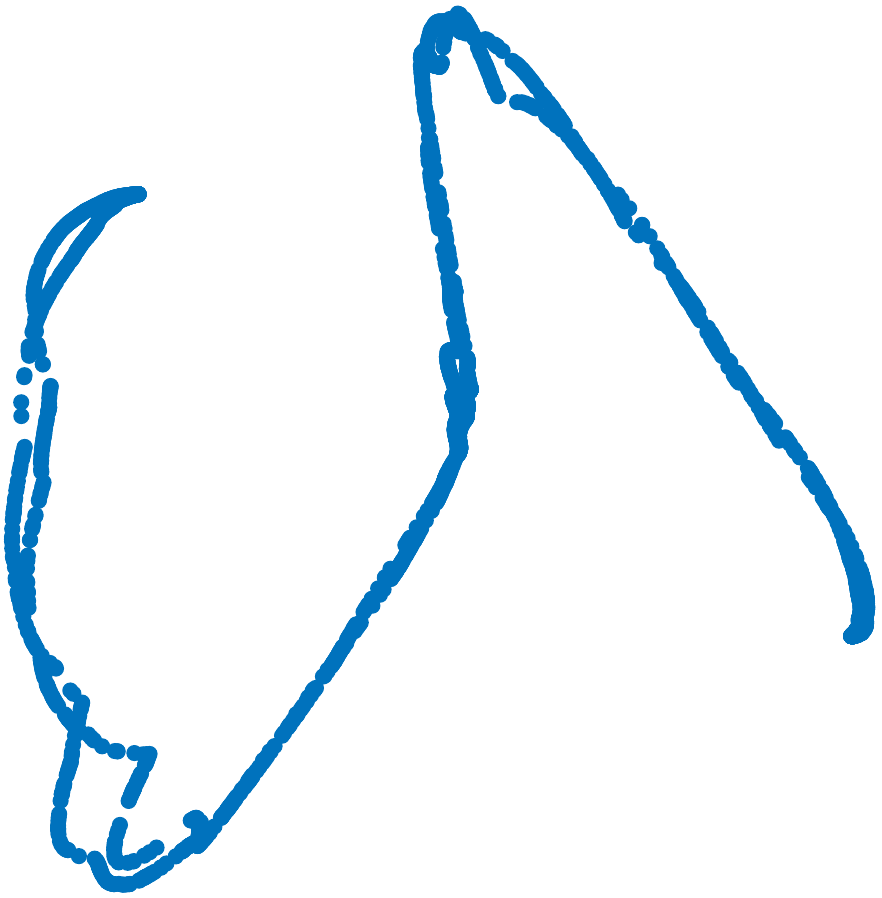}
	\includegraphics[width=2.95cm]{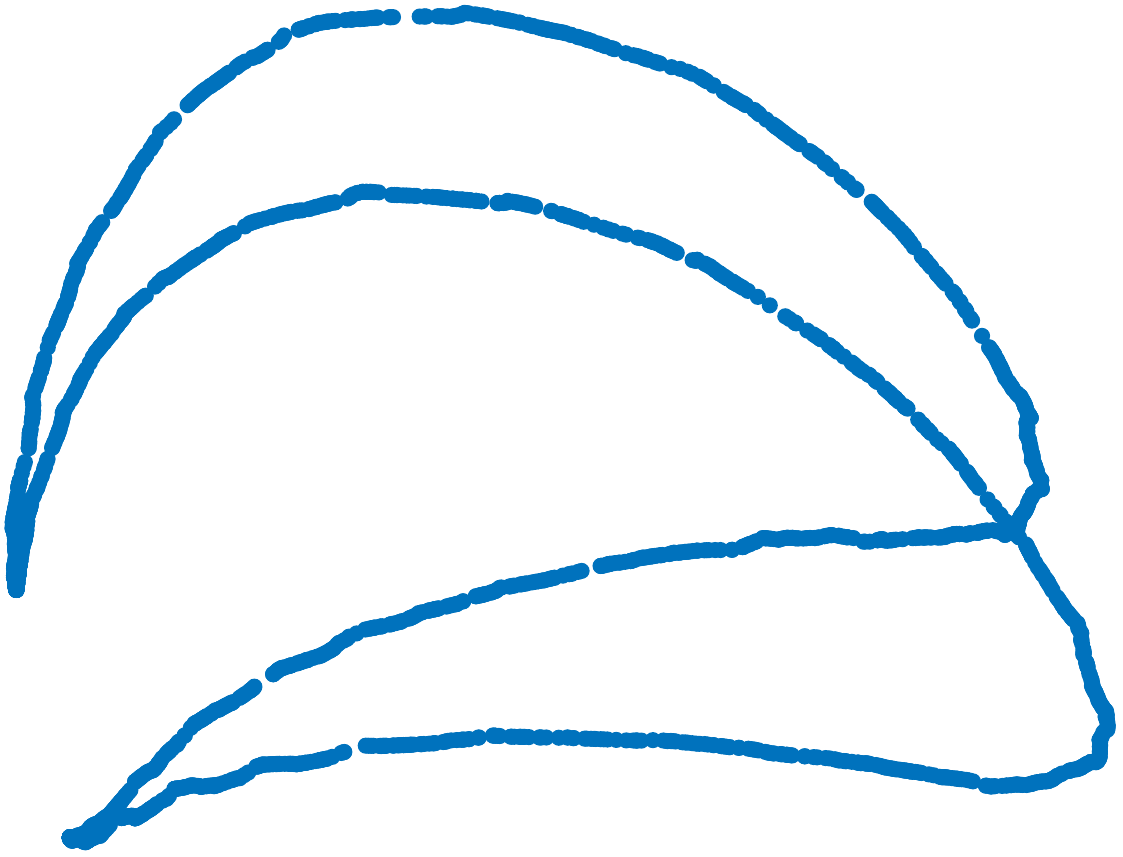}\\
	\includegraphics[width=5cm]{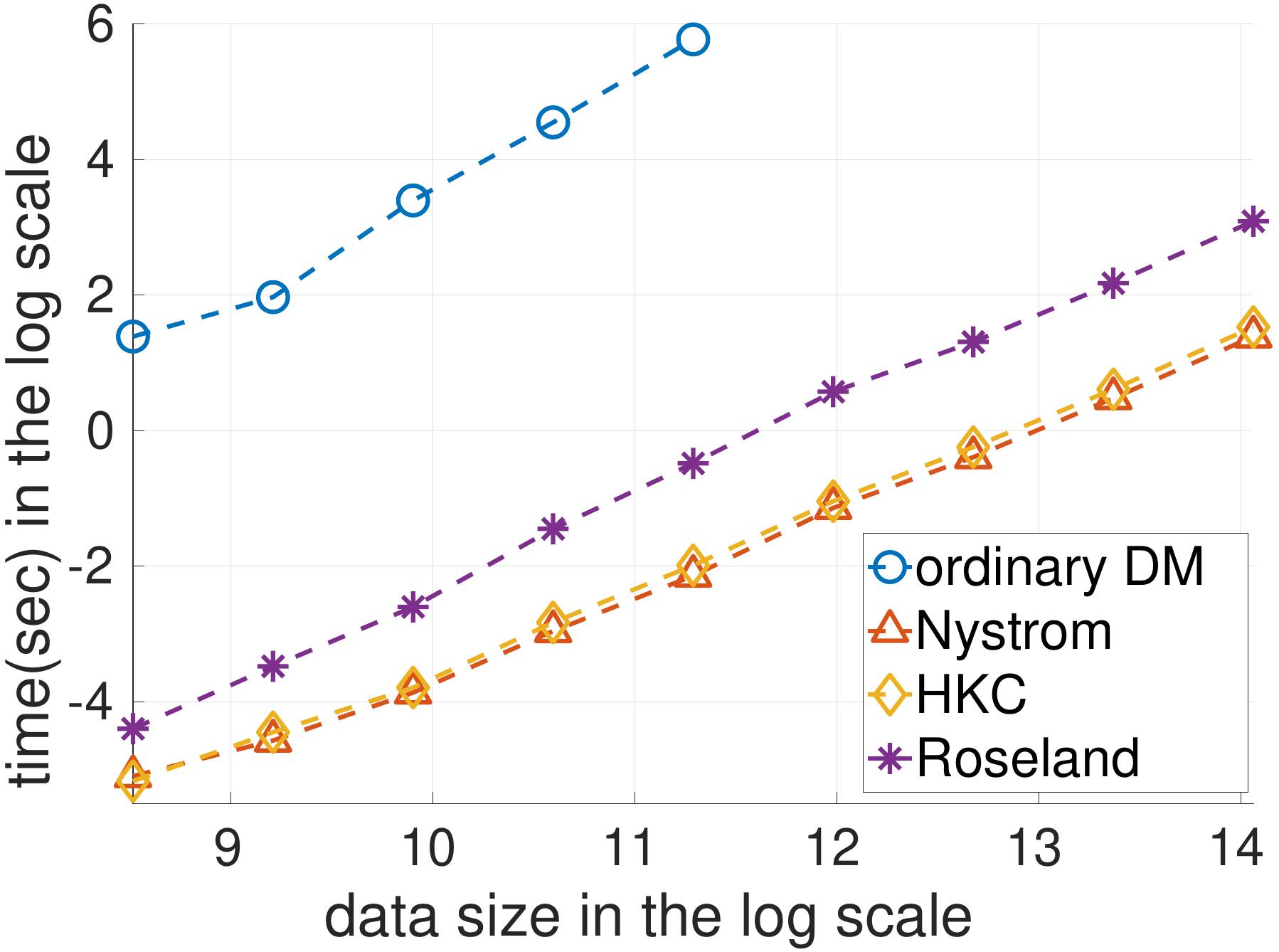}
	\caption{The random projection data from the phantom image, where the data size is $n=10,000$ and the dimension is $p=128$. Top row, from left to right: the DM embedding, Roseland, HKC, and the Nystr\"om extension, where we take $\beta =0.5$ for Roseland, the Nystr\"om, and HKC. All embeddings are 3-dim, and have been rotated to optimize the visualization. Bottom row: the runtime comparison of various algorithms when $\beta=0.3$. 
		The x-axis is in the natural log unit, and the largest database size is $1,280,000$. 
		\label{Figure:phantom}}
\end{figure}

\subsection{Robustness of Roseland}\label{numerical section noisy data and subset}

We compare performance of the Nystr\"om extension, HKC, and Roseland from the aspect of spectral embedding when the data is noisy.
We consider the standard $S^1$ model, which is the one-dimensional canonical $S^1$ embedded in the first two coordinates of $\mathbb{R}^{100}$, since all ground truths can be analytically calculated. 
Specifically, we uniformly sample $n=90,000$ points from the $S^1$ to be the dataset and independently sample another $m=3 00$ points uniformly to be the landmark set; that is, $\beta=0.5$. Then, embed all points to $\mathbb{R}^{p}$, where $p=100$, and add independent Gaussian noise $\epsilon_{i}$ to both the dataset and the landmark set, where $\epsilon_{i}$ are i.i.d. sampled from $\mathcal{N}(0, \frac{1}{\sqrt{p}} I_{p\times p})$.
The visualization results are shown in Figures \ref{S1noisy-noisy_embedding}. 
Clearly, while the Nystr\"om extension and HKC embed $S^1$ successfully, the embedding by Roseland is cleaner. 

\begin{figure}[bht!]\centering
	\includegraphics[width=2.95cm]{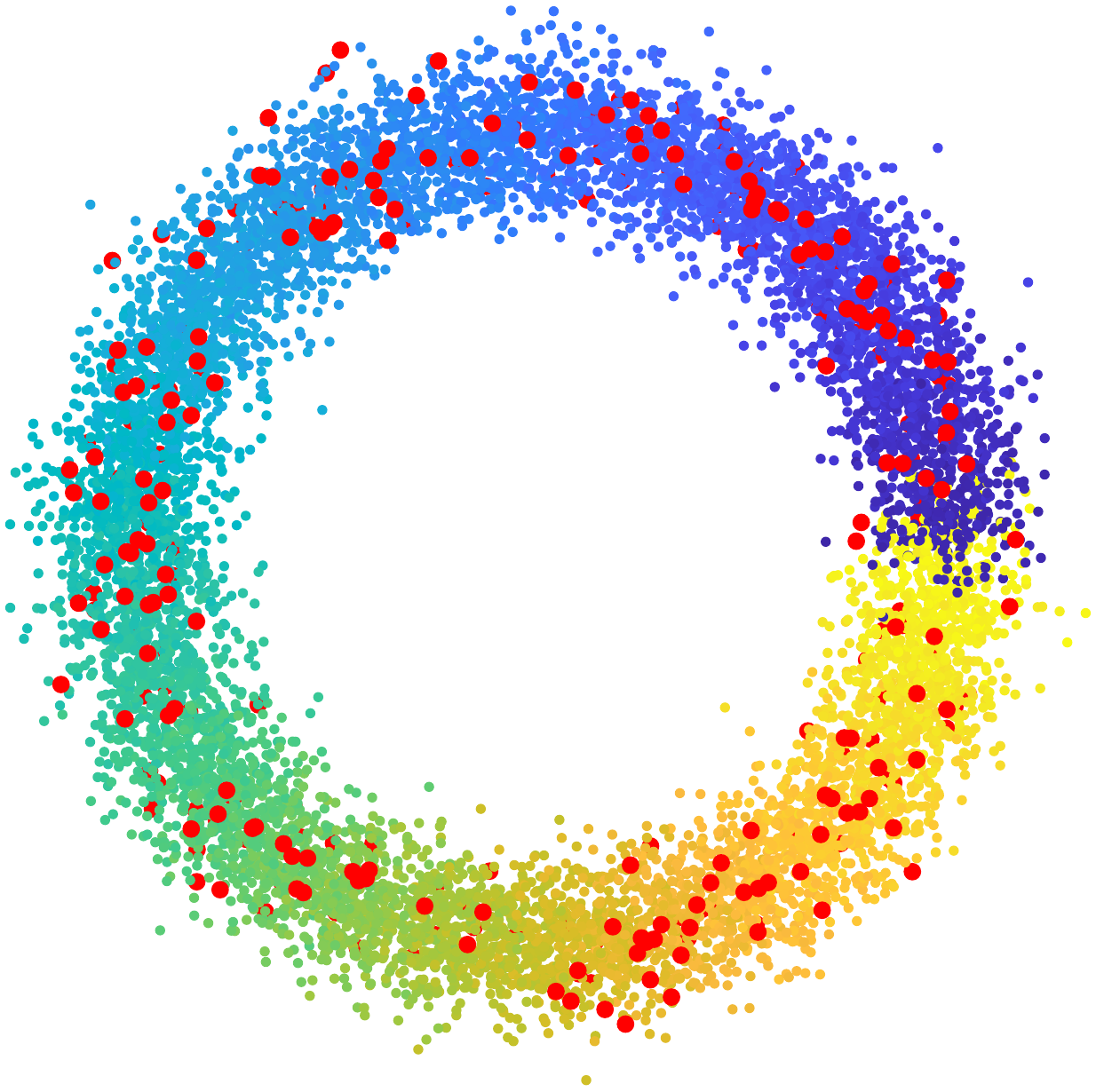}
	\includegraphics[width=2.95cm]{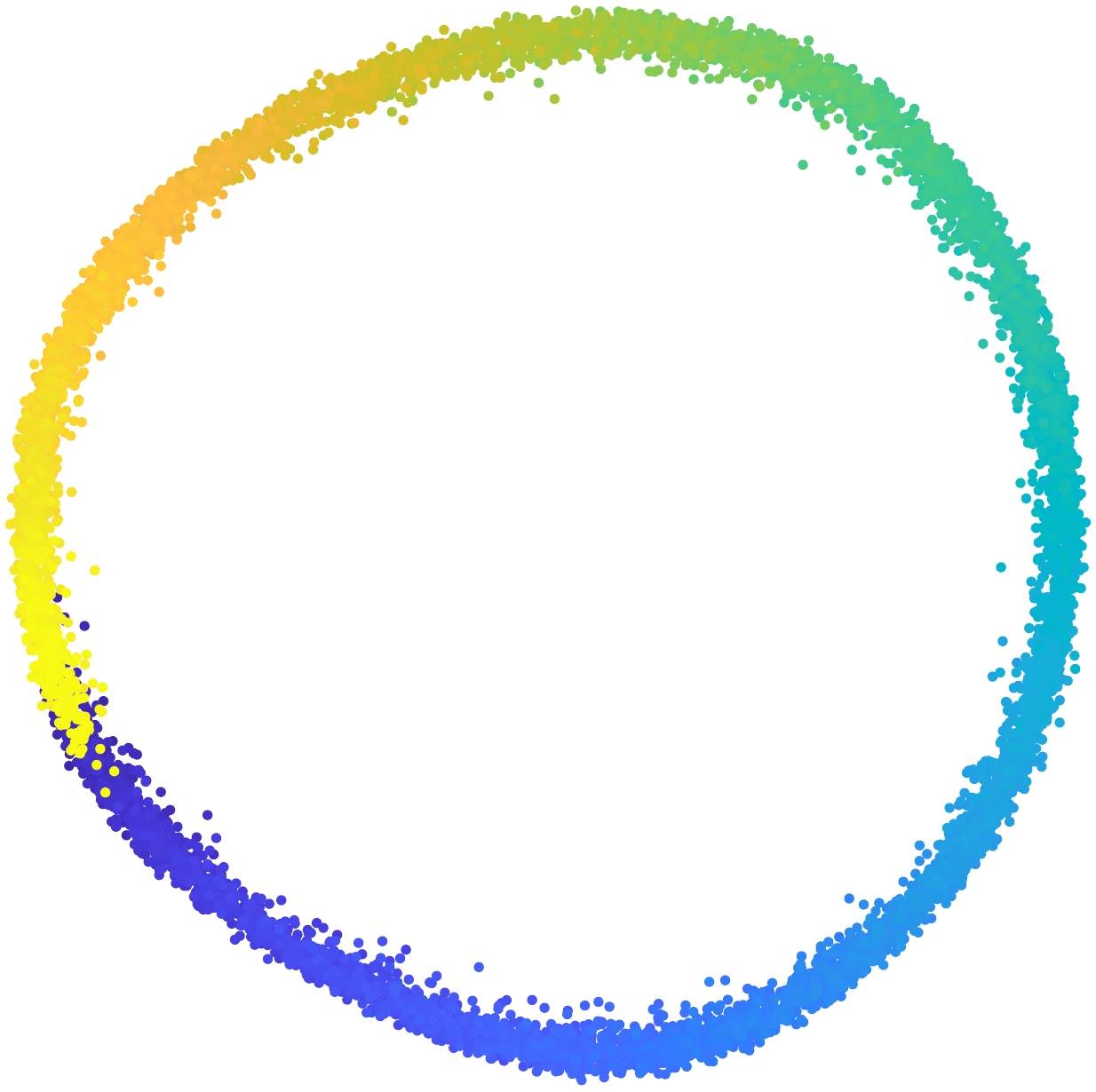}
	\includegraphics[width=2.95cm]{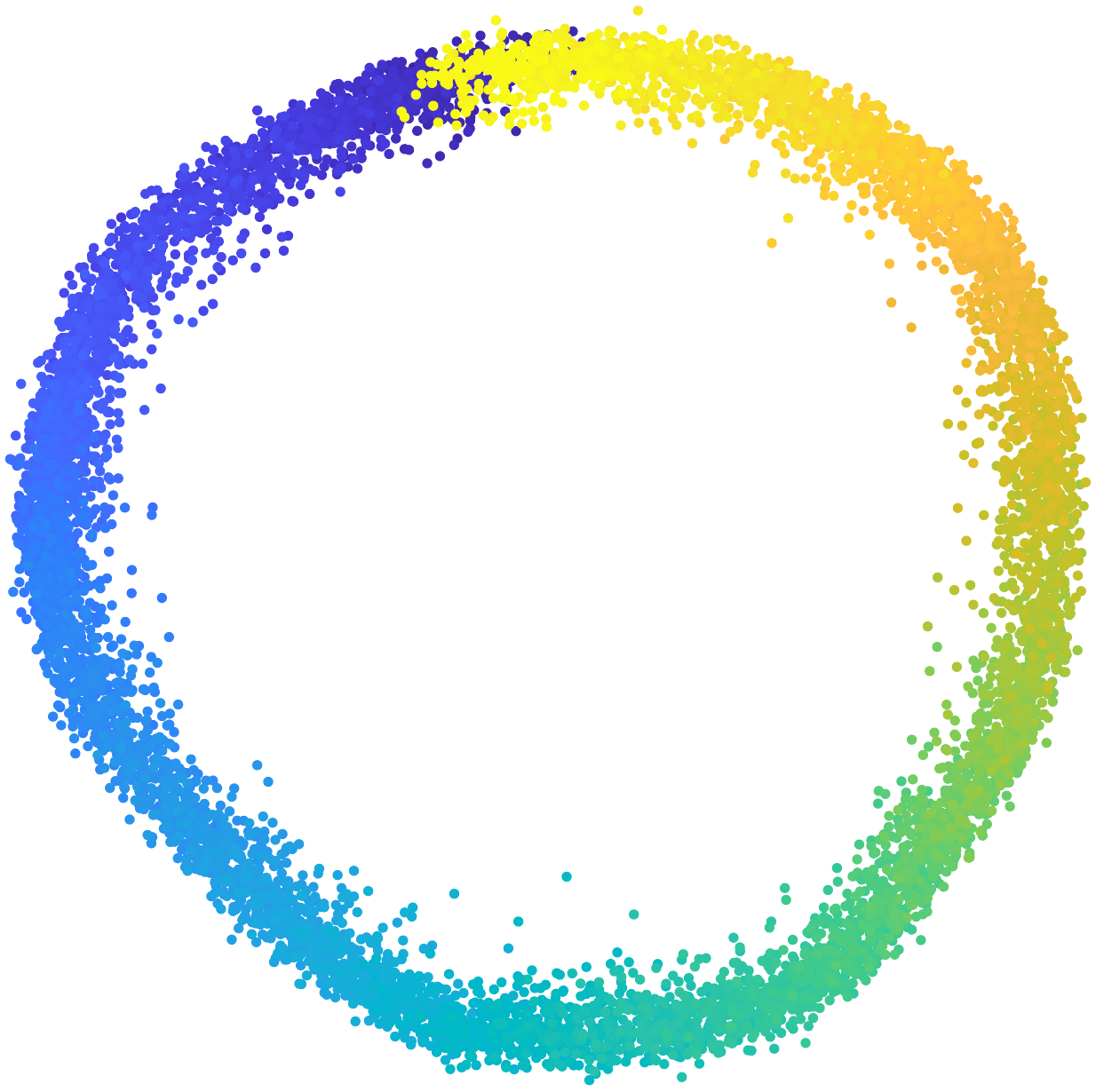}\includegraphics[width=2.95cm]{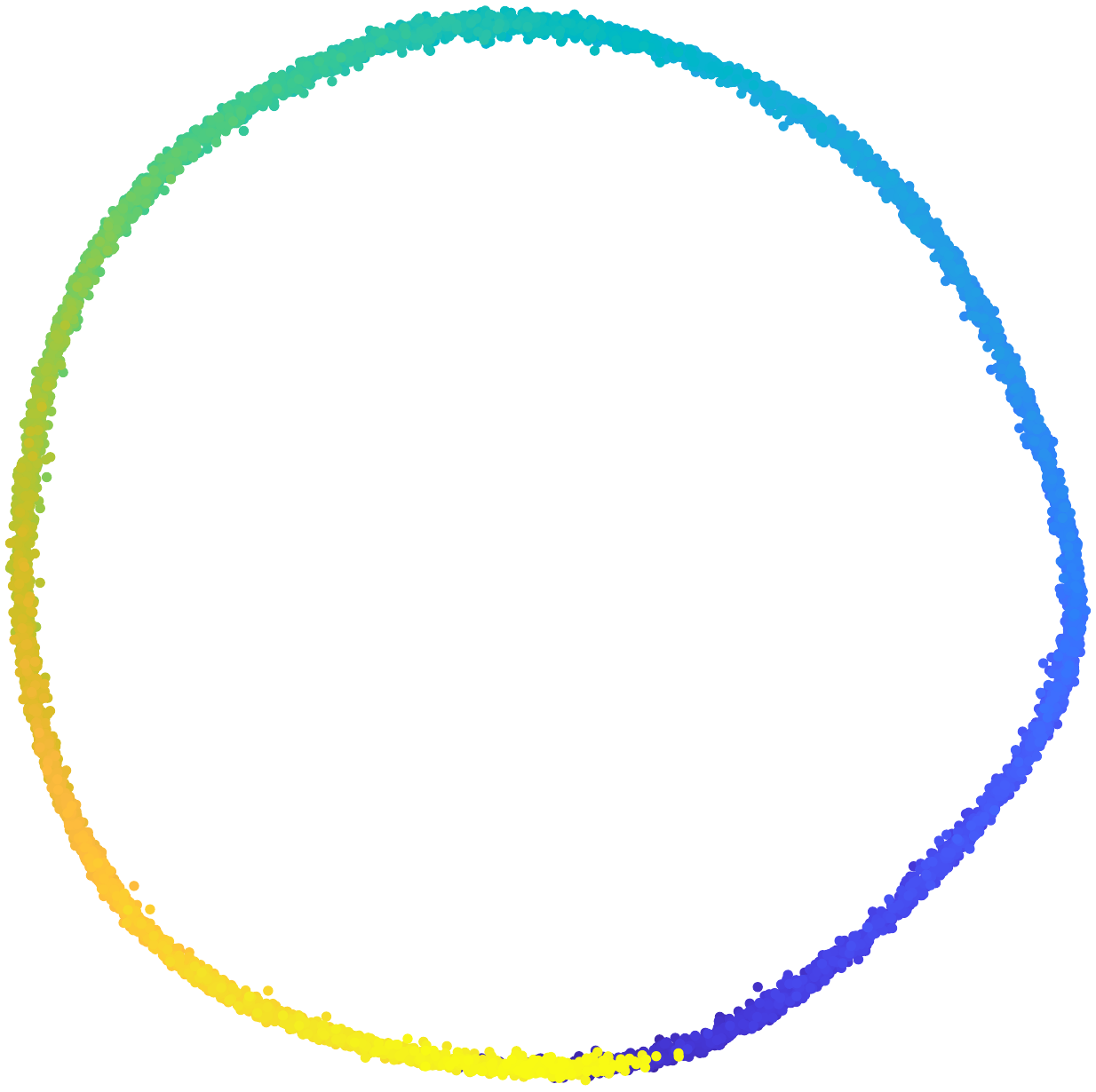}
	\vspace{-5pt}
	\caption{Left: noisy data and noisy subset (only the first two coordinately are shown). Middle left: the Nystr\"om embedding. Middle right: the HKC embedding. Right: the Roseland embedding. \label{S1noisy-noisy_embedding}}
\end{figure}

Next, the recovered eigenvectors are shown in Figures \ref{S1noisy-noisy_eigenvector} and \ref{S1noisy-noisy_eigenvalue}. Clearly, the Nystr\"om method can only successfully recover first few eigenfunctions of the Laplace-Beltrami operator (visually, only the first 8 look reasonably), while HKC and Roseland can recover more eigenfunctions (visually, the first 10 are reasonably well). Since HKC is not designed to recover the Laplace-Beltrami operator of $S^1$, we do not consider it in Figure \ref{S1noisy-noisy_eigenvalue}. Compared with eigenvectors, only the first 7 or 8 eigenvalues of Laplace-Beltrami operator can be well approximated in both Nystr\"om method and Roseland.

\begin{figure}[bht!]\centering
	\includegraphics[trim=4 0 0 0, clip, width=12cm]{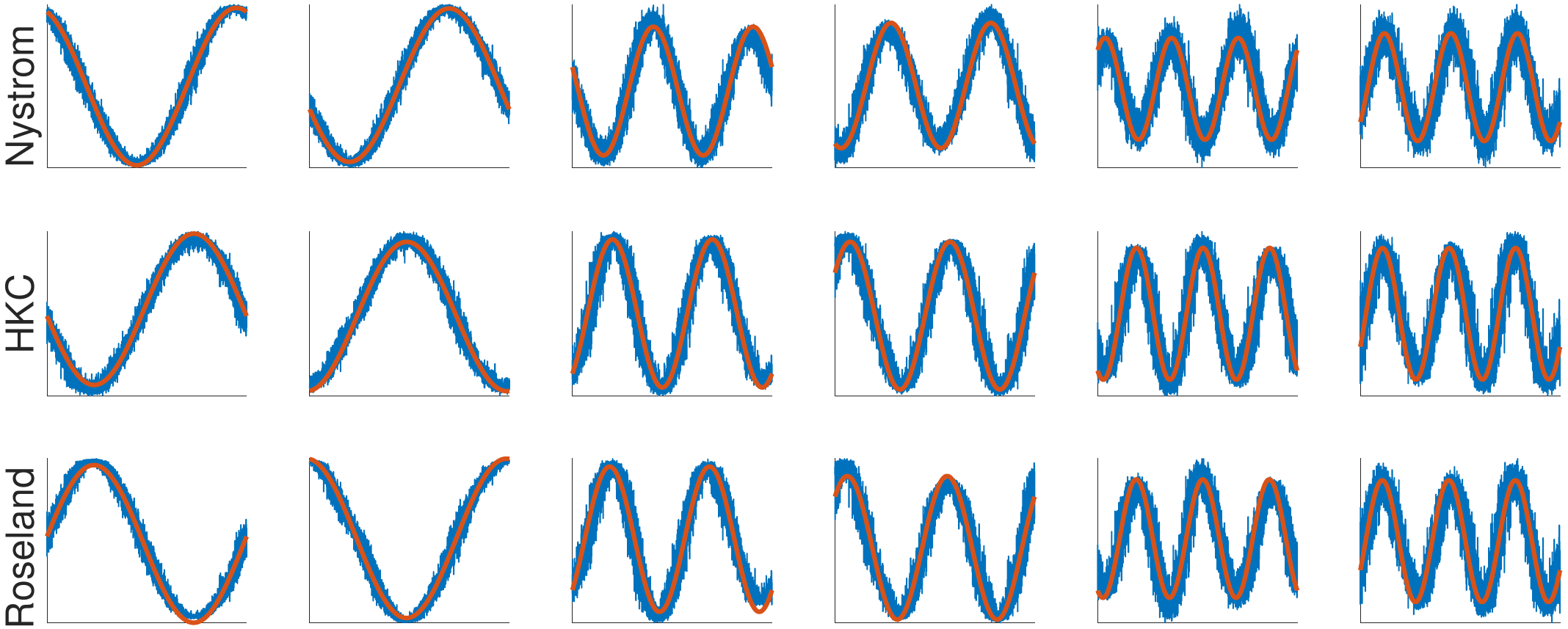}
	\includegraphics[trim=4 0 0 0, clip, width=12cm]{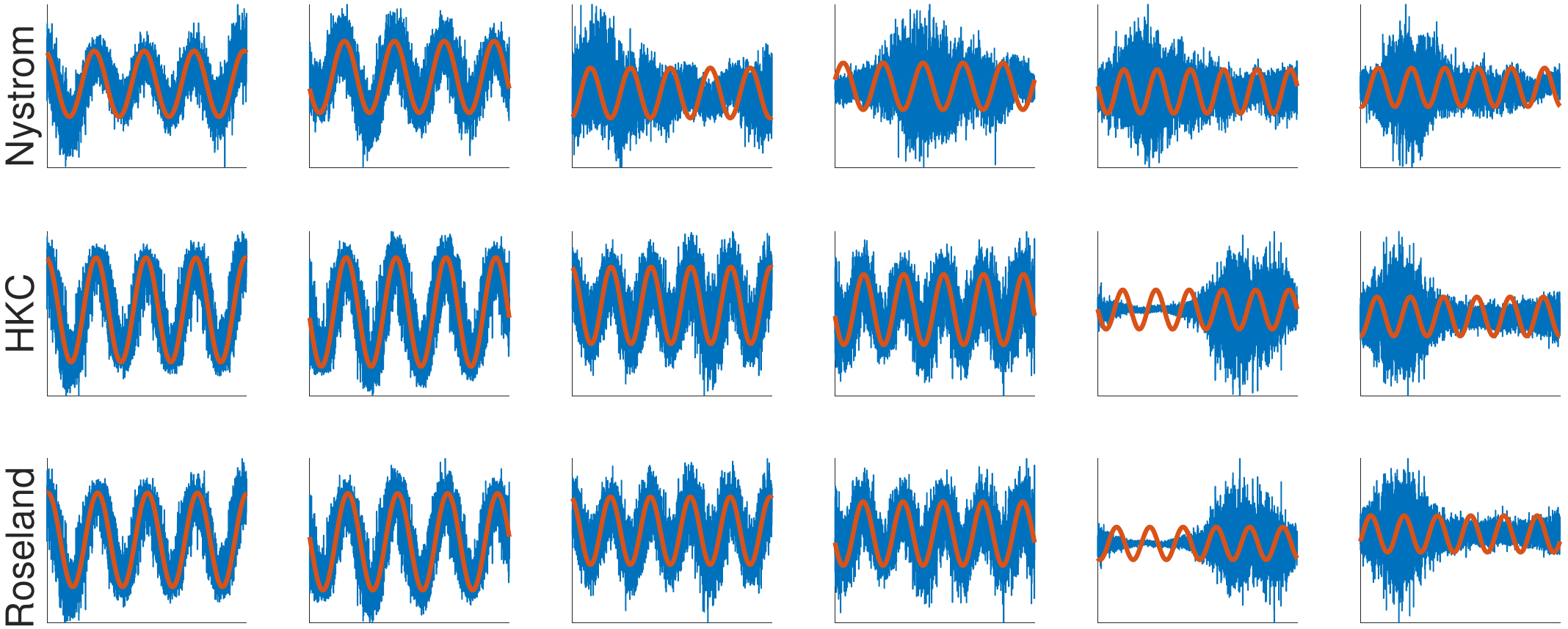}
	\caption{Noisy data set and subset. Superimpose the top 12 non-trivial eigenvectors by the Nystr\"om, HKC and Roseland with the ground truth (superimposed in red). Top three rows: the top 6 eigenvectors; bottom three rows: the $7^{\textup{th}}$ to the $12^{\textup{th}}$ eigenvectors. \label{S1noisy-noisy_eigenvector}}
\end{figure}

\begin{figure}[bht!]\centering
	\includegraphics[width=3.7cm]{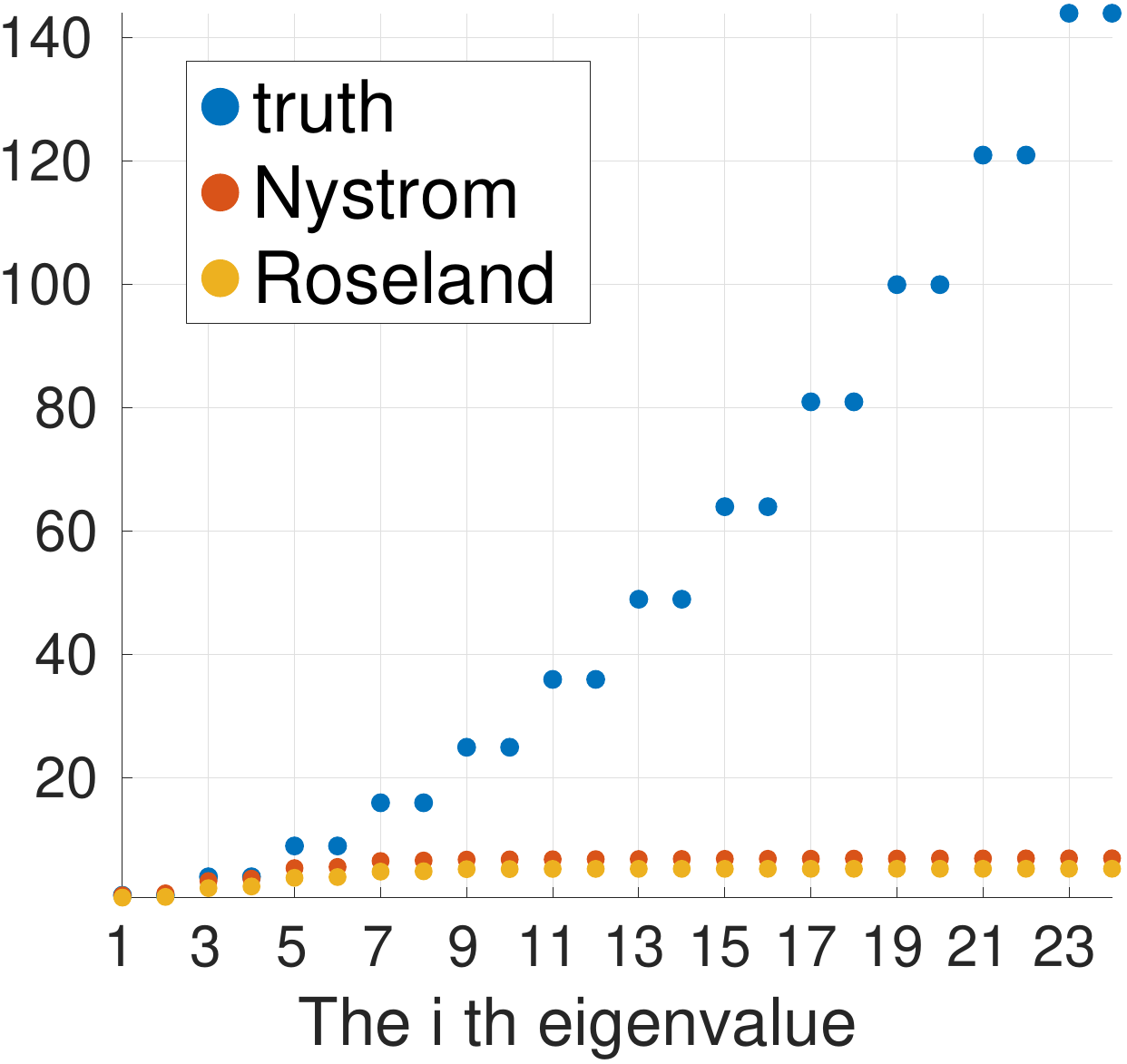}
	\includegraphics[width=3.7cm]{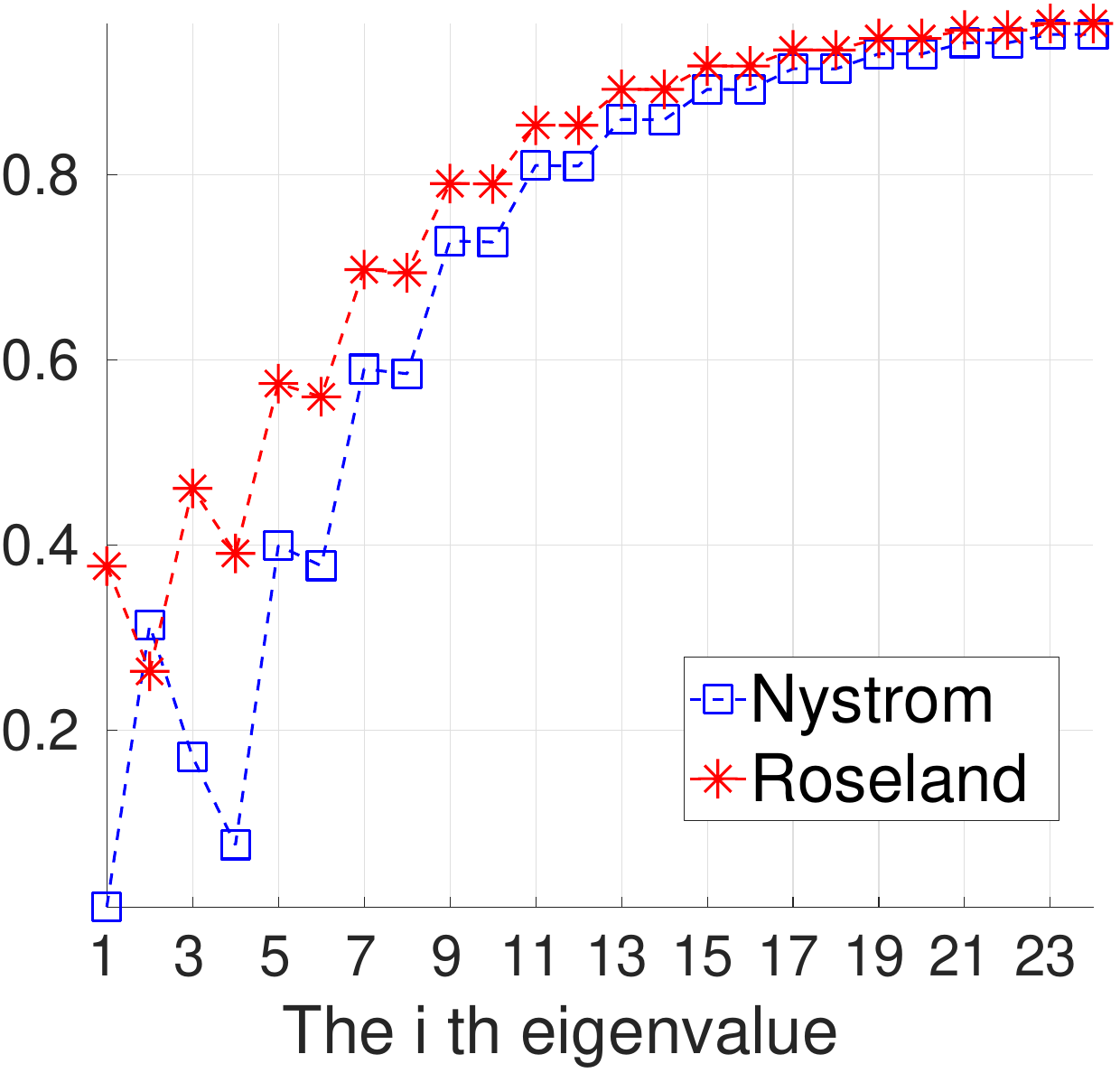}
	\includegraphics[width=4.1cm]{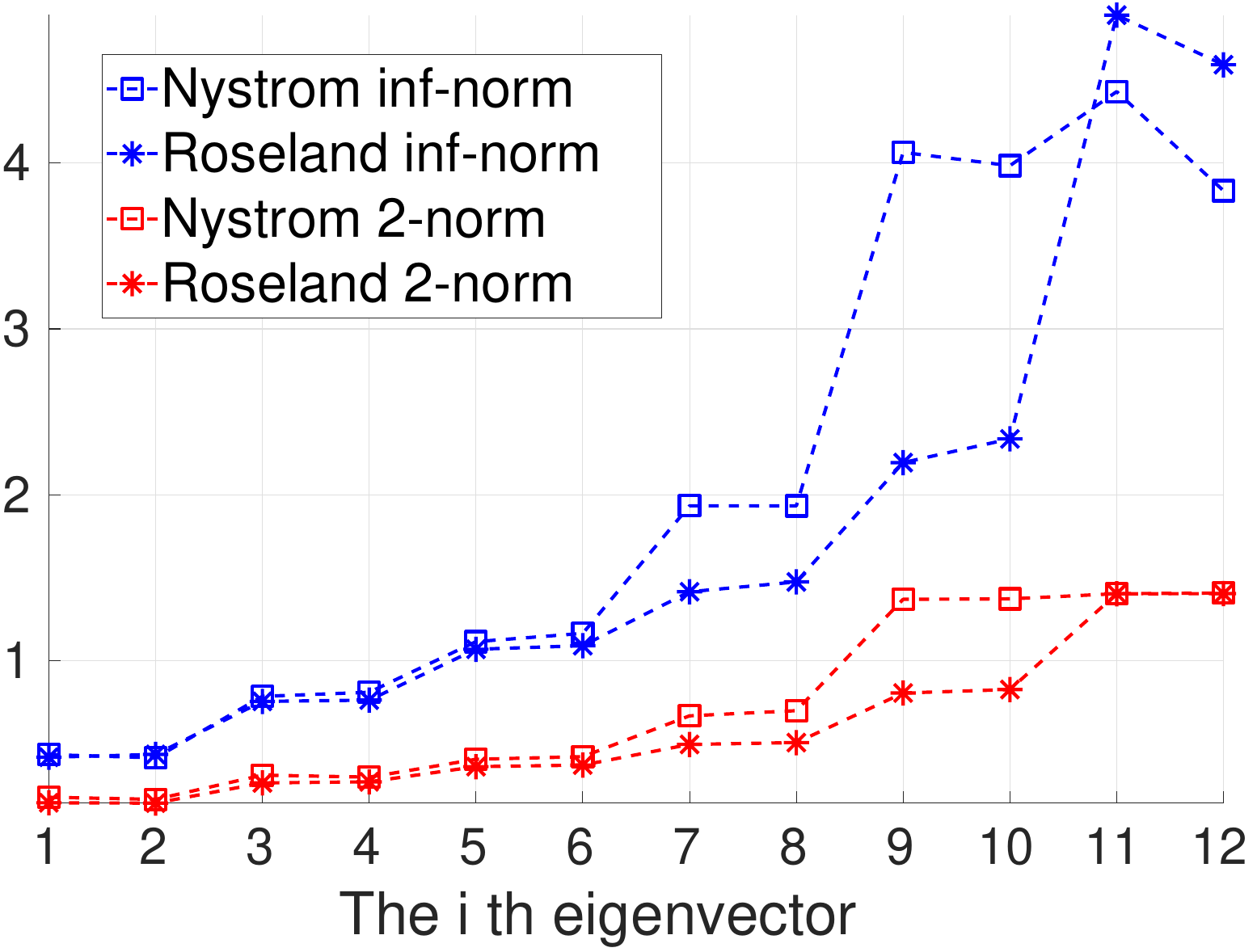}\caption{Illustration of the Nystr\"om method and Roseland on the noisy dataset and landmark set. Left: the top 18 non-trivial eigenvalues by the Nystr\"om and Roseland with the ground truth. Middle: relative error of eigenvalues. Right: relative $L^{\infty}$ and $L^{2}$ error of the top 12 non-trivial eigenvectors by the Nystr\"om and Roseland with the ground truth. \label{S1noisy-noisy_eigenvalue}}
\end{figure}

``Visually'' the qualities of the first two non-trivial eigenfunctions of the Nystr\"om extension, HKC and Roseland are similar,  
but the qualities of embeddings are different. To understand this discrepancy, we consider the following quantities. Note that the first two non-trivial eigenvectors, $v_1,v_2\in \mathbb{R}^n$ from either the Nystr\"om extension, HKC or Roseland, if successfully recovered the eigenfunctions of the Laplace-Beltrami operator, should be $\sin(\theta+\phi)$ and $\cos(\theta+\phi)$ for some $\phi\in(0,2\pi]$ respectively. Here, the phase $\phi$ comes from the uncertainty nature of the spectral embedding methods. We then plot $\arctan(v_1(i)/v_2(i))$ and $\sqrt{v_1(i)^2+v_2(i)^2}$ against $\theta_i$, where $\theta_i$ is the angle of the $i$-th sampled point. The results are shown in Figure \ref{S1noisy-noisy_eigenvector_phase}. Clearly, the amplitude eigenvectors of the Nystr\"om extension and HKC fluctuates more than those of Roseland, while the phase recovery qualities are similar. This difference comes from the different normalization steps of Roseland and HKC.

\begin{figure}[bht!]\centering
	\includegraphics[width=4cm]{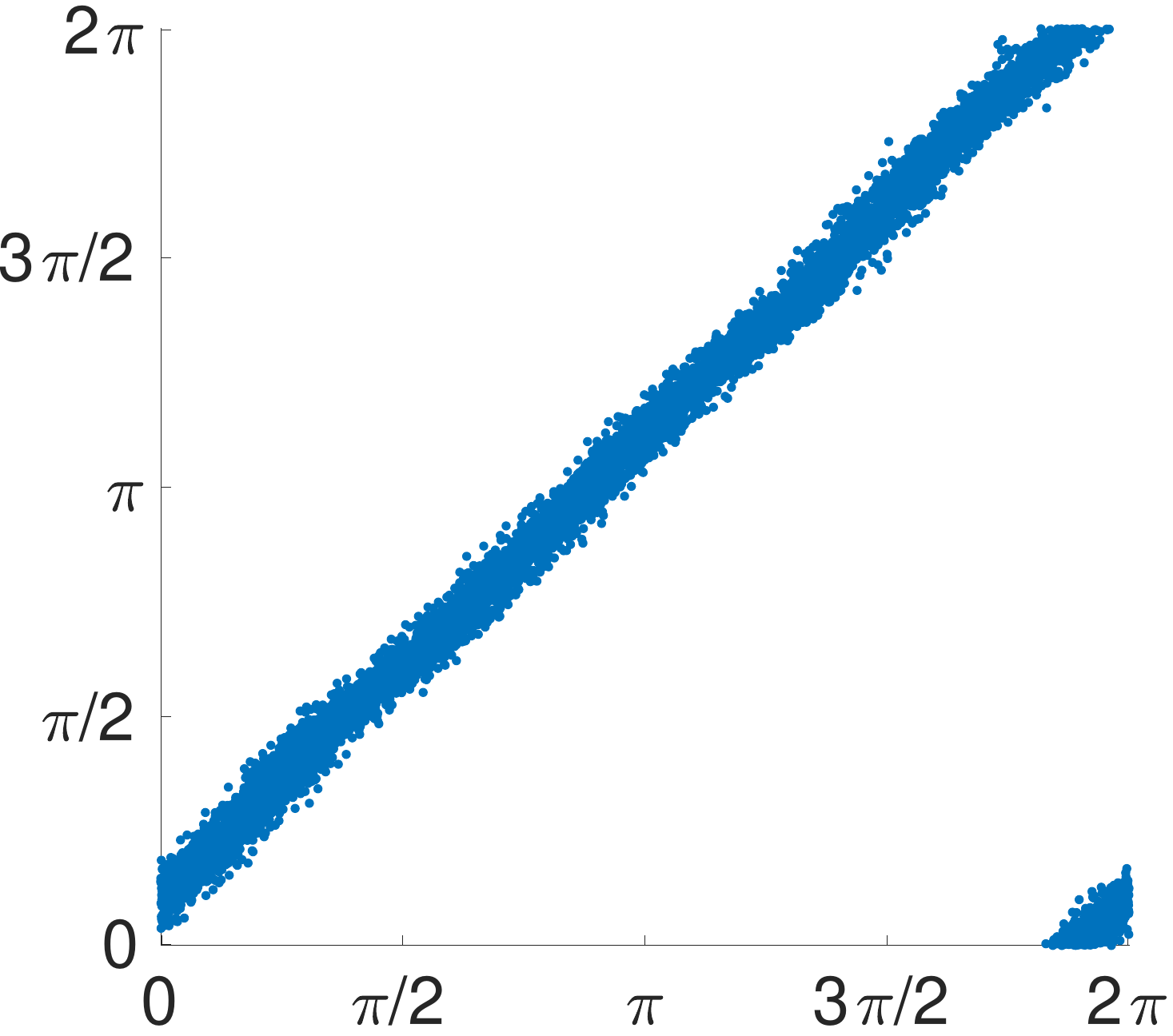}\includegraphics[width=4cm]{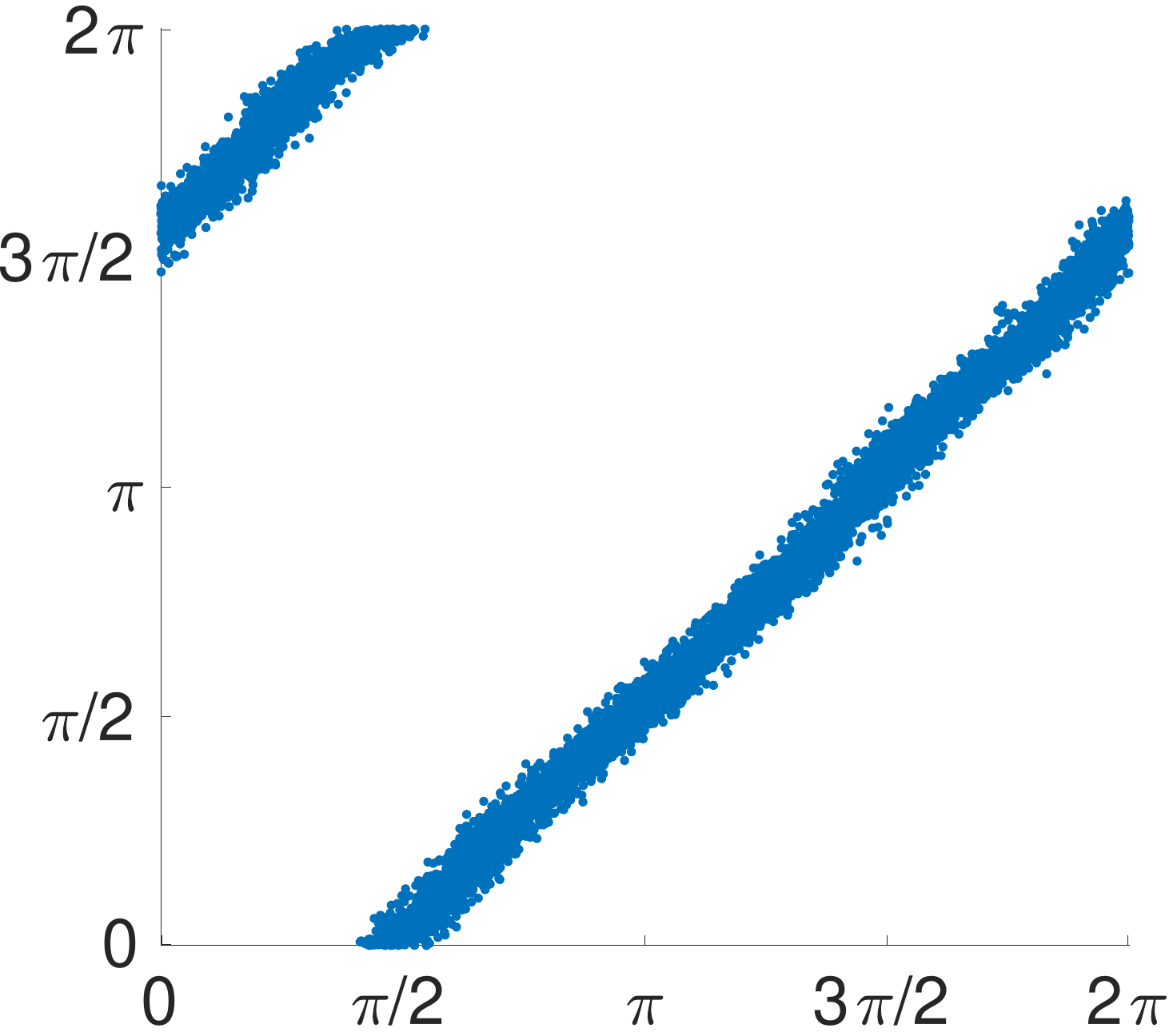}
	\includegraphics[width=4cm]{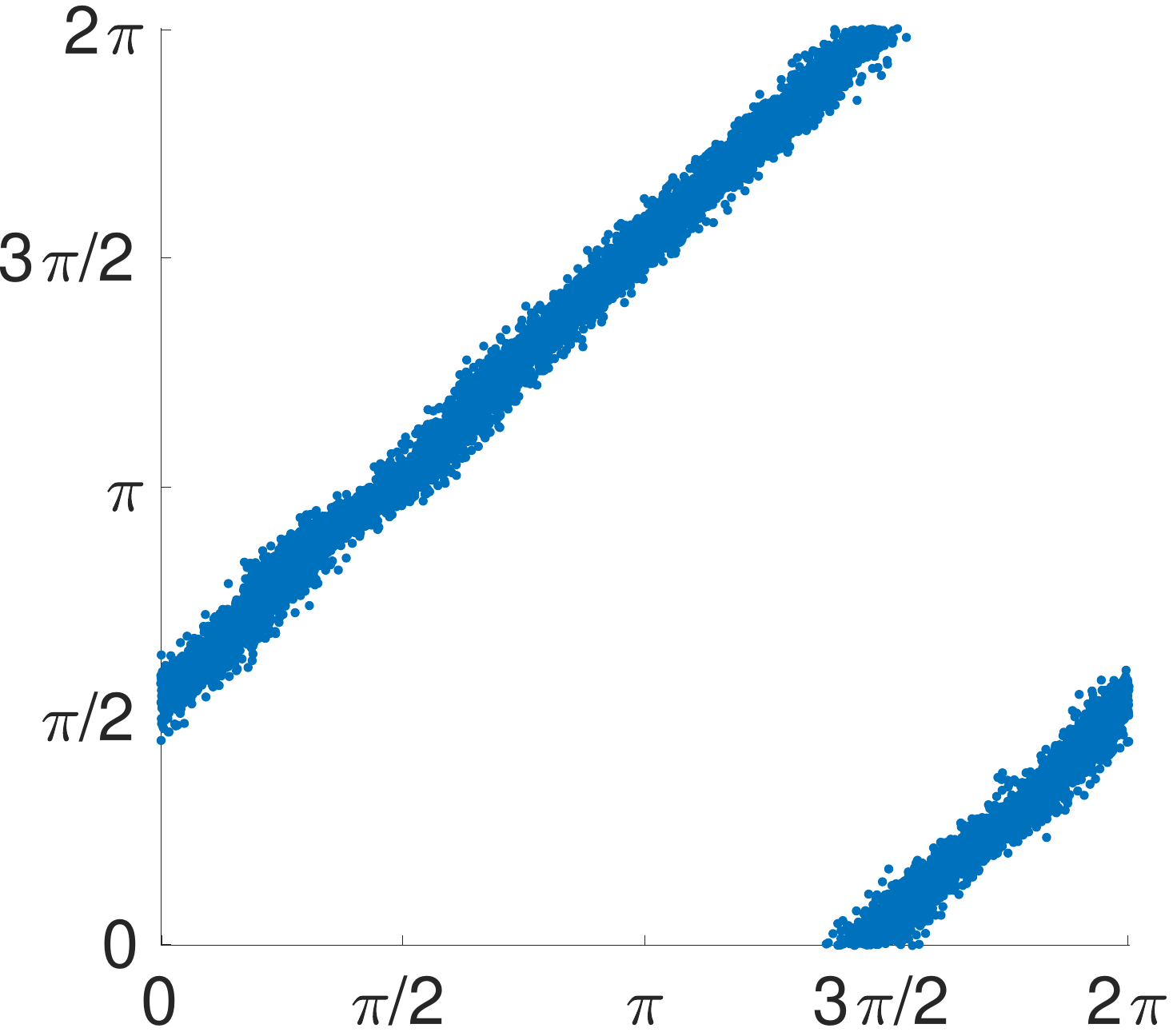}
	\\\vspace{10pt}
	\includegraphics[width=4cm]{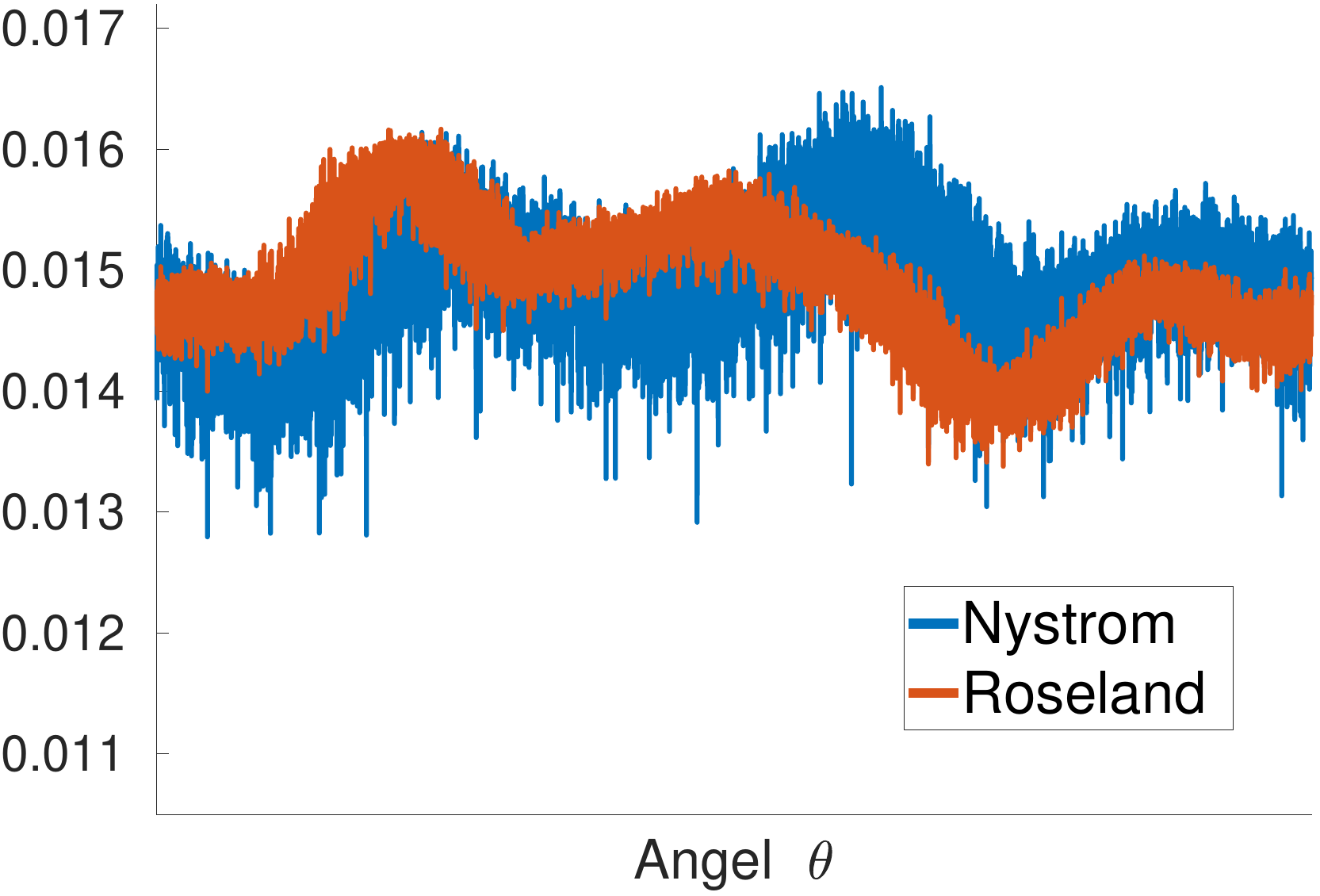}
	\includegraphics[width=4cm]{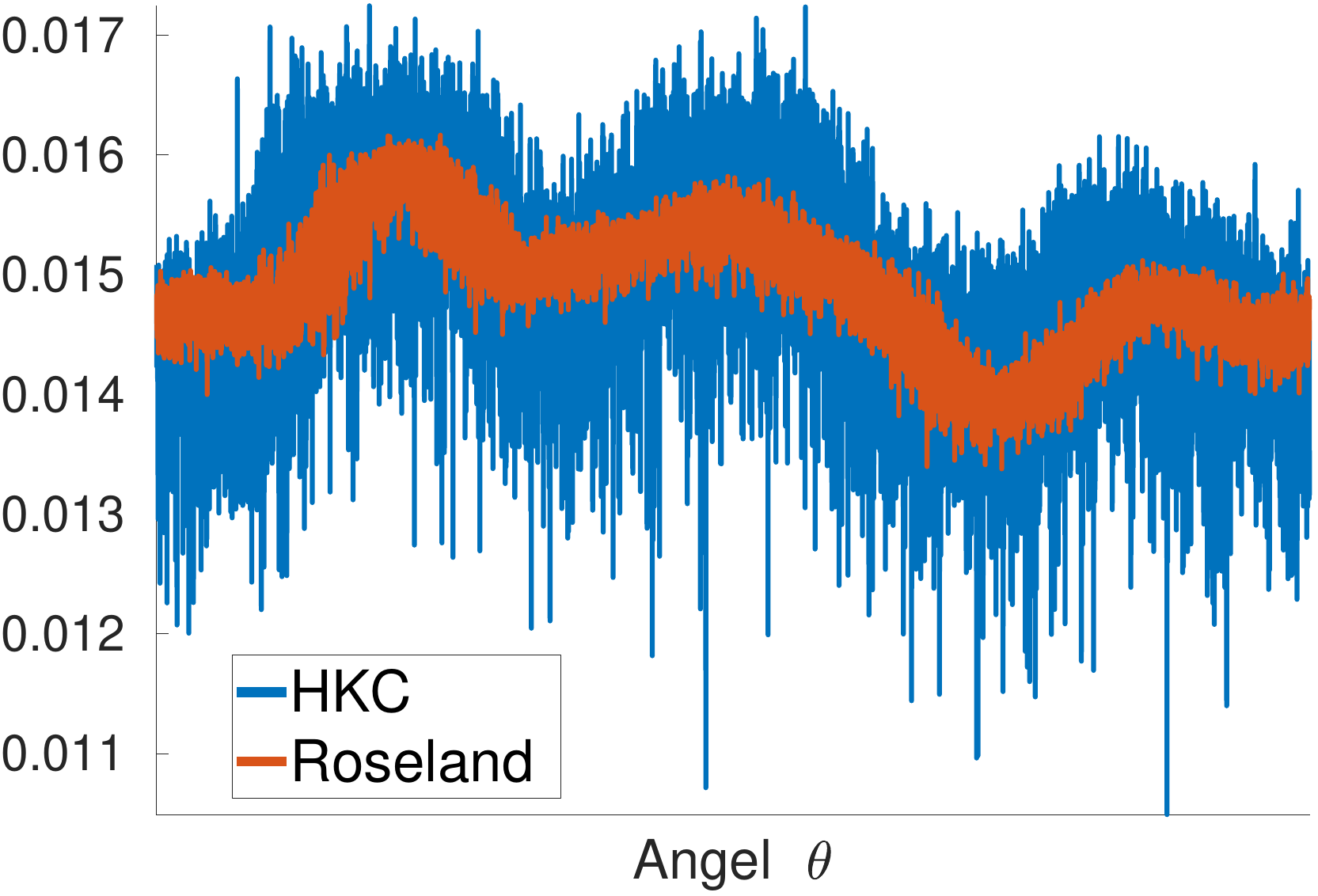}
	\vspace{-5pt}
	\caption{Noisy data set and subset. Top left: the phase of the embedding by Roseland. Top middle: the phase of the embedding by HKC. Top right: the phase of the embedding by the Nystr\"om extension.  
		Bottom left: the amplitude of the embedding by Roseland and the Nystr\"om. 
		Bottom right: the amplitude of the embedding by Roseland and HKC. 
		The phase of the embedding is determined by $\arctan(v_1(i)/v_2(i))$ and the amplitude is determined by $\sqrt{v_1(i)^2+v_2(i)^2}$, where $v_1$ and $v_2$ are the first non-trivial eigenvectors determined by the Nystr\"om extension or Roseland.
		\label{S1noisy-noisy_eigenvector_phase}}
\end{figure}

\subsection{Geometric structure recovery -- Geodesic distance estimation}

We now show that if we want to recover DM from the aspect of geodesic distance estimation, then Roseland outperforms the Nystr\"om method. Since HKC is not designed for this purpose, we do not compare it here. For a fair comparison, the subset used in the Nystr\"om extension is the same as the landmark set used in Roseland.
First, we describe the comparison methodology.
\begin{itemize}
	\item Uniformly and independently sample $2,500$ points from $S^{1}$ as the dataset. Uniformly and independently sample another $m=n^{\beta}$ points from $S^{1}$ as the landmark set for Roseland. In this experiment, we chose $m=50$, where $\beta=0.5$.
	\item Fix $K\in \mathbb{N}$. Denote $\{x_{i}\}_{i=1}^{2,500}$ to be the dataset, and denote $\{y_{i}\}_{i=1}^{2,500}\subset \mathbb{R}^l$ to be the embedded dataset, where $l\in \mathbb{N}$ is the dimension of Roseland. Denote $x_i^{(K)}$ (resp. $y_i^{(K)}$) to be the $K$-th nearest neighbor of $x_i$ (resp. $y_i$). The relative errors of the geodesic distance between $x_i$ and its $K$-th nearest neighbor is calculated by 
	\begin{equation}
	\frac{\big|D_t(x_{i},x_{i}^{(K)}) - d(x_{i},x_{i}^{(K)})\big|}{d(x_{i},x_{i}^{(K)})}\,, 
	\end{equation}
	where $d(x_{i},x_{i}^{(K)})$ is the ground truth geodesic distance between $x_{i}$ and $x_{i}^{(K)}$, and $D_t(x_{i},x_{i}^{(K)})$ can be DD determined by DM, Roseland, or the Nystr\"om extension.
	\item We compare the relative errors of the geodesic distance between $x_i$ and its $K$-th nearest neighbor by embeddings from DM, the Nystr\"om method and Roseland.
\end{itemize}

The results are shown in Figure \ref{geo_recov_50}. Clearly, the geodesic distance can be well recovered by Roseland, but the Nystr\"om method is limited. This result is more dramatic if we use the ground truth eigenvalues to estimate DD. This result indicates that it is critical to recover the eigenvalues.
The result is not surprising from the theoretical standpoint. Note that it is shown in \cite{portegies2016embeddings} that one can obtain an almost isometric embedding of the manifold as long as one has enough number of eigenvalues and eigenfunctions of the Laplace-Beltromi operator. Specifically, let $\varepsilon>0$ be any tolerable error given, then there exists some $t_{0}$, which depends on the manifold's intrinsic dimension, Ricci curvature, injectivity radius and $\varepsilon$ such that for all $0<t<t_{0}$, there is a $N_{E}$ depends on the manifold's intrinsic dimension, Ricci curvature, injectivity radius, volume, $\varepsilon$ and $t$ such that if $N>N_{E}$, the spectral embedding
\begin{align}
x\mapsto 2t^{(d+2)/4}\sqrt{2}(4\pi)^{d/4} \begin{bmatrix}e^{-\lambda_{1}t}\phi_{1}(x)&\ldots& e^{-\lambda_{N}t}\phi_{N}(x)\end{bmatrix}^\top
\end{align}
is almost isometric with the error controlled by $\varepsilon$, where $\lambda_{i}$ and $\phi_{i}$ is the $i^{\textup{th}}$ eigenpair of the Laplace Beltromi operator of the manifold. The above theorem essentially says that we need to recover enough eigenfunctions if we want to have an accurate geodesic distance estimate by DD. Since the diffusion structure is not taken into account in the Nystr\"om extension, it is limited in recovering higher order eigenfunctions. On the other hand, Roseland preserves the diffusion property, and hence the eigenfunctions of the Laplace-Beltrami operator.

\begin{figure}[bht!]\centering
	\includegraphics[width=6cm]{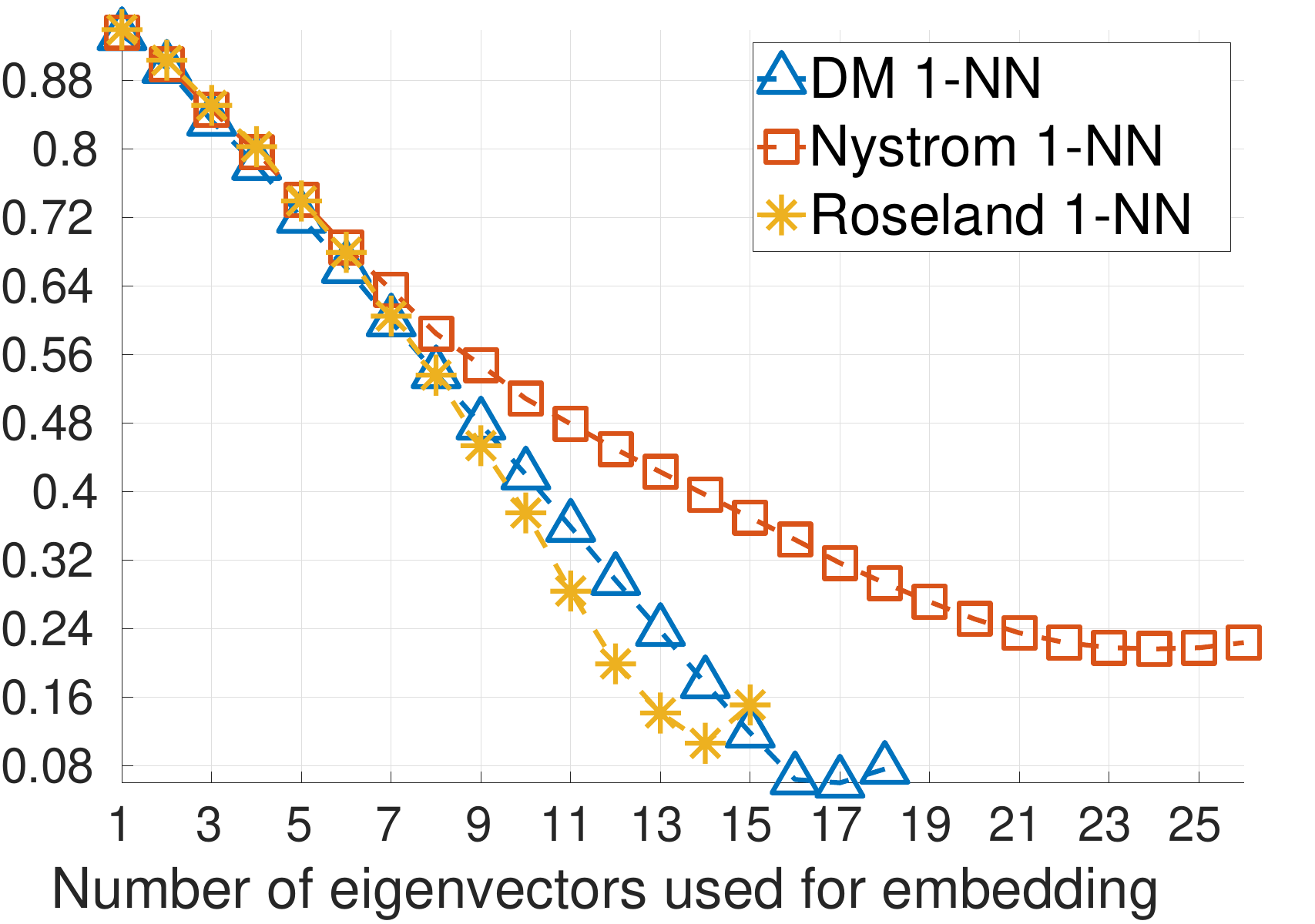}
	\includegraphics[width=6cm]{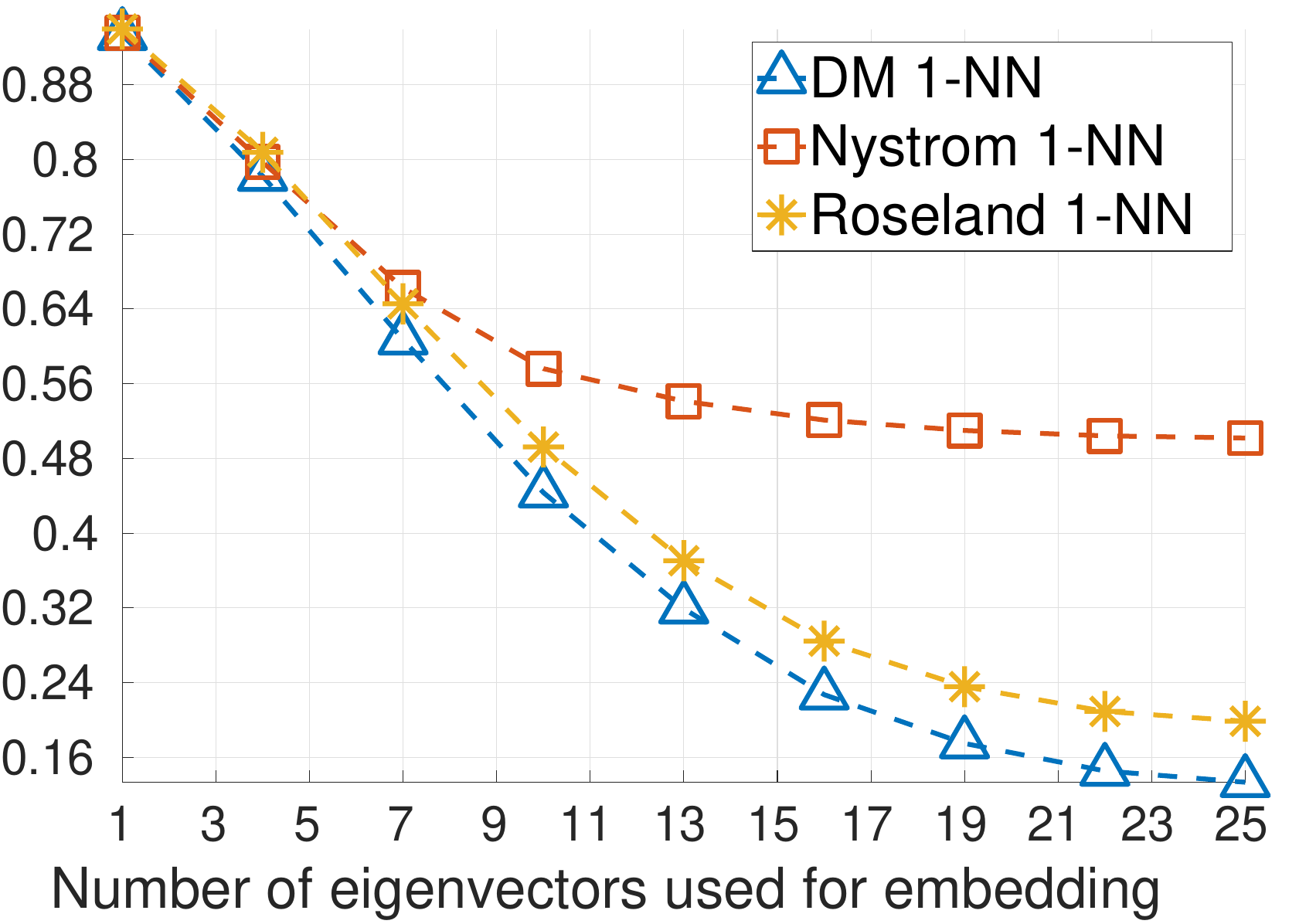}
	\caption{Subset size = 50, so $\beta=0.5$. 
		Left: relative errors of geodesic recovery by DM, the Nystr\"om extension and Roseland using their own eigenpairs. Right: relative errors of geodesic recovery by DM, the Nystr\"om extension and Roseland using their own eigenvectors and the ground truth eigenvalues. \label{geo_recov_50}}
\end{figure}

\subsection{Control non-uniform sampling by designing the landmark set}

Recall Remark \ref{remark: dm_thm} -- in Roseland, if we can design the sampling scheme for the landmark set so that $\frac{2\nabla p_{X}(x)}{p_{X}(x)}+\frac{\nabla p_{Y}(x)}{p_{Y}(x)}=0$, then we remove the impact of the non-uniformly sampling and recover the Laplace-Beltrami operator.  
The condition $\frac{2\nabla p_{X}(x)}{p_{X}(x)}+\frac{\nabla p_{Y}(x)}{p_{Y}(x)}=0$ suggests that we may want to sample the landmark set following the density function $p_{Y}(x)\propto\frac{1}{p^{2}_{X}(x)}$. 

To illustrate this fact, we use a dataset non-uniformly sampled from the canonical $S^{1}$ as an example. See Figure \ref{Figure:nonuniform-design} for the result. It is clear that if we do not design the landmark set, the first two nontrivial eigenfunctions estimated from Roseland are deviated from the ground truth. However, if the landmark set is well designed according to the developed theory, or could incorporate the background knowledge (like the HKC algorithm designed for the texture separation problem), we may better recover the desired ground truth. This preliminary result warrants a further consideration of this ``design problem'' in our future work.

\begin{figure}[bht!]\centering
	\includegraphics[width=2.9cm]{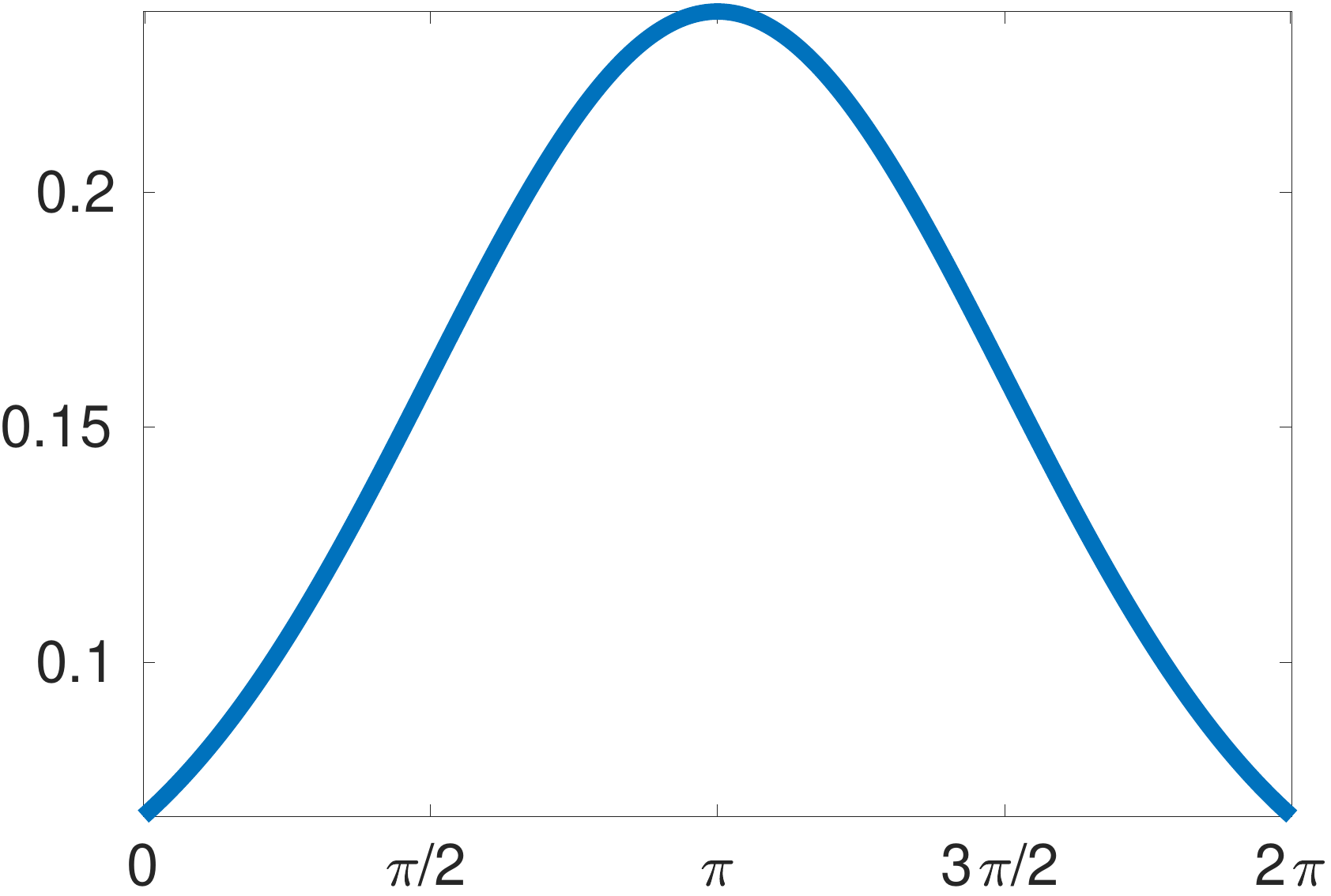}
	\includegraphics[width=2.9cm]{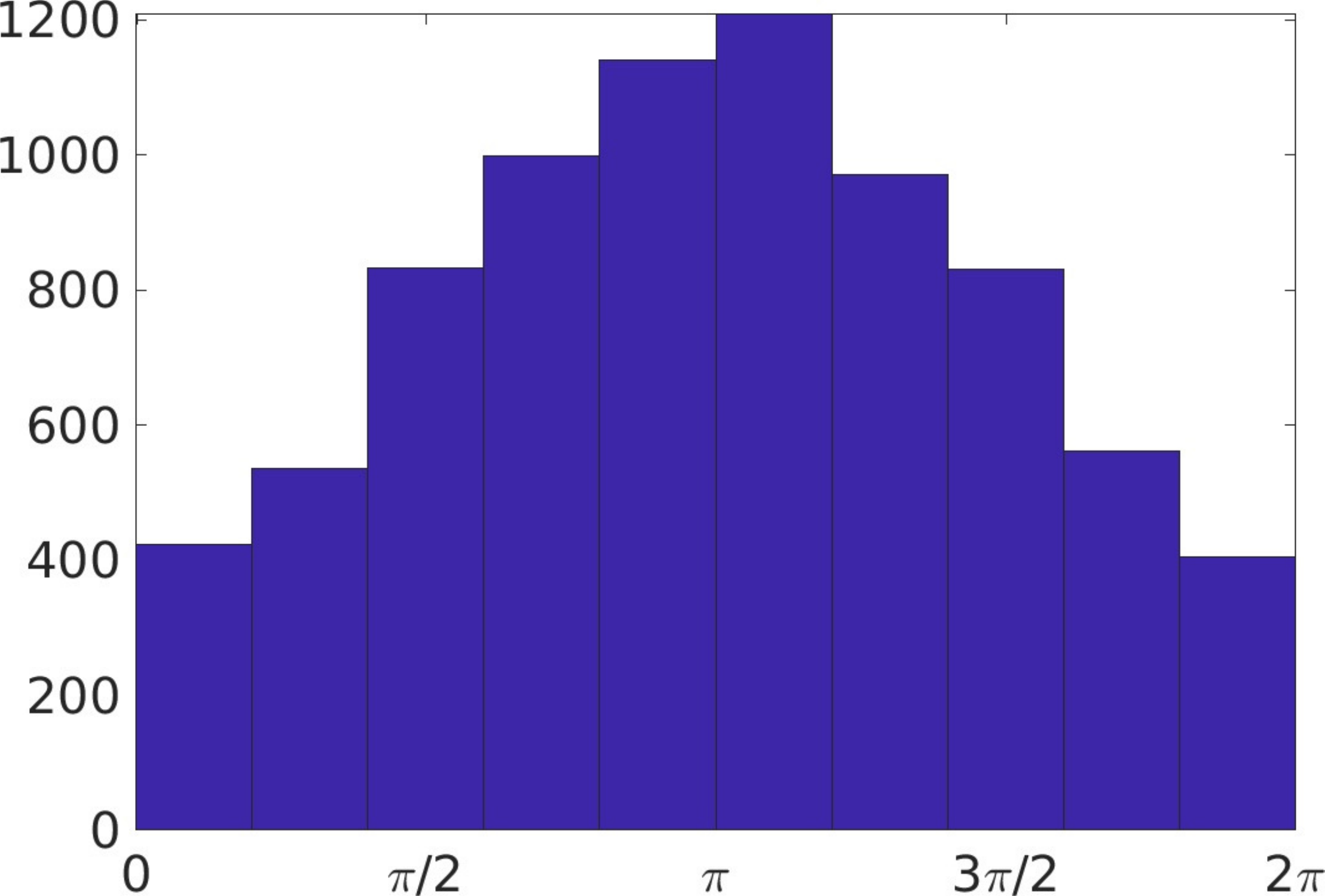}
	\includegraphics[width=2.9cm]{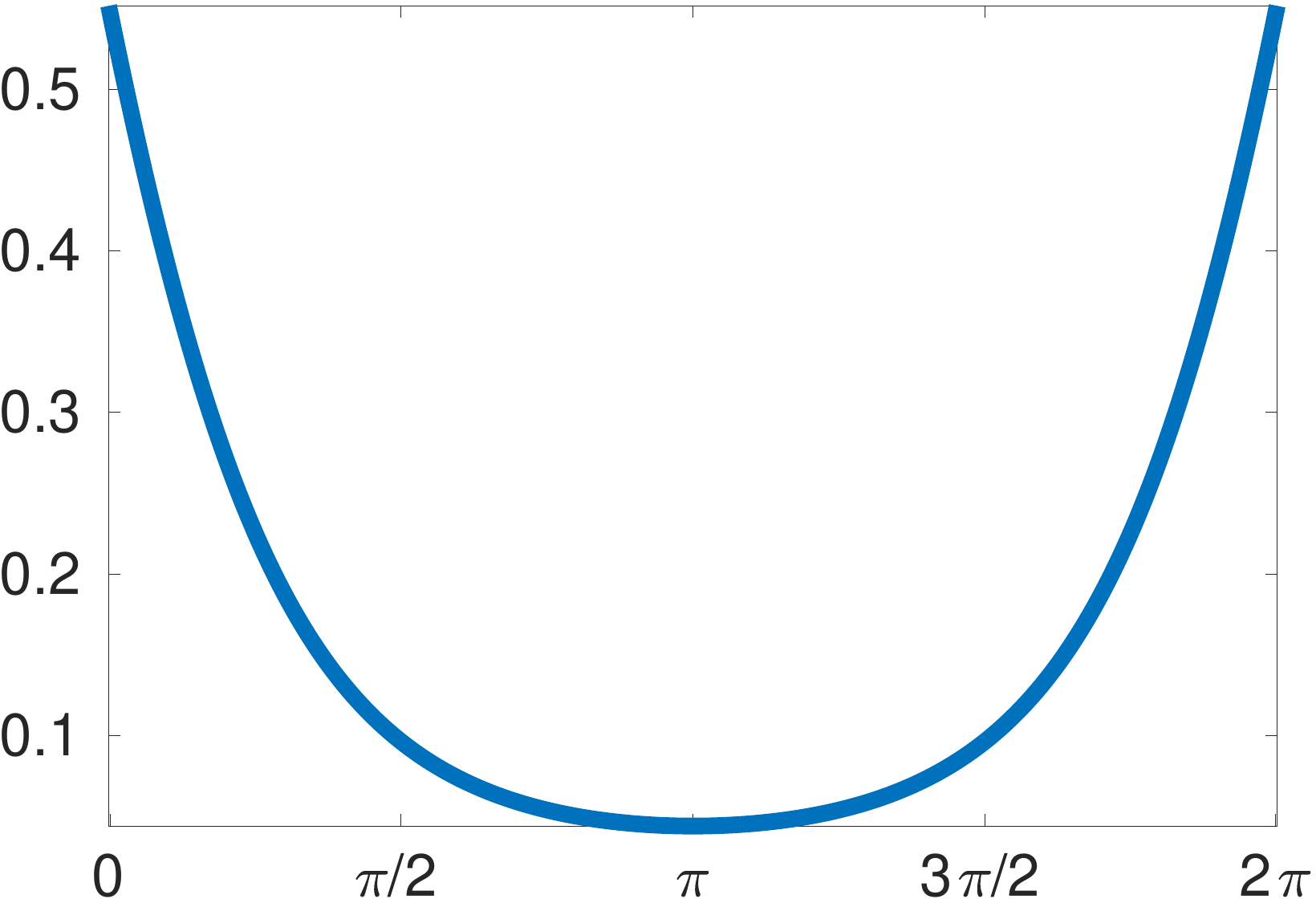}
	\includegraphics[width=2.9cm]{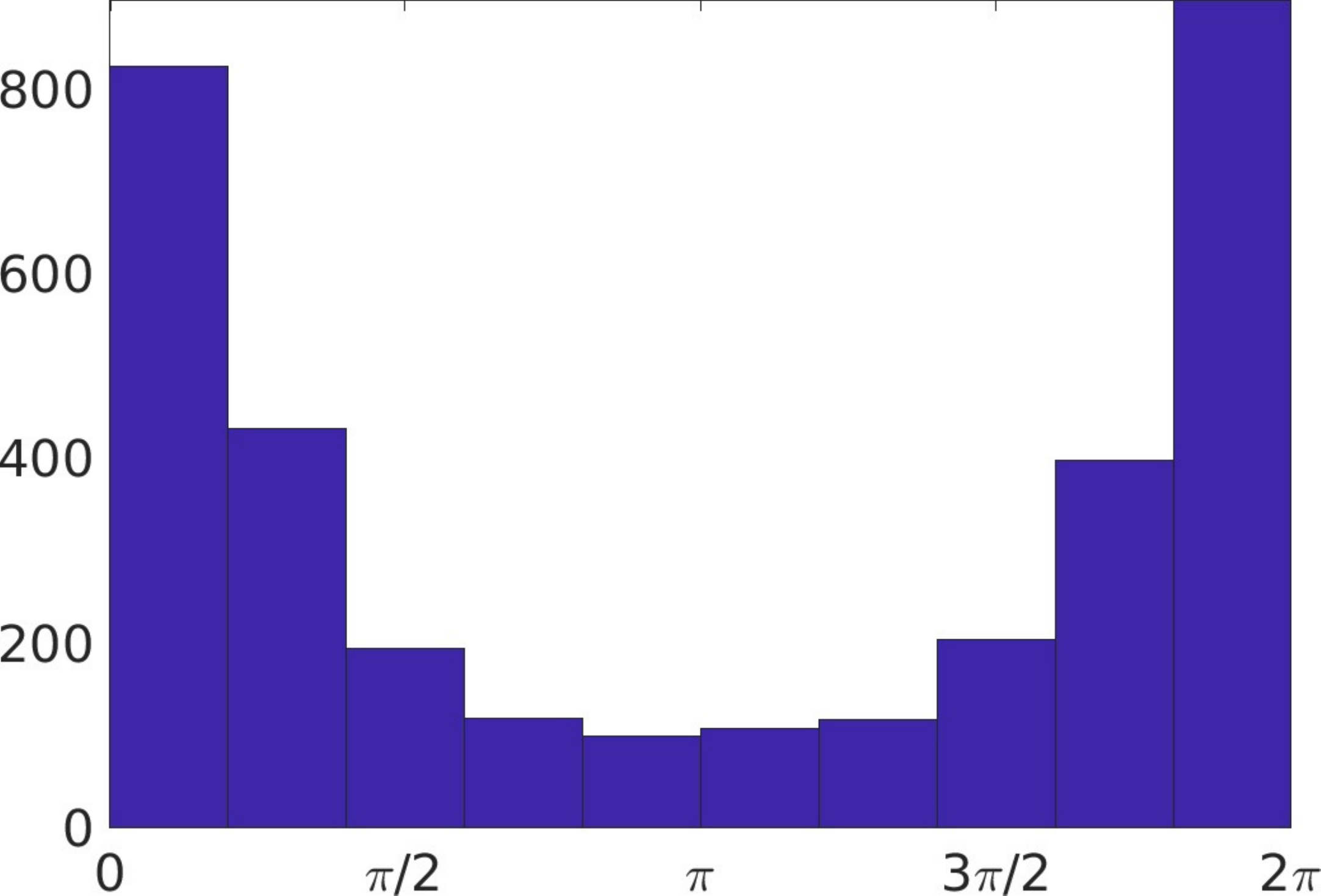}
	\includegraphics[width=2.9cm]{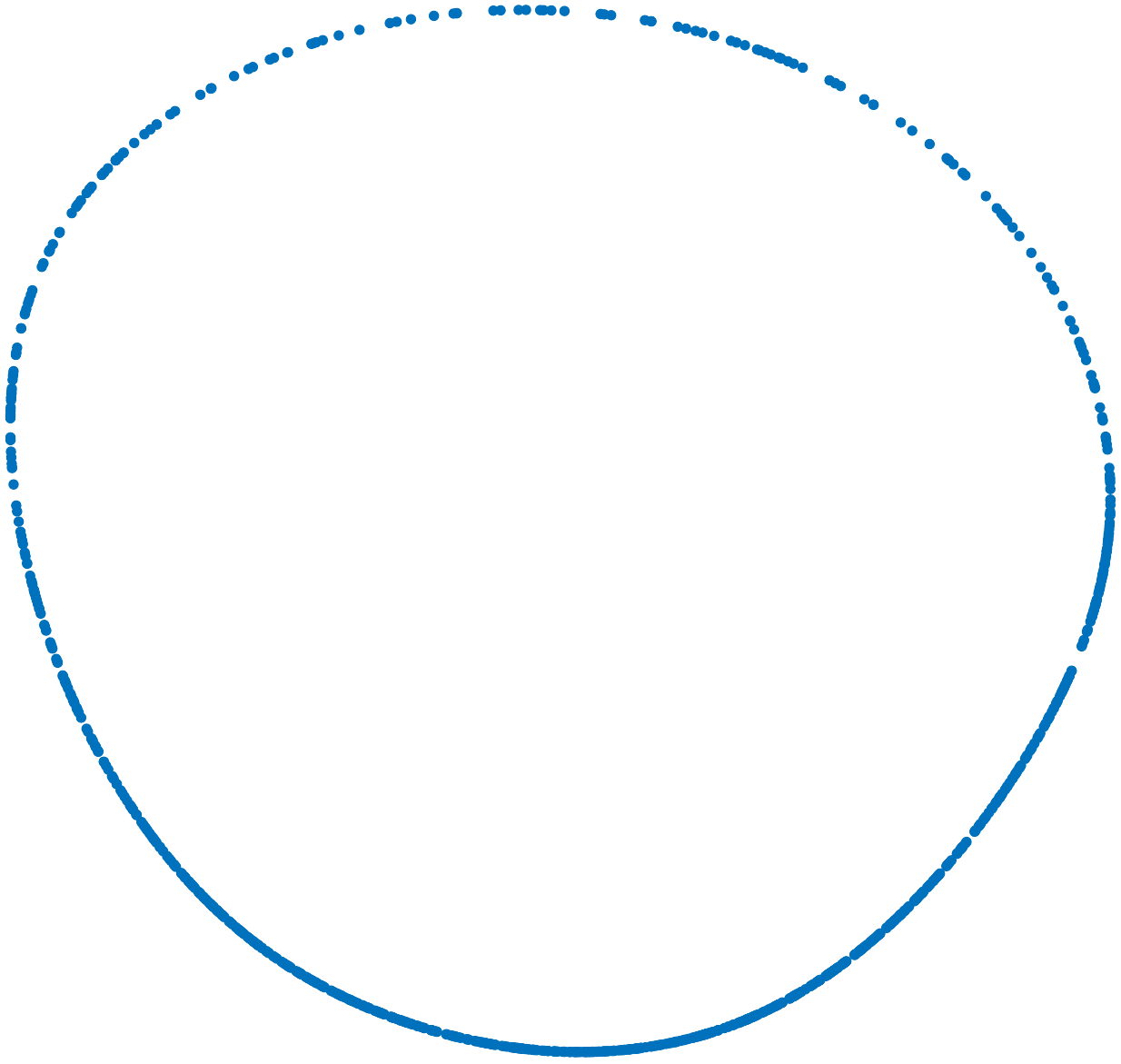}
	\includegraphics[width=2.9cm]{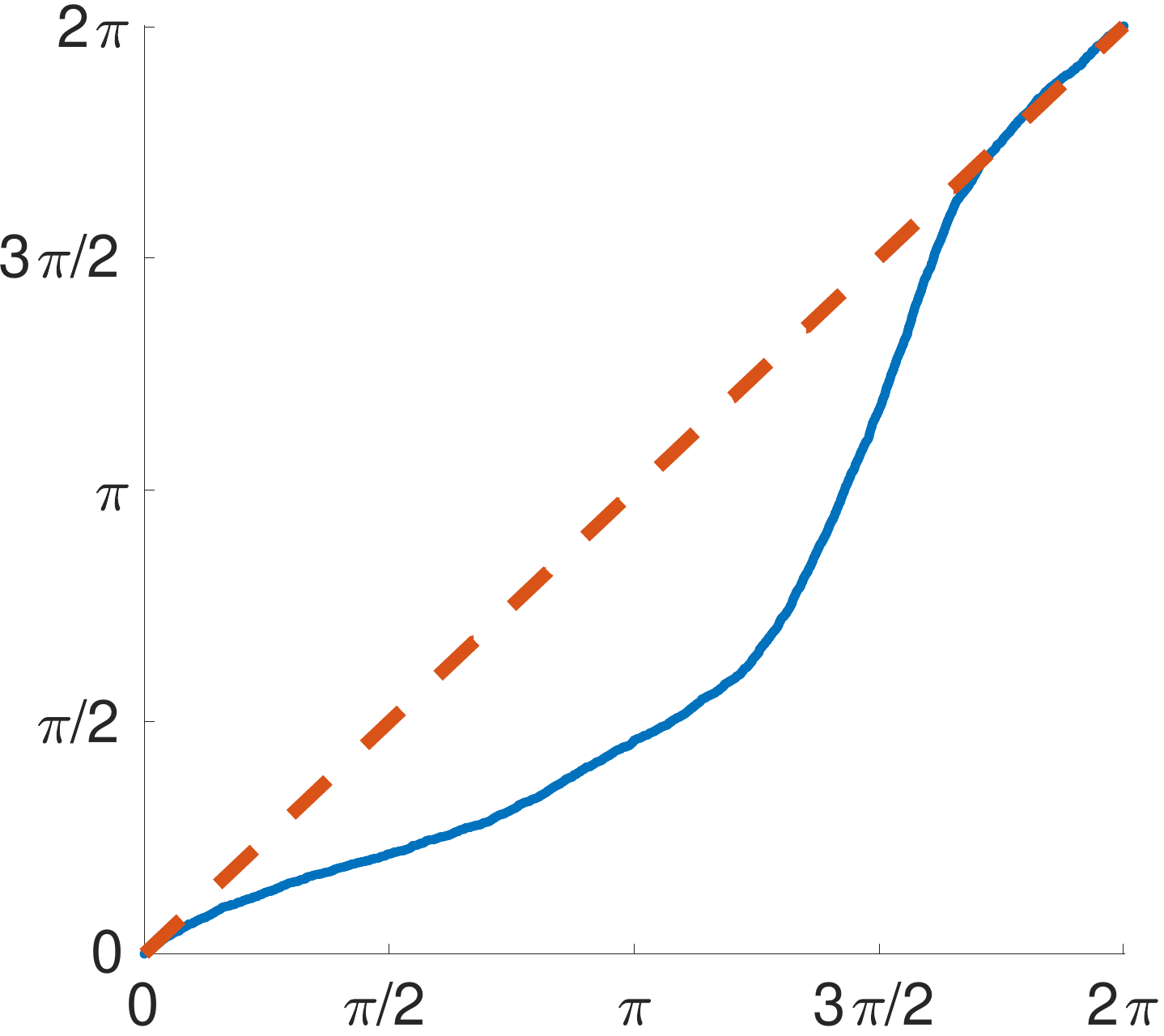}
	\includegraphics[width=2.9cm]{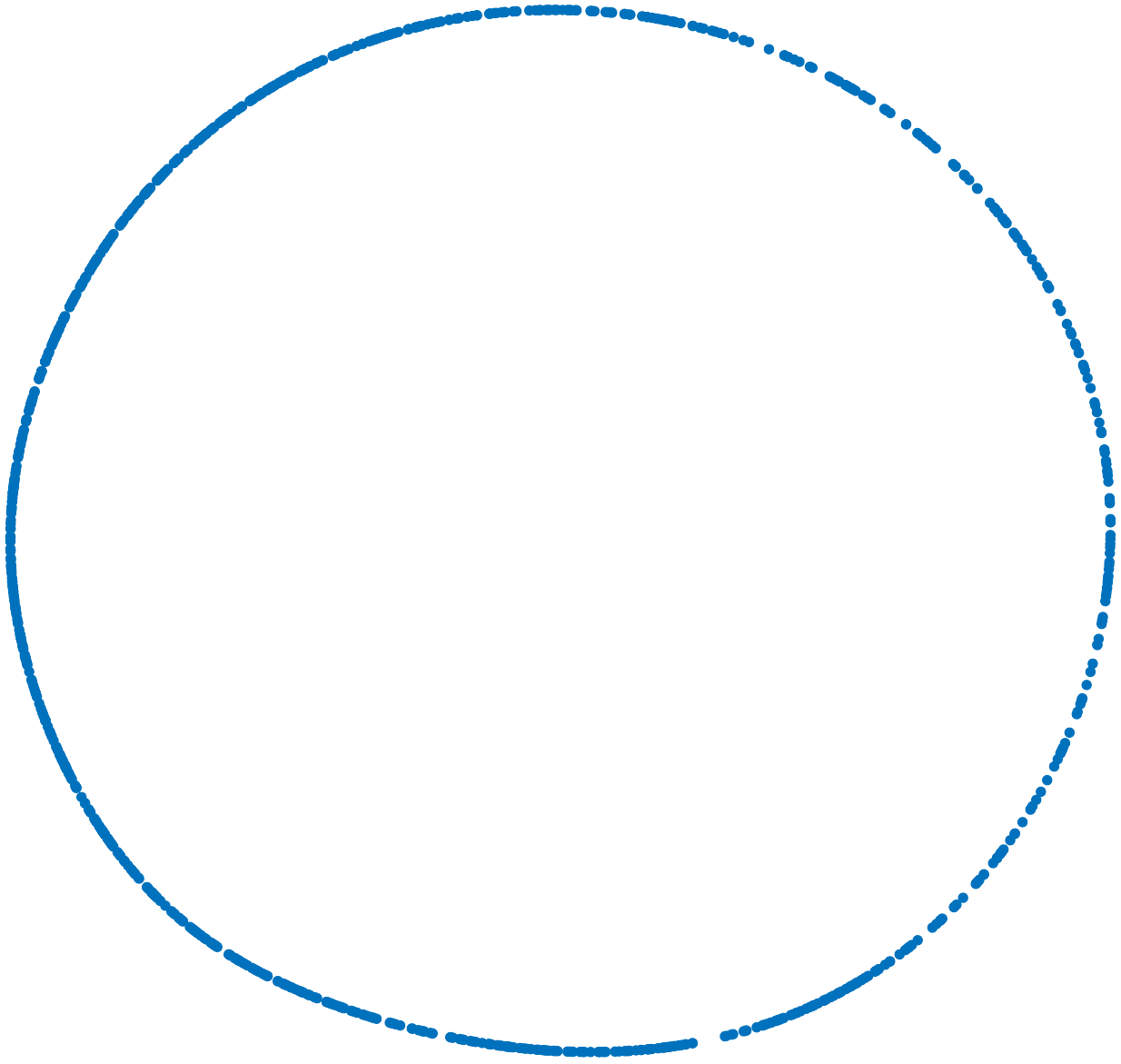}
	\includegraphics[width=2.9cm]{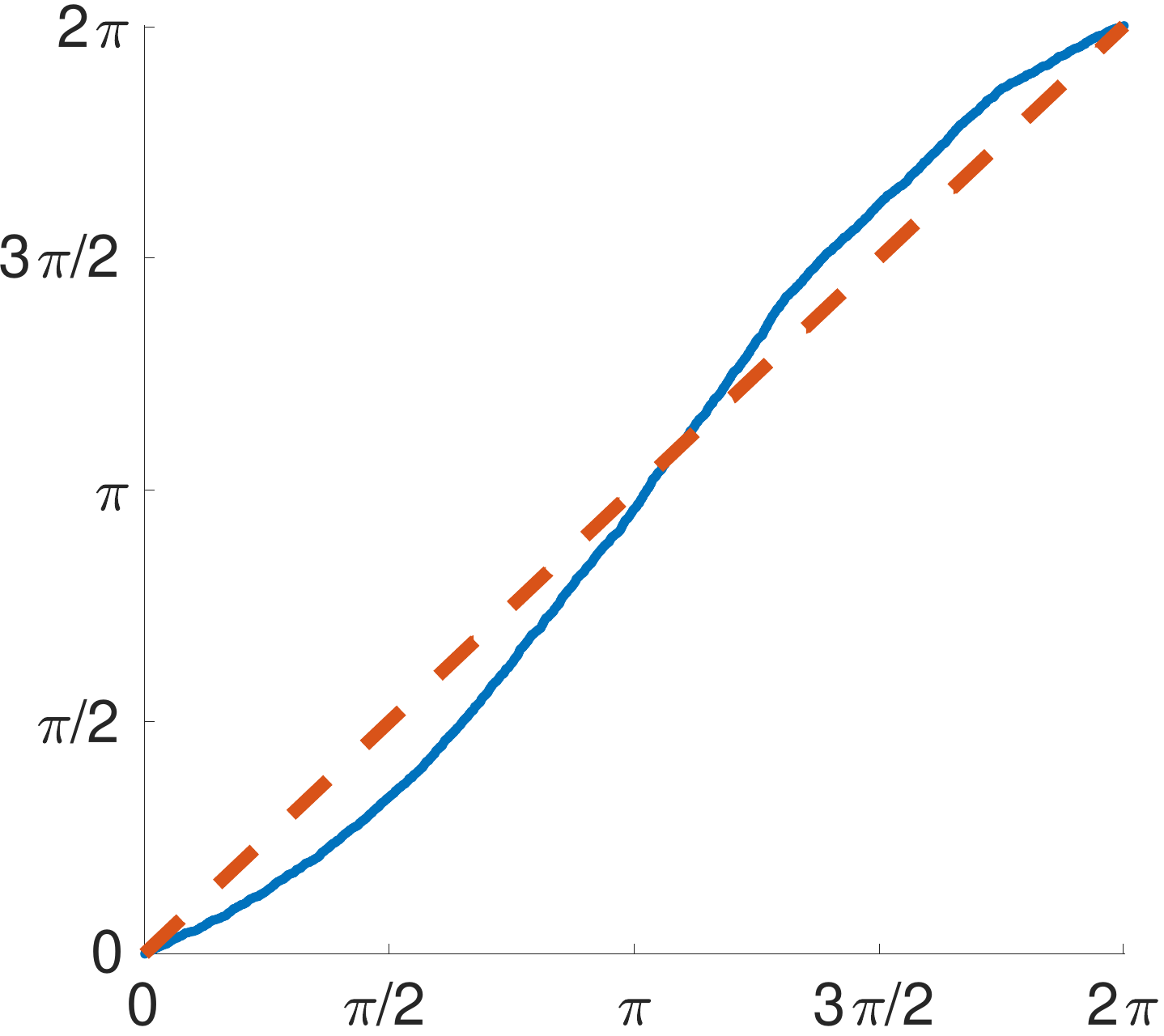}
	\caption{Top row: the left subplot shows the true p.d.f. $p(\theta)$ we use to sample points from $S^{1}$, which is parametrized by $\theta\in [0,2\pi)$, the left middle subplot shows the histogram of the sampled data, the right middle subplot shows the designed p.d.f. $q(\theta)$ that is proportional to $1/p^2(\theta)$, and the right subplot shows the histogram of the landmark set sampled $\theta$ according to $q(\theta)$. Bottom row: the left subplot is the embedding determined by Roseland, where the landmark set is chosen randomly, the left middle subplot shows the  scattering plot of the recovered angles from Roseland with the randomly chosen landmarks, where the x-axis is the estimated angles of $S^1$, and the y-axis is the associated true angles, the right middle subplot is the embedding determined by Roseland with the designed landmark set following the law of $q(\theta)$, and the right subplot is the scattering plot of the recovered angles from Roseland with the designed landmark set, where the x-axis is the estimated angles of $S^1$, and the y-axis is the associated true angles. \label{Figure:nonuniform-design}}
\end{figure}

\section{Discussion and conclusion}\label{discuss}

In this paper, we introduce a new algorithm based on the landmark set to accelerate the DM algorithm. In addition to providing a series of theoretical justification, we also provide a series of numerical examples to support the potential of the algorithm.

\subsection{Optimal variance control}\label{Discussion optimal variance convergence discrepancy}

Note that in the pointwise convergence, the variance is bounded by the large deviation, while the bound might not be the optimal one. We discuss this problem from a theoretical aspect and a numerical aspect. 
Recall the definition of the U-statistics in \eqref{definition U statistics}, which is a special case of (\ref{dep_sum}). \cite{hoeffding1994probability} proved that for all $t>0$, we have the following bound for the $r$-degree U-statistics:
\begin{equation}
\mathbb{P}(|U_{r}-\mathbb{E}(U_{r})|\geq t)\leq 2\textup{exp}\left(\frac{-nt^{2}}{r}\right).
\end{equation} 
Later, when $\sigma=\textup{Var}(h(X_{1},\ldots,X_{r}))$ is finite, \cite{arcones1995bernstein} refined it to a Bernstein-like inequalities; that is, for all $t>0$,
$$
\mathbb{P}(|U_{r}-\mathbb{E}(U_{r})|\geq t)\leq a\textup{exp}\left(\frac{-(n/r)t^{2}}{2\sigma^{2}+bt}\right),
$$ 
where $a,b$ are some constants. Note that this bound is not better than the one shown in Theorem \ref{bernstein} \cite{janson2004large}. We comment that since the U-statistic is a special form of (\ref{dep_sum}), we would expect that the convergence rate provided in Theorems \ref{hoeffding} and \ref{bernstein} is good enough since they are of the same order as those for the U-statistics. 
While showing the optimal bound, either order or constant, for the U-statistics or the more general form like (\ref{dep_sum}) is not the focus of this paper, we could numerically evaluate the quality of the bound provided in Theorems \ref{bernstein} \cite{janson2004large}. See Figure \ref{largedev} for a simulation, where we observe that the empirical convergence rate of $M\times N$ grid samples is ``faster'' than $M$ i.i.d. samples. This at least suggests that we may have a better constant in front of the convergence order term. We will explore this large deviation rate in our future work.

\begin{figure}[bht!]\centering
	\includegraphics[width=4.5cm]{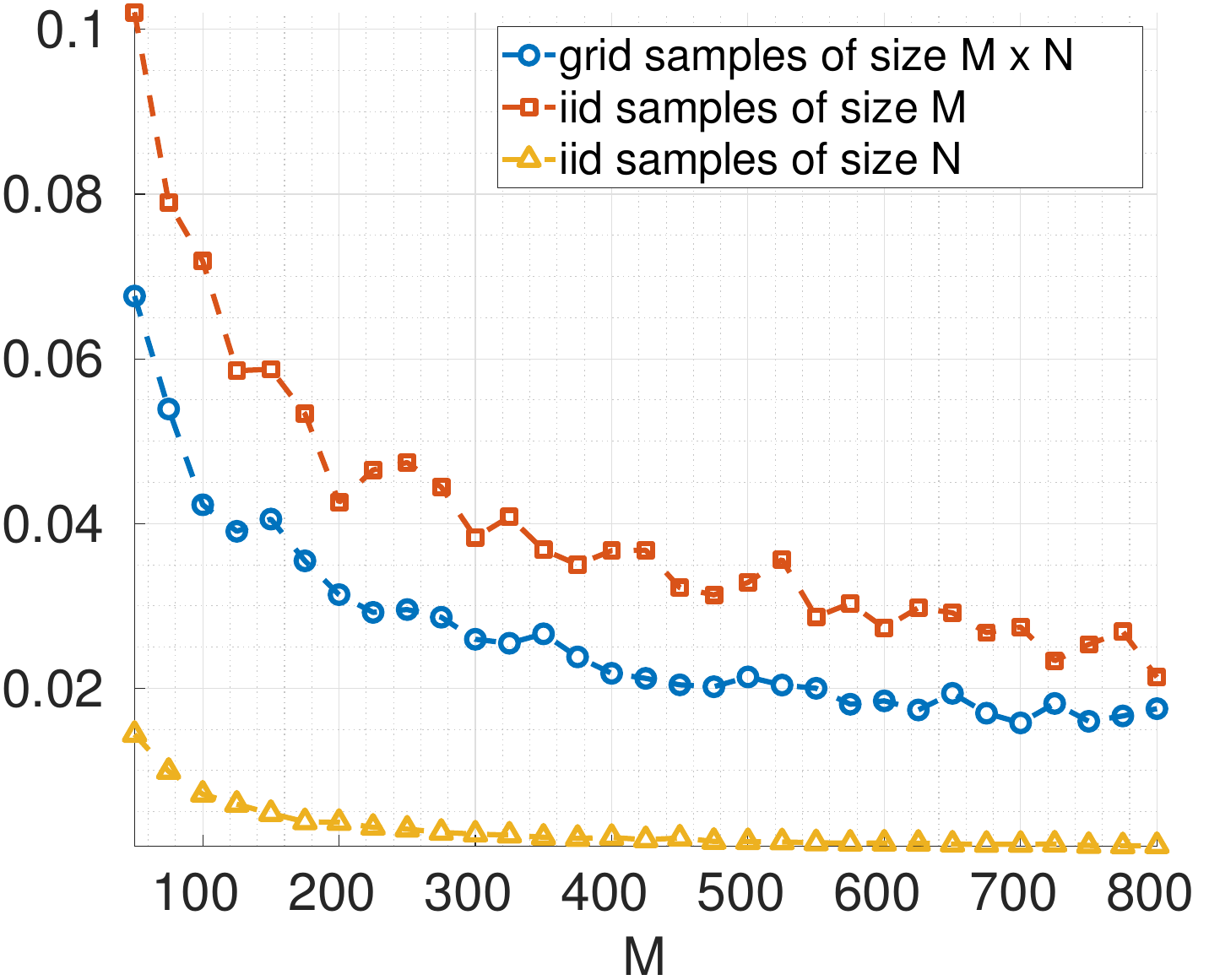}
	\includegraphics[width=4.5cm]{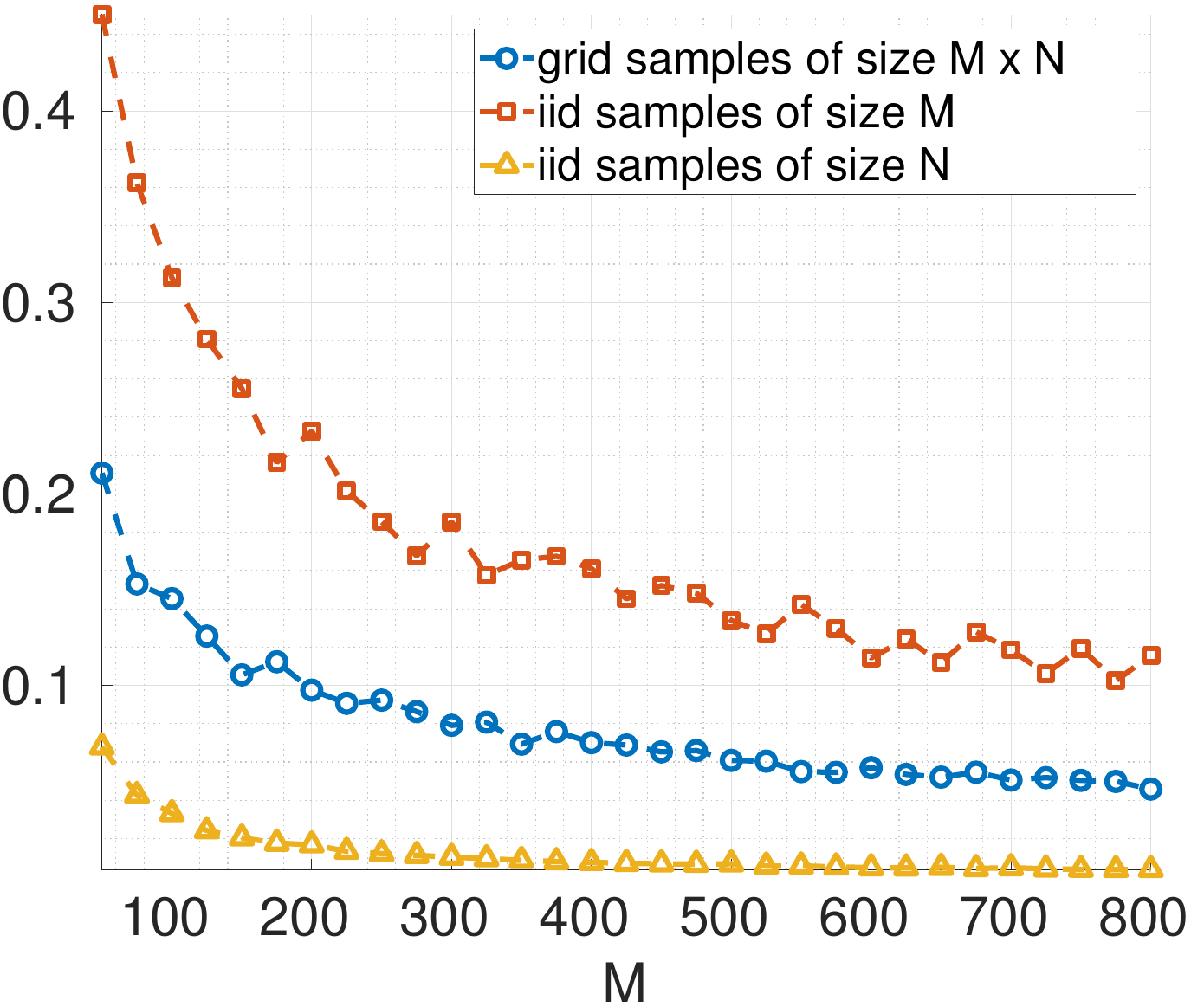}
	\caption{We run two simulations to evaluate the optimality of the convergence rate and plot the relative errors. In both cases, we let $M=N^{1/2}$ and compare the convergence of the empirical mean using i.i.d. $M$ samples, i.i.d. $N$ samples, and $N\times M$ grid samples. Left: $X,Y\sim U[0,6]$ and $f(X,Y)=XY$. Right: $X,Y\sim U[0,1]$ and $f(X,Y)=100e^{-100[(Y-0.5)^{2}+(X-Y)^{2}]}$.}\label{largedev}
\end{figure}

Another natural issue we shall further elaborate is the discrepancy raised in Remark \ref{Remark about the variance convergence rate}.  
{Ideally, when $m$ is big, like $\beta=1$, we would expect that 
\[
{K}_{\textup{ref},\epsilon,n}(x_i,x_j)=\frac{1}{m}\sum_{k=1}^{m}\epsilon^{-d/2} K_{\epsilon}(x_{i},y_{k})K_{\epsilon}(y_{k},x_{j})\approx  \int_M \epsilon^{-d/2} K_{\epsilon}(x_{i},y)K_{\epsilon}(y,x_{j})p_Y(y)dV(y)={K}_{\text{ref},\epsilon}(x_i,x_j)
\]
where ${K}_{\text{ref},\epsilon}:M\times M\to \mathbb{R}$ is a kernel with the bandwidth of order $\sqrt{\epsilon}$. See \eqref{expansion landmark kernel} and \eqref{expansion landmark kernel2} for the behavior of $K_{\text{ref},\epsilon}$. If this approximation is accurate when $m$ is large, then Roseland can be understood and analyzed as the vanilla DM with $\alpha=0$ and the landmark-kernel ${K}_{\text{ref},\epsilon}$. If this holds, we would expect the variance of Roseland to be the same as that of DM with $\alpha=0$ shown in \eqref{alpha=0 DM error bound}. However, it is not the case according to our theorem. This counterintuitive finding thus needs some discussion. First, the finite approximation of $K_{\text{ref},\epsilon}$ is not accurate with a non-negligible variance even when $m=n$. Indeed, with probability greater than $1-O(m^{-2})$, for all $x_i$ and $x_j$ the finite approximation satisfies
\begin{equation}
\frac{1}{m}\sum_{k=1}^{m}\epsilon^{-d/2} K_{\epsilon}(x_{i},y_{k})K_{\epsilon}(y_{k},x_{j})={K}_{\text{ref},\epsilon}(x_i,x_j)+O\left(\frac{\sqrt{\log(n)}}{n^{\beta/2}\epsilon^{d/4}}\right)\,.\label{kernel ref convergence finite variance}
\end{equation}
See Section \ref{Roseland kernel discrepancy proof} for details.
While it is intuitive that this non-negligible variance might be canceled when we continue to analyze Roseland with this approximation,
however, this cancelation is not as efficient as expected due to the dependence. This dependence has been detailed in the discussion of the variance analysis of Roseland  shown in Section \ref{grid_samp_def}. 
Indeed, even if $\beta=1$, if we traced the proof of the variance analysis stated in Theorem \ref{variancethm}, we know that the ``slower'' convergence rate incurred by the worse variance $\mathcal{O}\big(\frac{\sqrt{\log(n)}}{n^{1/2}\epsilon^{d/2-1/2}}\big)$ comes from Step 2 in the proof, where we have a double integral but not a single integral like that in the DM analysis. 
As a result, the variance shown in \eqref{Proof Variance Theorem key variance quantity}, which is slower than that of DM with $\alpha=0$ by $\epsilon^{-d/4}$ even if $\beta=1$. We shall also mention that if we are able to take $m$ to be much larger than $n$, (e.g. $\beta>1$), then this variance is better controlled, and ``ideally'' the variance of Roseland is the same as that of DM with $\alpha=0$. But this is not the region we are concerned with in practice, so we do not pursue the related convergence result in this paper. 
To provide more insights,}
in Figure \ref{m=n figures}, we show a numerical result for an illustration. {The experiment is set as follows. We first i.i.d. uniformly sample $n$ points from $S^1$ as the dataset, and  i.i.d. uniformly sample $m=n$ data points from $S^1$ as the landmark set, which is independent of the dataset. Then we run DM on the dataset ($n$ points) only, and run Roseland on the dataset and landmark set ($2n$ points in total). Since we know the true eigenfunctions and eigenvalues of $S^1$, we are able to compute the $L^2$ and $L^\infty$ error of eigenvectors and absolute error of eigenvalues. We see that even when $m=n=2,500$, the performance of Roseland is ``slightly'' worse than that of DM. This indicates that even if we sacrifice the computational efficiency by setting $m$ to be as large as $m=n$, we do not gain benefit from Roseland compared with the vanilla DM. Note that in practice, if we have that many points, we would prefer to shift some points in the landmark set to the dataset to improve both the computational efficiency and accuracy.} 


\begin{figure}[bht!]\centering
	\includegraphics[width=7cm]{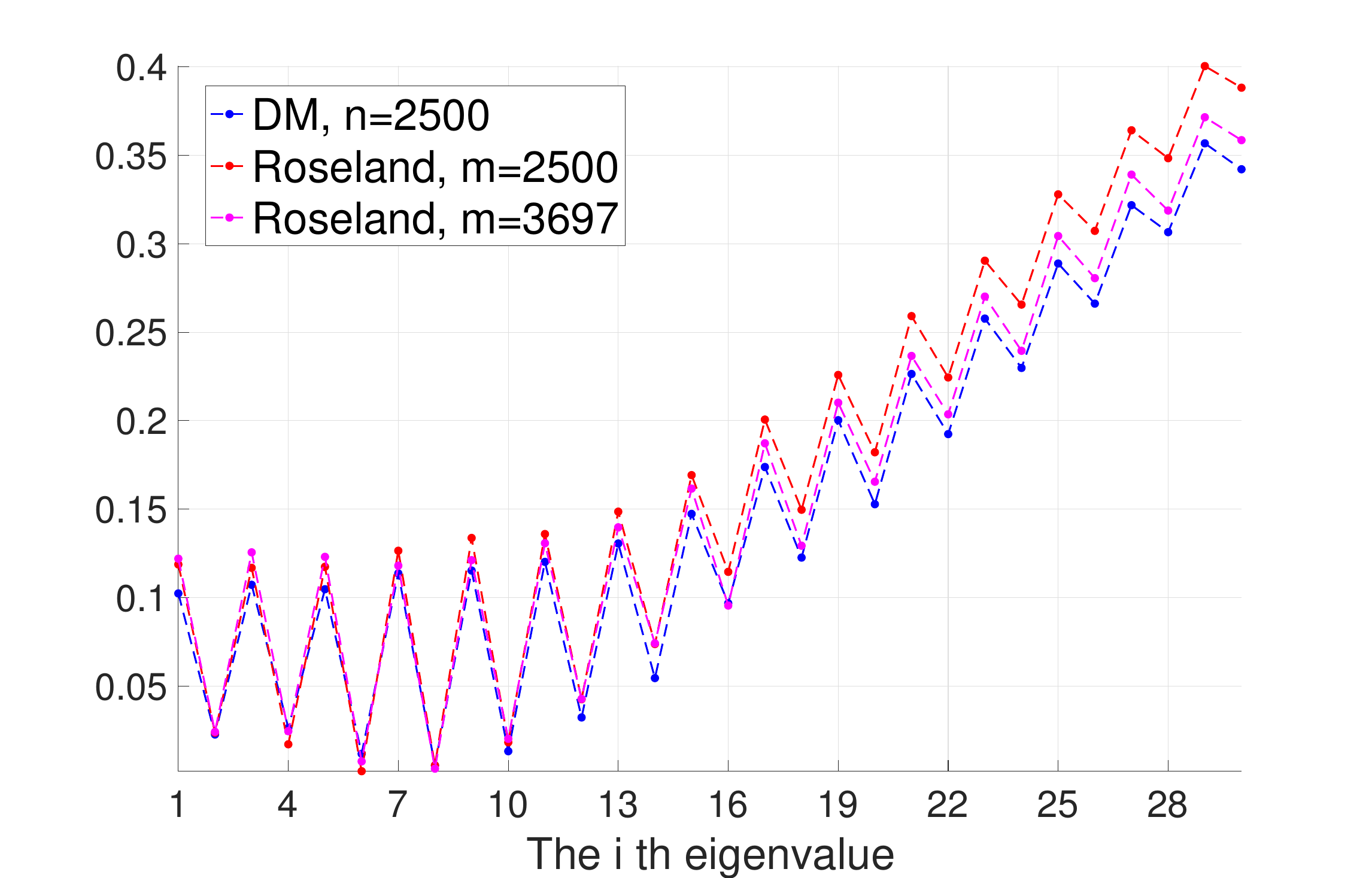}
	\includegraphics[width=7cm]{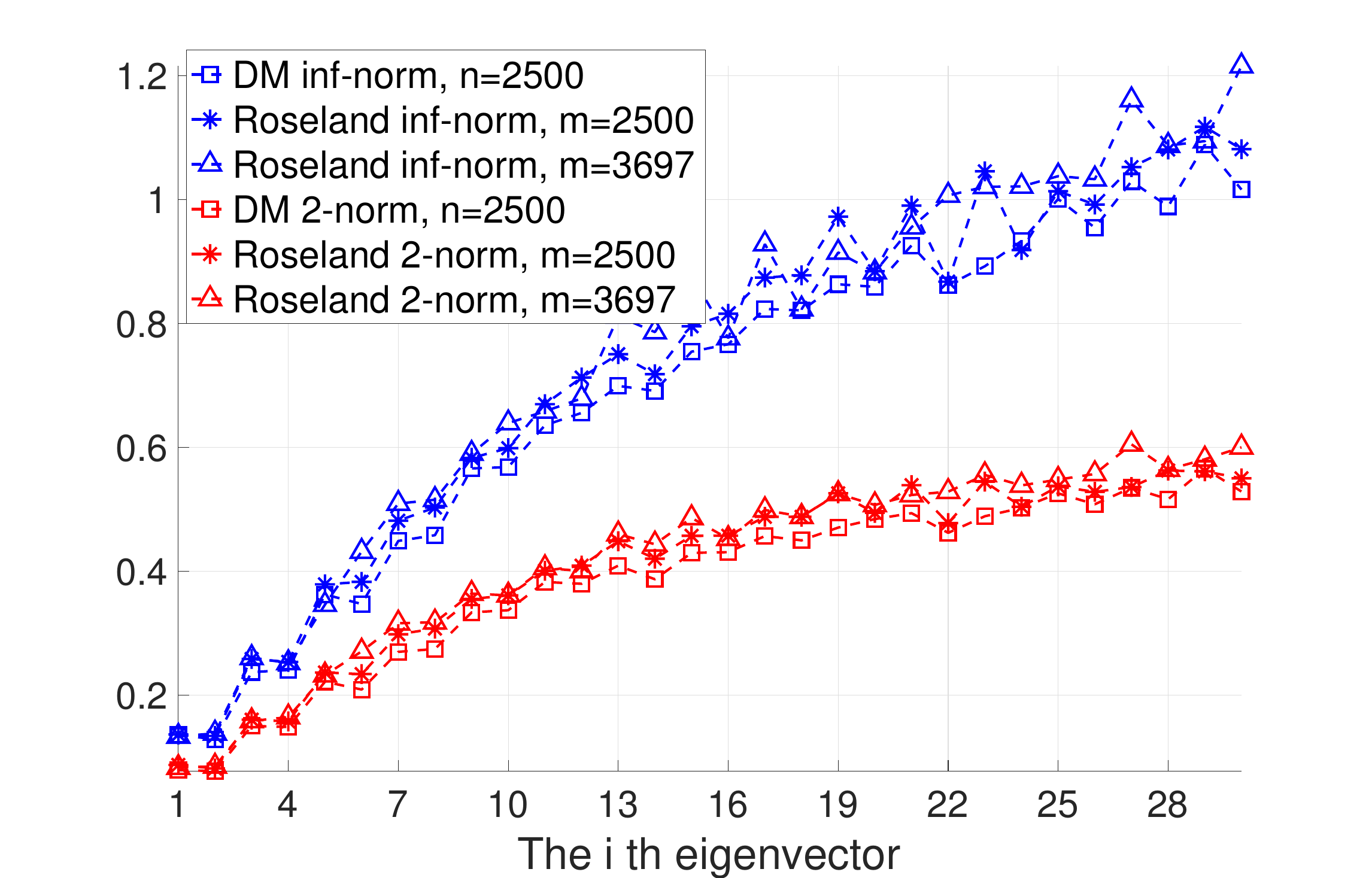}
	\caption{{ The dataset is uniformly sampled from $S^1$. We compare DM with $n=2,500$, Roseland with $n=2500$ and $m=n=2500$ landmarks, also Roseland with $n=2500$ and $m=3,697\approx n^{1.05}$ landmarks. Left column: the relative errors of eigenvalues; right column: the relative errors of eigenfunctions. Note that even when $m=n$, the performance of Roseland is worse than that of DM. The performance increases when we increase the landmarks, but it is still not as good as DM.}}\label{m=n figures}
\end{figure}

\subsection{Optimal spectral convergence rate}

Another relevant topic we need to discuss is spectral convergence. To the best of our knowledge, there have been several papers studying the spectral convergence rate, for example \cite{von2008consistency,wang2015spectral,shi2015convergence,trillos2018error,calder2019improved}, but the ``optimal'' spectral convergence rate is not yet known. Among these papers, our approach and setup are closest to \cite{von2008consistency,wang2015spectral}, where we studied the connection between the graph laplacian and the heat kernel, and utilized the covering number of the kernel function. In \cite{shi2015convergence}, the authors consider a different setup. In \cite{trillos2018error,calder2019improved}, the setup is similar, but the approach is different. The optimal transport approach considered in \cite{trillos2018error} and an improvement in \cite{calder2019improved} both have a potential to be combined with the analysis strategy considered in this paper. See \cite{DunsonWuWu2019} for more discussion. It is interesting to ask if we are able to determine the ``optimal'' spectral convergence rate under the manifold setup, and design an algorithm to achieve it. 
Last but not the least, we shall mention that without the knowledge of ``optimal spectral convergence rate'', we are not able to select bandwidth to recover the spectral structure. In our numerical simulation, we provide a fair comparison based on a naive bandwidth setup. However, we consistently find that if we select a smaller bandwidth for Roseland, then the eigenvalue reconstruction is better, while the Nyst\"om extension fails. 
To sum up, this topic is critical to further understand the algorithm and for the purpose of statistical inference, and we will report our exploration in our future work.

\section*{Acknowledgments}
Chao Shen thanks Xiucai Ding and Nan Wu for various discussion of the topic.
{The authors would like to thank the anonymous reviewers their constructive and helpful comments that improve the overall quality of this paper.}

\section*{Data Availability statement}

No new data were generated or analyzed in support of this review

\bibliographystyle{plain}
\bibliography{mybibfile}



\section{More relevant work}\label{Appendix:MoreRelatedWork}

There are some works focusing on accelerating the spectral clustering. Since they are related to our focus, we summarize them here. 
In \cite{pham2018large}, the authors propose to use a smaller subset as landmarks, and take the co-clustering idea into account for the spectral clustering purpose. The landmarks are chosen by uniform sampling or by the k-means clustering. Although this landmark idea is the same to our proposed algorithm at the first glance, they are different since the affinity matrix between data and landmarks are normalized differently. Specifically, the authors normalize the affinity matrix by both row and column sums. This approach can thus be classified as the third category. 
In \cite{yan2009fast}, the authors apply the k-means or random projection tree first to pre-group the data sets and obtain $m$ centroids and hence $m$ groups. Then, apply the spectral clustering on these $m$ centroids to obtain $k$ new centroids. Finally, merge the original $m$ groups according to these $k$ centroids. This is thus also classified as the third category.
Although these approaches do not focus on recovering the intrinsic geometry, like the geodesic distance or almost isometric embedding, we still consider them as relative studies since the spectral clustering is directly related to the GL. Indeed, under the manifold setup, the spectral clustering can be understood as finding the connected components, which is related to recovering the null space of the Laplace-Beltrami operator. Moreover, the behavior of those algorithms on the manifold setup is not clear.

\cite{salhov2015approximately} propose a dictionary-based method, where they incrementally construct an approximative map by using a single scan of the data. This algorithm is greedy, inefficient and sensitive to the scan order, as is commented in their followup paper \cite{bermanis2016incomplete}. \cite{bermanis2016incomplete} provide an incomplete pivoted QR-based deterministic method for dimensionality reduction {\em after} running DM.  
In general, given a generic matrix $A$, the authors select a subset of columns of $A$ called pivots, and then use them to perform an incomplete QR factorization to approximate $A$. This method is applied to reduce the dimension of DM, and is claimed to preserve the data geometry up to a user-specified distortion rate.  
Based on the nature of this algorithm, while it looks relevant at the first glance, this approach is not directly related to our acceleration mission.

Another useful and closely related algorithms are the subspace sampling \cite{drineas2006subspace} and the CUR decomposition \cite{mahoney2009cur,wang2013improving}. Those algorithms aim to find interpretable $k$ rank approximation of the original data matrix with respect to the matrix Frobenius norm. For example, the SVD would give us \cite{mahoney2009cur}:  $(1/2)\textup{age}- (1/\sqrt{2})\textup{height}+(1/2)\textup{income}$. This kind of linear combination of uncorrelated features are difficult to interpret in some situations. The CUR on the other hand, is interpretable as it decomposes the original data matrix into a small number of actual columns and rows.  In general, the variable selection algorithms, like LASSO, can also be considered as dimension reduction algorithms, although the purpose of variable selection is totally different and the computational complexity might not be suitable for our purpose. However, from the aspect of reducing the dimension of the dataset, in the broad sense they are closely related to algorithms in the first category.

In \cite{cheng2005compression}, the authors give a new decomposition form to compress a rank-deficient matrices when the SVD cannot be used efficiently. Specifically, a matrix $A$ of rank $k$ is represented as $A=UBV$, where $B$ is a $k\times k$ submatrix of $A$, and $U,V$ are well-conditioned matrices that each contain a $k \times k$ identity submatrix. The geometric interpretation of this decomposition is: columns (rows) of A are expressed as linear combinations of $k$ selected columns (rows) of A, the selection induces the matrix $B$, such that in the new coordinate system, the action of $A$ is represented by the action of $B$.

\section{Technical background for the proof}\label{lemma}

In this section we prepare some known technical lemmas and necessary results for our proof. To be succinct, detailed proofs will be skipped and we refer readers to the relative papers.

\subsection{Some basic differential geometry facts}

The following Lemma is critical for us to control the size of the eigenfunction.
\begin{lemma}[Homander's inequality \cite{hormander1968spectral}]\label{lemma hormander}
	Fix a compact Riemannian manifold $(M,g)$. For the $l$-th pair of eigenvalue $\lambda_l$ and eigenfunction $\phi_l$ of the Laplace-Beltrami operator, where $\|\phi_l\|_2=1$, we have  
	\begin{align}
	\|\phi_l\|_\infty \leq  C_1 \lambda_l^{\frac{d-1}{4}}\,,   \nonumber 
	\end{align}
	where $C_1$ is a constant depending on the injectivity radius and sectional curvature of the manifold $M$.
\end{lemma}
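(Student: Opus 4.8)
The statement is the classical H\"ormander pointwise bound on Laplace--Beltrami eigenfunctions, so the plan is to reproduce the standard wave-equation proof, stressing why the sharp exponent $(d-1)/4$ appears. A cheap first attempt uses the heat kernel: from $p_t(x,x)=\sum_j e^{-t\lambda_j}\abs{\phi_j(x)}^2$ and the on-diagonal short-time bound $p_t(x,x)\le Ct^{-d/2}$ one gets $\abs{\phi_l(x)}^2\le Ct^{-d/2}e^{t\lambda_l}$, and optimizing at $t=\lambda_l^{-1}$ yields only $\norm{\phi_l}_\infty\le C\lambda_l^{d/4}$, which is off by a factor $\lambda_l^{1/4}$. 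Recovering that factor forces one to use the \emph{finite propagation speed} of the wave operator rather than the (infinitely smoothing) heat semigroup.

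Write $\mu:=\sqrt{\lambda_l}$. The argument then reduces to one operator estimate: if $\chi$ is a Schwartz function with $\chi(0)=1$ and $\widehat{\chi}$ supported in $(-T_0,T_0)$ for $T_0$ less than half the injectivity radius of $M$, then $\chi(\sqrt{-\Delta}-\mu)$ maps $L^2(M)\to L^\infty(M)$ with operator norm $\mathcal O(\mu^{(d-1)/2})$. Granting this, since $\sqrt{-\Delta}\,\phi_l=\mu\,\phi_l$ we have $\chi(\sqrt{-\Delta}-\mu)\phi_l=\chi(0)\phi_l=\phi_l$, hence $\norm{\phi_l}_\infty\le C\mu^{(d-1)/2}\norm{\phi_l}_2=C\lambda_l^{(d-1)/4}$, which is the claim. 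To prove the operator estimate one expands $\chi(\sqrt{-\Delta}-\mu)=\frac{1}{2\pi}\int_{\mathbb R}\widehat{\chi}(t)\,e^{it\mu}\,e^{-it\sqrt{-\Delta}}\,dt$; because $\widehat{\chi}$ is supported in $(-T_0,T_0)$, finite propagation speed confines the Schwartz kernel of $e^{-it\sqrt{-\Delta}}$ to a tube around the diagonal, where it is given by the Hadamard--Lax parametrix, a Fourier integral operator $\int e^{i(\langle\exp_y^{-1}(x),\xi\rangle-t\abs{\xi}_{g(y)})}a(t,x,y,\xi)\,d\xi$ with $a$ a classical symbol of order $0$. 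Inserting this and applying stationary phase in $t$ (the critical set being $\abs{\xi}_{g(y)}=\mu$) together with the stationary-phase decay of the integral over the sphere $\{\abs{\xi}_{g(y)}=1\}$ produces the pointwise kernel bound $\abs{K_{\chi,\mu}(x,y)}\le C\mu^{(d-1)/2}(1+\mu\,\mathrm{dist}(x,y))^{-(d-1)/2}$, with rapid decay once $\mathrm{dist}(x,y)\gtrsim1$. Integrating gives $\norm{K_{\chi,\mu}(x,\cdot)}_{L^2(M)}\le C\mu^{(d-1)/2}$, and Cauchy--Schwarz yields the desired $L^2\to L^\infty$ bound.

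The constant $C$ is controlled entirely by the parametrix: it lives on the time interval $\abs t<T_0$ and its amplitude bounds involve only finitely many derivatives of the metric in geodesic normal coordinates, hence ultimately bounds on the injectivity radius and the sectional curvature, precisely as in the statement. The technical heart --- and the step I expect to be the main obstacle --- is the construction and uniform symbol control of the half-wave parametrix for small time, together with the stationary-phase analysis of the oscillatory integral defining $K_{\chi,\mu}$; the finite-propagation-speed reduction is exactly what keeps the manifold's geometry entering only through these local bounds rather than globally. A route that sidesteps explicit Fourier integral operators is Sogge's $TT^\ast$ / reproducing-kernel argument for $\chi(\sqrt{-\Delta}-\mu)$, but it meets the same oscillatory-integral estimate; in a paper of this scope it is also entirely reasonable simply to cite \cite{hormander1968spectral}.
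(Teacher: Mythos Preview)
The paper does not prove this lemma at all: it is stated with the citation \cite{hormander1968spectral} and no proof is given, exactly as you anticipated in your final sentence. Your sketch of the half-wave parametrix argument is a correct outline of the classical proof and correctly identifies why the heat-kernel approach loses a factor of $\lambda_l^{1/4}$, but in the context of this paper the lemma is simply quoted as a known result.
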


The following lemma is the well-known Weyl's law, which controls the eigenvalue growth.
\begin{lemma}[Weyl's law \cite{courant1920eigenwerte}]\label{lemma weyls law}
	Fix a compact and connected Riemannian manifold $(M,g)$. The eigenvalues of the Laplace-Beltrami operator, denoted as $0=\lambda_1< \lambda_2\leq\ldots$, satisfy
	\begin{align}
	l=C_2 \lambda^{d/2}_l+O\big(\lambda^{\frac{d-1}{2}}_l \log \lambda_l\big)\,,  \nonumber 
	\end{align}
	where $C_2$ is a constant depending on the volume of the manifold. 
\end{lemma}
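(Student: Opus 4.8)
The plan is to reduce everything to the eigenvalue counting function $N(\lambda) := \#\{\,l \ge 1 : \lambda_l \le \lambda\,\}$: because $\lambda_l$ is the (multiplicity-counted) inverse of $N$, the claimed asymptotic $l = C_2\lambda_l^{d/2} + O\big(\lambda_l^{(d-1)/2}\log\lambda_l\big)$ is equivalent to $N(\lambda) = C_2\lambda^{d/2} + O\big(\lambda^{(d-1)/2}\log\lambda\big)$, with $C_2 = \omega_d\,\mathrm{vol}(M)/(2\pi)^d$ and $\omega_d$ the volume of the Euclidean unit $d$-ball; the multiplicity of any $\lambda_l$ is itself $O(\lambda_l^{(d-1)/2})$ once the remainder is known, so it is absorbed and the two formulations are genuinely interchangeable. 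First I would get the leading term from the heat trace: by the Minakshisundaram--Pleijel parametrix on the closed manifold $M$, $\Theta(t) := \sum_{l\ge 1} e^{-\lambda_l t} = (4\pi t)^{-d/2}\big(\mathrm{vol}(M) + a_1 t + a_2 t^2 + \cdots\big)$ as $t \to 0^+$, the $a_j$ being integrals of universal curvature polynomials; Karamata's Tauberian theorem applied to the positive measure $dN$ then yields $N(\lambda) \sim C_2 \lambda^{d/2}$.

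Next I would sharpen the remainder, which the heat trace alone cannot do: a smooth error in $\Theta(t)$ forces only a $o(\lambda^{d/2})$, not a power-type, error in $N$, so one must replace the heat semigroup by the wave group. The standard device (Levitan--Avakumovi\'c--H\"ormander) is to study the smoothed count $\sum_l \rho\big(T(\mu - \sqrt{\lambda_l})\big)$ for a fixed Schwartz $\rho$ with $\widehat{\rho}$ supported near $0$, rewrite it as $\tfrac{1}{2\pi}\int \widehat{\rho}(s/T)\,e^{i\mu s}\,\mathrm{Tr}\,\cos(s\sqrt{-\Delta})\,ds$, and insert a Hadamard / Fourier-integral-operator parametrix for $\cos(s\sqrt{-\Delta})$ valid for $|s|$ below the injectivity radius. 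Stationary phase gives $C_2\mu^d + O(\mu^{d-1})$ for the smoothed count, and a Tauberian deconvolution exploiting monotonicity of $N$ transfers this to $N(\mu^2) = C_2\mu^d + O(\mu^{d-1})$, i.e. $N(\lambda) = C_2\lambda^{d/2} + O(\lambda^{(d-1)/2})$, which is sharper than, hence implies, the stated bound (the $\log$ in the statement is a harmless weakening). As in Lemma \ref{lemma hormander}, the implied constant then depends on the injectivity radius and sectional curvature bounds of $M$; pushing the remainder further to $o(\lambda^{(d-1)/2})$ is the Duistermaat--Guillemin refinement, requires controlling the measure of closed geodesics, and is not needed here.

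The hard part is the second step: the heat-kernel/Tauberian argument is soft and delivers the leading term almost for free, but \emph{any} power-type remainder requires a genuinely microlocal input --- a parametrix for the half-wave operator $e^{is\sqrt{-\Delta}}$ near the diagonal and a stationary-phase analysis of its trace --- and on a general closed manifold no explicit kernel is available, so this is where the real work (and the geometric dependence of the implied constant) sits. Everything else --- the heat coefficients, the Karamata bookkeeping, the inversion of $N$, and the treatment of multiplicities (bounded by the remainder itself) --- is routine.
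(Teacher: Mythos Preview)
Your outline is a correct and standard route to Weyl's law with the Hörmander remainder $O(\lambda^{(d-1)/2})$, which indeed implies the stated bound with the extra logarithmic factor. The paper, however, does not prove this lemma at all: it is quoted as a classical result with a citation to \cite{courant1920eigenwerte} and used as a black box. So there is nothing to compare against; your sketch simply supplies what the paper deliberately omits.
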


The proof of the following truncation lemma can be found in, for example, \cite{Coifman2006}.

\begin{lemma}\label{trunc}
	Suppose $f\in L^{\infty}(M^{d})$ and $0<\gamma<1/2$. Then for any $x\in M^{d}$, when $\epsilon$ is sufficiently small, we have
	$$
	\left|\int_{M^{d}\backslash\widetilde{\mathcal{B}}_{\epsilon^{\gamma}}(x)}\epsilon^{-d/2}K_{\epsilon}(x,y)f(y)\,dV(y)\right|=\mathcal{O}(\epsilon^{2})\,,
	$$
	where $\mathcal{O}(\epsilon^{2})$ depends on $\|f\|_{\infty}$, and $\widetilde{\mathcal{B}}_{\epsilon^{\gamma}}(x)\coloneqq\iota^{-1}(\mathcal{B}_{\epsilon^{\gamma}}(x)\cap\iota(M^{d}))\subseteq M^{d}$, where $\mathcal{B}_{\epsilon^{\gamma}}(x)$ is the Euclidean ball with radius $\epsilon^{\gamma}$ centered at $x$.  
\end{lemma}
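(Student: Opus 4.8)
The plan is to prove the bound as a soft tail estimate powered by the exponential decay of the kernel in Definition \ref{Definition:kernel}. First I would extract the only consequence of the hypothesis that is needed from the integration region: if $y \in M^d \setminus \widetilde{\mathcal{B}}_{\epsilon^\gamma}(x)$, then by the definition of $\widetilde{\mathcal{B}}_{\epsilon^\gamma}(x)$ one has $\|\iota(x)-\iota(y)\|_{\mathbb{R}^D} \geq \epsilon^\gamma$, so the argument fed into $K$ satisfies $\|\iota(x)-\iota(y)\|_{\mathbb{R}^D}/\sqrt{\epsilon} \geq \epsilon^{-(1/2-\gamma)}$. Since $\gamma < 1/2$, this lower bound diverges to $+\infty$ as $\epsilon \to 0$; this is the single place where the hypothesis $\gamma < 1/2$ enters.

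Next I would invoke the exponential decay of $K$: there exist $C_1, C_2 > 0$ and a threshold $t_0$ with $K(t) \leq C_1 e^{-C_2 t}$ for all $t \geq t_0$. Taking $\epsilon$ small enough that $\epsilon^{-(1/2-\gamma)} \geq t_0$, I obtain the uniform bound $K_\epsilon(x,y) = K\big(\|\iota(x)-\iota(y)\|_{\mathbb{R}^D}/\sqrt{\epsilon}\big) \leq C_1 \exp\!\big(-C_2\,\epsilon^{-(1/2-\gamma)}\big)$ for every $y$ in the region. Together with $|f| \leq \|f\|_\infty$ and $\mathrm{Vol}(M) < \infty$ (compactness of $M$), the triangle inequality for integrals yields
\[
\left|\int_{M^d \setminus \widetilde{\mathcal{B}}_{\epsilon^\gamma}(x)} \epsilon^{-d/2} K_\epsilon(x,y) f(y)\, dV(y)\right| \leq C_1 \|f\|_\infty \,\mathrm{Vol}(M)\; \epsilon^{-d/2} \exp\!\big(-C_2\,\epsilon^{-(1/2-\gamma)}\big).
\]

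Finally I would note that, because $1/2 - \gamma > 0$, the quantity $\epsilon^{-d/2-2}\exp\!\big(-C_2\,\epsilon^{-(1/2-\gamma)}\big) \to 0$ as $\epsilon \to 0^+$ (the exponential dominates every power of $\epsilon$), so the right-hand side above is $o(\epsilon^2)$ and in particular $\mathcal{O}(\epsilon^2)$, with an implied constant depending only on $\|f\|_\infty$, $\mathrm{Vol}(M)$, the kernel, and $\gamma$. I do not anticipate any real obstacle: the lemma is purely a localization estimate, and the only point needing a touch of care is the passage from ``decays exponentially fast'' to a clean uniform exponential bound valid on the whole region, which is exactly what forces the smallness requirement on $\epsilon$; everything else is a one-line estimate.
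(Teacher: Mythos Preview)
Your proof is correct and is exactly the standard localization argument that the paper defers to the literature (it simply cites \cite{Coifman2006} rather than giving its own proof). The key steps—extracting the lower bound $\epsilon^{-(1/2-\gamma)}$ on the kernel argument, invoking exponential decay from Definition~\ref{Definition:kernel}, and noting that the exponential dominates any power of $\epsilon$—are precisely what is needed, and nothing is missing.
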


The proof of the following approximation of identity type lemma follows the standard argument, and can be found in \cite[Lemma B.3]{singer2016spectral}.

\begin{lemma}\label{for_numerator_est}
	If $f\in C^{4}(M^{d})$, then for all $x\in M^{d}$, we have
	$$
	\int_{M^{d}}\epsilon^{-d/2}K_{\epsilon}(x,y)f(y)\,dV(y)=f(x)+\frac{\epsilon\mu_{1,2}^{(0)}}{2d}(\Delta f(x)-w(x)f(x))+\mathcal{O}(\epsilon^{2})\,,
	$$
	where $w(x)=\frac{1}{3}s(x)-\frac{d}{12|S^{d-1}|}\frac{\mu_{1,3}^{(1)}}{\mu_{1,2}^{(0)}}\int_{S^{d-1}}\Second_{x}^{2}(\theta,\theta)\,d\theta$, $s(x)$ is scalar curvature at $x$, $\Second_{x}$ is the second fundamental form of the embedding at $x$, and $|S^{d-1}|$ is the volume of the canonical $(d-1)$-sphere.
\end{lemma}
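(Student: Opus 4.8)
\smallskip
\noindent\textbf{Proof proposal.} The plan is to reduce the integral to a small geodesic ball around $x$, pass to geodesic normal coordinates, rescale by $\sqrt\epsilon$, and Taylor-expand everything to fourth order; this is the standard heat-kernel-expansion argument (cf.\ \cite[Lemma B.3]{singer2016spectral}), adapted to the extrinsically measured distance $\norm{\iota(x)-\iota(y)}_{\mathbb{R}^D}$.

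First I would apply Lemma \ref{trunc}: since $K$ decays exponentially and $f\in L^{\infty}(M^{d})$, fixing any $0<\gamma<1/2$ the part of $\int_{M^{d}}\epsilon^{-d/2}K_{\epsilon}(x,y)f(y)\,dV(y)$ coming from $M^{d}\setminus\widetilde{\mathcal{B}}_{\epsilon^{\gamma}}(x)$ is $\mathcal{O}(\epsilon^{2})$, so it suffices to integrate over the geodesic ball $\widetilde{\mathcal{B}}_{\epsilon^{\gamma}}(x)$. On that ball I write $y=\exp_{x}(v)$, $v\in\mathbb{R}^{d}$, and use three classical local expansions: the volume element $dV(y)=\big(1-\tfrac{1}{6}\mathrm{Ric}_{x}(v,v)+\mathcal{O}(\norm{v}^{3})\big)\,dv$; the Taylor expansion $f(\exp_{x}v)=f(x)+\nabla f(x)\cdot v+\tfrac{1}{2}\mathrm{Hess}\,f(x)(v,v)+\mathcal{O}(\norm{v}^{3})$; and the expansion of the extrinsic squared distance in terms of the intrinsic one,
\[
\norm{\iota(x)-\iota(\exp_{x}v)}_{\mathbb{R}^{D}}^{2}=\norm{v}^{2}-\tfrac{1}{12}\norm{\Second_{x}(v,v)}_{\mathbb{R}^{D}}^{2}+\mathcal{O}(\norm{v}^{5})\,,
\]
which comes from Taylor-expanding $\iota\circ\exp_{x}$ along geodesics (the constant $-\tfrac1{12}$ can be read off, e.g., from the unit circle in $\mathbb{R}^{2}$).

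Next I substitute $v=\sqrt{\epsilon}\,u$, so that $dv=\epsilon^{d/2}\,du$ cancels the prefactor $\epsilon^{-d/2}$, and expand the kernel using the distance expansion,
\[
K\!\left(\frac{\norm{\iota(x)-\iota(\exp_{x}v)}}{\sqrt{\epsilon}}\right)=K(\norm{u})-\frac{\epsilon}{24}\,\frac{\norm{\Second_{x}(u,u)}^{2}}{\norm{u}}\,K'(\norm{u})+\mathcal{O}(\epsilon^{3/2})\,.
\]
Multiplying the three expansions and integrating in $u$, every term odd in $u$ drops out by the rotational symmetry of $K(\norm{u})$. The surviving terms are: the $\mathcal{O}(1)$ term $f(x)\int_{\mathbb{R}^{d}}K(\norm{u})\,du=f(x)$, using the normalization $\mu_{1,0}^{(0)}=1$; the Hessian term, which by $\int u_{i}u_{j}K(\norm{u})\,du=\tfrac{\mu_{1,2}^{(0)}}{d}\delta_{ij}$ equals $\tfrac{\epsilon\mu_{1,2}^{(0)}}{2d}\Delta f(x)$ (with $\Delta f=\mathrm{tr}\,\mathrm{Hess}\,f$ in normal coordinates); the Ricci term from the volume element, which after the same symmetrization contributes a multiple of $\epsilon\, s(x)f(x)$; and the $\Second_{x}$ term from the kernel expansion, which after writing $u=r\theta$ splits into a radial integral (a multiple of $\mu_{1,3}^{(1)}$) times the angular integral $\int_{S^{d-1}}\Second_{x}^{2}(\theta,\theta)\,d\theta$, contributing a multiple of $\epsilon\,f(x)\int_{S^{d-1}}\Second_{x}^{2}(\theta,\theta)\,d\theta$. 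Collecting the last two contributions and matching constants and signs, they combine into exactly $-\tfrac{\epsilon\mu_{1,2}^{(0)}}{2d}w(x)f(x)$ with $w(x)=\tfrac{1}{3}s(x)-\tfrac{d}{12|S^{d-1}|}\tfrac{\mu_{1,3}^{(1)}}{\mu_{1,2}^{(0)}}\int_{S^{d-1}}\Second_{x}^{2}(\theta,\theta)\,d\theta$, giving the claimed identity with an $\mathcal{O}(\epsilon^{2})$ remainder. The remainder bound uses $f\in C^{4}$, the compactness and smoothness of $M$ (to bound the geometric quantities and their derivatives uniformly in $x$), and the exponential decay of $K$ (so all the $u$-moments and remainder integrals converge).

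The step I expect to be the main obstacle is the fourth-order (i.e.\ $\mathcal{O}(\epsilon)$) bookkeeping: one must keep separate the three curvature contributions --- the $\norm{\Second_{x}}^{2}$ correction inside $K$, the $\mathrm{Ric}_{x}$ correction in the volume form, and the Hessian of $f$ against the non-flat geometry --- express each resulting angular average over $S^{d-1}$ in terms of the normalized quantities $\mu_{r,l}^{(k)}$ and $\int_{S^{d-1}}\Second_{x}^{2}(\theta,\theta)\,d\theta$, and check that the combinatorial factors and signs are precisely those appearing in $w(x)$. A secondary technical point is verifying that the remainder is $\mathcal{O}(\epsilon^{2})$ \emph{uniformly} over $x\in M^{d}$, which is exactly where compactness of $M$ and the $C^{4}$ control on $f$ are used.
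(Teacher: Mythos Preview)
Your proposal is correct and follows exactly the standard heat-kernel-expansion argument the paper defers to: the paper itself does not spell out a proof but simply cites \cite[Lemma B.3]{singer2016spectral}, which is the same reference you invoke and whose strategy (truncation via Lemma~\ref{trunc}, normal coordinates, rescale by $\sqrt\epsilon$, expand the volume form, the function, and the extrinsic distance, then collect even-order terms) you have reproduced faithfully. There is nothing to add.
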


\begin{lemma}\label{numerator_est}
	If $f\in C^{4}(M^{d})$, then for all $x\in M^{d}$, we have
	\begin{align}
	\nonumber &\int_{M^{d}}\epsilon^{-d/2}K_{\epsilon}(x,y)f(y)p_{X}(y)\,dV(y) 
	\\\nonumber =&\, p_{X}(x)\left[1+\frac{\epsilon\mu_{1,2}^{(0)}}{d}\left(\frac{\Delta p_{X}(x)}{2p_{X}(x)} - \frac{1}{2}w(x)\right)f(x)\right]
	+ \epsilon\frac{\mu_{1,2}^{(0)}p_{X}(x)}{d}\left(\frac{\Delta f(x)}{2}+\frac{\nabla f(x)\cdot\nabla p_{X}(x)}{p_{X}(x)}\right) + \mathcal{O}(\epsilon^{2})
	\end{align}
	where $w(x)$ as in lemma \ref{for_numerator_est}.
\end{lemma}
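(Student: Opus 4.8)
\textbf{Proof plan for Lemma \ref{numerator_est}.}

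The plan is to combine Lemma \ref{for_numerator_est} with a Taylor expansion of $p_X$, treating the product $f\cdot p_X$ as a single $C^4$ function and then separating out the contribution that involves derivatives of $p_X$. First I would note that $fp_X \in C^4(M^d)$ since both factors are, so Lemma \ref{for_numerator_est} applies directly to $fp_X$ in place of $f$:
\begin{equation}
\int_{M^{d}}\epsilon^{-d/2}K_{\epsilon}(x,y)f(y)p_X(y)\,dV(y)=f(x)p_X(x)+\frac{\epsilon\mu_{1,2}^{(0)}}{2d}\big(\Delta(fp_X)(x)-w(x)f(x)p_X(x)\big)+\mathcal{O}(\epsilon^{2})\,.\nonumber
\end{equation}
The $\mathcal{O}(\epsilon^2)$ constant now depends on $\|fp_X\|_{C^4}$, which is controlled since $\|f\|_{C^4}<\infty$ and $p_X\in C^4(M^d)$ with $p_X$ bounded below; I would remark on this for bookkeeping.

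Next I would expand the Laplacian of the product using the Leibniz rule on a Riemannian manifold:
\begin{equation}
\Delta(fp_X)=p_X\,\Delta f+2\,\nabla f\cdot\nabla p_X+f\,\Delta p_X\,.\nonumber
\end{equation}
Substituting this into the previous display and grouping terms gives
\begin{equation}
f(x)p_X(x)+\frac{\epsilon\mu_{1,2}^{(0)}}{2d}\Big(p_X(x)\Delta f(x)+2\nabla f(x)\cdot\nabla p_X(x)+f(x)\Delta p_X(x)-w(x)f(x)p_X(x)\Big)+\mathcal{O}(\epsilon^2)\,.\nonumber
\end{equation}
Now I would reorganize: pull $p_X(x)$ out of the terms not involving $\nabla f$ to form the bracket $p_X(x)\big[1+\frac{\epsilon\mu_{1,2}^{(0)}}{d}\big(\frac{\Delta p_X(x)}{2p_X(x)}-\frac{1}{2}w(x)\big)f(x)\big]$ (using $0<\inf p_X$ so division is legitimate), and collect the remaining $\nabla f$ term together with the $\Delta f$ term as $\epsilon\frac{\mu_{1,2}^{(0)}p_X(x)}{d}\big(\frac{\Delta f(x)}{2}+\frac{\nabla f(x)\cdot\nabla p_X(x)}{p_X(x)}\big)$. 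Matching these against the claimed formula completes the proof.

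There is no serious obstacle here: the only points requiring any care are (i) verifying that the $\mathcal{O}(\epsilon^2)$ error remains uniform after applying Lemma \ref{for_numerator_est} to $fp_X$, which follows from the $C^4$ bounds on $f$ and $p_X$ and the positive lower bound on $p_X$, and (ii) correctly applying the product rule for $\Delta$ on $(M,g)$ and bookkeeping the algebra when regrouping. The genuinely substantive work (the local parametrization, normal coordinates, second-fundamental-form curvature corrections, and truncation via Lemma \ref{trunc}) is already packaged inside Lemma \ref{for_numerator_est}, so this lemma is essentially a corollary obtained by the Leibniz rule and rearrangement.
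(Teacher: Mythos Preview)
Your proposal is correct and takes exactly the same approach as the paper: the paper's proof consists of the single sentence ``It is an immediate consequence of Lemma~\ref{for_numerator_est} by replacing $f(y)$ by $f(y)p_{X}(y)$,'' and you have simply spelled out the Leibniz-rule algebra that this entails.
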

\begin{proof}
	It is an immediate consequence of lemma \ref{for_numerator_est} by replacing $f(y)$ by $f(y)p_{X}(y)$.
\end{proof}

\subsection{Glivenko-Cantelli class and entropy bound}
\begin{definition}
	A set $\mathfrak{F}\subseteq C(M)$ is called a Glivenko-Cantelli class if 
	$$\textup{sup}_{f\in\mathfrak{F}}|\mathbb{P}_{n}f - \mathbb{P}f|\longrightarrow 0\quad\textup{a.s.}$$
\end{definition}

To handle the randomness, we need to control the complexity of ``all possible'' functions that are related to the random samples. The following definition is the quantity we need.

\begin{definition}
	Let $(\mathcal{F},\norm{\cdot})$ be a subset of normed space of real functions $f:M\rightarrow\mathbb{R}$. Given two functions $l$ and $u$, the bracket $[l,u]$ is the set of all functions $f$ such that $l(x)\leq f(x)\leq u(x)$ for all $x\in M$. An $\epsilon$-bracket is a bracket such that $\norm{u-l}<\epsilon$. The bracketing number $N_{[]}(\epsilon,\mathcal{F},\norm{\cdot})$ is the minimum number of $\epsilon$-bracket needed to cover $\mathcal{F}$. The upper and lower bounds $u$ and $l$ of the brackets need not belong to $\mathcal{F}$ but are assumed to have finite norms.
\end{definition}

To proceed to the spectral convergence rate, we need the following results that control the rate of convergence from finite sample points. 
To this end, we need the following entropy bound \cite[Theorem 19]{von2008consistency}. Or see \cite{mendelson2003few} for a more systematic review of the topic.

\begin{theorem}[Entropy bound \cite{mendelson2003few, von2008consistency}]\label{entropy_bd}
	Let $(\mathcal{X},\mathcal{B},\mathbb{P})$ be an arbitrary probability space, $\mathcal{F}$ a class of real-valued functions on $\mathcal{X}$ with $\norm{f}_{\infty}\leq 1$. Let $(X_{n})$ be a sequence of i.i.d. random variables drawn from $\mathbb{P}$, and $(\mathbb{P}_{n})$ the corresponding empirical distributions. For $\delta>0$, there exists a constant $C_E>0$ such that, for all $ n\in\mathbb{N}$, with probability higher than $1-\delta$:
	$$\sup_{F\in\mathcal{F}}\abs{\mathbb{P}_{n}F-\mathbb{P}F}\leq\frac{C_E}{\sqrt{n}}\int_{0}^{\infty}\sqrt{\log N(\mathcal{F},r,L_{2}(\mathbb{P}_{n}))}\,dr + \sqrt{\frac{1}{n}\log\frac{2}{\delta}}.$$
\end{theorem}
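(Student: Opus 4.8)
The plan is to obtain this as a routine application of the three classical pillars of empirical process theory --- a bounded-differences concentration inequality, symmetrization, and Dudley's entropy-integral (chaining) bound --- which is essentially the argument of \cite{mendelson2003few,von2008consistency}. First I would write $Z:=\sup_{F\in\mathcal F}|\mathbb P_nF-\mathbb P F|$ as a function of the sample $(X_1,\dots,X_n)$, and observe that, since $\|F\|_\infty\le1$, replacing a single $X_i$ changes $\mathbb P_nF$, and hence $Z$, by at most $2/n$ uniformly in $F$; thus $Z$ has the bounded-differences property with increments $2/n$. McDiarmid's inequality then gives $\mathbb P(Z\ge\mathbb E Z+t)\le\exp(-nt^2/2)$ for all $t>0$, so on an event of probability at least $1-\delta$,
\[
Z\ \le\ \mathbb E Z+\sqrt{\tfrac{1}{n}\log\tfrac{2}{\delta}}
\]
after adjusting absolute constants, which already accounts for the additive deviation term.

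It remains to control $\mathbb E Z$. By the standard symmetrization inequality, with $(\varepsilon_i)$ i.i.d.\ Rademacher signs independent of the data, $\mathbb E Z\le 2\,\mathbb E_X\mathbb E_\varepsilon\sup_{F}\big|\tfrac1n\sum_{i=1}^n\varepsilon_iF(X_i)\big|$. Conditioning on $X_1,\dots,X_n$, the process $F\mapsto\tfrac1n\sum_i\varepsilon_iF(X_i)$ has sub-Gaussian increments with respect to the random pseudometric $d_n(F,G)=n^{-1/2}\|F-G\|_{L_2(\mathbb P_n)}$, whose diameter is at most $2n^{-1/2}$ because $\|F\|_\infty\le1$. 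Dudley's chaining bound for conditionally sub-Gaussian processes then yields
\[
\mathbb E_\varepsilon\sup_{F}\Big|\tfrac1n\textstyle\sum_i\varepsilon_iF(X_i)\Big|\ \le\ C\int_0^\infty\!\sqrt{\log N(\mathcal F,r,d_n)}\,dr\ =\ \frac{C}{\sqrt n}\int_0^\infty\!\sqrt{\log N(\mathcal F,r,L_2(\mathbb P_n))}\,dr,
\]
the last equality by the change of variables relating $d_n$-balls to $L_2(\mathbb P_n)$-balls, and the integral is finite since its integrand vanishes for $r>2$. Combining the displays and absorbing absolute constants into $C_E$ would give the claim.

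The one point that genuinely needs care --- and the step I expect to be the real obstacle --- is that the entropy integral on the right-hand side is taken against the \emph{empirical} measure $\mathbb P_n$, whereas the symmetrization/chaining step above controls only its expectation $\mathbb E_X$, not its realized value on the event where $Z$ is also being controlled. I would resolve this by noting that the empirical Rademacher complexity $\mathbb E_\varepsilon\sup_F\big|\tfrac1n\sum_i\varepsilon_iF(X_i)\big|$ is itself a bounded-differences functional of $(X_1,\dots,X_n)$ with increments $O(1/n)$, hence concentrates around its mean, which lets one replace $\mathbb E_X[\cdot]$ by the value at the observed sample at the cost of one more term of order $\sqrt{n^{-1}\log(1/\delta)}$, absorbed into the stated deviation. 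The exact constants and this localization are carried out in \cite[Theorem 19]{von2008consistency} and in \cite{mendelson2003few}, to which I defer for the remaining details.
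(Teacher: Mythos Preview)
The paper does not supply its own proof of this statement: it is quoted verbatim as a background tool from \cite[Theorem 19]{von2008consistency} (see also \cite{mendelson2003few}) and used as a black box in the later arguments. Your sketch is the standard route to this kind of bound --- McDiarmid for the deviation term, symmetrization plus Dudley chaining for the expectation --- and is precisely the argument one finds in those references, so there is nothing to compare against in the present paper.
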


The following lemma is the standard statement about the covering number when the kernel is Gaussian. Since the proof can be found in \cite[Lemma 4.1]{wang2015spectral}, we omit it here.

\begin{lemma}[Covering number of Gaussian]\cite[Lemma 4.1]{wang2015spectral}\label{cover_num_gau}
	Take $\epsilon>0$ and $k(x,y):=K_\epsilon(x,y)=e^{-\norm{x-y}^2/\epsilon}$ in Definition \ref{Definition different classes}. For $r>0$, we have the following bound:
	$$
	N(\mathcal{K},r,\norm{\cdot}_{\infty})\leq\Big(\frac{24\sqrt{2d}D_{M}}{r\epsilon}\Big)^{2d},
	$$
	where $D_{M}$ is the diameter of $M^{d}$.
\end{lemma}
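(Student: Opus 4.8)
The plan is to exhibit $\mathcal{K}$ as a Lipschitz image of its index manifold and then invoke the standard covering estimate for a compact $d$-manifold, composing the two. Write the class from Definition \ref{Definition different classes}, specialized to the Gaussian, as $\mathcal{K}=\{k(x,\cdot):x\in M^{d}\}$ with $k(x,y)=e^{-\norm{x-y}^{2}/\epsilon}$ (and, if that definition builds its members out of a \emph{product} of two such Gaussian factors indexed by a pair $(x,y)\in M^{d}\times M^{d}$ — as the landmark kernel $K_{\textup{ref},\epsilon}$ suggests — simply replace the index set $M^{d}$ by $M^{d}\times M^{d}$ throughout, which is what produces the exponent $2d$ and the $\sqrt{2d}$ below). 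The first step is to show that the indexing map $\Psi:x\mapsto k(x,\cdot)$, from $M^{d}$ with the Euclidean metric of $\mathbb{R}^{D}$ into $(C(M^{d}),\norm{\cdot}_{\infty})$, is Lipschitz with a constant $L$ depending only on $\epsilon$ and $D_{M}$, not on the ambient dimension $D$. Once this holds, any Euclidean $\eta$-net $\{x_{1},\dots,x_{N}\}$ of $M^{d}$ yields, via $\Psi$, an $(L\eta)$-net $\{\Psi(x_{1}),\dots,\Psi(x_{N})\}$ of $\mathcal{K}$ in $\norm{\cdot}_{\infty}$, so that $N(\mathcal{K},r,\norm{\cdot}_{\infty})\le N(M^{d},r/L,\norm{\cdot}_{\mathbb{R}^{D}})$, reducing everything to a covering estimate for the manifold.

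For the Lipschitz step, fix $y$ and regard $x\mapsto e^{-\norm{x-y}^{2}/\epsilon}$ as a smooth function on all of $\mathbb{R}^{D}$; its gradient has norm $\tfrac{2\norm{x-y}}{\epsilon}e^{-\norm{x-y}^{2}/\epsilon}$, which is at most $\tfrac{2D_{M}}{\epsilon}$ (or, optimizing the scalar profile $u\mapsto\tfrac{2u}{\epsilon}e^{-u^{2}/\epsilon}$ over $u\ge 0$, at most $\sqrt{2/\epsilon}\,e^{-1/2}$). Integrating this bound along the Euclidean segment joining $x_{1}$ and $x_{2}$ gives $\abs{k(x_{1},y)-k(x_{2},y)}\le \tfrac{2D_{M}}{\epsilon}\norm{x_{1}-x_{2}}_{\mathbb{R}^{D}}$ uniformly in $y$, hence $\norm{\Psi(x_{1})-\Psi(x_{2})}_{\infty}\le \tfrac{2D_{M}}{\epsilon}\norm{x_{1}-x_{2}}_{\mathbb{R}^{D}}$; in the product-index case the same estimate follows from the triangle inequality together with $0\le k\le 1$. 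Since on $M^{d}$ the Euclidean distance is dominated by the geodesic distance, a geodesic $\delta$-net is a fortiori a Euclidean $\delta$-net, so one may cover intrinsically.

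It remains to bound $N(M^{d},\delta)$ (or $N(M^{d}\times M^{d},\delta)$) at scale $\delta\asymp r\epsilon/D_{M}$. Here I would cover $M^{d}$ by a fixed finite atlas of normal-coordinate charts whose images are cubes of side $\lesssim D_{M}$ in $\mathbb{R}^{d}$, take the Euclidean $\delta$-net of each cube (at most $(\sqrt{d}\,D_{M}/\delta)^{d}$ points), pull back, and — in the product case — form products, giving an $O\big((\sqrt{2d}\,D_{M}/\delta)^{2d}\big)$-net; since $M$ is compact and smooth, a lower Ricci bound and a positive injectivity radius are available, so by Bishop–Gromov the chart distortions only affect the universal constant. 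Substituting $\delta\asymp r\epsilon/D_{M}$ and collecting constants then reproduces the stated bound $N(\mathcal{K},r,\norm{\cdot}_{\infty})\le(24\sqrt{2d}\,D_{M}/(r\epsilon))^{2d}$. The only genuinely delicate point is the Lipschitz estimate: one must verify that the $\norm{\cdot}_{\infty}$-modulus of continuity of the Gaussian sections (and their products) in the index variable scales like $\epsilon^{-1}$ — equivalently like one over the kernel width — and carries no hidden dependence on $D$, since that $\epsilon$-dependence is exactly what governs the entropy integral $\int_{0}^{\infty}\sqrt{\log N(\mathcal{K},r,\cdot)}\,dr$ used in Theorem \ref{entropy_bd}; the manifold-covering part is routine.
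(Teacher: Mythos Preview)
The paper does not actually prove this lemma; it cites \cite[Lemma 4.1]{wang2015spectral} and omits the argument. Your outline---Lipschitz parametrization $x\mapsto k(x,\cdot)$ followed by a covering of the index manifold---is precisely the standard route, and the Lipschitz bound $\|k(x_1,\cdot)-k(x_2,\cdot)\|_\infty\le (2D_M/\epsilon)\|x_1-x_2\|$ you derive is correct and captures exactly the $\epsilon^{-1}$ dependence needed downstream.

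One clarification: your hedging about a possible product index is unnecessary. From Definition~\ref{Definition different classes}, $\mathcal{K}=\{k(x,\cdot):x\in M\}$ is indexed by a \emph{single} point of $M^d$, not by a pair. Your argument therefore delivers $N(\mathcal{K},r,\|\cdot\|_\infty)\le (C\sqrt{d}\,D_M/(r\epsilon))^{d}$ with exponent $d$, which is in fact \emph{stronger} than the stated $(\,\cdot\,)^{2d}$ bound whenever the base exceeds $1$ (the only regime that matters for the entropy integral). The exponent $2d$ and the $\sqrt{2d}$ in the quoted statement are artifacts of the specific bookkeeping in the cited reference, not features your proof needs to reproduce; you may simply note that your bound implies the stated one. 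The product classes $\mathcal{K}\cdot\mathcal{K}$ and $\mathcal{M}\cdot\mathcal{M}$, where a genuine $2d$-dimensional index would appear, are handled separately in Lemma~\ref{cover_num_bd} by reducing back to $\mathcal{K}$.
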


\subsection{Facts we need for the spectral convergence}

Next we need the notion of collectively compact convergence. Recall that $C(M)$ is a Banach space with the $\norm{\cdot}_{\infty}$ norm.
\begin{definition}(\cite[p.122]{chatelin2011spectral})
	Let $(E, \|\cdot\|_{E})$ be an arbitrary Banach space.
	A sequence of operators $T_{n}:E\rightarrow E$ converges to $T:E\rightarrow E$ collectively compactly if and only if the following conditions are satisfied:
	\begin{enumerate}
		\item $T_{n}\rightarrow T$ pointwisely,
		\item the set $\cup_{n}(T_{n}-T)B$ is relatively compact in $E$, where $B$ is the closed unit ball centered at $0$ in $E$.
	\end{enumerate}
\end{definition}

We state the following theorem for the spectral convergence.

\begin{theorem}(\cite{von2008consistency,chatelin2011spectral})
	Let $(E,\norm{\cdot}_{E})$ be any Banach space, $\{T_{n}\}_{n}$ and $T$ be bounded linear operators on $E$ such that $T_{n}\rightarrow T$ compactly. Let $\lambda\in\sigma(T)$ be an isolated eigenvalue with finite multiplicity $m$, and $\mathcal N_\lambda\subset \mathbb{C}$ be an open neighborhood of $\lambda$ such that $\sigma(T)\cap \mathcal N_\lambda=\lambda$. Then:
	\begin{enumerate}
		\item (Convergence of eigenvalues) There exists an $N\in\mathbb{N}$ such that for all $n>N$, the set $\sigma(T_{n})\cap \mathcal N_\lambda$ is an isolated part of $\sigma(T_{n})$ consisting of at most $m$ different eigenvalues, and their multiplicities sum to $m$. Moreover, the sequence of sets $\{\sigma(T_{n})\cap \mathcal N_\lambda\}\rightarrow\{\lambda\}$ in the sense that every sequence $\{\lambda_{n}\}$ with $\lambda_{n}\in\sigma(T_{n})\cap \mathcal N_\lambda$ satisfies $\lim\lambda_{n}\rightarrow\lambda$.
		\item (Convergence of spectral projections) Let $P_\lambda$ be the spectral projection of $T$ corresponding to $\lambda$. Let $P_{\lambda,n}$ be the spectral projection of $T_{n}$ corresponding to $\sigma(T_{n})\cap \mathcal N_\lambda$, then $P_{\lambda,n}\rightarrow P_\lambda$ pointwisely.
	\end{enumerate}
\end{theorem}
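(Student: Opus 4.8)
The plan is to recognize this as the classical spectral-perturbation theorem of collectively compact operator approximation theory (Anselone; Chatelin), so I would reproduce that argument adapted to the notation here. Write $E_n := T_n - T$; by hypothesis $\{E_n\}$ is a collectively compact family with $E_n \to 0$ pointwise, whereas crucially $\norm{E_n}_{E\to E}\not\to 0$, which is exactly why a naive Neumann-series perturbation is unavailable. The first step is to record the two ``Anselone estimates'' that substitute for norm convergence: (a) if $\{B_n\}$ is collectively compact with $B_n\to 0$ pointwise, then $\norm{B_n^2}\to 0$; and (b) if $K$ is a bounded operator with $\overline{KB}$ compact ($B$ the closed unit ball), then $\norm{E_nK}\to 0$, and likewise $\norm{E_nT_n}\to 0$. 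Both follow from a finite $\varepsilon$-net argument on the relevant compact set together with uniform boundedness of a collectively compact family; the same argument makes these limits uniform when an auxiliary parameter $z$ ranges over a compact set, because applying a norm-compact family of bounded operators preserves relative compactness.

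Next I would pin down uniform resolvent bounds. Fix a small positively oriented circle $\Gamma\subset\mathcal N_\lambda$ enclosing $\lambda$ and no other point of $\sigma(T)$, so $\Gamma\subset\rho(T)$ and $R(z,T):=(T-z)^{-1}$ is bounded and norm-continuous on $\Gamma$. For $z\in\Gamma$ factor $T_n - z = (T-z)\big(I + R(z,T)E_n\big)$. The family $\{R(z,T)E_n\}$ is collectively compact (it is $R(z,T)$ applied to the collectively compact null family $\{E_n\}$) and tends to $0$ pointwise, so estimate (a), uniformly over $z\in\Gamma$, gives $\sup_{z\in\Gamma}\norm{(R(z,T)E_n)^2}\to 0$. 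From $(I+A)(I-A)=I-A^2$ with $A=R(z,T)E_n$, the operator $I+R(z,T)E_n$ is invertible for $n$ large with $\norm{(I+R(z,T)E_n)^{-1}}\le (1+\sup_n\norm{R(z,T)E_n})/(1-\norm{(R(z,T)E_n)^2})$, a bound uniform over $z\in\Gamma$. Hence $\Gamma\subset\rho(T_n)$ for all large $n$, and $\sup_{n\ge N}\sup_{z\in\Gamma}\norm{(z-T_n)^{-1}}=:\mathsf M<\infty$ for some $N$.

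With the resolvents controlled I would introduce the Riesz projections $P_\lambda=\frac{1}{2\pi i}\oint_\Gamma (z-T)^{-1}\,dz$ and $P_{\lambda,n}=\frac{1}{2\pi i}\oint_\Gamma (z-T_n)^{-1}\,dz$; by the functional calculus $P_{\lambda,n}$ is the spectral projection of $T_n$ for the part of $\sigma(T_n)$ inside $\Gamma$, and it is finite rank (automatic in the application, where $T_n$ is a genuine matrix). The second resolvent identity gives $P_{\lambda,n}-P_\lambda=\frac{1}{2\pi i}\oint_\Gamma (z-T_n)^{-1}E_n(z-T)^{-1}\,dz$. For fixed $v$ the set $\{(z-T)^{-1}v:z\in\Gamma\}$ is compact, so $\sup_{z\in\Gamma}\norm{E_n(z-T)^{-1}v}\to 0$, whence $\norm{(P_{\lambda,n}-P_\lambda)v}\le \tfrac{|\Gamma|}{2\pi}\mathsf M\sup_{z\in\Gamma}\norm{E_n(z-T)^{-1}v}\to 0$: this is the convergence of spectral projections, claim (2). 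Replacing $v$ by $P_\lambda v$ (finite-dimensional range) and by $P_{\lambda,n}v$ (range inside $\overline{\bigcup_n P_{\lambda,n}B}$, which is compact since $\{T_n\}$, hence $\{P_{\lambda,n}\}$, is collectively compact) and invoking estimate (b) upgrades this to $\norm{(P_{\lambda,n}-P_\lambda)P_\lambda}\to 0$ and $\norm{(P_{\lambda,n}-P_\lambda)P_{\lambda,n}}\to 0$.

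Finally I would run the standard two-projection rank comparison for claim (1): once both $\norm{(P_{\lambda,n}-P_\lambda)P_\lambda}<1$ and $\norm{(P_{\lambda,n}-P_\lambda)P_{\lambda,n}}<1$, the maps $P_{\lambda,n}|_{\mathrm{Range}(P_\lambda)}$ and $P_\lambda|_{\mathrm{Range}(P_{\lambda,n})}$ are both injective (if $P_\lambda v=v$ and $P_{\lambda,n}v=0$ then $v=(I-P_{\lambda,n})P_\lambda v$, forcing $\norm{v}<\norm{v}$), so $\mathrm{rank}(P_{\lambda,n})=\mathrm{rank}(P_\lambda)=m$ for $n$ large. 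Thus $\sigma(T_n)\cap\mathcal N_\lambda$ is a nonempty isolated part of $\sigma(T_n)$ with algebraic multiplicities summing to $m$, hence at most $m$ distinct eigenvalues; shrinking $\Gamma$ to radius $\delta$ and rerunning the argument shows every such eigenvalue lies within $\delta$ of $\lambda$, giving $\lambda_n\to\lambda$. The main obstacle throughout is precisely that $\norm{T_n-T}\not\to 0$, so every step that is a one-line Neumann-series perturbation in the norm-convergent case must be re-derived from collective compactness; the two genuinely delicate points are obtaining the uniform resolvent bound on $\Gamma$ from $\norm{(R(z,T)E_n)^2}\to 0$ rather than $\norm{R(z,T)E_n}\to 0$, and the $\mathrm{rank}\le m$ half of the dimension count, which needs the refined estimate $\norm{(P_{\lambda,n}-P_\lambda)P_{\lambda,n}}\to 0$ and hence the relative compactness of $\bigcup_n P_{\lambda,n}B$.
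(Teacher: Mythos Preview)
The paper does not prove this theorem; it is stated as a known result with citations to \cite{von2008consistency,chatelin2011spectral} and used as a black box in the spectral convergence argument. Your sketch is a faithful and correct reproduction of the classical Anselone--Chatelin argument that lives in those references: the collectively compact ``squared'' estimate to get uniform resolvent bounds on $\Gamma$, the second resolvent identity for pointwise convergence of the Riesz projections, and the two-sided injectivity argument for the rank equality. There is nothing to compare against in the paper itself, and nothing substantive to correct in your outline.
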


Note that when $\lambda$ is simple with the eigenfunction $f$, this theorem can be simplified. Indeed, there exists $N\in\mathbb{N}$ such that for all $n>N$, the sets $\sigma(T_{n})\cap \mathcal N_\lambda$ consists of a simple eigenvalue $\lambda_{n}$, and $\lim\lambda_{n}\rightarrow\lambda$. Moreover, for the corresponding eigenfunctions $f_{n}$, there exists a sequence $a_{n}\in \{1,\,-1\}$ so that $\norm{a_{n}f_{n}-f}_{E}\rightarrow 0$.

Recall that the $L^2(M)$ space is a separable Hilbert space. The following lemma is the key toward the spectral convergence rate \cite{DunsonWuWu2019}. The proof can be found in \cite{DunsonWuWu2019}, so we omit it here. We mention that last part of this lemma is also considered in \cite{calder2019improved} to improve the $L^2$ convergence rate.

\begin{lemma}\label{ratio of eigenvalues}
	Let $A$ and $B$ be two compact self-adjoint operators from the separable Hilbert space $H$ to $H$. Let $(\cdot, \cdot)$ be the inner product of $H$. 
	Suppose the eigenvalues of $A$, denoted as $\lambda_l(A)$, $l=1,\ldots$, are simple and positive, and the eigenvalues of $B$, denoted as $\lambda_l(B)$, $l=1,\ldots$, are simple and bounded from below so that $1=\lambda_1(A) > \lambda_2(A) > \cdots\geq 0$ and $\lambda_1(B) >  \lambda_2(B) > \cdots $. Denote $\{u_i\}$ to be the orthonormal eigenfunctions of $A$ and $\{w_i\}$ to be orthonormal eigenfunctions of $B$. Furthermore, denote 
	\begin{equation}
	\gamma_i(B):=\min(\lambda_i(B)-\lambda_{i-1}(B), \,\lambda_{i+1}(B)-\lambda_i(B))\,.\label{Proof proposition 1 Definition gamma i} 
	\end{equation}
	Let $E: = A-B$. Then, for $\epsilon>0$ we have the following statements:
	\begin{enumerate}
		\item
		If $\big|\frac{(E f,f)}{(Af,f)}\big| \leq \epsilon$ for all $f \in L^2$, then for all $i$, we have 
		\begin{equation}
		\left|\frac{1-\lambda_i(B)}{1-\lambda_i(A)}-1\right|\leq \epsilon\,.\nonumber
		\end{equation}
		\item
		If $\|Bu_i-\lambda_i(B)u_i\|_2 \leq \epsilon$, then for $a=1$ or $-1$, we have 
		\begin{equation}
		\|aw_i- u_i\|_2 \leq \frac{2\epsilon}{\gamma_i(B)}. \nonumber
		\end{equation}
		Moreover, 
		\begin{equation}
		|(u_i,w_i)| \geq 1-\frac{\epsilon}{\gamma_i(B)}\,.\nonumber
		\end{equation}
		\item The eigenvalues satisfy
		\begin{equation}
		|\lambda_i(A)-\lambda_j(B)|\leq \frac{\|Ew_j\|_2}{|(u_i,w_j)|}\,.\nonumber
		\end{equation}
	\end{enumerate}
\end{lemma}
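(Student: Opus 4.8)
I would prove the three statements separately, since they are independent facts of self-adjoint perturbation theory; I would in fact do (2) and (3) before (1), as those are the ones invoked when bootstrapping to the $L^\infty$ estimate in Theorem~\ref{spec_cong}. For (2), since $B$ is compact self-adjoint the family $\{w_j\}$ is an orthonormal basis of $H$, so I would expand $u_i=\sum_j c_j w_j$ with $\sum_j c_j^2=1$ and compute
$$\|Bu_i-\lambda_i(B)u_i\|_2^2=\sum_j c_j^2\bigl(\lambda_j(B)-\lambda_i(B)\bigr)^2\ \geq\ \gamma_i(B)^2\sum_{j\neq i}c_j^2,$$
the inequality using that $|\lambda_j(B)-\lambda_i(B)|\geq\gamma_i(B)$ for every $j\neq i$ (here simplicity of the spectrum of $B$ is what makes $\gamma_i(B)$ the relevant quantity). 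Hence $\sum_{j\neq i}c_j^2\leq\epsilon^2/\gamma_i(B)^2$ and $c_i^2\geq 1-\epsilon^2/\gamma_i(B)^2$; choosing $a\in\{1,-1\}$ with $ac_i\geq 0$ gives $\|aw_i-u_i\|_2^2=2-2|c_i|\leq 2\bigl(1-\sqrt{1-\epsilon^2/\gamma_i(B)^2}\bigr)\leq 2\epsilon^2/\gamma_i(B)^2\leq(2\epsilon/\gamma_i(B))^2$, and the bound $|(u_i,w_i)|=|c_i|\geq 1-\epsilon/\gamma_i(B)$ follows from $\sqrt{1-t}\geq 1-t$. For (3), I would use the identity $Ew_j=Aw_j-Bw_j=Aw_j-\lambda_j(B)w_j$, pair it against $u_i$, and move $A$ onto $u_i$ by self-adjointness to get $(Ew_j,u_i)=(\lambda_i(A)-\lambda_j(B))(w_j,u_i)$; Cauchy--Schwarz on the left together with $\|u_i\|_2=1$ then gives $|\lambda_i(A)-\lambda_j(B)|\,|(w_j,u_i)|\leq\|Ew_j\|_2$, which is the assertion. (As stated, (3) is only useful in tandem with (2) at $i=j$, which forces $(u_i,w_i)$ away from $0$.)

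For (1), I would write $B=A-E$ and convert the hypothesis into a two-sided \emph{multiplicative} comparison of quadratic forms on the relevant test functions, controlling $|(Ef,f)|$ by $\epsilon$ times the Dirichlet-type form of $I-A$, so that $(1-\epsilon)\,((I-A)f,f)\leq((I-B)f,f)\leq(1+\epsilon)\,((I-A)f,f)$. Because $\lambda_1(A)=1$ is the common leading eigenvalue, $I-A$ and $I-B$ are nonnegative operators whose eigenvalues are exactly $1-\lambda_i(A)$ and $1-\lambda_i(B)$ in increasing order; substituting the multiplicative comparison into the Courant--Fischer min--max characterization of these eigenvalues then propagates it from forms to spectra, giving $(1-\epsilon)(1-\lambda_i(A))\leq 1-\lambda_i(B)\leq(1+\epsilon)(1-\lambda_i(A))$, i.e.\ $\bigl|\tfrac{1-\lambda_i(B)}{1-\lambda_i(A)}-1\bigr|\leq\epsilon$.

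The analytic content is light; the step that requires genuine care is part (1): one must use the hypothesis in the form that bounds $E$ by the \emph{gap} $1-\lambda_i(A)$ rather than by $\lambda_i(A)$ (a crude form comparison $B\leq(1+\epsilon)A$ would be far too lossy near $\lambda=1$ and would not yield the stated ratio), and one must keep the shift $I-\,\cdot\,$ bookkept correctly so that numerator and denominator come out as $1-\lambda_i(B)$ and $1-\lambda_i(A)$, not $\lambda_i(B)$ and $\lambda_i(A)$. In (2) the only subtlety is the choice of sign $a$ and the elementary estimate $2-2\sqrt{1-x}\leq 2x$; in (3) it is remembering which operator's eigenfunctions to expand in so that self-adjointness can be applied on the correct side.
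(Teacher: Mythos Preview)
The paper does not supply its own proof of this lemma; it cites \cite{DunsonWuWu2019} and omits the argument. So there is nothing in the paper to compare against, and the relevant question is whether your proof is correct.

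Your arguments for (2) and (3) are correct and are the standard Davis--Kahan/eigenvalue--eigenvector perturbation computations. One small bookkeeping point: as written in the lemma the eigenvalues are \emph{decreasing}, so $\gamma_i(B)$ should be read as $\min(\lambda_{i-1}(B)-\lambda_i(B),\lambda_i(B)-\lambda_{i+1}(B))$; with that reading your inequality $|\lambda_j(B)-\lambda_i(B)|\geq\gamma_i(B)$ for all $j\neq i$ is indeed valid by monotonicity, and the rest goes through.

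For (1), there is a genuine gap in your argument. The hypothesis as stated is $|(Ef,f)|\leq\epsilon\,(Af,f)$, with $A$ (not $I-A$) in the denominator. You write that you would ``convert the hypothesis'' into $|(Ef,f)|\leq\epsilon\,((I-A)f,f)$ and then run min--max on $I-A$ and $I-B$. That conversion is not possible in general: take $f=u_1$, for which $(Af,f)=\lambda_1(A)=1$ but $((I-A)f,f)=0$. More concretely, the direct min--max comparison $(1-\epsilon)(Af,f)\leq(Bf,f)\leq(1+\epsilon)(Af,f)$ only yields $|\lambda_i(A)-\lambda_i(B)|\leq\epsilon\,\lambda_i(A)$, hence
\[
\left|\frac{1-\lambda_i(B)}{1-\lambda_i(A)}-1\right|\leq\frac{\epsilon\,\lambda_i(A)}{1-\lambda_i(A)},
\]
which blows up as $\lambda_i(A)\to 1$ and is not the stated bound. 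You even flag exactly this failure (``a crude form comparison $B\leq(1+\epsilon)A$ would be far too lossy near $\lambda=1$''), but then do not say how to get past it---because, with the hypothesis as literally written, one cannot.

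This is almost certainly a transcription slip in the lemma statement: in the application (eigenvalues of $\frac{I-T_\epsilon}{\epsilon}$ versus those of $-\Delta$) the natural Dirichlet form is $((I-A)f,f)$, and the intended hypothesis is $\big|\frac{(Ef,f)}{((I-A)f,f)}\big|\leq\epsilon$. With that corrected hypothesis your min--max argument on $I-A$ and $I-B$ is exactly right and gives the conclusion immediately.
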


The following result describes how the spectral convergence, or more precisely, the eigenfunction convergence, happens when a sequence of operators converges. It is a restatement of  \cite[Equation (5) in,][Theorem 3]{atkinson1967numerical}. We refer the readers with interest in its derivation to \cite[Theorem SI.1]{DunsonWuWu2019}.

\begin{theorem}\label{atkinson1967bound}\cite{atkinson1967numerical}, or \cite[Theorem SI.1]{DunsonWuWu2019}
	Let $(E, \|\cdot\|_{E})$ be an arbitrary Banach space. Let $\{T_n\}_{n=1}^\infty$ and $T$ be compact linear operators on $E$ such that $\{T_n\}_{n=1}^\infty$ converges to $T$ collectively compactly. 
	For a nonzero eigenvalue $\lambda \in \sigma(T)$, denote the corresponding spectral projection by $\textup{Pr}_{\lambda}$. Let $D\subset \mathbb{C}$ be an open neighborhood of $\lambda$ such that $\sigma(T)\cap D$ =$\{ \lambda\}$. There exists some $N\in \mathbb{N}$ such that for all $n > N$, $\sigma(T_n)\cap D=\{\lambda_n\}$. Let $\textup{Pr}_{\lambda_n}$ be the corresponding spectral projections of $T_n$ for $\lambda_n$. Let $r <|\lambda|$ and $r<\textup{dist}(\{\lambda\}, \sigma(K)\setminus \{\lambda\})$. Then, for every $x \in \textup{Pr}_\lambda(E)$, we have
	\begin{equation}
	\|x-\textup{Pr}_{\lambda_n} x\|_{E} \leq \max_{z\in \Gamma_r(\lambda)} \frac{2r\|R_{z}(T)\|}{\min_{z\in \Gamma_r(\lambda)}|z|}\left(\|(T_n-T)x\|_{E} +\|R_{z}(T)x\|_{E}\|(T-T_n)T_n\|\right)\,,
	\nonumber 
	\end{equation}
	where $\Gamma_r(\lambda):=\{z\in \mathbb{C}|\,|z-\lambda|=r\}$.
\end{theorem}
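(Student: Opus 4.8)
Since the statement is a restatement of \cite[Theorem~3, Eq.~(5)]{atkinson1967numerical} (see also \cite[Theorem~SI.1]{DunsonWuWu2019}), the plan is to reproduce the Dunford--Riesz contour-integral argument. First I would invoke the spectral convergence theorem for collectively compact convergence quoted above (\cite{von2008consistency,chatelin2011spectral}): it guarantees that for all $n$ larger than some $N$, the circle $\Gamma_r(\lambda)$ --- which, thanks to $r<|\lambda|$ and $r<\textup{dist}(\{\lambda\},\sigma(T)\setminus\{\lambda\})$, lies in $\rho(T)$, avoids $0$, and separates $\lambda$ from the rest of $\sigma(T)$ --- also lies in $\rho(T_n)$, encircles exactly one point $\lambda_n$ of $\sigma(T_n)$, and carries a uniform resolvent bound $\sup_{n\ge N}\sup_{z\in\Gamma_r(\lambda)}\|R_z(T_n)\|<\infty$. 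Consequently both spectral projections admit the Riesz representations
\[
\textup{Pr}_\lambda=\frac{1}{2\pi i}\oint_{\Gamma_r(\lambda)}R_z(T)\,dz,\qquad
\textup{Pr}_{\lambda_n}=\frac{1}{2\pi i}\oint_{\Gamma_r(\lambda)}R_z(T_n)\,dz .
\]

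Next, fix $x\in\textup{Pr}_\lambda(E)$, so $\textup{Pr}_\lambda x=x$. Subtracting the two representations and using the second resolvent identity $R_z(T)-R_z(T_n)=R_z(T)(T-T_n)R_z(T_n)$ gives
\[
x-\textup{Pr}_{\lambda_n}x=\frac{1}{2\pi i}\oint_{\Gamma_r(\lambda)}R_z(T)(T-T_n)R_z(T_n)x\,dz .
\]
The key algebraic step is to split off the part of $(T-T_n)R_z(T_n)$ that does not interact with the perturbation near $0$: writing $R_z(T_n)=z^{-1}\bigl(\mathrm{Id}+T_nR_z(T_n)\bigr)$ yields $(T-T_n)R_z(T_n)=z^{-1}(T-T_n)+z^{-1}(T-T_n)T_nR_z(T_n)$, hence
\[
x-\textup{Pr}_{\lambda_n}x=\frac{1}{2\pi i}\oint_{\Gamma_r(\lambda)}\frac{R_z(T)(T-T_n)x}{z}\,dz+\frac{1}{2\pi i}\oint_{\Gamma_r(\lambda)}\frac{R_z(T)(T-T_n)T_nR_z(T_n)x}{z}\,dz .
\]
In the first integral the perturbation acts directly on the eigenfunction $x$, producing the factor $\|(T_n-T)x\|_E$, which is small by pointwise convergence $T_n\to T$. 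In the second integral the perturbation appears in the operator-norm combination $\|(T-T_n)T_n\|$, which is small by collectively compact convergence; the surviving factor $R_z(T_n)x$ I would control via $R_z(T_n)x=R_z(T)x+R_z(T_n)(T_n-T)R_z(T)x$ (noting $R_z(T)x\in\textup{Pr}_\lambda(E)$, a fixed finite-dimensional subspace on which $T_n\to T$ in norm) together with the uniform resolvent bound, so that $\|R_z(T_n)x\|_E\le\|R_z(T)x\|_E+o(1)$ as $n\to\infty$, and I would use $\|x\|_E=\|\textup{Pr}_\lambda x\|_E\le r\sup_{z\in\Gamma_r(\lambda)}\|R_z(T)x\|_E$ to reconcile the two normalizations appearing in the target bound.

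Finally I would close with the standard $ML$-estimate: on $\Gamma_r(\lambda)$ the two integrands are bounded by $\tfrac{1}{\min_z|z|}\max_z\|R_z(T)\|$ times $\|(T_n-T)x\|_E$ and $\|R_z(T)x\|_E\,\|(T-T_n)T_n\|$ respectively, so multiplying by the length $2\pi r$ of $\Gamma_r(\lambda)$ and dividing by $2\pi$ reproduces exactly the asserted inequality (the overall constant $2$ absorbing the $o(1)$ correction valid for $n$ large). I expect the only genuinely delicate point to be the bookkeeping in the splitting step: one must arrange the algebra so that the final constant involves only $\|R_z(T)\|$ and the ``good'' operator norm $\|(T-T_n)T_n\|$, and never a uniform bound on $\|R_z(T_n)\|$ standing alone; everything else --- the Riesz formulas, the upper semicontinuity of the spectrum under collectively compact convergence, and the contour estimate --- is routine. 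Of course, since the result is quoted essentially verbatim, in the paper one may simply cite \cite{atkinson1967numerical} and \cite[Theorem~SI.1]{DunsonWuWu2019}.
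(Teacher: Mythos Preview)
The paper does not give its own proof of this theorem: it is stated as a quotation, with the remark ``It is a restatement of \cite[Equation (5) in,][Theorem 3]{atkinson1967numerical}. We refer the readers with interest in its derivation to \cite[Theorem SI.1]{DunsonWuWu2019}.'' Your final sentence already anticipates this, and a bare citation is indeed all the paper supplies. Your sketch of the Dunford--Riesz contour argument is the standard derivation and is correct in outline; in particular the splitting $R_z(T_n)=z^{-1}(\mathrm{Id}+T_nR_z(T_n))$ is exactly the device that produces the two terms $\|(T_n-T)x\|_E$ and $\|(T-T_n)T_n\|$ in the final bound, and your handling of $\|R_z(T_n)x\|_E$ via $R_z(T)x=(z-\lambda)^{-1}x\in\textup{Pr}_\lambda(E)$ plus the uniform resolvent bound is the right way to avoid an unwanted $\|R_z(T_n)\|$ factor.
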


We need the following lemma to connect the ultimate eigenfunction convergence to Theorem \ref{atkinson1967bound}.

\begin{lemma}[convergence of one-dim projections]\label{cong_1_dim_proj} \cite[Proposition 18]{von2008consistency}
	Let $\{v_{n}\}_{n=1}^\infty$ be a sequence of vectors in a Banach space $(E,\norm{\cdot})$ with $\norm{v_{n}}=1$. Denote $\textup{Pr}_{v_n}$ be the projection onto the one-dimensional subspaces spanned by $v_{n}$. Take $v\in E$ with $\norm{v}=1$. Then there exists a sequence of signs $(a_{n})$ such that
	$$
	\norm{a_{n}v_{n}-v}\leq2\norm{v-\textup{Pr}_{v_n}(v)}.
	$$
	In particular, if $\norm{v-\textup{Pr}_{v_n}(v)}\rightarrow 0$, then $v_{n}\rightarrow v$ in $(E,\norm{\cdot})$ up to a change of sign.
\end{lemma}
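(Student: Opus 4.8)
The plan is to reduce the statement to a single application of the triangle inequality, exploiting only that $\textup{Pr}_{v_n}(v)$ lies on the line $\operatorname{span}(v_n)$. So I would begin by writing $\textup{Pr}_{v_n}(v)=c_nv_n$ for a scalar $c_n$ (this is all we need about the projection; in particular neither linearity nor boundedness of $\textup{Pr}_{v_n}$ enters), and I would set $\delta_n:=\norm{v-\textup{Pr}_{v_n}(v)}=\norm{v-c_nv_n}$.

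Next I would control $\abs{c_n}$. Since $\norm{v_n}=\norm{v}=1$, the reverse triangle inequality gives
\[
\left|\,\abs{c_n}-1\,\right|=\left|\,\norm{c_nv_n}-\norm{v}\,\right|\le\norm{c_nv_n-v}=\delta_n,
\]
so $\abs{c_n}\in[1-\delta_n,\,1+\delta_n]$. I would then choose the sign $a_n:=\operatorname{sign}(c_n)$, with the convention $a_n:=1$ when $c_n=0$. The degenerate case $c_n=0$ is disposed of immediately: there $\delta_n=\norm{v}=1$, and the asserted bound $2\delta_n=2$ is no smaller than $\norm{a_nv_n-v}\le\norm{v_n}+\norm{v}=2$, so nothing has to be shown. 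When $c_n\neq0$ one has $a_n-c_n=\operatorname{sign}(c_n)\bigl(1-\abs{c_n}\bigr)$, hence $\abs{a_n-c_n}=\left|\,1-\abs{c_n}\,\right|\le\delta_n$ by the previous display.

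Finally I would assemble the estimate:
\[
\norm{a_nv_n-v}\le\norm{a_nv_n-c_nv_n}+\norm{c_nv_n-v}=\abs{a_n-c_n}\,\norm{v_n}+\delta_n\le 2\delta_n,
\]
which is exactly $\norm{a_nv_n-v}\le 2\norm{v-\textup{Pr}_{v_n}(v)}$; letting $\delta_n\to0$ then yields the ``in particular'' assertion that $v_n\to v$ up to a change of sign. I do not expect a genuine obstacle here, since the argument is essentially three lines of triangle inequality. The only points that deserve a moment of care are the legitimacy of writing $\textup{Pr}_{v_n}(v)=c_nv_n$ (fine, because the range of $\textup{Pr}_{v_n}$ is one-dimensional) and the degenerate possibility $\textup{Pr}_{v_n}(v)=0$, both of which are handled above.
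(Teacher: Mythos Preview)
Your proof is correct; the argument via the reverse triangle inequality to bound $|1-|c_n||$ and then the triangle inequality to assemble the final estimate is clean and complete, including the degenerate case $c_n=0$. The paper itself does not supply a proof of this lemma but simply quotes it from \cite[Proposition~18]{von2008consistency}, so there is nothing to compare against.
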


\begin{table}[htb!]\caption{Table of notation throughout the proof. Assume $f,g\in C(M)$.}
	\begin{center}
		\begin{tabular}{r c l}
			\toprule
			$K_{\textup{ref},\epsilon}(x,y)$ & &$\displaystyle\int_{M}K_{\epsilon}(x,z)K_{\epsilon}(z,y)p_{Y}(z)\,dV(z)$ \\
			$d_{\textup{ref},\epsilon}(x)$ & & $\displaystyle\int_{M}K_{\textup{ref},\epsilon}(x,y)p_{X}(y)\,dV(y)$\\
			$M_{\textup{ref},\epsilon}(x,y)$ & & $\displaystyle \frac{K_{\textup{ref},\epsilon}(x,y)}{d_{\textup{ref},\epsilon}(x)}$\\
			$T_{\textup{ref},\epsilon}f(x)$ & & $\displaystyle\int_{M}M_{\textup{ref},\epsilon}(x,y)f(y)p_{X}(y)\,dV(y)$\\
			$\widehat{K}_{\textup{ref},\epsilon,n}(x,y)$ & & $\displaystyle\frac{1}{m}\sum_{j}^{m}K_{\epsilon}(x,z_{j})K_{\epsilon}(z_{j},y)$\\
			$\widehat{d}_{\textup{ref},\epsilon,n}(x)$ & & $\displaystyle\frac{1}{n}\sum_{i=1}^{n}\widehat{K}_{\textup{ref},\epsilon,n}(x,x_{i})$\\
			$\widehat{M}_{\textup{ref},\epsilon,n}(x,y)$ &  & $\displaystyle\frac{\widehat{K}_{\textup{ref},\epsilon,n}(x,y)}{\widehat{d}_{\textup{ref},\epsilon,n}(x)}$\\
			$\widehat{T}_{\textup{ref},\epsilon,n}f(x)$ & & $\displaystyle\frac{1}{n}\sum_{i=1}^{n}\widehat{M}_{\textup{ref},\epsilon,n}(x,x_{i})f(x_{i})$\\
			$d_{\textup{ref},\epsilon,n}(x)$ & & $\displaystyle\frac{1}{n}\sum_{i=1}^n K_{\textup{ref},\epsilon}(x,x_i)$\\
			$T_{\textup{ref},\epsilon,n}f(x)$ & & $\displaystyle\frac{1}{n}\sum_{i=1}^{n}M_{\textup{ref},\epsilon}(x,x_{i})f(x_{i})$\\
			$\widehat{M}^{(d)}_{\textup{ref},\epsilon,n}(x,y)$ &  & $\displaystyle\frac{K_{\textup{ref},\epsilon}(x,y)}{\widehat{d}_{\textup{ref},\epsilon,n}(x)}$\\
			$\mathbb{P}f$ & & $\displaystyle\int f(x)p_X(x)\,dV(x)$\\
			$\mathbb{P}_{n}f$ & & $\displaystyle\frac{1}{n}\sum_{i=1}^{n}f(x_{i})$\\
			$\widetilde{\mathbb{P}}g$ & & $\displaystyle\int g(y)p_{Y}(y)\,dV(y)$\\
			$\widetilde{\mathbb{P}}_mg$ & & $\displaystyle\frac{1}{m}\sum_{l=1}^m g(y_l)$\\
			\bottomrule
		\end{tabular}
	\end{center}
	\label{tab:TableOfNotationForMyResearch}
\end{table}

{\allowdisplaybreaks

\section{Proof of Theorems \ref{biasthm} and \ref{variancethm} -- pointwise convergence} \label{biasproof}
The main theoretical contribution of this paper is handling how the overall diffusion behaves when the diffusion must goes through the landmark set. The result will form the base of the spectral convergence proof.

\begin{theorem}[Bias analysis]\label{Proof: Theorem1: bias analysis}
	Take $f\in C^{3}(M^{d})$. Then, for all $x\in M^{d}$ we have
	\begin{align}
	T_{\textup{ref},\epsilon}f(x) - f(x) =&\, \frac{\epsilon\mu_{1,2}^{(0)}}{d}\left(\frac{2\nabla p_{X}(x)}{p_{X}(x)}+\frac{\nabla p_{Y}(x)}{p_{Y}(x)}\right)\cdot\nabla f(x) + \frac{\epsilon\mu_{1,2}^{(0)}}{d}\Delta f(x) + \mathcal{O}(\epsilon^{3/2})\,,\nonumber
	\end{align}
	where the implied constant in $\mathcal{O}(\epsilon^{3/2})$ depends on the $C^3$ norm of $f$, the $C^2$ norms of $p_X$ and $p_Y$, and the Ricci curvature of the manifold.
\end{theorem}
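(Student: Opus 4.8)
The plan is to exploit the nested, ``two-step diffusion'' structure of $T_{\textup{ref},\epsilon}$. The landmark kernel $K_{\textup{ref},\epsilon}(x,y)=\int_M K_\epsilon(x,z)K_\epsilon(z,y)p_Y(z)\,dV(z)$ diffuses first from $x$ to $z$ against the landmark density $p_Y$, then from $z$ to $y$ against $p_X$, so I would write the numerator of $T_{\textup{ref},\epsilon}f(x)$ by Fubini as
\[
\int_M K_\epsilon(x,z)\,p_Y(z)\Big[\int_M K_\epsilon(z,y)f(y)p_X(y)\,dV(y)\Big]\,dV(z).
\]
First I would expand the inner integral over $y$ by the already-established approximation-of-identity estimate (Lemma \ref{numerator_est}), obtaining $\epsilon^{d/2}\big(p_X(z)f(z)+\epsilon\,Q_f(z)+\mathcal O(\epsilon^{3/2})\big)$ for an explicit $Q_f$ built from $\Delta f$, $\nabla f\cdot\nabla p_X$, $\Delta p_X$ and the curvature term $w$ of Lemma \ref{for_numerator_est}. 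Substituting back, I would apply Lemma \ref{for_numerator_est}/\ref{numerator_est} a second time to the outer integral over $z$: the leading piece requires $g:=p_Yp_Xf$ expanded to order $\epsilon$, while the $\epsilon Q_f$ piece only needs the leading-order (approximation-of-identity) expansion, contributing an extra $\epsilon\,p_Y(x)Q_f(x)$. The truncation Lemma \ref{trunc} justifies restricting both integrals to geodesic balls of radius $\epsilon^\gamma$ so the manifold Taylor expansions are valid, the tails costing $\mathcal O(\epsilon^2)$.

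Running the same computation with $f\equiv 1$ gives the expansion of $d_{\textup{ref},\epsilon}(x)$; since $\inf p_X>0$ and $\inf p_Y>0$ we get $d_{\textup{ref},\epsilon}(x)=\Theta(\epsilon^{d})$ uniformly, so forming the quotient and expanding it is legitimate. Writing numerator $=\epsilon^{d}(N_0+\epsilon N_1+\cdots)$ and denominator $=\epsilon^{d}(D_0+\epsilon D_1+\cdots)$ with $N_0=p_Yp_Xf$, $D_0=p_Yp_X$, one gets $T_{\textup{ref},\epsilon}f(x)=f(x)+\epsilon\,D_0^{-1}(N_1-fD_1)+\mathcal O(\epsilon^{3/2})$. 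The decisive algebraic point is that the curvature term $w(x)$, together with the bare $\Delta p_X(x)$ contribution coming out of the inner expansion, appear identically (up to a factor $f$) in $N_1$ and in $fD_1$, hence cancel in $N_1-fD_1$ — exactly the cancellation that makes the ordinary diffusion map bias free of second-order density terms. Using the product rule $\Delta(p_Yp_Xf)-f\Delta(p_Yp_X)=2\nabla(p_Yp_X)\cdot\nabla f+p_Yp_X\Delta f$, what survives collapses to $\tfrac{\mu_{1,2}^{(0)}}{d}\big((\tfrac{2\nabla p_X}{p_X}+\tfrac{\nabla p_Y}{p_Y})\cdot\nabla f+\Delta f\big)$ after dividing by $D_0=p_Yp_X$; the coefficient $1/d$ rather than $1/(2d)$ arises because the $\Delta f/2$ contributions of the two successive diffusion steps add.

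The main obstacle is not any single estimate but the disciplined bookkeeping through two nested expansions: one must retain the inner expansion one order past leading order (so that its $\mathcal O(\epsilon)$ term, including the density--gradient coupling, is correct) while needing only leading order when that term is re-integrated, and then identify precisely which terms survive the ratio. Pushing the remainder down to $\mathcal O(\epsilon^{3/2})$ under only $C^3(M)$ regularity of $f$ and $C^2$ regularity of $p_X,p_Y$ — instead of the $C^4$ needed for an $\mathcal O(\epsilon^2)$ remainder — requires the low-regularity forms of Lemmas \ref{for_numerator_est}--\ref{numerator_est}, i.e.\ tracking a Taylor remainder of order $\|y-x\|^3$ and using that the odd moments of $K_\epsilon$ vanish on the flat tangent space, so the genuine leftover is $\mathcal O(\epsilon^{3/2})$ once the curvature corrections to the volume form are accounted for. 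I would also double-check that the $\epsilon^{d/2}$ normalizations propagate consistently through the product of two approximation-of-identity kernels, and that every implied constant depends only on $\|f\|_{C^3}$, $\|p_X\|_{C^2}$, $\|p_Y\|_{C^2}$ and the Ricci curvature of $M$, as asserted.
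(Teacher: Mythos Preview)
Your proposal is correct and follows essentially the same approach as the paper: apply Fubini to write the numerator as a nested integral, expand the inner integral via Lemma~\ref{numerator_est}, then the outer integral via Lemma~\ref{for_numerator_est}/\ref{numerator_est}, repeat with $f\equiv 1$ for the denominator, and form the ratio so that the curvature term $w(x)$ and the pure-density Laplacian terms cancel. You are in fact more explicit than the paper about the cancellation mechanism and about why only $C^3$ regularity suffices for an $\mathcal O(\epsilon^{3/2})$ remainder.
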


\begin{proof}
	The bias analysis is almost the same as those shown in \cite{Coifman2006,singer2016spectral}, except the extra step handling the landmark set.
	By Definition \ref{GL_cts}, we have
	\begin{align}
	\nonumber T_{\textup{ref},\epsilon}f(x) &= \frac{\int_{M}K_{\textup{ref},\epsilon}(x,y)f(y)p_{X}(y)\,dV(y)}{\int_{M}K_{\textup{ref},\epsilon}(x,y)p_{X}(y)\,dV(y)}\,.
	\end{align}
	We first compute the numerator, which satisfies
	\begin{align}
	\nonumber\int_{M}K_{\textup{ref},\epsilon}(x,y)f(y)p_{X}(y)\,dV(y)
	=& \int_{M}\left(\int_{M}K_{\epsilon}(x,z)K_{\epsilon}(z,y)p_{Y}(z)\,dV(z)\right)f(y)p_{X}(y)\,dV(y)
	\\\nonumber =& \int_{M}\left(\int_{M}K_{\epsilon}(z,y)f(y)p_{X}(y)\,dV(y)\right)K_{\epsilon}(x,z)p_{Y}(z)\,dV(z)\,.
	\end{align}
	By Lemmas \ref{trunc} and \ref{numerator_est}, the right hand side can be expanded and organized as
	\begin{align}
	\nonumber &\, \epsilon^{d}\left(p_{X}(x)p_{Y}(x)+\frac{\epsilon\mu_{1,2}^{(0)}}{d}p_{Y}(x)\Delta p_{X}(x) + \frac{\epsilon\mu_{1,2}^{(0)}}{2d}p_{X}(x)\Delta p_{Y}(x)\right. \\
	\nonumber&\quad\left.-\frac{\epsilon\mu_{1,2}^{(0)}}{d}w(x)p_{X}(x)p_{Y}(x)+\frac{\epsilon\mu_{1,2}^{(0)}}{d}\nabla p_{X}(x)\cdot\nabla p_{Y}(x)\right)f(x)
	\\\nonumber &\quad+ \epsilon^{d}\left(\frac{2\epsilon\mu_{1,2}^{(0)}}{d}p_{Y}(x)\nabla p_{X}(x)+\frac{\epsilon\mu_{1,2}^{(0)}}{d}p_{X}(x)\nabla p_{Y}(x)\right)\cdot\nabla f(x) \nonumber\\
	&\quad+ \frac{\epsilon^{d+1}\mu_{1,2}^{(0)}}{d}p_{X}(x)p_{Y}(x)\Delta f(x)+\mathcal{O}(\epsilon^{d+2})\,.
	\end{align}
	Next, note that the denominator is just the numerator with $f(x)$ replaced by the constant function $1$. Hence, we have:
	\begin{align}
	\nonumber \int_{M}K_{\textup{ref},\epsilon}(x,y)p_{X}(y)\,dV(y)
	=&\, \epsilon^{d}\left(p_{X}(x)p_{Y}(x)+\frac{\epsilon\mu_{1,2}^{(0)}}{d}p_{Y}(x)\Delta p_{X}(x) + \frac{\epsilon\mu_{1,2}^{(0)}}{2d}p_{X}(x)\Delta p_{Y}(x)\right. \\
	\nonumber&\left.\qquad-\frac{\epsilon\mu_{1,2}^{(0)}}{d}w(x)p_{X}(x)p_{Y}(x)+\frac{\epsilon\mu_{1,2}^{(0)}}{d}\nabla p_{X}(x)\cdot\nabla p_{Y}(x)\right)+\mathcal{O}(\epsilon^{d+2})\,.
	\end{align}
	By putting them together, we have
	\begin{align}
	\nonumber T_{\textup{ref},\epsilon}f(x) &= f(x) + \frac{\epsilon\mu_{1,2}^{(0)}}{d}\left(\frac{2\nabla p_{X}(x)}{p_{X}(x)}+\frac{\nabla p_{Y}(x)}{p_{Y}(x)}\right)\cdot\nabla f(x) + \frac{\epsilon\mu_{1,2}^{(0)}}{d}\Delta f(x) + \mathcal{O}(\epsilon^{2})\,.
	\end{align}
\end{proof}

\begin{remark}
	Note the constants in front of $\frac{\nabla p_{X}(x)}{p_{X}(x)}$ and $\frac{\nabla p_{Y}(x)}{p_{Y}(x)}$ respectively. Intuitively, the $2$ in front of $\frac{\nabla p_{X}(x)}{p_{X}(x)}$ comes from the ``2'' steps diffusion. 
\end{remark}

The variance analysis is less trivial, and we need to apply the large deviation theorem when dependence exists. 

\begin{theorem}[Variance analysis]
	Take $\mathcal{X}=\{x_i\}_{i=1}^n$ and $\mathcal{Y}=\{y_j\}_{i=1}^m$, where $m=[n^{\beta}]$ for some $0<\beta\leq 1$ and $[x]$ is the nearest integer of $x\in\mathbb{R}$. 
	Take $f\in C(M^{d})$ and denote $\textbf{\textit{f}}\in\mathbb{R}^{n}$ such that $\textbf{\textit{f}}_{i}=f(x_{i})$. Let $\epsilon = \epsilon(n)$ so that $\frac{\sqrt{\log n}}{n^{\beta/2}\epsilon^{d/2+1/2}}\rightarrow 0$ and $\epsilon\rightarrow 0$ when $n\rightarrow\infty$. Then with probability higher than $1 - \mathcal{O}(1/n^{2})$, we have
	\begin{equation}
	\big[(I-(D^{(\textup{R})})^{-1}W^{(\textup{R})})\textbf{\textit{f}}\;\big](i) = f(x_{i}) - T_{\textup{ref},\epsilon}f(x_{i}) + \mathcal{O}\Big(\frac{\sqrt{\log n}}{n^{\beta/2}\epsilon^{d/2-1/2}}\Big)
	\end{equation}
	for all $i=1,2,\ldots,n$.
\end{theorem}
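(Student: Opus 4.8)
The plan is to fix $i$, argue conditionally on $x_i=x$, and recognize both the numerator and the denominator of \eqref{discrete_op2} as grid-sampling averages in the sense of Definition \ref{gridsamp}, so that the Janson-type concentration inequalities, Theorems \ref{hoeffding} and \ref{bernstein}, apply. Set $a_{jk}:=K_\epsilon(x,y_k)K_\epsilon(y_k,x_j)$, $\mathcal{N}_i:=\frac{1}{nm}\sum_{j=1}^n\sum_{k=1}^m a_{jk}f(x_j)$ and $\mathcal{D}_i:=\frac{1}{nm}\sum_{j=1}^n\sum_{k=1}^m a_{jk}$, so that $\big[(D^{(\textup{R})})^{-1}W^{(\textup{R})}\textbf{\textit{f}}\,\big](i)=\mathcal{N}_i/\mathcal{D}_i$. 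Conditioned on $x$, the family $\{(x_j,y_k)\}_{j\ne i,\,k}$ is a genuine grid sampling, and by Definitions \ref{ref_affinity} and \ref{GL_cts}, $\mathbb{E}[\mathcal{N}_i\mid x]/\mathbb{E}[\mathcal{D}_i\mid x]=T_{\textup{ref},\epsilon}f(x)$ with $\mathbb{E}[\mathcal{D}_i\mid x]=d_{\textup{ref},\epsilon}(x)$; moreover the expansion used to prove Theorem \ref{biasthm} (via Lemmas \ref{trunc} and \ref{numerator_est}) gives $d_{\textup{ref},\epsilon}(x)=\epsilon^d\big(p_X(x)p_Y(x)+\mathcal{O}(\epsilon)\big)=\Theta(\epsilon^d)$ uniformly in $x$. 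The single diagonal term $j=i$ perturbs $\mathcal{N}_i$ and $\mathcal{D}_i$ by only $\mathcal{O}(\epsilon^{d/2}/n)$, which will be negligible at the target scale, so I suppress it.

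The heart of the argument is the centering identity. Since $\mathbb{E}\mathcal{N}_i=T_{\textup{ref},\epsilon}f(x)\,\mathbb{E}\mathcal{D}_i$,
\[
\frac{\mathcal{N}_i}{\mathcal{D}_i}-T_{\textup{ref},\epsilon}f(x)=\frac{\mathcal{N}_i\,\mathbb{E}\mathcal{D}_i-\mathbb{E}\mathcal{N}_i\,\mathcal{D}_i}{\mathcal{D}_i\,\mathbb{E}\mathcal{D}_i}=\frac{1}{\mathcal{D}_i}\cdot\frac{1}{nm}\sum_{j,k}a_{jk}\big(f(x_j)-T_{\textup{ref},\epsilon}f(x)\big),
\]
and the last sum is centered, i.e.\ has mean zero. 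The value of this rearrangement is that $a_{jk}$ is, up to exponentially small tails (Lemma \ref{trunc}), supported where $y_k$ lies within $\mathcal{O}(\sqrt{\epsilon})$ of both $x$ and $x_j$, hence where $\|x_j-x\|=\mathcal{O}(\sqrt{\epsilon})$; on that set $|f(x_j)-T_{\textup{ref},\epsilon}f(x)|\le|f(x_j)-f(x)|+|f(x)-T_{\textup{ref},\epsilon}f(x)|=\mathcal{O}(\sqrt{\epsilon})$, using the regularity of $f$ and the bias bound of Theorem \ref{biasthm}. Thus the summand $Y_{jk}:=a_{jk}\big(f(x_j)-T_{\textup{ref},\epsilon}f(x)\big)$ obeys $|Y_{jk}|=\mathcal{O}(1)$ and, crucially, $\mathrm{Var}(Y_{jk})\le\mathbb{E}\big[a_{jk}^2(f(x_j)-T_{\textup{ref},\epsilon}f(x))^2\big]=\mathcal{O}(\epsilon)\cdot\mathbb{E}[a_{jk}^2]=\mathcal{O}(\epsilon^{d+1})$, an extra factor $\epsilon$ over the crude bound $\mathcal{O}(\epsilon^d)$ — this is exactly what produces the $\epsilon^{1/2}$ improvement in the final exponent (the same centering device appears in the DM variance analysis). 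Separately, a cruder grid-Bernstein bound applied to $\mathcal{D}_i$ (summand variance $\mathcal{O}(\epsilon^d)$) shows $\mathcal{D}_i\ge\tfrac12 d_{\textup{ref},\epsilon}(x)=\Theta(\epsilon^d)$ with probability $1-\mathcal{O}(n^{-3})$ under the stated bandwidth hypothesis.

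Finally, apply Theorem \ref{bernstein} to $\sum_{j,k}(Y_{jk}-\mathbb{E}Y_{jk})$ with $b=\mathcal{O}(1)$, $S=\sum_{j,k}\mathrm{Var}(Y_{jk})=\mathcal{O}(nm\,\epsilon^{d+1})$, and cover number $\chi(\mathcal{A})\le n+m-1=\mathcal{O}(n)$ from \eqref{Optimal bound of the independent set A} (using $m=[n^\beta]\le n$ — this is precisely what turns the effective sample size into $\min(n,m)=n^\beta$ rather than $nm$). Choosing $t\asymp n^{1+\beta/2}\epsilon^{(d+1)/2}\sqrt{\log n}$ makes the Bernstein exponent of order $-\log n$, and the hypothesis $\frac{\sqrt{\log n}}{n^{\beta/2}\epsilon^{d/2+1/2}}\to 0$ is exactly the condition under which the $bt/3$ term is dominated by $S$. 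Hence, with probability $1-\mathcal{O}(n^{-3})$, $\big|\frac{1}{nm}\sum_{j,k}(Y_{jk}-\mathbb{E}Y_{jk})\big|=\mathcal{O}\big(\epsilon^{(d+1)/2}\sqrt{\log n}\,/\,n^{\beta/2}\big)$, and dividing by $\mathcal{D}_i=\Theta(\epsilon^d)$ yields $\big|\mathcal{N}_i/\mathcal{D}_i-T_{\textup{ref},\epsilon}f(x_i)\big|=\mathcal{O}\big(\frac{\sqrt{\log n}}{n^{\beta/2}\epsilon^{d/2-1/2}}\big)$; a union bound over $i=1,\dots,n$ (each event failing with probability $\mathcal{O}(n^{-3})$) upgrades this to the simultaneous statement with probability $1-\mathcal{O}(n^{-2})$. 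The main obstacle throughout is the dependence induced by the shared landmark set: the classical i.i.d.\ Bernstein/Chernoff argument is unavailable, so one must route the concentration through the grid-sampling inequalities and perform the centered rearrangement above so that the reduced summand variance delivers the claimed $\epsilon$-power; verifying that the diagonal $j=i$ term and the Bernstein $bt$-term are negligible under the stated bandwidth scaling is then routine bookkeeping.
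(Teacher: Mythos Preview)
Your proposal is correct and follows essentially the same approach as the paper: fix $x_i$, recognize the grid-sampling structure with $\chi(\mathcal{A})=\mathcal{O}(n)$, exploit the extra factor $\epsilon$ in the summand variance coming from the smallness of $f(x_j)-f(x)$ on the kernel's effective support, apply Janson's Bernstein-type inequality (Theorem~\ref{bernstein}), and union-bound over $i$. The only organizational difference is that the paper bounds numerator and denominator fluctuations separately (centering at $f(x_i)$) and combines via a Taylor/binomial expansion, whereas you use the single centered-sum identity (centering at $T_{\textup{ref},\epsilon}f(x)$) together with a lower bound on $\mathcal{D}_i$; both routes yield the same rate.
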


\begin{proof}
	Define $L:=I-(D^{(\textup{R})})^{-1}W^{(\textup{R})}$. Fix some $x_{i}\in M^{d}$, by Definition \ref{ref_affinity},
	\begin{align}
	\nonumber(L\textbf{\textit{f}})_{i} \,&=\frac{\sum_{j=1}^{n}W_{ij}(f(x_{i})-f(x_{j}))}{\sum_{j=1}^{n}W_{ij}}=\frac{\sum_{j=1}^{n}K_{\textup{ref},\epsilon}(x_{i},x_{j})(f(x_{i})-f(x_{j}))}{\sum_{j=1}^{n}K_{\textup{ref},\epsilon}(x_{i},x_{j})}
	\\ &= \label{grid_samp}
	\frac{\frac{1}{nm}\sum_{j=1,k=1}^{n,m}\epsilon^{-d} K_{\epsilon}(x_{i},y_{k})K_{\epsilon}(y_{k},x_{j})(f(x_{i})-f(x_{j}))}{\frac{1}{nm}\sum_{j=1,k=1}^{n,m}\epsilon^{-d}K_{\epsilon}(x_{i},y_{k})K_{\epsilon}(y_{k},x_{j})}\,.
	\end{align}
	Define two random variables
	\begin{eqnarray}\label{def_F}
	F\coloneqq\epsilon^{-d}K_{\epsilon}(x_{i},Y)K_{\epsilon}(Y,X)(f(x_{i})-f(X))
	\end{eqnarray}
	\begin{eqnarray}\label{def_G}
	G\coloneqq\epsilon^{-d}K_{\epsilon}(x_{i},Y)K_{\epsilon}(Y,X)
	\end{eqnarray}
	Recall that the landmark set $\mathcal{Y}=\{y_{k}\}_{k=1}^{m}$ are i.i.d. samples from the random vector $Y$, which has the p.d.f. $p_{Y}$. Also, the data set $\mathcal{X}=\{x_{j}\}_{j=1}^{n}$ are i.i.d. sampled from the random vector $X$, which has the p.d.f. $p_{X}$. Moreover, $Y$ is independent of $X$. Denote by $F_{k,j}$ one realization of $F$ when the realization of the random vector $(X,Y)$ is   $(x_j,\,y_{k})$; in other words, $F_{k,j}=\epsilon^{-d}K_{\epsilon}(x_{i},y_{k})K_{\epsilon}(y_{k},x_{j})(f(x_{i})-f(x_{j}))$. Similarly for $G_{k,j}$.
	Hence the numerator in \eqref{grid_samp} can be written as a random variable
	\begin{eqnarray}\label{F}
	\frac{1}{mn}\textbf{F}\coloneqq\frac{1}{mn}\sum_{j=1,k=1}^{n,m}F_{k,j}
	\end{eqnarray}
	and the denominator can be written as a random variable
	\begin{eqnarray}\label{G}
	\frac{1}{mn}\textbf{G}\coloneqq\frac{1}{mn}\sum_{j=1,k=1}^{n,m}G_{k,j}
	\end{eqnarray}
	By the law of large number, we would expect that 
	$(L\textbf{\textit{f}})_{i}\approx\frac{\mathbb{E}(F)}{\mathbb{E}(G)}$.
	We now justify this intuition, and get the convergence rates of
	\begin{eqnarray}\label{top_rate}
	\frac{1}{mn}\sum_{j=1,k=1}^{n,m}F_{k,j}\longrightarrow\mathbb{E}(F)
	\quad\mbox{and}\quad
	\frac{1}{mn}\sum_{j=1,k=1}^{n,m}G_{k,j}\longrightarrow\mathbb{E}(G)\,,
	\end{eqnarray}
	and hence the convergence rate of
	$(L\textbf{\textit{f}})_{i}\longrightarrow\frac{\mathbb{E}(F)}{\mathbb{E}(G)}$.
	%
	We start by solving \ref{top_rate} and there are three steps.\newline\newline
	\textbf{[Step1]}. We know $\chi(\mathcal{A})=\mathcal{O}(\max(m,n))$ by \eqref{Optimal bound of the independent set A}. \newline
	
	\noindent\textbf{[Step2]}. We want to compute $\textup{Var}(F)=\mathbb{E}(F^{2})-(\mathbb{E}(F))^{2}$. From Lemma \ref{numerator_est}, we have
	\begin{align}
	\nonumber\mathbb{E}(F) =&\, \epsilon^{-d}\int_{M}\left(\int_{M}K_{\epsilon}(x_{i},y)K_{\epsilon}(y,x)p_{Y}(y)\,dV(y)\right)(f(x_{i})-f(x))p_{X}(x)\,dV(x)
	\\\nonumber =&\, \epsilon^{-d}\left(\frac{\epsilon^{d+1}\mu_{1,2}^{(0)}}{d}(2p_{Y}\nabla p_{X}+p_{X}\nabla p_{Y})\cdot\nabla ((f(x_{i})-f(x))p_{X}(x))|_{x=x_{i}}\right. \\
	\nonumber&\left. +\frac{\epsilon^{d}\mu_{1,2}^{(0)}}{d}p_{X}p_{Y}\Delta ((f(x_{i})-f(x))p_{X}(x))|_{x=x_{i}}+\mathcal{O}(\epsilon^{d+2})\right)
	\\\nonumber =&\, \frac{\epsilon\mu_{1,2}^{(0)}}{d}(2p_{Y}\nabla p_{X}+p_{X}\nabla p_{Y})\cdot\nabla ((f(x_{i})-f(x))p_{X}(x))|_{x=x_{i}} \\
	\nonumber& +\frac{\epsilon\mu_{1,2}^{(0)}}{d}p_{X}p_{Y}\Delta ((f(x_{i})-f(x))p_{X}(x))|_{x=x_{i}}+\mathcal{O}(\epsilon^{2});
	\end{align}
	Similarly, by applying Lemma \ref{numerator_est} twice, we get
	\begin{align}
	\label{Proof Variance Theorem key variance quantity}\mathbb{E}(F^{2}) =&\, \epsilon^{-2d}\int_{M}\left(\int_{M}K_{\epsilon}^{2}(x_{i},y)K_{\epsilon}^{2}(y,x)p_{Y}(y)\,dV(y)\right) (f(x_{i})-f(x))^{2}p_{X}(x)\,dV(x)
	\\\nonumber =&\, \int_{M}\epsilon^{-d}\left(\int_{M}\epsilon^{-d}K_{\epsilon}^{2}(y,x)(f(x_{i})-f(x))^{2}p_{X}(x)\,dV(x)\right) K_{\epsilon}^{2}(x_{i},y)p_{Y}(y)\,dV(y)\nonumber
	\\\nonumber =&\, \frac{\epsilon^{1-d}\mu_{2,0}^{(0)}\mu_{2,2}^{(0)}}{d}\Delta((f(y)-f(x_{i}))^{2}p_{X}(y)p_{Y}(y))|_{y=x_{i}} + \mathcal{O}(\epsilon^{2-d})\,.
	\end{align}
	Without loss of generality, we assume from now that $\Delta((f(y)-f(x_{i}))^{2}p_{X}(y)p_{Y}(y))|_{y=x_{i}}$ is positive. Therefore, when $\epsilon>0$ is sufficiently small, $\mathbb{E}(F^{2})\asymp \epsilon^{1-d}$.  
	Since $[\mathbb{E}(F)]^2 =\mathcal{O}(\epsilon^{2})$, we have $\mathbb{E}(F^{2}) \gg  (\mathbb{E}(F))^{2}$ and hence $\textup{Var}(F)\asymp \epsilon^{1-d}$.\newline\newline
	\noindent\textbf{[Step3]}. We apply Theorem \ref{bernstein} to establish the large deviation bound. 
	From \textbf{[Step2]} we have $\textup{Var}(F_{k,j})\asymp \epsilon^{1-d}$, which is controlled by $|F_{k,j}|\asymp \epsilon^{-d}$ by \eqref{def_F}. 
	Moreover, $\mathbb{E}(F_{k,j})=\mathcal{O}(\epsilon)$, so $|F_{k,j}-\mathbb{E}(F_{k,j})|\leq b$, for some $b>0$ satisfying $b\asymp \epsilon^{-d}$ when $\epsilon$ is sufficiently small.
	Hence, by Theorem \ref{bernstein}, let $m=[n^{\beta}]$ for any $0<\beta\leq 1$, we have for all $ t>0$:
	\begin{align}
	\nonumber &\mathbb{P}\Big(\frac{1}{mn}\textbf{F}-\mathbb{E}(F)\geq t\Big)\leq\textup{exp}\left(\frac{-8(mnt)^{2}}{25\chi(\mathcal{A})(\sum_{k,j}\textup{Var}(F_{k,j})+bmnt/3)}\right)
	\\\nonumber\asymp &\,\textup{exp}\left(\frac{-8(mnt)^{2}}{25(m+n)(c_1mn\epsilon^{1-d}+c_2mnt\epsilon^{-d}/3)}\right)
	=\textup{exp}\left(\frac{-8mnt^{2}}{25(m+n)(c_1\epsilon^{1-d}+c_2t\epsilon^{-d}/3)}\right)\,,
	\end{align}
	where $c_1,c_2>0$ are implied constants in $\textup{Var}(F_{k,j})\asymp \epsilon^{1-d}$ and $b\asymp \epsilon^{-d}$ respectively. Since our goal is to estimate the Laplace-Beltrami term, which has the prefactor of order $\epsilon$, we ask $\frac{t}{\epsilon}\rightarrow 0$. As a result, the exponent becomes
	\begin{align}
	\label{aa}\frac{8mnt^{2}}{25(m+n)(c_1\epsilon^{1-d}+c_2t\epsilon^{-d}/3)} \geq \frac{c_3n^{1+\beta}t^{2}}{(n^{\beta}+n)\epsilon^{1-d}} \geq\frac{c_3n^{1+\beta}t^{2}}{n\epsilon^{1-d}}=\frac{c_3n^{\beta}t^{2}}{\epsilon^{1-d}}\,,
	\end{align}
	for some constant $c_3>0$. 
	Then, if we choose $n$ such that $\frac{c_3n^{\beta}t^{2}}{\epsilon^{1-d}}=3\log n$, we have
	\begin{eqnarray}
	\nonumber t\asymp \frac{\sqrt{\log n}}{n^{\beta/2}\epsilon^{d/2-1/2}}\,,
	\end{eqnarray}
	which satisfies the request that $\frac{t}{\epsilon} \asymp \frac{\sqrt{\log n}}{n^{\beta/2}\epsilon^{d/2+1/2}}\rightarrow 0$ as $n\rightarrow\infty$ by assumption.
	As a result, by the chosen $n$, we have
	\begin{eqnarray}
	\nonumber\mathbb{P}\Big(\frac{1}{mn}\textbf{F}-\mathbb{E}(F)\geq t\Big)\leq\textup{exp}(-3\log n) = \frac{1}{n^{3}}\,.
	\end{eqnarray}
	Recall that we have fixed $x_{i}$ for some $i$. In order for $i=1,2,\ldots,n$, we simply use the union bound to get:
	\begin{eqnarray}
	\nonumber\mathbb{P}\Big(\frac{1}{mn}\textbf{F}-\mathbb{E}(F)\geq t; \textup{for}\;i=1,2,\ldots,n\Big)\leq n\times\frac{1}{n^{3}}=\frac{1}{n^{2}}\,.
	\end{eqnarray}
	
	In general, when $\Delta((f(y)-f(x_{i}))^{2}p_{X}(y)p_{Y}(y))|_{y=x_{i}}=0$, the variance is smaller, and $t$ is smaller. 
	Hence, we conclude that with probability $1-\mathcal{O}(n^{-2})$, for all $x_i$, the numerator of $(L\textbf{\textit{f}})_{i}$ equals
	\begin{eqnarray}\label{num_LF}
	\mathbb{E}(F)+\mathcal{O}\left(\frac{\sqrt{\log n}}{n^{\beta/2}\epsilon^{d/2-1/2}}\right)\,.
	\end{eqnarray}
	The the denominator follows the same line and we list the computation here for the convenience of the reader:
	\begin{align}
	\nonumber\mathbb{E}(G)\, &= \epsilon^{-d}\int_{M}\left(\int_{M}K_{\epsilon}(x_{i},y)K_{\epsilon}(y,x)p_{Y}(y)\,dV(y)\right)p_{X}(x)\,dV(x)= p_{X}p_{Y}(x_{i}) + \mathcal{O}(\epsilon)
	\end{align}
	and 
	\begin{align}
	\nonumber\mathbb{E}(G^{2}) \,&= \epsilon^{-2d}\int_{M}\left(\int_{M}K_{\epsilon}^{2}(x_{i},y)K_{\epsilon}^{2}(y,x)p_{Y}(y)\,dV(y)\right)p_{X}(x)\,dV(x)= \epsilon^{-d}\mu_{2,0}^{(0)}\mu_{2,0}^{(0)}p_{X}p_{Y}(x_{i}) + \mathcal{O}(\epsilon^{1-d})\,.
	\end{align}
	By the same argument, with probability $1-\mathcal{O}(1/n^{2})$, we have for all $x_i$ that the denominator of $(L\textbf{\textit{f}})_{i}$ satisfies
	\begin{eqnarray}\label{denom_LF}
	\mathbb{E}(G)+\mathcal{O}\left(\frac{\sqrt{\log n}}{n^{\beta/2}\epsilon^{d/2}}\right)\,.
	\end{eqnarray}
	By combining \eqref{num_LF} and \eqref{denom_LF} and the binomial expansion, we conclude that for all $i=1,2,\ldots,n$, with probability $1-\mathcal{O}(1/n^{2})$, we have
	\begin{align}
	\nonumber(L\textbf{\textit{f}})_{i} \,=& \frac{\mathbb{E}(F)+\mathcal{O}\left(\frac{\sqrt{\log n}}{n^{\beta/2}\epsilon^{d/2-1/2}}\right)}{\mathbb{E}(G)+\mathcal{O}\left(\frac{\sqrt{\log n}}{n^{\beta/2}\epsilon^{d/2}}\right)}
	= \frac{\mathbb{E}(F)}{\mathbb{E}(G)} + \mathcal{O}\left(\frac{\sqrt{\log n}}{n^{\beta/2}\epsilon^{d/2-1/2}}\right)
	=
	f(x_{i}) - T_{\textup{ref},\epsilon}f(x_{i}) + \mathcal{O}\Big(\frac{\sqrt{\log n}}{n^{\beta/2}\epsilon^{d/2-1/2}}\Big)\,,
	\end{align}
	and hence the proof.
\end{proof}

\section{Proof of Theorems \ref{spec_cong} -- spectral convergence}

We extend the argument provided in \cite{belkin2007convergence,von2008consistency}, and apply tools from \cite{DunsonWuWu2019} to prove the spectral convergence in the $L^\infty$ sense and its corresponding rate. 
We first define some notations.
Recall the definition of $K_{\textup{ref},\epsilon}$ and $d_{\textup{ref},\epsilon}$ in Definition \ref{GL_cts}. 
Define the normalized landmark-kernel as
\begin{equation}
M_{\textup{ref},\epsilon}(x,y)=\frac{K_{\textup{ref},\epsilon}(x,y)}{d_{\textup{ref},\epsilon}(x)}\in C(M\times M)\,.
\end{equation}
Note that by this definition we have 
\begin{equation}
T_{\textup{ref},\epsilon}f(x)=\int_{M}M_{\textup{ref},\epsilon}(x,y)f(y)p_{X}(y)\,dV(y)\,.
\end{equation}
When we only have finite sample points $\{x_{i}\}_{i=1}^{n}$ and landmark set $\{z_{i}\}_{i=1}^{m}$, we need to handle various terms in Definition \ref{Definition various operators discrete}, and the following ``intermittent'' term:
\begin{align}
d_{\textup{ref},\epsilon,n}(x):=\frac{1}{n}\sum_{i=1}^n K_{\textup{ref},\epsilon}(x,x_i)\,,\quad
T_{\textup{ref},\epsilon,n}f(x):=\frac{1}{n}\sum_{i=1}^{n}M_{\textup{ref},\epsilon}(x,x_{i})f(x_{i})\,.\nonumber
\end{align}

The following lemma says that $T_{\textup{ref},\epsilon}$, $T_{\textup{ref},\epsilon,n}$, and $\widehat{T}_{\textup{ref},\epsilon,n}$ are all ``nice'' integral operators. The proof is similar to that shown in \cite{belkin2007convergence,von2008consistency}, so we omit it.

\begin{lemma}
	The integral operators $T_{\textup{ref},\epsilon}$, $T_{\textup{ref},\epsilon,n}$, and $\widehat{T}_{\textup{ref},\epsilon,n}$ are all compact.
\end{lemma}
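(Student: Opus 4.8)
The plan is to reduce the statement to two classical facts about the Banach space $C(M)$ with the supremum norm: an integral operator with a continuous kernel over a compact set is compact (Arzel\`a--Ascoli), and a bounded finite-rank operator is compact. The only real preparation is to check that the kernels $M_{\textup{ref},\epsilon}$ and $\widehat M_{\textup{ref},\epsilon,n}$ are genuinely continuous, which amounts to bounding their denominators $d_{\textup{ref},\epsilon}$ and $\widehat d_{\textup{ref},\epsilon,n}$ away from zero. Since $K\in C^3$, the map $(x,y)\mapsto K_\epsilon(x,y)=K(\|x-y\|/\sqrt\epsilon)$ is continuous, hence uniformly continuous and bounded, on the compact set $M\times M$; by dominated convergence it follows that $K_{\textup{ref},\epsilon}\in C(M\times M)$, $d_{\textup{ref},\epsilon}\in C(M)$, and likewise $\widehat K_{\textup{ref},\epsilon,n}\in C(M\times M)$, $\widehat d_{\textup{ref},\epsilon,n}\in C(M)$ (these are finite sums of continuous functions). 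For positivity: because $K(0)>0$ and $K$ is continuous near $0$, there are $\delta,c>0$ with $K(t)\ge c$ for $t\le\delta$; combined with $\inf_M p_X>0$, $\inf_M p_Y>0$ and the compactness of $M$ this yields $\inf_{x\in M}d_{\textup{ref},\epsilon}(x)>0$, and for the Gaussian kernel used in Theorem \ref{spec_cong} one has $K>0$ everywhere so $\inf_{x\in M}\widehat d_{\textup{ref},\epsilon,n}(x)>0$ as well. Hence $M_{\textup{ref},\epsilon}=K_{\textup{ref},\epsilon}/d_{\textup{ref},\epsilon}$ and $\widehat M_{\textup{ref},\epsilon,n}=\widehat K_{\textup{ref},\epsilon,n}/\widehat d_{\textup{ref},\epsilon,n}$ are continuous, hence bounded and uniformly continuous, on $M\times M$.

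Given this, I would run the Arzel\`a--Ascoli argument for $T_{\textup{ref},\epsilon}$. For $f\in C(M)$ with $\|f\|_\infty\le 1$ we have $\|T_{\textup{ref},\epsilon}f\|_\infty\le \|M_{\textup{ref},\epsilon}\|_\infty\|p_X\|_\infty\mathrm{Vol}(M)$, so the image of the unit ball is uniformly bounded; and if $\omega$ is a modulus of uniform continuity of $M_{\textup{ref},\epsilon}$ on $M\times M$, then
\[
|T_{\textup{ref},\epsilon}f(x)-T_{\textup{ref},\epsilon}f(x')|\le \omega(\|x-x'\|_{\mathbb{R}^D})\,\|p_X\|_\infty\mathrm{Vol}(M)
\]
uniformly over $\|f\|_\infty\le1$, so the image of the unit ball is equicontinuous; by Arzel\`a--Ascoli it is relatively compact in $C(M)$, i.e. $T_{\textup{ref},\epsilon}$ is a compact operator. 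The same argument applies verbatim to $T_{\textup{ref},\epsilon,n}$ and $\widehat T_{\textup{ref},\epsilon,n}$, whose kernels are also continuous. Even more directly, these last two operators are bounded and of finite rank, hence compact: $T_{\textup{ref},\epsilon,n}f=\sum_{i=1}^n \tfrac{f(x_i)}{n}\,M_{\textup{ref},\epsilon}(\cdot,x_i)$ has range in $\mathrm{span}\{M_{\textup{ref},\epsilon}(\cdot,x_i)\}_{i=1}^n$, and, using $\widehat K_{\textup{ref},\epsilon,n}(x,x_i)=\tfrac1m\sum_{j=1}^m K_\epsilon(x,z_j)K_\epsilon(z_j,x_i)$,
\[
\widehat T_{\textup{ref},\epsilon,n}f(x)=\frac{1}{\widehat d_{\textup{ref},\epsilon,n}(x)}\sum_{j=1}^m\frac{K_\epsilon(x,z_j)}{m}\Big(\frac1n\sum_{i=1}^n K_\epsilon(z_j,x_i)f(x_i)\Big),
\]
so $\widehat T_{\textup{ref},\epsilon,n}$ has range in $\mathrm{span}\{K_\epsilon(\cdot,z_j)/\widehat d_{\textup{ref},\epsilon,n}(\cdot)\}_{j=1}^m$ (dimension at most $\min(m,n)$).

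There is no substantial obstacle here; the only point requiring a moment of care is the uniform lower bound on the denominators, which is where the assumptions $K(0)>0$, the continuity of $K$, and $\inf_M p_X,\inf_M p_Y>0$ are used (and, for $\widehat d_{\textup{ref},\epsilon,n}$, strict positivity of the kernel as in the Gaussian case of Theorem \ref{spec_cong}). I would also remark, as in \cite{belkin2007convergence,von2008consistency}, that the same reasoning gives compactness (and self-adjointness) of the symmetrized operators $D^{-1/2}W^{(\textup{R})}D^{-1/2}$-type kernels, which is the form actually invoked when applying the collectively compact convergence machinery in the proof of Theorem \ref{spec_cong}.
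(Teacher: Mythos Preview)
Your proposal is correct and follows the standard route the paper defers to: the paper omits the proof entirely, citing \cite{belkin2007convergence,von2008consistency}, and your Arzel\`a--Ascoli verification for $T_{\textup{ref},\epsilon}$ together with the finite-rank observation for $T_{\textup{ref},\epsilon,n}$ and $\widehat T_{\textup{ref},\epsilon,n}$ is exactly that argument. Your attention to the strict positivity of the denominators is consistent with the paper's own Lemma~\ref{unif-bd}, which sets $\delta=\min K_\epsilon>0$ (the kernel in Definition~\ref{Definition:kernel} takes values in $\mathbb{R}^+$, so this minimum over the compact $M\times M$ is indeed positive, not only in the Gaussian case).
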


The proof of Theorems \ref{spec_cong} is composed of two major parts.
\begin{itemize}
	\item Part 1. When $p_{Y}$ is well chosen, show that $\frac{T_{\textup{ref},\epsilon}-1}{\epsilon}\rightarrow \frac{\mu_{1,2}^{(0)}}{d}\Delta$ ``spectrally'' as $\epsilon\rightarrow 0$, and evaluate the rate that depends on $\epsilon$. 
	
	\item Part 2. For a fixed $\epsilon>0$, show that $\widehat{T}_{\textup{ref},\epsilon,n}\rightarrow T_{\textup{ref},\epsilon}$  compactly $\textup{a.s.}$ as $n\rightarrow\infty$, and evaluate the rate that depends on $\epsilon$ and $n$.   
\end{itemize}

Below, we prepare needed facts for these two major parts. With these facts, we put them together to finish the proof in the end.

\subsection{Facts for Part 1}

We need the following Proposition. This proposition is the key step toward the spectral convergence. Its proof is long and delicate, and can be found in \cite{DunsonWuWu2019}, we only provide key steps and refer readers with interest to \cite[Proposition 1]{DunsonWuWu2019} for details.

\begin{proposition}\label{T epsilon and Delta}
	Assume that all eigenvalues of $\Delta$ are simple. Denote $(\lambda_{i,\epsilon}, \phi_{i,\epsilon})$ to be the $i$-th eigenpair of $\frac{I-T_{\textup{ref},\epsilon}}{\epsilon}$ and $(\lambda_{i}, \phi_{i})$ to be the $i$-th eigenpair of $-\Delta$. 
	Assume both $\phi_{i,\epsilon}$ and $\phi_{i}$ are normalized in the $L^2$ norm. 
	Fix $K\in \mathbb{N}$. Denote 
	\begin{equation}
	\Gamma_K:=\min_{1 \leq i \leq K}\textup{dist}(\lambda_i, \sigma(-\Delta)\setminus \{\lambda_i\})\,.
	\end{equation}
	Suppose $\sqrt{\epsilon} \leq \mathcal{K}_1 \min \Bigg(\bigg(\frac{\min(\Gamma_K,1)}{\mathcal{K}_2+\lambda_K^{d/2+5}}\bigg)^2, \frac{1}{(2+\lambda_K^{d+1})^2}\Bigg)$, where $\mathcal{K}_1$ and $\mathcal{K}_2>1$  are the constants depending on $p_X$, $p_Y$, and the volume, the injectivity radius and the sectional curvature of the manifold. 
	Furthermore, assume $p_Y$ is properly chosen so that $\frac{2\nabla p_{X}(x)}{p_{X}(x)}+\frac{\nabla p_{Y}(x)}{p_{Y}(x)}=0$.
	Then, there are $a_i \in \{-1, 1\}$ such that for all $i < K$, 
	\begin{align}
	|\lambda_{i,\epsilon}-\lambda_{i}| &\, \leq \epsilon^{3/4} \,,  \label{Proposition main result statement} \\
	\|a_i\phi_{i,\epsilon}-\phi_{i}\|_{\infty} &\, \leq \epsilon^{1/2} \,. \nonumber 
	\end{align}
\end{proposition}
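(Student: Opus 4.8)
The plan is to deduce the statement from an abstract self-adjoint spectral perturbation argument, comparing $\tfrac{I-T_{\textup{ref},\epsilon}}{\epsilon}$ with an operator manufactured from $-\Delta$ by functional calculus. The first observation is that under the hypothesis $\tfrac{2\nabla p_{X}}{p_{X}}+\tfrac{\nabla p_{Y}}{p_{Y}}=0$ the bias analysis (Theorem \ref{biasthm}) does two jobs at once. On one hand the drift term drops out, so $\tfrac{I-T_{\textup{ref},\epsilon}}{\epsilon}f=-\tfrac{\mu_{1,2}^{(0)}}{d}\Delta f+\mathcal{O}(\epsilon)$ pointwise for $f\in C^{4}(M)$, with the remainder governed by $\norm{f}_{C^4}$ and the curvature. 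On the other hand the same relation says that $2\log p_{X}+\log p_{Y}$ is constant, i.e. $p_{X}^{2}p_{Y}$ is constant; since the reversing measure of the Markov operator $T_{\textup{ref},\epsilon}$ is proportional to $p_{X}(x)\,d_{\textup{ref},\epsilon}(x)\,dV(x)$ and, by the same expansion used in Theorem \ref{biasthm}, $d_{\textup{ref},\epsilon}(x)=\epsilon^{d}p_{X}(x)p_{Y}(x)\bigl(1+\mathcal{O}(\epsilon)\bigr)$, this reversing measure is a constant multiple of $dV$ up to an $\mathcal{O}(\epsilon)$ correction. Consequently $T_{\textup{ref},\epsilon}$ may be treated as a self-adjoint operator on $L^{2}(M,dV)$ modulo an $\mathcal{O}(\epsilon)$ error (the $\epsilon^{d}$ prefactor being irrelevant after normalization), its $\epsilon\to 0$ limit is $-\Delta$ once the explicit constant $\mu_{1,2}^{(0)}/d$ is absorbed into a rescaling of $\epsilon$, and $\lambda_{i,\epsilon}=\tfrac{1-\lambda_{i}(T_{\textup{ref},\epsilon})}{\epsilon}$ with eigenfunctions $\phi_{i,\epsilon}$ that should converge to the $\phi_{i}$.

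The quantitative core is the choice of a comparison operator together with a two-regime estimate for $T_{\textup{ref},\epsilon}$. I would take the heat semigroup $A_{\epsilon}:=e^{\epsilon\Delta}$, whose $i$-th eigenpair is exactly $(e^{-\epsilon\lambda_{i}},\phi_{i})$. For the low-frequency regime $i<K$, the task is to bound $\norm{(T_{\textup{ref},\epsilon}-A_{\epsilon})\phi_{i}}_{\infty}$: one expands $T_{\textup{ref},\epsilon}\phi_{i}$ by the truncation Lemma \ref{trunc} and the expansion Lemma \ref{numerator_est}, applied \emph{separately to the inner and outer integrals} defining $K_{\textup{ref},\epsilon}(x,y)=\int_{M}K_{\epsilon}(x,z)K_{\epsilon}(z,y)p_{Y}(z)\,dV(z)$ --- precisely as in the proof of Theorem \ref{biasthm} --- and matches the result against $e^{\epsilon\Delta}\phi_{i}=\phi_{i}-\epsilon\lambda_{i}\phi_{i}+\tfrac{\epsilon^{2}}{2}\lambda_{i}^{2}\phi_{i}-\cdots$; the leftover, nominally $\mathcal{O}(\epsilon^{2})$ for a fixed smooth test function, carries a constant proportional to $\norm{\phi_{i}}_{C^{4}}$, which elliptic estimates together with Hörmander's Lemma \ref{lemma hormander} bound by a fixed power of $\lambda_{i}$. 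For the high-frequency regime one uses that $T_{\textup{ref},\epsilon}$ is a genuine smoothing operator, so the part of its spectrum beyond the first $K$ eigenvalues stays away from the window around $e^{-\epsilon\lambda_{i}}$; this prevents spurious eigenvalues from entering and lets the low-frequency match control the whole picture.

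With these ingredients the assembly follows the pattern of \cite{von2008consistency,DunsonWuWu2019}. For the eigenvalues, Lemma \ref{ratio of eigenvalues}(1) with $A=A_{\epsilon}$, $B=T_{\textup{ref},\epsilon}$, $E=A_{\epsilon}-T_{\textup{ref},\epsilon}$ gives $\bigl|\tfrac{1-\lambda_{i}(T_{\textup{ref},\epsilon})}{1-e^{-\epsilon\lambda_{i}}}-1\bigr|$ small, and, after dividing by $\epsilon$ and using $\tfrac{1-e^{-\epsilon\lambda_{i}}}{\epsilon}=\lambda_{i}+\mathcal{O}(\epsilon\lambda_{i}^{2})$ (with Lemma \ref{ratio of eigenvalues}(3) to absorb the residual), yields $|\lambda_{i,\epsilon}-\lambda_{i}|\le\epsilon^{4/3}$ provided $\sqrt{\epsilon}$ is small relative to $\tfrac{1}{2+\lambda_{K}^{d+1}}$. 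For the eigenfunctions in the $L^{\infty}$ sense --- the genuinely new output compared with the earlier $L^{2}$ results --- I would feed the low-frequency estimate into Theorem \ref{atkinson1967bound} with $T_{n}\leftrightarrow T_{\textup{ref},\epsilon}$, $T\leftrightarrow A_{\epsilon}$, $\lambda\leftrightarrow e^{-\epsilon\lambda_{i}}$, on a circle $\Gamma_{r}$ of radius $r\asymp\min(\Gamma_{K}\epsilon,1)$ around $e^{-\epsilon\lambda_{i}}$ on which $\norm{R_{z}(A_{\epsilon})}$ is controlled by the spectral gap $\Gamma_{K}$, and combine with the one-dimensional projection Lemma \ref{cong_1_dim_proj} to obtain signs $a_{i}$ with $\norm{a_{i}\phi_{i,\epsilon}-\phi_{i}}_{\infty}\le\epsilon^{1/2}$. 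The hypothesis $\sqrt{\epsilon}\le\mathcal{K}_{1}\min\bigl(\bigl(\tfrac{\min(\Gamma_{K},1)}{\mathcal{K}_{2}+\lambda_{K}^{d/2+5}}\bigr)^{2},\tfrac{1}{(2+\lambda_{K}^{d+1})^{2}}\bigr)$ is exactly the calibration under which all of these errors become $\le\epsilon^{4/3}$ (resp. $\le\epsilon^{1/2}$) simultaneously for every $i<K$.

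The main obstacle is uniformity over $i<K$. For a single smooth test function the bias expansion is genuinely $\mathcal{O}(\epsilon^{2})$, but applied to $\phi_{i}$ its implied constant grows polynomially in $\lambda_{i}$: Weyl's law (Lemma \ref{lemma weyls law}) gives $\lambda_{K}\asymp K^{2/d}$, Hörmander's inequality (Lemma \ref{lemma hormander}) gives $\norm{\phi_{i}}_{\infty}\lesssim\lambda_{i}^{(d-1)/4}$, and the higher $C^{k}$ norms scale with still larger powers of $\lambda_{i}$; meanwhile the spectral gap $\Gamma_{K}$ sits in the denominators coming from Lemma \ref{ratio of eigenvalues}(2)--(3) and Theorem \ref{atkinson1967bound}. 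Balancing the remainder growth against the gap and the eigenvalue spread is exactly what forces the stated bound on $\sqrt{\epsilon}$, and getting this bookkeeping right is the delicate part. A second, genuinely new difficulty relative to the ordinary graph-Laplacian analysis is that $K_{\textup{ref},\epsilon}$ is not a fixed kernel of a single bandwidth but the $p_{Y}$-weighted self-convolution of $K_{\epsilon}$, so every Taylor and truncation estimate must be carried out on the iterated integral, and the cancellation $\tfrac{2\nabla p_{X}}{p_{X}}+\tfrac{\nabla p_{Y}}{p_{Y}}=0$ must be used not merely at leading order but carefully enough that the reversing measure is Lebesgue up to $\mathcal{O}(\epsilon)$ and no $\mathcal{O}(\epsilon)$ non-self-adjoint remainder survives.
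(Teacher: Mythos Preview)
Your proposal is correct and follows essentially the same route as the paper: the paper's own proof is a one-paragraph reduction to \cite[Proposition 1]{DunsonWuWu2019}, observing that $K_{\textup{ref},\epsilon}$ is smooth with exponential decay, that the bias expansion (Theorem \ref{biasthm}) kills the drift under $\tfrac{2\nabla p_{X}}{p_{X}}+\tfrac{\nabla p_{Y}}{p_{Y}}=0$, and that Sobolev embedding controls the $C^{4}$-norm of the eigenfunctions uniformly --- after which Lemma \ref{ratio of eigenvalues} and the Atkinson-type bound do the rest. You have in fact spelled out more of the DunsonWuWu machinery (heat-semigroup comparison, low/high-frequency split, H\"ormander and Weyl bookkeeping) than the paper itself does, and your observation that the hypothesis forces $p_{X}^{2}p_{Y}$ constant --- hence the reversing measure is $dV$ up to $\mathcal{O}(\epsilon)$ --- is a clean way to justify the self-adjoint framework that the paper takes for granted.
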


\begin{proof}
	Note that the kernel associated with $T_{\textup{ref},\epsilon}$ is
	\begin{align}
	K_{\textup{ref},\epsilon}(x,y)&\coloneqq\int_{M}K_{\epsilon}(x,z)K_{\epsilon}(z,y)p_{Y}(z)\,dV(z)\,.
	\end{align}
	While in general $K_{\textup{ref},\epsilon}$ is not Gaussian, it is smooth and decays exponentially fast. To proceed, note that by Theorem \ref{Proof: Theorem1: bias analysis}, when $p_Y$ is properly chosen so that $\frac{2\nabla p_{X}(x)}{p_{X}(x)}+\frac{\nabla p_{Y}(x)}{p_{Y}(x)}=0$, we have the pointwise convergence of the eigenvalue/eigenfunction of $\frac{1-T_{\textup{ref},\epsilon}}{\epsilon}$ to those of $-\Delta$. On the other hand, if we plug the eigenfunction of $-\Delta$ into Theorem \ref{Proof: Theorem1: bias analysis}, the error in the pointwise convergence depends on the $C^4$ norm of the eigenfunction. Therefore, by the standard Sobolev embedding (see \cite[Theorem 9.2]{palais1968foundations} or \cite[Lemma SI.8]{DunsonWuWu2019}), the error of the pointwise convergence is controlled in the uniform way.
	Therefore, by plugging the kernel $K_{\textup{ref},\epsilon}$ into the proof of \cite[Proposition 1]{DunsonWuWu2019}, where Lemma \ref{ratio of eigenvalues} is applied to control the deviations of eigenvalues and eigenfunctions, we obtain the result. Note that the implied constants associated with error bounds are different from those shown in \cite[Proposition 1]{DunsonWuWu2019} due to the different kernels we choose here. Also, note that the bandwidth used in \cite{DunsonWuWu2019} is $\epsilon$, while it is $\epsilon^{1/2}$ in this work.
\end{proof}

\subsection{Facts for Part 2}

This subsection is long and includes several details we need to discuss. 
Overall, to link the random finite samples to the continuous and deterministic setup; that is, link $\widehat{T}_{\textup{ref},\epsilon,n}$ to $T_{\textup{ref},\epsilon}$, we consider the Glivenko-Cantelli class commonly used in the empirical processes analysis \cite{wellner2013weak}. We need some more definitions.
For the probability measure $d\mathbb{P}_X=p_XdV$ associated with the dataset, and a function $f\in C(M)$, introduce the abbreviation 
\begin{equation}
\mathbb{P}f\coloneqq\int f(x)\,d\mathbb{P}_X(x)\,. 
\end{equation}
Let $x_{1},\ldots,x_{n}$ be i.i.d. sampled from $\mathbb{P}$, and denote by 
\begin{equation}
\mathbb{P}_{n}\coloneqq \frac{1}{n}\sum_{i=1}^{n}\delta_{x_{i}}
\end{equation}
the corresponding empirical distribution, where $\delta_{x_{i}}$ is the Dirac delta measure supported at $x_i$.
Note that we have $\mathbb{P}_{n}f = \frac{1}{n}\sum_{i=1}^{n}f(x_{i})$. Also, denote 
\begin{equation}
\widetilde{\mathbb{P}}f:=\int f(y)p_{Y}(y)\,dV(y)\,\,\,\mbox{ and }\,\,\,\widetilde{\mathbb{P}}_mf:=\frac{1}{m}\sum_{l=1}^m f(y_l)\,.
\end{equation}

We now prepare some bounds for later proof.
\begin{lemma}\label{unif-bd}
	Fix $\epsilon>0$. Set $\delta=\min K_{\epsilon}$. The following bounds hold for all $x\in M$: 
	\begin{align}
	\delta^2\leq K_{\textup{ref},\epsilon}(x,y)\leq\norm{K}^{2}_{\infty},&\;\; \delta^2\leq\widehat{K}_{\textup{ref},\epsilon,n}(x,y)\leq\norm{K}^{2}_{\infty}\,,\nonumber\\
	C_1\epsilon^d\leq d_{\textup{ref},\epsilon}(x)\leq C_2\epsilon^d, &\;\; \delta^2\leq\widehat{d}_{\textup{ref},\epsilon,n}(x)\leq\norm{K}^{2}_{\infty}\,,\label{Bound of d and dhat}\\
	\frac{C_2\epsilon^d}{\norm{K}_{\infty}}\leq M_{\textup{ref},\epsilon}(x,y)\leq \frac{\norm{K}^{2}_{\infty}}{C_1\epsilon^{d}},&\;\;\frac{\delta^2}{\norm{K}_{\infty}}\leq \widehat{M}_{\textup{ref},\epsilon,n}(x,y)\leq \frac{\norm{K}^{2}_{\infty}}{\delta^2}\,,\nonumber
	\end{align}
	where $C_1$ is a constant depending on the kernel, the curvature of the manifold and the minima of $p_X$ and $p_Y$. Similarly, $C_2$ is a constant depending on the kernel, the curvature of the manifold and the maxima of $p_X$ and $p_Y$. 
\end{lemma}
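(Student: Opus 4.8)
The plan is to reduce everything to two ingredients: (i) crude two-sided bounds on the bare kernel $K_{\epsilon}$ over the compact manifold, and (ii) the standard uniform approximation-of-identity estimate $\int_{M}K_{\epsilon}(x,z)\,dV(z)\asymp\epsilon^{d/2}$. For (i), observe that as $x,y$ range over $M$ the quantity $\norm{x-y}_{\mathbb{R}^{D}}$ ranges over the compact interval $[0,\operatorname{diam}\iota(M)]$, while $K$ is continuous and strictly positive on $[0,\infty)$; hence $K_{\epsilon}=K(\norm{\cdot-\cdot}/\sqrt{\epsilon})$ is a continuous strictly positive function on the compact set $M\times M$, so $\delta=\min K_{\epsilon}>0$ exists and $\delta\le K_{\epsilon}\le\norm{K}_{\infty}$. (For each fixed $\epsilon$; only the $\epsilon^{d}$-scale estimates below will need $\epsilon$ small.)

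Given this, the bounds for $K_{\textup{ref},\epsilon}$ and $\widehat{K}_{\textup{ref},\epsilon,n}$ are immediate: each is a weighted average of the products $K_{\epsilon}(x,z)K_{\epsilon}(z,y)\in[\delta^{2},\norm{K}_{\infty}^{2}]$, the weight being the probability density $p_{Y}\,dV$ (total mass $1$) in the former case and the uniform probability weight $\tfrac1m$ on the landmark points in the latter, so both averages stay in $[\delta^{2},\norm{K}_{\infty}^{2}]$. Applying the same averaging argument once more, now over the data points $x_{1},\dots,x_{n}$ to $\widehat{K}_{\textup{ref},\epsilon,n}(x,x_{i})\in[\delta^{2},\norm{K}_{\infty}^{2}]$, yields the stated bounds for $\widehat{d}_{\textup{ref},\epsilon,n}(x)$ (no $\epsilon^{d}$ factor appears because there is no integration against a density).

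The one genuinely analytic step, which I expect to be the main obstacle, is the $\epsilon^{d}$-scale bound on $d_{\textup{ref},\epsilon}$. I would write
\[
d_{\textup{ref},\epsilon}(x)=\int_{M}K_{\epsilon}(x,z)\,p_{Y}(z)\Big[\int_{M}K_{\epsilon}(z,y)\,p_{X}(y)\,dV(y)\Big]dV(z),
\]
and estimate the inner integral uniformly in $z$ by $\int_{M}K_{\epsilon}(z,y)\,dV(y)=\epsilon^{d/2}\big(\mu_{1,0}^{(0)}+\mathcal{O}(\epsilon)\big)$: passing to geodesic normal coordinates at $z$ the Jacobian is $1+\mathcal{O}(\epsilon)$ on a ball of radius $\asymp\sqrt{\epsilon}$, and the contribution from outside the injectivity radius is $\mathcal{O}(\epsilon^{\infty})$ by the exponential decay of $K$ (this is exactly Lemma \ref{trunc}, and Lemma \ref{for_numerator_est} with $f\equiv1$); for the lower bound one may instead restrict the integral to a geodesic $\sqrt{\epsilon}$-ball, on which $K_{\epsilon}\ge\min_{t\in[0,1]}K(t)>0$ and whose volume is $\asymp\epsilon^{d/2}$ by the curvature bounds. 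Combined with $0<\inf_{M}p_{X}\le p_{X}\le\sup_{M}p_{X}<\infty$, this gives the inner integral $\asymp\epsilon^{d/2}$ uniformly in $z$; repeating the identical argument for the outer integral against $K_{\epsilon}(x,z)p_{Y}(z)$ gives $d_{\textup{ref},\epsilon}(x)\asymp\epsilon^{d/2}\cdot\epsilon^{d/2}=\epsilon^{d}$, the upper constant $C_{2}$ depending on $\sup p_{X}$, $\sup p_{Y}$, $\norm{K}_{\infty}$ and the volume comparison, and the lower constant $C_{1}$ on $\inf p_{X}$, $\inf p_{Y}$, $\min_{[0,1]}K$ and the curvature bounds.

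Finally, the bounds for $M_{\textup{ref},\epsilon}=K_{\textup{ref},\epsilon}/d_{\textup{ref},\epsilon}$ and $\widehat{M}_{\textup{ref},\epsilon,n}=\widehat{K}_{\textup{ref},\epsilon,n}/\widehat{d}_{\textup{ref},\epsilon,n}$ follow by dividing the numerator bounds by the denominator bounds established above, which produces two-sided bounds of the stated form. All of this is elementary once the single estimate $\int_{M}K_{\epsilon}(x,z)\,dV(z)\asymp\epsilon^{d/2}$ is in hand; the only mild bookkeeping is tracking the dependence of $C_{1},C_{2}$ on the geometry of $M$, which is why the paper omits the details.
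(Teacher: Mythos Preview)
Your proposal is correct and follows essentially the same approach as the paper: the trivial pointwise bounds on $K_{\epsilon}$ handle $K_{\textup{ref},\epsilon}$, $\widehat{K}_{\textup{ref},\epsilon,n}$, and $\widehat{d}_{\textup{ref},\epsilon,n}$, while for $d_{\textup{ref},\epsilon}$ one pulls out $\inf p_X$, $\inf p_Y$ (resp.\ the suprema) and applies the approximation-of-identity estimate $\int_M K_\epsilon\,dV\asymp\epsilon^{d/2}$ twice, and the $M$-bounds follow by division. The paper's proof is in fact terser than yours, writing only the single chain of inequalities for the lower bound on $d_{\textup{ref},\epsilon}$ and declaring the rest ``trivial'' or ``by combining the above bounds.''
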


\begin{proof}
	The bounds for $K_{\textup{ref},\epsilon}(x,y)$ and $\hat K_{\textup{ref},\epsilon,n}(x,y)$ come from a trivial bound. For $d_{\textup{ref},\epsilon}(x)$, we have
	\begin{align}
	\nonumber&\quad\int_{M}K_{\textup{ref},\epsilon}(x,y)P_{X}(y)\,dV(y)\,\geq \inf_{x'\in M}P_{X}(x')\int_{M}K_{\textup{ref},\epsilon}(x,y)\,dV(y)\\
	\nonumber
	&\geq \inf_{x',y'\in M}P_X(x')P_Y(y')\int_M \int_{M}K_{\epsilon}(x,z)K_{\epsilon}(z,y)\,dV(z)dV(y) = C_1\epsilon^{d}\,,
	\end{align}
	Note that due to the randomness, $\widehat{d}_{\textup{ref},\epsilon,n}(x)$ can only be trivially bounded. $M_{\textup{ref},\epsilon}(x,y)$ and $\hat M_{\textup{ref},\epsilon,n}(x,y)$ are bounded by combining the above bounds.
\end{proof}

Below, we list some functional spaces we need for the analysis, and show that they are Glivenko-Cantelli classes.

\begin{definition}\label{Definition different classes}
	Let $u\in C(M)$ and $k$ be the chosen Gaussian kernel stated in Theorem \ref{spec_cong}. Define 
	\begin{align}
	\mathcal{K}&\coloneqq\{k(x,\cdot);x\in M\}, \nonumber\\
	\mathcal{K}\cdot\mathcal{K}&\coloneqq\{k(x,\cdot)k(\cdot,y);x,y\in M\}\,\nonumber\\
	u\cdot\mathcal{M}&\coloneqq\{u(\cdot)M_{\textup{ref},\epsilon}(x,\cdot);x\in M\},\\
	\mathcal{M}\cdot\mathcal{M}&\coloneqq\{M_{\textup{ref},\epsilon}(x,\cdot)M_{\textup{ref},\epsilon}(\cdot,y);x,y\in M\}\nonumber\\
	\int\mathcal{K}\cdot\mathcal{K}&\coloneqq\Big\{\int k(x,z)k(z,\cdot)p_{Y}(z)\,dV(z);x\in M\Big\}\,.\nonumber
	\end{align}
\end{definition}

\begin{lemma}\label{G-C class}
	The classes $\mathcal{K}$, $\mathcal{K}\cdot\mathcal{K}, f\cdot\mathcal{M}$, $\mathcal{M}\cdot\mathcal{M}$ and $\int\mathcal{K}\cdot\mathcal{K}$ are Glivenko-Cantelli classes.
\end{lemma}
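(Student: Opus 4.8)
The plan is to verify the Glivenko--Cantelli property for each of the five classes by exhibiting finite bracketing numbers (or finite uniform covering numbers), since a uniformly bounded class with $N_{[]}(\varepsilon,\mathcal F,\|\cdot\|)<\infty$ for every $\varepsilon>0$ is automatically Glivenko--Cantelli by the classical bracketing law of large numbers (van der Vaart--Wellner). First I would record, via Lemma \ref{unif-bd}, that every class in Definition \ref{Definition different classes} is uniformly bounded: elements of $\mathcal K$ are bounded by $\|K\|_\infty$, elements of $\mathcal K\cdot\mathcal K$ and $\int\mathcal K\cdot\mathcal K$ by $\|K\|_\infty^2$, and elements of $f\cdot\mathcal M$ and $\mathcal M\cdot\mathcal M$ by $\|f\|_\infty\,\|K\|_\infty^2/(C_1\epsilon^d)$ and $(\|K\|_\infty^2/(C_1\epsilon^d))^2$ respectively (here $\epsilon>0$ is fixed throughout Part 2, so these are genuine finite constants). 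Uniform boundedness gives integrability of the envelope, so the only remaining task is to bound covering/bracketing numbers.

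Next I would handle $\mathcal K$ directly: by Lemma \ref{cover_num_gau} the $L^\infty$ covering number of $\mathcal K=\{K_\epsilon(x,\cdot):x\in M\}$ is at most $(24\sqrt{2d}\,D_M/(r\epsilon))^{2d}$, which is finite for each $r>0$; an $L^\infty$ cover is in particular an $L^2(\mathbb P_n)$ cover and can be turned into a bracketing cover of comparable size by taking $[g-r,g+r]$ around each center $g$. For the product and integral classes I would use elementary stability properties of covering numbers: if $\mathcal F$ and $\mathcal G$ are uniformly bounded classes with finite $L^\infty$ covering numbers, then $\{fg: f\in\mathcal F, g\in\mathcal G\}$ has $L^\infty$ covering number bounded by $N(\mathcal F,r/(2B_{\mathcal G}),\|\cdot\|_\infty)\cdot N(\mathcal G,r/(2B_{\mathcal F}),\|\cdot\|_\infty)$, using $\|fg-f'g'\|_\infty\le B_{\mathcal G}\|f-f'\|_\infty+B_{\mathcal F}\|g-g'\|_\infty$. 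Applying this with $\mathcal F=\{K_\epsilon(x,\cdot)\}$ and $\mathcal G=\{K_\epsilon(\cdot,y)\}$ (both copies of $\mathcal K$ after relabeling arguments, hence covered by Lemma \ref{cover_num_gau}) gives finite covering numbers for $\mathcal K\cdot\mathcal K$. For $\int\mathcal K\cdot\mathcal K$, note $x\mapsto \int K_\epsilon(x,z)K_\epsilon(z,\cdot)p_Y(z)\,dV(z)$ is a $p_Y$-average of the maps $x\mapsto K_\epsilon(x,z)K_\epsilon(z,\cdot)$; since averaging over $z$ is $1$-Lipschitz from $\mathcal K\cdot\mathcal K$-type families into $C(M)$ in the sup norm, a finite cover of $\mathcal K=\{K_\epsilon(x,\cdot)\}$ in $x$ (uniformly in the frozen variable $z$, which Lemma \ref{cover_num_gau} provides since the bound is independent of the second argument) yields a finite cover of $\int\mathcal K\cdot\mathcal K$. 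Then $\int\mathcal K\cdot\mathcal K$ (and likewise the finite-sample $\widehat K_{\textup{ref},\epsilon,n}$-type family) being uniformly bounded below by $\delta^2$ and above by $\|K\|_\infty^2$, the map $t\mapsto 1/t$ is Lipschitz on $[\delta^2,\|K\|_\infty^2]$, so $\mathcal M$-type classes $\{M_{\textup{ref},\epsilon}(x,\cdot)\}=\{K_{\textup{ref},\epsilon}(x,\cdot)/d_{\textup{ref},\epsilon}(x)\}$ inherit finite covering numbers (the numerator varies in a finitely-coverable family, the denominator is a scalar in a compact interval). Finally $f\cdot\mathcal M$ and $\mathcal M\cdot\mathcal M$ follow from the product rule again, with $f$ fixed (a single function) for the former.

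The main obstacle — really the only delicate point — is the uniformity of all bandwidth-dependent constants: since Part 2 fixes $\epsilon>0$, the constants $C_1\epsilon^d$, $\delta=\min K_\epsilon$, and the covering-number bound $(24\sqrt{2d}\,D_M/(r\epsilon))^{2d}$ are all finite, but one must be careful not to conflate this lemma (which only asserts the classes are Glivenko--Cantelli for fixed $\epsilon$, i.e.\ a qualitative a.s.\ statement) with the later quantitative rates, where $\epsilon=\epsilon(n)\to0$ and these constants blow up; the quantitative control is deferred to Theorem \ref{entropy_bd} combined with Lemma \ref{cover_num_gau}. So the proof of this lemma is purely qualitative: exhibit finite bracketing numbers for each fixed $\epsilon$ and invoke the bracketing GLLN. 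I would also remark that the analogous finite-sample objects $\widehat K_{\textup{ref},\epsilon,n}$, $\widehat d_{\textup{ref},\epsilon,n}$, $\widehat M_{\textup{ref},\epsilon,n}$ lie in (or are covered by) the same classes up to the uniform bounds in Lemma \ref{unif-bd}, so no separate argument is needed for them. Since the calculations are entirely routine (composition of Lipschitz maps with finitely-coverable families), I would state them compactly and cite \cite{von2008consistency,wellner2013weak} for the standard stability and GLLN facts rather than reproduce them.
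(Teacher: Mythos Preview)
Your proposal is correct and matches the paper's approach: the paper omits the proof as ``standard'' and cites \cite[Proposition 11]{von2008consistency}, and the covering-number reductions you sketch (product rule, averaging, Lipschitz composition with the reciprocal on a compact interval) are exactly what the paper carries out quantitatively a few lines later in Lemma~\ref{cover_num_bd}. Your care to separate the qualitative fixed-$\epsilon$ statement here from the later quantitative rates is apt and consistent with how the paper organizes the argument.
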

The proof of Lemma \ref{G-C class} is standard, and can be found in, for example \cite[Proposition 11]{von2008consistency}, so we omit the details.

\begin{lemma}\label{Proposition: Tepsn conv to Teps compactly}
	For a fixed $\epsilon>0$, $\widehat{T}_{\textup{ref},\epsilon,n}$ converges to $T_{\textup{ref},\epsilon}$ collectively compactly $\textup{a.s.}$ as $n\rightarrow\infty$.
\end{lemma}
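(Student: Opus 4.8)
**Proof proposal for Lemma SI.23 (collectively compact convergence of $\widehat{T}_{\textup{ref},\epsilon,n}$ to $T_{\textup{ref},\epsilon}$).**

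The plan is to verify the two defining conditions of collectively compact convergence directly: (i) $\widehat{T}_{\textup{ref},\epsilon,n} \to T_{\textup{ref},\epsilon}$ pointwise on $C(M)$, and (ii) the set $\bigcup_n (\widehat{T}_{\textup{ref},\epsilon,n} - T_{\textup{ref},\epsilon})B$ is relatively compact in $C(M)$, where $B$ is the closed unit ball. Throughout, $\epsilon>0$ is fixed, so all the $\epsilon$-dependent quantities (e.g. the lower bound $C_1\epsilon^d$ on $d_{\textup{ref},\epsilon}$ and $\delta^2$ on $\widehat d_{\textup{ref},\epsilon,n}$ from Lemma \ref{unif-bd}) are just positive constants.

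First I would handle the pointwise convergence. Fix $f \in C(M)$ with $\|f\|_\infty \le 1$. The key is to decompose $\widehat{T}_{\textup{ref},\epsilon,n}f(x) - T_{\textup{ref},\epsilon}f(x)$ into two error sources: (a) replacing the integral against $p_Y$ inside the landmark-kernel $K_{\textup{ref},\epsilon}$ by the empirical average over $\{z_j\}_{j=1}^m$, i.e. $\widehat K_{\textup{ref},\epsilon,n}$ vs.\ $K_{\textup{ref},\epsilon}$; and (b) replacing the outer integral against $p_X$ by the empirical average over $\{x_i\}_{i=1}^n$. For (a), note $\widehat K_{\textup{ref},\epsilon,n}(x,y) - K_{\textup{ref},\epsilon}(x,y) = (\widetilde{\mathbb{P}}_m - \widetilde{\mathbb{P}})\big(K_\epsilon(x,\cdot)K_\epsilon(\cdot,y)\big)$, which tends to $0$ uniformly in $(x,y)$ a.s.\ because $\mathcal{K}\cdot\mathcal{K}$ is a Glivenko–Cantelli class by Lemma \ref{G-C class}; similarly $\widehat d_{\textup{ref},\epsilon,n}(x) \to d_{\textup{ref},\epsilon}(x)$ uniformly a.s.\ using the $\int\mathcal{K}\cdot\mathcal{K}$ class and then the outer law of large numbers. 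Since all these quantities are bounded below by positive constants (Lemma \ref{unif-bd}), the ratio $\widehat M_{\textup{ref},\epsilon,n} \to M_{\textup{ref},\epsilon}$ uniformly a.s. For (b), $\widehat T_{\textup{ref},\epsilon,n}f(x) = \mathbb{P}_n\big(\widehat M_{\textup{ref},\epsilon,n}(x,\cdot)f(\cdot)\big)$, and the Glivenko–Cantelli property of $f\cdot\mathcal{M}$ controls $(\mathbb{P}_n - \mathbb{P})\big(M_{\textup{ref},\epsilon}(x,\cdot)f(\cdot)\big) \to 0$ uniformly in $x$; combining with (a) gives $\|\widehat T_{\textup{ref},\epsilon,n}f - T_{\textup{ref},\epsilon}f\|_\infty \to 0$ a.s. In fact this argument already gives uniform-in-$f$ convergence over the unit ball, which is stronger than pointwise.

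Next I would establish the collective compactness. The natural approach is Arzelà–Ascoli: show that $\bigcup_n(\widehat T_{\textup{ref},\epsilon,n} - T_{\textup{ref},\epsilon})B$ is uniformly bounded and equicontinuous in $C(M)$. Uniform boundedness follows from the $L^\infty$ bounds on $M_{\textup{ref},\epsilon}$ and $\widehat M_{\textup{ref},\epsilon,n}$ in Lemma \ref{unif-bd} together with the fact that $M$ has finite volume. For equicontinuity, the crucial observation is that for $f\in B$,
\[
|\widehat T_{\textup{ref},\epsilon,n}f(x) - \widehat T_{\textup{ref},\epsilon,n}f(x')| \le \frac{1}{n}\sum_{i=1}^n |\widehat M_{\textup{ref},\epsilon,n}(x,x_i) - \widehat M_{\textup{ref},\epsilon,n}(x',x_i)|,
\]
and $\widehat M_{\textup{ref},\epsilon,n}(x,y) = \widehat K_{\textup{ref},\epsilon,n}(x,y)/\widehat d_{\textup{ref},\epsilon,n}(x)$, where both numerator and denominator are, for fixed $\epsilon$, smooth functions of $x$ whose moduli of continuity (in $x$, uniformly in $y$ and uniformly in the sampled landmark points $\{z_j\}$) are controlled purely by the kernel $K$ and its derivatives and the constants in Lemma \ref{unif-bd} — none of which depend on $n$. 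Hence the modulus of continuity of $x\mapsto \widehat T_{\textup{ref},\epsilon,n}f(x)$ is bounded by a single $n$-independent, $f$-independent function $\omega(\cdot)$ with $\omega(0^+)=0$; the same holds for $T_{\textup{ref},\epsilon}f$. Thus the difference is equicontinuous, and Arzelà–Ascoli gives relative compactness.

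The main obstacle I anticipate is making the equicontinuity argument genuinely uniform in $n$: one must be careful that the Lipschitz (or Hölder) constant of $\widehat M_{\textup{ref},\epsilon,n}(\cdot,y)$ does not degrade as the random landmark points $\{z_j\}$ vary or as $m=m(n)\to\infty$. This is where the lower bound $\widehat d_{\textup{ref},\epsilon,n}(x)\ge \delta^2 = (\min K_\epsilon)^2 > 0$ from Lemma \ref{unif-bd}, which holds deterministically (not just with high probability), is essential — it prevents the denominator from being small regardless of the sample. Once that uniform lower bound and the deterministic $C^1$-bound on $K_\epsilon$ are in hand, differentiating the ratio and bounding gives an $n$-independent modulus of continuity, and the rest is routine. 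Since this lemma is stated without a quantitative rate and is of the same flavor as \cite[Proposition 11 and surrounding results]{von2008consistency}, I would present the two verifications compactly and cite the standard empirical-process facts for the Glivenko–Cantelli steps.
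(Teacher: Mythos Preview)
Your proposal is correct and follows essentially the same route as the paper: pointwise convergence via the Glivenko--Cantelli classes $\mathcal{K}\cdot\mathcal{K}$, $\int\mathcal{K}\cdot\mathcal{K}$, $f\cdot\mathcal{M}$ from Lemma~\ref{G-C class}, and relative compactness via Arzel\`a--Ascoli using the deterministic lower bound $\widehat d_{\textup{ref},\epsilon,n}\ge\delta^2$ from Lemma~\ref{unif-bd} to get an $n$-independent modulus of continuity. The only cosmetic difference is that the paper, noting $T_{\textup{ref},\epsilon}$ is already compact, verifies Arzel\`a--Ascoli for $\cup_n\widehat T_{\textup{ref},\epsilon,n}(B)$ alone rather than for the difference set, and it organizes the pointwise-convergence step as an explicit three-term triangle inequality through the intermediate kernel $\widehat M^{(d)}_{\textup{ref},\epsilon,n}(x,y):=K_{\textup{ref},\epsilon}(x,y)/\widehat d_{\textup{ref},\epsilon,n}(x)$; your two-source description amounts to the same bounds.
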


\begin{proof}
	We verify the collectively compact convergence. For (I), let $\widehat{M}_{\textup{ref},\epsilon,n}^{(d)}(x,y)\coloneqq\frac{K_{\textup{ref},\epsilon}(x,y)}{\widehat{d}_{\textup{ref},\epsilon,n}^{(d)}(x)}\in C(M\times M)$. Pick any $f\in C(M)$. By the triangle inequality, we have
	\begin{align}
	\nonumber \norm{\widehat{T}_{\textup{ref},\epsilon,n}f - T_{\textup{ref},\epsilon}f}_{\infty} =&\, \norm{\frac{1}{n}\sum_{i=1}^{n}\widehat{M}_{\textup{ref},\epsilon,n}(x,x_{i})f(x_{i}) - \int_{M}M_{\textup{ref},\epsilon}(x,y)f(y)p_{X}(y)\,dV(y)}_{\infty}
	\\\nonumber \leq&\, \sup_{x}\abs{\mathbb{P}_{n}\widehat{M}_{\textup{ref},\epsilon,n}(x,\cdot)f(\cdot) - \mathbb{P}M_{\textup{ref},\epsilon}(x,\cdot)f(\cdot)}
	\\ \label{term1} \leq&\, \sup_{x}\abs{\mathbb{P}_{n}\widehat{M}_{\textup{ref},\epsilon,n}(x,\cdot)f(\cdot) - \mathbb{P}_{n}\widehat{M}^{(d)}_{\textup{ref},\epsilon,n}(x,\cdot)f(\cdot)}
	\\ \label{term2} &\quad+\, \sup_{x}\abs{\mathbb{P}_{n}\widehat{M}^{(d)}_{\textup{ref},\epsilon,n}(x,\cdot)f(\cdot) - \mathbb{P}_{n}M_{\textup{ref},\epsilon}(x,\cdot)f(\cdot)}
	\\ \label{term3} &\quad+\, \sup_{x}\abs{\mathbb{P}_{n}M_{\textup{ref},\epsilon}(x,\cdot)f(\cdot) - \mathbb{P}M_{\textup{ref},\epsilon}(x,\cdot)f(\cdot)}\,.
	\end{align}
	We bound the three terms (\ref{term1}) (\ref{term2}) and (\ref{term3}) respectively by Lemma \ref{unif-bd}. By a direct expansion, 
	\begin{align}
	\nonumber (\ref{term1}) \,&= \sup_{x}\abs{\frac{1}{n}\sum_{i=1}^{n}\widehat{M}_{\textup{ref},\epsilon,n}(x,x_{i})f(x_{i}) - \frac{1}{n}\sum_{i=1}^{n}\widehat{M}^{(d)}_{\textup{ref},\epsilon,n}(x,x_{i})f(x_{i})}
	\\\nonumber &=
	\sup_{x}\abs{\frac{1}{n}\sum_{i=1}^{n}\left[\widehat{M}_{\textup{ref},\epsilon,n}(x,x_{i})-\widehat{M}^{(d)}_{\textup{ref},\epsilon,n}(x,x_{i})\right]f(x_{i})}\,,
	\end{align}
	which is bounded by
	\begin{align}
	\nonumber &\quad
	\sup_{x}\frac{1}{n}\sum_{i=1}^{n}\abs{\widehat{M}_{\textup{ref},\epsilon,n}(x,x_{i})-\widehat{M}^{(d)}_{\textup{ref},\epsilon,n}(x,x_{i})}\abs{f(x_{i})}
	\\\nonumber &\leq
	\norm{f}_{\infty}\sup_{x,y}\abs{\frac{\widehat{K}_{\textup{ref},\epsilon,n}(x,y)-K_{\textup{ref},\epsilon}(x,y)}{\widehat{d}_{\textup{ref},\epsilon,n}(x)}}
	\leq \frac{\norm{f}_{\infty}}{\delta^{2}}\sup_{x,y}\abs{\widehat{K}_{\textup{ref},\epsilon,n}(x,y)-K_{\textup{ref},\epsilon}(x,y)}\,,
	\end{align}
	where the last bound comes from \eqref{Bound of d and dhat}. Then, we spell out $K_{\textup{ref},\epsilon,n}(x,y)$ and $K_{\textup{ref},\epsilon}(x,y)$ so that (\ref{term1}) is further bounded by
	\begin{align}
	&\nonumber
	\frac{\norm{f}_{\infty}}{\delta^{2}}\sup_{x,y}\abs{\frac{1}{m}\sum_{j}^{m}K_{\epsilon}(x,z_{j})K_{\epsilon}(z_{j},y)-\int_{M}K_{\epsilon}(x,z)K_{\epsilon}(z,y)p_{Y}(z)\,dV(z)}
	\\\nonumber =&\,
	\frac{\norm{f}_{\infty}}{\delta^{2}}\sup_{x,y}\abs{\widetilde{\mathbb{P}}_{m}K_{\epsilon}(x,\cdot)K_{\epsilon}(\cdot,y) - \widetilde{\mathbb{P}}K_{\epsilon}(x,\cdot)K_{\epsilon}(\cdot,y)}
	\leq 
	\frac{\norm{f}_{\infty}}{\delta^{2}}\sup_{F\in\mathcal{K}\cdot\mathcal{K}}\abs{\widetilde{\mathbb{P}}_{m}F - \widetilde{\mathbb{P}}F} \,,
	\end{align}
	which goes to $0$ a.s. as $n\rightarrow\infty$
	since $\mathcal{K}\cdot\mathcal{K}$ is Glivenko-Cantelli class by Lemma \ref{G-C class} and $m=n^{\beta}$ for some $\beta\in(0,1)$.
	For term (\ref{term2}), we again expand it directly: 
	\begin{align}
	\nonumber (\ref{term2}) =& \sup_{x}\abs{\frac{1}{n}\sum_{i=1}^{n}\widehat{M}^{(d)}_{\textup{ref},\epsilon,n}(x,x_{i})f(x_{i}) - \frac{1}{n}\sum_{i=1}^{n}M_{\textup{ref},\epsilon}(x,x_{i})f(x_{i})}
	\\\nonumber \leq&\,
	\sup_{x}\frac{1}{n}\sum_{i=1}^{n}\abs{\widehat{M}^{(d)}_{\textup{ref},\epsilon,n}(x,x_{i})-M_{\textup{ref},\epsilon}(x,x_{i})}\abs{f(x_{i})}
	\\\nonumber \leq&\,
	\norm{f}_{\infty}\sup_{x,y}\abs{\frac{K_{\textup{ref},\epsilon}(x,y)}{\widehat{d}_{\textup{ref},\epsilon,n}(x)}-\frac{K_{\textup{ref},\epsilon}(x,y)}{d_{\textup{ref},\epsilon}(x)}} 
	\leq
	\frac{\norm{f}_{\infty}\norm{K}_{\infty}^{2}}{\delta^{4}}\sup_{x}\abs{\widehat{d}_{\textup{ref},\epsilon,n}(x) - d_{\textup{ref},\epsilon}(x)}\,,
	\end{align}
	where in the last bound we apply the fact that if $A,B\geq C>0$, then $\abs{A^{\beta}-B^{\beta}}\leq\frac{1}{C^{1-\beta}}\abs{A-B}$. Then, by spelling out $\widehat{d}_{\textup{ref},\epsilon,n}(x)$ and $d_{\textup{ref},\epsilon}(x)$, (\ref{term2}) is further bounded by
	\begin{align}
	\nonumber &\,
	\frac{\norm{f}_{\infty}\norm{K}_{\infty}^{2}}{\delta^{4}}\sup_{x}\abs{\mathbb{P}_{n}\widetilde{\mathbb{P}}_{m}K_{\epsilon}(x,\cdot)K_{\epsilon}(\cdot,\star)-\mathbb{P}\widetilde{\mathbb{P}}K_{\epsilon}(x,\cdot)K_{\epsilon}(\cdot,\star)}
	\\ \label{term4}\leq&\,
	\frac{\norm{f}_{\infty}\norm{K}_{\infty}^{2}}{\delta^{4}}\sup_{x}\abs{\mathbb{P}_{n}\widetilde{\mathbb{P}}_{m}K_{\epsilon}(x,\cdot)K_{\epsilon}(\cdot,\star)-\mathbb{P}_{n}\widetilde{\mathbb{P}}K_{\epsilon}(x,\cdot)K_{\epsilon}(\cdot,\star)}
	\\ \label{term5}&\quad+\,
	\frac{\norm{f}_{\infty}\norm{K}_{\infty}^{2}}{\delta^{4}}\sup_{x}\abs{\mathbb{P}_{n}\widetilde{\mathbb{P}}K_{\epsilon}(x,\cdot)K_{\epsilon}(\cdot,\star)-\mathbb{P}\widetilde{\mathbb{P}}K_{\epsilon}(x,\cdot)K_{\epsilon}(\cdot,\star)}
	\end{align}
	Clearly, the term $(\ref{term5})\rightarrow 0\quad\textup{a.s.}$ by lemma \ref{G-C class}. For term (\ref{term4}):
	\begin{align}
	\nonumber (\ref{term4}) =&\, \sup_{x}\abs{\mathbb{P}_{n}\widetilde{\mathbb{P}}_{m}K_{\epsilon}(x,\cdot)K_{\epsilon}(\cdot,\star)-\mathbb{P}_{n}\widetilde{\mathbb{P}}K_{\epsilon}(x,\cdot)K_{\epsilon}(\cdot,\star)}
	\\\nonumber =&\,
	\sup_{x}\abs{\frac{1}{n}\sum_{i=1}^{n}\frac{1}{m}\sum_{j=1}^{m}K_{\epsilon}(x,z_{j})K_{\epsilon}(z_{j},x_{i})-\frac{1}{n}\sum_{i=1}^{n}\int K_{\epsilon}(x,z)K_{\epsilon}(z,x_{i})p_{Y}(z)\,dV(z)}
	\\\nonumber \leq&\,
	\sup_{x}\frac{1}{n}\sum_{i=1}^{n}\abs{\frac{1}{m}\sum_{j=1}^{m}K_{\epsilon}(x,z_{j})K_{\epsilon}(z_{j},x_{i})-\int K_{\epsilon}(x,z)K_{\epsilon}(z,x_{i})p_{Y}(z)\,dV(z)}
	\\\nonumber \leq&
	\sup_{x,y}\abs{\frac{1}{m}\sum_{j=1}^{m}K_{\epsilon}(x,z_{j})K_{\epsilon}(z_{j},y)-\int K_{\epsilon}(x,z)K_{\epsilon}(z,y)p_{Y}(z)\,dV(z)}
	\\\nonumber =&\,
	\sup_{x,y}\abs{\widetilde{\mathbb{P}}_{m}K_{\epsilon}(x,\cdot)K_{\epsilon}(\cdot,y) - \widetilde{\mathbb{P}}K_{\epsilon}(x,\cdot)K_{\epsilon}(\cdot,y)}\,,
	\end{align}
	which tends to $0$ a.s. as $n\rightarrow\infty$.
	Hence, the term $(\ref{term2})\rightarrow 0\;\; \textup{a.s. as}\;n\rightarrow\infty$.
	Finally, the term $(\ref{term3})\rightarrow 0\;\; \textup{a.s. as}\;n\rightarrow\infty$ by Lemma \ref{G-C class}. So the condition (I) is verified.
	
	Next, we verify (II). Since $T_{\textup{ref},\epsilon}$ is compact, it is enough to show the set $\cup_{n}\widehat{T}_{\textup{ref},\epsilon,n}(B)$ is relatively compact, where $B\subset (C(M),\,\|\cdot\|_\infty)$ is the unit ball centered at $0$. By the Arzela-Ascoli theorem, we need to show that the set $\cup_{n}\widehat{T}_{\textup{ref},\epsilon,n}(B)$ is:
	\begin{enumerate}
		\item pointwisely bounded, and
		\item equicontinuous.
	\end{enumerate}
	For 1, pick any $x\in M$, by Lemma \ref{unif-bd}:
	\begin{align}
	\nonumber&\sup_{f\in B,\, n\in \mathbb{N}}\norm{\widehat{T}_{\textup{ref},\epsilon,n}f(x)}_{\infty}
	=
	\sup_{f\in B,\, n\in \mathbb{N}}\abs{\frac{1}{n}\sum_{i=1}^{n}\widehat{M}_{\textup{ref},\epsilon,n}(x,x_{i})f(x_{i})}
	\\\nonumber\leq&\,
	\sup_{f\in B,\, n\in \mathbb{N}}\norm{f}_{\infty}\frac{1}{n}\sum_{i=1}^{n}\abs{\widehat{M}_{\textup{ref},\epsilon,n}(x,x_{i})}
	\leq \frac{\norm{K}_{\infty}^{2}}{\delta^{2}} < \infty.
	\end{align}
	For 2, pick any $x,y\in M$ that are close, a direct expansion leads to
	\begin{align}
	\nonumber &\sup_{f\in B,\, n\in \mathbb{N}}\abs{\widehat{T}_{\textup{ref},\epsilon,n}f(y) - \widehat T_{\textup{ref},\epsilon,n}f(x)} 
	= \sup_{f\in B,\, n\in \mathbb{N}}\abs{\frac{1}{n}\sum_{i=1}^{n}\widehat{M}_{\textup{ref},\epsilon,n}(y,x_{i})f(x_{i}) - \frac{1}{n}\sum_{i=1}^{n}\widehat{M}_{\textup{ref},\epsilon,n}(x,x_{i})f(x_{i})}
	\\\nonumber \leq&\,
	\sup_{f\in B,\, n\in \mathbb{N}}\frac{1}{n}\sum_{i=1}^{n}\abs{\widehat{M}_{\textup{ref},\epsilon,n}(y,x_{i})-\widehat{M}_{\textup{ref},\epsilon,n}(x,x_{i})}\abs{f(x_{i})}
	\leq
	\sup_{f\in B}\norm{f}_{\infty}\sup_{z}\abs{\widehat{M}_{\textup{ref},\epsilon,n}(y,z)-\widehat{M}_{\textup{ref},\epsilon,n}(x,z)}\,.
	\end{align}
	Clearly, since $f\in B$, $\norm{f}_{\infty}\leq 1$. So $\sup_{f\in B,\, n\in \mathbb{N}}\abs{\widehat{T}_{\textup{ref},\epsilon,n}f(y) - \widehat T_{\textup{ref},\epsilon,n}f(x)}$ is further bounded by 
	\begin{align}
	\nonumber &\,
	\sup_{z}\abs{\frac{\widehat{K}_{\textup{ref},\epsilon,n}(y,z)}{\widehat{d}_{\textup{ref},\epsilon,n}(y)}-\frac{\widehat{K}_{\textup{ref},\epsilon,n}(x,z)}{\widehat{d}_{\textup{ref},\epsilon,n}(x)}}
	\leq 
	\frac{1}{\delta^{4}}\sup_{z}\abs{\widehat{d}_{\textup{ref},\epsilon,n}(y)\widehat{K}_{\textup{ref},\epsilon,n}(x,z)-\widehat{d}_{\textup{ref},\epsilon,n}(x)\widehat{K}_{\textup{ref},\epsilon,n}(y,z)}\,,
	\end{align}
	where the last inequality comes again from Lemma \ref{unif-bd}. The right hand side can further be bounded by
	\begin{align}
	\nonumber &\,
	\frac{1}{\delta^{4}}\sup_{z}\big|\widehat{d}_{\textup{ref},\epsilon,n}(y)[\widehat{K}_{\textup{ref},\epsilon,n}(x,z)-\widehat{K}_{\textup{ref},\epsilon,n}(y,z)]+[\widehat{d}_{\textup{ref},\epsilon,n}(y)-\widehat{d}_{\textup{ref},\epsilon,n}(x)]\widehat{K}_{\textup{ref},\epsilon,n}(y,z)\big|
	\\\label{Bound for equicontinuity part 1} \leq&\,
	\frac{\norm{K}_{\infty}^{2}}{\delta^{4}}\sup_{z}\abs{\widehat{K}_{\textup{ref},\epsilon,n}(x,z)-\widehat{K}_{\textup{ref},\epsilon,n}(y,z)}
	\\\label{Bound for equicontinuity part 2} &\qquad+
	\frac{\norm{K}_{\infty}^{2}}{\delta^{4}}\abs{\widehat{d}_{\textup{ref},\epsilon,n}(y)-\widehat{d}_{\textup{ref},\epsilon,n}(x)}\,.
	\end{align}
	To finish the equicontinuity argument, we bound \eqref{Bound for equicontinuity part 1} and \eqref{Bound for equicontinuity part 2}. By spelling out $\widehat{K}_{\textup{ref},\epsilon,n}(x,z)$ and $\widehat{K}_{\textup{ref},\epsilon,n}(y,z)$; that is,
	\begin{align}
	\nonumber &\sup_{z}\abs{\widehat{K}_{\textup{ref},\epsilon,n}(x,z)-\widehat{K}_{\textup{ref},\epsilon,n}(y,z)}
	=
	\sup_{z}\abs{\frac{1}{m}\sum_{j}^{m}K_{\epsilon}(x,z_{j})K_{\epsilon}(z_{j},z)-\frac{1}{m}\sum_{j}^{m}K_{\epsilon}(y,z_{j})K_{\epsilon}(z_{j},z)}\,,\nonumber
	\end{align}    
	\eqref{Bound for equicontinuity part 1} is bounded by:
	\begin{align}
	\nonumber &
	\sup_{z}\frac{1}{m}\sum_{j}^{m}\abs{K_{\epsilon}(x,z_{j})-K_{\epsilon}(y,z_{j})}\abs{K_{\epsilon}(z_{j},z)}
	\\\nonumber \leq&\,
	\norm{K_{\epsilon}}_{\infty}\frac{1}{m}\sum_{j}^{m}\abs{K_{\epsilon}(x,z_{j})-K_{\epsilon}(y,z_{j})}
	\leq
	\norm{K_{\epsilon}}_{\infty}\sup_{z}\abs{K_{\epsilon}(x,z)-K_{\epsilon}(y,z)}\,,
	\end{align}
	which is controlled by $Cd(x,y)$ for some constant $C>0$ due to the continuity of $K_{\epsilon}$ and the compactness of $M$.
	Similarly, by spelling out $\widehat{d}_{\textup{ref},\epsilon,n}(y)-\widehat{d}_{\textup{ref},\epsilon,n}(x)$; that is,
	\begin{equation}
	{\widehat{d}_{\textup{ref},\epsilon,n}(y)-\widehat{d}_{\textup{ref},\epsilon,n}(x)}
	= {\frac{1}{n}\sum_{i=1}^{n}\widehat{K}_{\textup{ref},\epsilon,n}(y,x_{i})-\frac{1}{n}\sum_{i=1}^{n}\widehat{K}_{\textup{ref},\epsilon,n}(x,x_{i})}\,,
	\end{equation}
	\eqref{Bound for equicontinuity part 2} is bounded by:
	\begin{align}
	\nonumber &
	\abs{\frac{1}{n}\sum_{i=1}^{n}\frac{1}{m}\sum_{j}^{m}K_{\epsilon}(y,z_{j})K_{\epsilon}(z_{j},x_{i})-\frac{1}{n}\sum_{i=1}^{n}\frac{1}{m}\sum_{j}^{m}K_{\epsilon}(x,z_{j})K_{\epsilon}(z_{j},x_{i})}
	\\\nonumber \leq&
	\frac{1}{nm}\sum_{i,j}\abs{K_{\epsilon}(y,z_{j})-K_{\epsilon}(x,z_{j})}\abs{K_{\epsilon}(z_{j},x_{i})}
	\leq
	\norm{K_{\epsilon}}_{\infty}\sup_{z}\abs{K_{\epsilon}(x,z)-K_{\epsilon}(y,z)}\,,
	\end{align}
	which is again controlled by $Cd(x,y)$. So equicontinuity is verified. Then condition (II) is verified by Arzela-Ascoli theorem. We thus finish the proof of collectively compact convergence.
\end{proof}

Next, we control various functional classes that concern us with the bound shown in Lemma \ref{cover_num_gau}. These controls are needed when we derive the convergence rate. While the proof is standard, we provide details of how the landmark set plays a role in the bound.

\begin{lemma}\label{cover_num_bd}
	Take $u\in C(M)$ and $\epsilon>0$. Take $k(x,y):=K_\epsilon(x,y)$ in Definition \ref{Definition different classes}. For $r>0$, we have the following bounds:
	$$N(\mathcal{K}\cdot\mathcal{K},r,\norm{\cdot}_{\infty})\leq N\Big(\mathcal{K},\frac{r}{2\norm{K_{\epsilon}}},\norm{\cdot}_{\infty}\Big)$$
	$$N(\int\mathcal{K}\cdot\mathcal{K},r,\norm{\cdot}_{\infty})\leq N\Big(\mathcal{K},\frac{r}{\norm{K_{\epsilon}}},\norm{\cdot}_{\infty}\Big)$$
	$$N(u\cdot\mathcal{M},r,\norm{\cdot}_{\infty})\leq N\Big(\mathcal{K},\frac{rC^{2}\epsilon^{2d}}{2\norm{u}\norm{K_{\epsilon}}^{3}},\norm{\cdot}_{\infty}\Big)$$
	$$N(\mathcal{M}\cdot\mathcal{M},r,\norm{\cdot}_{\infty})\leq N\Big(\mathcal{K},\frac{rC^{3}\epsilon^{3d}}{4\norm{K_{\epsilon}}^{5}},\norm{\cdot}_{\infty}\Big)\,.$$
\end{lemma}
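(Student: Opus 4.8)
The plan is to treat each of the four function classes as the image of $\mathcal{K}$ (or of $\mathcal{K}\times\mathcal{K}$) under a map that is Lipschitz for the $\norm{\cdot}_\infty$ norm, and to push an $r'$-net of $\mathcal{K}$ forward through it: if the map has ``Lipschitz constant'' $L$, the image of the net is an $(Lr')$-net of the derived class, so taking $r'=r/L$ yields $N(\mathcal{F},r,\norm{\cdot}_\infty)\le N(\mathcal{K},r/L,\norm{\cdot}_\infty)$, and the value of $L$ is read off from the uniform bounds of Lemma~\ref{unif-bd}. For $\mathcal{K}\cdot\mathcal{K}$ and $\mathcal{M}\cdot\mathcal{M}$, which are parametrized by a \emph{pair} of points, the induced net is formed from products of net elements, so the sharp statement carries $N(\mathcal{K},r/L,\norm{\cdot}_\infty)^2$; since only $\sqrt{\log N}$ ever enters later (through the entropy bound of Theorem~\ref{entropy_bd}), this squaring is harmless and it suffices to record the cruder form. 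Two elementary facts are used throughout: $K_\epsilon$ is symmetric, so $k(\cdot,y)=k(y,\cdot)\in\mathcal{K}$; and $\int_M p_Y\,dV=\int_M p_X\,dV=1$.

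For $\mathcal{K}\cdot\mathcal{K}$ and $\int\mathcal{K}\cdot\mathcal{K}$ the computation is immediate. Writing $k(x,\cdot)k(\cdot,y)-k(x',\cdot)k(\cdot,y')=[k(x,\cdot)-k(x',\cdot)]\,k(\cdot,y)+k(x',\cdot)\,[k(\cdot,y)-k(\cdot,y')]$ and using the pointwise bound $k\le\norm{K_\epsilon}_\infty$ gives $L=2\norm{K_\epsilon}_\infty$. For $\int\mathcal{K}\cdot\mathcal{K}$, indexed by the single point $x$, I would integrate the difference $k(x,z)-k(x',z)$ against $k(z,\cdot)p_Y(z)$, so $L=\norm{K_\epsilon}_\infty\int p_Y=\norm{K_\epsilon}_\infty$. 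These match the first two stated bounds.

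For $u\cdot\mathcal{M}$ and $\mathcal{M}\cdot\mathcal{M}$ the extra ingredient is the quotient estimate $\abs{\frac{a_1}{b_1}-\frac{a_2}{b_2}}\le\frac{\abs{a_1-a_2}}{c}+\frac{(\sup_i a_i)\,\abs{b_1-b_2}}{c^2}$, valid for $b_i\ge c>0$. First I would propagate closeness of $k(x,\cdot)$ and $k(x',\cdot)$: integrating once against $k(\cdot,z)p_Y$ gives $\norm{K_{\textup{ref},\epsilon}(x,\cdot)-K_{\textup{ref},\epsilon}(x',\cdot)}_\infty\le\norm{K_\epsilon}_\infty\norm{k(x,\cdot)-k(x',\cdot)}_\infty$, and integrating that once more against $p_X$ gives the same bound for $\abs{d_{\textup{ref},\epsilon}(x)-d_{\textup{ref},\epsilon}(x')}$. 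Feeding these into the quotient estimate with $a_i=K_{\textup{ref},\epsilon}\le\norm{K}_\infty^2$ and $b_i=d_{\textup{ref},\epsilon}\ge C_1\epsilon^d$ (both from Lemma~\ref{unif-bd}) produces $\norm{M_{\textup{ref},\epsilon}(x,\cdot)-M_{\textup{ref},\epsilon}(x',\cdot)}_\infty\le C\norm{K_\epsilon}_\infty^3\epsilon^{-2d}\norm{k(x,\cdot)-k(x',\cdot)}_\infty$; multiplying by $\norm{u}_\infty$ gives $L$ for $u\cdot\mathcal{M}$, and one further application of the product estimate, using $\norm{M_{\textup{ref},\epsilon}}_\infty\le C\norm{K_\epsilon}_\infty^2\epsilon^{-d}$ from Lemma~\ref{unif-bd}, gives $L\sim\norm{K_\epsilon}_\infty^5\epsilon^{-3d}$ for $\mathcal{M}\cdot\mathcal{M}$. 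Solving $Lr'=r$ reproduces the last two radii, the explicit numbers $2,4$ and $C,C^2,C^3$ standing for the accumulated kernel, curvature and density constants.

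The whole argument is bookkeeping rather than ideas; the only non-formal input is the lower bound $d_{\textup{ref},\epsilon}(x)\ge C_1\epsilon^d$ from Lemma~\ref{unif-bd}, which uses the manifold-plus-kernel structure, and it is precisely this $\epsilon^d$ in a denominator that forces the covering radius for the $\mathcal{M}$-classes to shrink by the polynomial factors $\epsilon^{-2d}$ and $\epsilon^{-3d}$, and hence, through Lemma~\ref{cover_num_gau} and Theorem~\ref{entropy_bd}, into the $\epsilon$-powers appearing in the spectral rate of Theorem~\ref{spec_cong}. The one point demanding a little care in the write-up is that $M_{\textup{ref},\epsilon}$ is \emph{not} symmetric, so in $\mathcal{M}\cdot\mathcal{M}$ the two factors must be estimated separately as functions of the free integration variable.
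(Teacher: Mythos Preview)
Your approach is essentially the same as the paper's: both arguments push a net of $\mathcal{K}$ forward through Lipschitz-type estimates obtained from the product/quotient rule and the uniform bounds of Lemma~\ref{unif-bd}. You are in fact slightly more careful than the paper on one point: for the doubly-indexed classes $\mathcal{K}\cdot\mathcal{K}$ and $\mathcal{M}\cdot\mathcal{M}$ the induced net has cardinality $N(\mathcal{K},r',\norm{\cdot}_\infty)^2$, not $N(\mathcal{K},r',\norm{\cdot}_\infty)$ as written in the lemma's statement, and you correctly note (and the paper implicitly relies on) the fact that this squaring is immaterial once one passes to $\sqrt{\log N}$ in Lemma~\ref{cant_class_rate}.
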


\begin{proof}
	For the class $\mathcal{K}\cdot\mathcal{K}$, pick any $x_{1},x_{2},y_{1},y_{2}\in M$:
	\begin{align}
	\nonumber&\abs{K_{\epsilon}(x_{1},z)K_{\epsilon}(y_{1},z)-K_{\epsilon}(x_{2},z)K_{\epsilon}(y_{2},z)}
	\\\nonumber=&\,
	\big|K_{\epsilon}(x_{1},z)K_{\epsilon}(y_{1},z)-K_{\epsilon}(x_{1},z)K_{\epsilon}(y_{2},z)+K_{\epsilon}(x_{1},z)K_{\epsilon}(y_{2},z)-K_{\epsilon}(x_{2},z)K_{\epsilon}(y_{2},z)\big|
	\\\nonumber\leq&\,
	\norm{K_{\epsilon}}_\infty\big(\abs{K_{\epsilon}(y_{1},z)-K_{\epsilon}(y_{2},z)} + \abs{K_{\epsilon}(x_{1},z)-K_{\epsilon}(x_{2},z)}\big)\,.
	\end{align}
	This implies a $\frac{r}{2\norm{K_{\epsilon}}}$-cover of $\mathcal{K}$ induces a $r$-cover of $\mathcal{K}\cdot\mathcal{K}$. Hence, $N(\mathcal{K}\cdot\mathcal{K},r,\norm{\cdot}_{\infty})\leq N(\mathcal{K},\frac{r}{2\norm{K_{\epsilon}}},\norm{\cdot}_{\infty})$.
	
	For the class $\int\mathcal{K}\cdot\mathcal{K}$, pick any $x_{1},x_{2}\in M$:
	\begin{align}
	\nonumber&\abs{\int K_{\epsilon}(x_{1},z)K_{\epsilon}(z,y)\,d\widetilde{\mathbb{P}}(z)-\int K_{\epsilon}(x_{2},z)K_{\epsilon}(z,y)\,d\widetilde{\mathbb{P}}(z)}
	\leq
	\norm{K_{\epsilon}}\abs{K_{\epsilon}(x_{1},z)-K_{\epsilon}(x_{2},z)}\,,
	\end{align}
	which implies that a $\frac{r}{\norm{K_{\epsilon}}}$-cover of $\mathcal{K}$ induces a $r$-cover of $\int\mathcal{K}\cdot\mathcal{K}$. Hence, $N(\int\mathcal{K}\cdot\mathcal{K},r,\norm{\cdot}_{\infty})\leq N(\mathcal{K},\frac{r}{\norm{K_{\epsilon}}},\norm{\cdot}_{\infty})$.
	
	For the class $u\cdot\mathcal{M}$, pick any $x_{1},x_{2}\in M$:
	\begin{align}
	\nonumber&\quad\abs{u(y)M_{\textup{ref},\epsilon}(x_{1},y)-u(y)M_{\textup{ref},\epsilon}(x_{2},y)}
	\leq
	\norm{u}_\infty \abs{\frac{K_{\textup{ref},\epsilon}(x_{1},y)}{d_{\textup{ref},\epsilon}(x_{1})}-\frac{K_{\textup{ref},\epsilon}(x_{2},y)}{d_{\textup{ref},\epsilon}(x_{2})}}
	\\\nonumber&\leq
	\frac{\norm{u}_\infty}{C^{2}\epsilon^{2d}}\left(\norm{d_{\textup{ref},\epsilon}}\abs{K_{\textup{ref},\epsilon}(x_{1},y)-K_{\textup{ref},\epsilon}(x_{2},y)}+\norm{K_{\textup{ref},\epsilon}}\norm{d_{\textup{ref},\epsilon}(x_{1})-d_{\textup{ref},\epsilon}(x_{2})}\right)
	\\\nonumber&\leq
	\frac{2\norm{u}_\infty\norm{K_{\epsilon}}_\infty^{2}}{C^{2}\epsilon^{2d}}\abs{K_{\textup{ref},\epsilon}(x_{1},y)-K_{\textup{ref},\epsilon}(x_{2},y)}
	\\\nonumber&=
	\frac{2\norm{u}_\infty\norm{K_{\epsilon}}_\infty^{2}}{C^{2}\epsilon^{2d}}\abs{\int K_{\epsilon}(x_{1},z)K_{\epsilon}(z,y)\,d\widetilde{\mathbb{P}}(z)-\int K_{\epsilon}(x_{2},z)K_{\epsilon}(z,y)\,d\widetilde{\mathbb{P}}(z)}
	\\\nonumber&\leq
	\frac{2\norm{u}_\infty\norm{K_{\epsilon}}_\infty^{3}}{C^{2}\epsilon^{2d}}\abs{K_{\epsilon}(x_{1},z)-K_{\epsilon}(x_{2},z)}\,,
	\end{align}
	which implies that a $\frac{rC^{2}\epsilon^{2d}}{2\norm{u}_\infty\norm{K_{\epsilon}}_\infty^{3}}$-cover of $\mathcal{K}$ induces a $r$-cover of $u\cdot\mathcal{M}$, hence $N(u\cdot\mathcal{M},r,\norm{\cdot}_{\infty})\leq N\big(\mathcal{K},\frac{rC^{2}\epsilon^{2d}}{2\norm{u}_\infty\norm{K_{\epsilon}}_\infty^{3}},\norm{\cdot}_{\infty}\big)$.
	
	For the class $\mathcal{M}\cdot\mathcal{M}$, pick any $x_{1},x_{2},y_{1},y_{2}\in M$ and carry out a similar bound:
	\begin{align}
	\nonumber&\quad\abs{M_{\epsilon}(x_{1},z)M_{\epsilon}(y_{1},z)-M_{\epsilon}(x_{2},z)M_{\epsilon}(y_{2},z)}
	\\\nonumber&\leq
	\norm{M_{\epsilon}}_\infty\big(\abs{M_{\epsilon}(y_{1},z)-M_{\epsilon}(y_{2},z)} + \abs{M_{\epsilon}(x_{1},z)-M_{\epsilon}(x_{2},z)}\big)
	\leq
	\frac{4\norm{K_{\epsilon}}^{5}_\infty}{C^{3}\epsilon^{3d}}\abs{K_{\epsilon}(x_{1},z)-K_{\epsilon}(x_{2},z)}\,,
	\end{align}
	which implies that a $\frac{rC^{3}\epsilon^{3d}}{4\norm{K_{\epsilon}}^{5}}$-cover of $\mathcal{K}$ induces a $r$-cover of $\mathcal{M}\cdot\mathcal{M}$. Hence, $N(\mathcal{M}\cdot\mathcal{M},r,\norm{\cdot}_{\infty})\leq N\big(\mathcal{K},\frac{rC^{3}\epsilon^{3d}}{4\norm{K_{\epsilon}}^{5}},\norm{\cdot}_{\infty}\big)$. The constant C above is the C in Lemma \ref{unif-bd}, depends on the kernel $K$, the curvature of the manifold and the density functions $p_{X}, p_{Y}$.
\end{proof}

With Lemma \ref{cover_num_bd}, we can derive the following technical lemma. We provide a detailed proof to show how the landmark set impacts the bound.

\begin{lemma}\label{cant_class_rate}
	Take $\epsilon>0$, $u\in C(M)$, and $k(x,y):=K_\epsilon(x,y)=e^{-\norm{x-y}^2/\epsilon}$ in Definition \ref{Definition different classes}.
	Let $\mathcal{F}_{\epsilon}\coloneqq(\mathcal{K}\cdot\mathcal{K})\cup(\int\mathcal{K}\cdot\mathcal{K})\cup(u\cdot\mathcal{M})\cup(\mathcal{M}\cdot\mathcal{M})$.
	With probability $1-\mathcal O(n^{-2})$, we have:
	\begin{equation}\label{GC class bound for all classes}
	\sup_{f\in\mathcal{F}_{\epsilon}}\abs{\mathbb{P}_{n}f-\mathbb{P}f}=\mathcal{O}\left(\frac{\sqrt{-\log \epsilon}+\sqrt{\log n}}{\sqrt{n}}\right)\,,
	\end{equation}
	where the implied constant depends on $d$, $\|u\|_\infty$ and the constants shown in the entropy bound shown in Theorem \ref{entropy_bd}.
\end{lemma}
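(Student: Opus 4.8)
# Proof Proposal for Lemma \ref{cant_class_rate}

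\textbf{Overall strategy.} The plan is to apply the entropy bound (Theorem \ref{entropy_bd}) separately to each of the four function classes making up $\mathcal{F}_\epsilon$, and then combine the four estimates by a union bound. Since the supremum over $\mathcal{F}_\epsilon$ is dominated by the maximum of the four suprema, it suffices to produce, for each class, a bound of the advertised order that holds with probability $1-\mathcal{O}(n^{-2})$. The entropy bound requires the class to be uniformly bounded by $1$, so a preliminary normalization step is needed: each class is uniformly bounded by a constant of the form $C\epsilon^{-cd}$ (by Lemma \ref{unif-bd}), so we rescale, apply Theorem \ref{entropy_bd} to the normalized class, and rescale back; the rescaling factor $C\epsilon^{-cd}$ is absorbed into the implied constant (note that $\epsilon^{-cd}$ contributes only logarithmically through the covering-number integral, not polynomially, once we track where $\epsilon$ enters—this is the crux of getting $\sqrt{-\log\epsilon}$ rather than a power of $\epsilon$).

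\textbf{Key steps.} First I would invoke Lemma \ref{cover_num_bd} to reduce the covering numbers of $\mathcal{K}\cdot\mathcal{K}$, $\int\mathcal{K}\cdot\mathcal{K}$, $u\cdot\mathcal{M}$, and $\mathcal{M}\cdot\mathcal{M}$ (in $\|\cdot\|_\infty$, hence in $L_2(\mathbb{P}_n)$) to covering numbers of the single Gaussian class $\mathcal{K}$ at a rescaled radius; the rescaling factors are $\frac{r}{2\norm{K_\epsilon}}$, $\frac{r}{\norm{K_\epsilon}}$, $\frac{rC^2\epsilon^{2d}}{2\norm{u}\norm{K_\epsilon}^3}$, and $\frac{rC^3\epsilon^{3d}}{4\norm{K_\epsilon}^5}$ respectively. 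Second, I would plug Lemma \ref{cover_num_gau}, namely $N(\mathcal{K},r,\norm{\cdot}_\infty)\leq(24\sqrt{2d}D_M/(r\epsilon))^{2d}$, into each, obtaining $\log N(\mathcal{F}_\epsilon\text{-piece},r,L_2(\mathbb{P}_n))\leq 2d\log(C'/(r\epsilon^{\kappa}))$ for an appropriate constant $C'$ and power $\kappa$ depending on which class we are in. Third, I would compute the Dudley-type integral $\int_0^\infty\sqrt{\log N(\cdot,r,L_2(\mathbb{P}_n))}\,dr$; because the classes are uniformly bounded, the integrand vanishes for $r$ larger than the uniform bound, and for small $r$ the integrand is $\sqrt{2d}\sqrt{\log(C'/(r\epsilon^\kappa))}$, whose integral over a bounded interval is $\mathcal{O}(\sqrt{-\log\epsilon})$ up to constants (since $\int_0^a\sqrt{\log(b/r)}\,dr$ is finite and grows like $\sqrt{\log b}$ in $b$). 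Fourth, with $\delta=n^{-2}$ in Theorem \ref{entropy_bd}, the two resulting terms are $\frac{C_E}{\sqrt{n}}\mathcal{O}(\sqrt{-\log\epsilon})$ and $\sqrt{\frac{1}{n}\log(2n^2)}=\mathcal{O}(\sqrt{\log n}/\sqrt{n})$; summing gives $\mathcal{O}\big((\sqrt{-\log\epsilon}+\sqrt{\log n})/\sqrt{n}\big)$ for a single class. Finally, a union bound over the four classes keeps the probability at $1-\mathcal{O}(n^{-2})$ and the rate unchanged, yielding \eqref{GC class bound for all classes}.

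\textbf{Main obstacle.} The delicate point is bookkeeping the $\epsilon$-dependence so that it stays inside a logarithm. Naively, the uniform bounds $C\epsilon^{-cd}$ (from Lemma \ref{unif-bd}) on $u\cdot\mathcal{M}$ and $\mathcal{M}\cdot\mathcal{M}$ multiply the Dudley integral, and the rescaled radii in Lemma \ref{cover_num_bd} carry powers $\epsilon^{2d}$ and $\epsilon^{3d}$; one must check that when the uniform bound is factored out of the integral and the change of variables $r\mapsto r/(\text{uniform bound})$ is performed, the net effect on the covering-number argument of $\mathcal{K}$ is still of the form $C''/(r\,\epsilon^{\kappa})$ with $\kappa$ a fixed power, so that the $r$-integral of $\sqrt{\log(C''/(r\epsilon^\kappa))}$ is $\mathcal{O}(\sqrt{\kappa}\sqrt{-\log\epsilon})+\mathcal{O}(1)$. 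Here $\norm{K_\epsilon}_\infty=K(0)$ is an $\epsilon$-independent constant, which helps. Since $\kappa$ is a fixed multiple of $d$, $\sqrt{\kappa}$ is absorbed into the implied constant, and the claimed rate follows. The rest of the argument is routine application of the cited lemmas.
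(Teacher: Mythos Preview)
Your overall strategy mirrors the paper's exactly: apply Theorem~\ref{entropy_bd} with $\delta=n^{-2}$ to each of the four classes, reduce their covering numbers to that of $\mathcal{K}$ via Lemma~\ref{cover_num_bd}, insert Lemma~\ref{cover_num_gau}, and integrate. For $\mathcal{K}\cdot\mathcal{K}$ and $\int\mathcal{K}\cdot\mathcal{K}$ this is fine, since $\|K_\epsilon\|_\infty=1$ makes these classes already bounded by $1$; the paper carries out precisely this computation for $\mathcal{K}\cdot\mathcal{K}$ and obtains the erfc identity leading to $\mathcal{O}(\sqrt{-\log\epsilon})$.

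The gap is in your handling of $u\cdot\mathcal{M}$ and $\mathcal{M}\cdot\mathcal{M}$. You correctly observe that these are only bounded by some $B\asymp\epsilon^{-cd}$ (Lemma~\ref{unif-bd}), so Theorem~\ref{entropy_bd} must be applied to $\mathcal{F}/B$. But your claim that the rescaling factor ``is absorbed into the implied constant'' and ``contributes only logarithmically'' is false. After multiplying the bound for $\mathcal{F}/B$ by $B$ you obtain
\[
\sup_{F\in\mathcal{F}}\abs{\mathbb{P}_nF-\mathbb{P}F}\;\le\;B\cdot\frac{C_E}{\sqrt n}\int_0^1\!\sqrt{\log N(\mathcal{F}/B,r,L_2(\mathbb{P}_n))}\,dr\;+\;B\sqrt{\tfrac{1}{n}\log(2n^2)}\,.
\]
The Dudley integral for the normalized class is indeed $\mathcal{O}(\sqrt{-\log\epsilon})$, but the prefactor $B$ remains; equivalently, under $r'=Br$ the first term becomes $\tfrac{C_E}{\sqrt n}\int_0^{B}\sqrt{\log N(\mathcal{F},r',\cdot)}\,dr'$, and with upper limit $B=\epsilon^{-cd}$ the integral is of order $B\sqrt{-\log\epsilon}$, not $\sqrt{-\log\epsilon}$. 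The second term is even more transparent: $B\sqrt{\log n}/\sqrt n\asymp\epsilon^{-cd}\sqrt{\log n}/\sqrt n$, a polynomial loss in $\epsilon$. So the mechanism you describe in the ``Main obstacle'' paragraph does not actually cancel the factor $B$. The paper itself, after doing $\mathcal{K}\cdot\mathcal{K}$ in detail, writes only ``by a similar argument'' for the remaining three classes and does not confront this point either---so your approach is the same as the paper's, but the specific justification you offer for the $\epsilon$-bookkeeping does not hold.
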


Note that the probability event space that \eqref{GC class bound for all classes} holds depends on the chosen $u$, and the implied constant depends on $\|u\|_\infty$. This is critical when we carry out the final spectral convergence proof.

\begin{proof}
	By plugging $\delta=1/n^{2}$ into Theorem \ref{entropy_bd}, we have
	\begin{align}
	\nonumber\sup_{F\in\mathcal{K}\cdot\mathcal{K}}\abs{\mathbb{P}_{n}F-\mathbb{P}F}\,&\leq\frac{C_{E}}{\sqrt{n}}\int_{0}^{\infty}\sqrt{\log N(\mathcal{K}\cdot\mathcal{K},r,L_{2}(\mathbb{P}_{n}))}\,dr + \sqrt{\frac{1}{n}\log(2n^{2})}
	\\\nonumber&=
	\frac{C_{E}}{\sqrt{n}}\int_{0}^{1}\sqrt{\log N(\mathcal{K}\cdot\mathcal{K},r,L_{2}(\mathbb{P}_{n}))}\,dr + c_{1}\sqrt{\frac{\log n}{n}}\,,
	\end{align}
	where we use the fact that $N(\mathcal{K}\cdot\mathcal{K},r,L_{2}(\mathbb{P}_{n}))=1$ when $r>1$. Indeed, for any $x,y\in M$ and $n\in\mathbb{N}$, we have
	\begin{align}
	\|K_\epsilon(x,\cdot)K_\epsilon(\cdot,y)\|^2_{L_{2}(\mathbb{P}_{n})}
	\leq \frac{1}{n}\sum_{l=1}^n|K_\epsilon(x,z_l)K_\epsilon(z_l,y)|^2\leq \|K_\epsilon\|_{\infty}^2=1\,.\nonumber
	\end{align}
	Similarly, note that for any $x,x',y,y'\in M$ and $n\in \mathbb{N}$, we have
	\begin{equation}
	\|K_\epsilon(x,\cdot)K_\epsilon(\cdot,y)-K_\epsilon(x',\cdot)K_\epsilon(\cdot,y')\|^2_{L_{2}(\mathbb{P}_{n})}\leq \|K_\epsilon(x,\cdot)K_\epsilon(\cdot,y)-K_\epsilon(x',\cdot)K_\epsilon(\cdot,y')\|^2_{\infty}\nonumber\,,
	\end{equation}
	so we immediately have
	\begin{equation}
	N(\mathcal{K}\cdot\mathcal{K},r,L_{2}(\mathbb{P}_{n}))\leq N(\mathcal{K}\cdot\mathcal{K},r,\norm{\cdot}_{\infty})\nonumber\,
	\end{equation}
	and hence $\int_{0}^{1}\sqrt{\log N(\mathcal{K}\cdot\mathcal{K},r,L_{2}(\mathbb{P}_{n}))}\,dr\leq
	\int_{0}^{1}\sqrt{\log N(\mathcal{K}\cdot\mathcal{K},r,\norm{\cdot}_{\infty})}\,dr$.
	By Proposition \ref{cover_num_bd}, we have the bound that
	\begin{align}
	\nonumber &\quad   \int_{0}^{1}\sqrt{\log N(\mathcal{K}\cdot\mathcal{K},r,\norm{\cdot}_{\infty})}\,dr 
	\leq
	\int_{0}^{1}\sqrt{\log N\Big(\mathcal{K},\frac{r}{2\norm{K_{\epsilon}}_\infty},\norm{\cdot}_{\infty}\Big)}\,dr 
	\\\nonumber&\leq \sqrt{2d}\int_0^1\sqrt{[48\sqrt{2d} D_M-\log \epsilon]-\log r}dr
	=\sqrt{2d}\Big(\frac{\sqrt{\pi}}{2}e^{c_\epsilon}\textup{erfc}(c_\epsilon)+\sqrt{c_\epsilon}\Big)\,,
	\end{align}
	where $\textup{erfc}$ is the  complementary error function, $c_\epsilon:= 48\sqrt{2d} D_M-\log \epsilon$, the second inequality comes from Theorem \ref{cover_num_gau} and the fact that $\|K_\epsilon\|_\infty=1$. Note that since $c_\epsilon\approx -\log \epsilon$ when $\epsilon$ is small and  $\textup{erfc}(x)\approx \frac{e^{-x^2}}{\sqrt{\pi}x}$, we know that $\frac{\sqrt{\pi}}{2}e^{c_\epsilon}\textup{erfc}(c_\epsilon)\to 0$ when $\epsilon$ tends to $0$. 
	As a result, 
	\begin{align}
	\nonumber \int_{0}^{1}\sqrt{\log N(\mathcal{K}\cdot\mathcal{K},r,\norm{\cdot}_{\infty})}\,dr&\leq
	2\sqrt{2d}\sqrt{-\log \epsilon}
	\end{align}
	when $\epsilon$ is sufficiently small.
	By combining the above bounds, we have
	\begin{align}
	\nonumber \sup_{F\in\mathcal{K}\cdot\mathcal{K}}\abs{\mathbb{P}_{n}F-\mathbb{P}F}\,&\leq  \frac{2\sqrt{2d}c\sqrt{-\log \epsilon}}{\sqrt{n}} + \frac{c_1\sqrt{\log n}}{\sqrt{n}}=\mathcal{O}\Big(\frac{\sqrt{-\log \epsilon}+\sqrt{\log n}}{\sqrt{n}}\Big)\,.
	\end{align}
	By a similar argument we have the bound for $\sup_{F\in\int\mathcal{K}\cdot\mathcal{K}}\abs{\mathbb{P}_{n}F-\mathbb{P}F}$ , $\sup_{F\in u\cdot\mathcal{M}}\abs{\mathbb{P}_{n}F-\mathbb{P}F}$ and $\sup_{F\in \mathcal{M}\cdot\mathcal{M}}\abs{\mathbb{P}_{n}F-\mathbb{P}F}$. Note that the implied constant of the bound for $\sup_{F\in u\cdot\mathcal{M}}\abs{\mathbb{P}_{n}F-\mathbb{P}F}$ depends on $\|u\|_\infty$. The result follows.
	
\end{proof}

Below, we prepare several technical lemmas to control the spectral convergence rate.

\begin{lemma}\label{d_hat_bd}
	Suppose $m=n^\beta$ for $\beta\in (0,1)$ and take $\epsilon=\epsilon(n)$ so that $\epsilon\to 0$ when $n\to \infty$. If we further assume that $\frac{\sqrt{-\log\epsilon}+\sqrt{\log m}}{\sqrt{m}\epsilon^{d}}\rightarrow 0$, we have with probability $1-\mathcal{O}(m^{-2})$:
	\begin{align}
	&\;\; C_1 \epsilon^{d}/2\leq\widehat{d}_{\textup{ref},\epsilon,n}(x)\leq 2C_2 \epsilon^{d} \label{Bound of d and dhat prob}\,,
	\end{align}
	and with probability $1-\mathcal{O}(n^{-2})$:
	\begin{align}
	&\;\; C_1 \epsilon^{d}/2\leq d_{\textup{ref},\epsilon,n}(x)\leq 2C_2 \epsilon^{d} \nonumber\,,
	\end{align}
	where $C_1,C_2>0$ are constants defined in Lemma \ref{unif-bd}. 
\end{lemma}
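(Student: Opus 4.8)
The plan is to control both $d_{\textup{ref},\epsilon,n}(x)$ and $\widehat d_{\textup{ref},\epsilon,n}(x)$ by comparing them with the deterministic quantity $d_{\textup{ref},\epsilon}(x)$, for which Lemma \ref{unif-bd} already supplies $C_1\epsilon^d\le d_{\textup{ref},\epsilon}(x)\le C_2\epsilon^d$, and then showing the stochastic fluctuations are $o(\epsilon^d)$ uniformly in $x$ on a high-probability event.

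First I would handle $d_{\textup{ref},\epsilon,n}(x)=\mathbb{P}_nK_{\textup{ref},\epsilon}(x,\cdot)$. For each fixed $x$ the function $K_{\textup{ref},\epsilon}(x,\cdot)$ lies in the class $\int\mathcal{K}\cdot\mathcal{K}$ of Definition \ref{Definition different classes} and $\mathbb{P}K_{\textup{ref},\epsilon}(x,\cdot)=d_{\textup{ref},\epsilon}(x)$, so Lemma \ref{cant_class_rate} gives, with probability $1-\mathcal{O}(n^{-2})$,
\[
\sup_{x\in M}\bigl|d_{\textup{ref},\epsilon,n}(x)-d_{\textup{ref},\epsilon}(x)\bigr|=\mathcal{O}\!\left(\frac{\sqrt{-\log\epsilon}+\sqrt{\log n}}{\sqrt{n}}\right).
\]
Since $m=n^{\beta}$ with $\beta\in(0,1)$, we have $\sqrt{\log n}=\beta^{-1/2}\sqrt{\log m}$ and $\sqrt{n}\ge\sqrt{m}$, so this error is at most $\beta^{-1/2}\epsilon^{d}\cdot\frac{\sqrt{-\log\epsilon}+\sqrt{\log m}}{\sqrt{m}\epsilon^{d}}=o(\epsilon^{d})$ by hypothesis. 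Hence for $n$ large the error is below $\min(C_1/2,\,C_2)\,\epsilon^{d}$, and the triangle inequality together with Lemma \ref{unif-bd} yields $C_1\epsilon^{d}/2\le d_{\textup{ref},\epsilon,n}(x)\le 2C_2\epsilon^{d}$ for all $x$, with probability $1-\mathcal{O}(n^{-2})$.

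Next I would treat $\widehat d_{\textup{ref},\epsilon,n}(x)$, which carries the extra randomness of the landmark set, by writing $\widehat d_{\textup{ref},\epsilon,n}(x)-d_{\textup{ref},\epsilon}(x)=\bigl[\widehat d_{\textup{ref},\epsilon,n}(x)-d_{\textup{ref},\epsilon,n}(x)\bigr]+\bigl[d_{\textup{ref},\epsilon,n}(x)-d_{\textup{ref},\epsilon}(x)\bigr]$; the second bracket is already handled. For the first, $\widehat d_{\textup{ref},\epsilon,n}(x)-d_{\textup{ref},\epsilon,n}(x)=\tfrac1n\sum_{i=1}^{n}\bigl[\widehat K_{\textup{ref},\epsilon,n}(x,x_i)-K_{\textup{ref},\epsilon}(x,x_i)\bigr]$, which is bounded by $\sup_{x,y}\bigl|\widehat K_{\textup{ref},\epsilon,n}(x,y)-K_{\textup{ref},\epsilon}(x,y)\bigr|=\sup_{x,y}\bigl|\widetilde{\mathbb{P}}_m\bigl[K_\epsilon(x,\cdot)K_\epsilon(\cdot,y)\bigr]-\widetilde{\mathbb{P}}\bigl[K_\epsilon(x,\cdot)K_\epsilon(\cdot,y)\bigr]\bigr|$. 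Running the argument of Lemma \ref{cant_class_rate} with the empirical measure $\widetilde{\mathbb{P}}_m$ of the $m=n^{\beta}$ landmarks in place of $\mathbb{P}_n$ and the Glivenko--Cantelli class $\mathcal{K}\cdot\mathcal{K}$ (whose $\norm{\cdot}_\infty$ covering number is controlled through Lemmas \ref{cover_num_gau} and \ref{cover_num_bd}, the entropy bound of Theorem \ref{entropy_bd} being valid on any probability space) gives, with probability $1-\mathcal{O}(m^{-2})$,
\[
\sup_{x,y\in M}\bigl|\widehat K_{\textup{ref},\epsilon,n}(x,y)-K_{\textup{ref},\epsilon}(x,y)\bigr|=\mathcal{O}\!\left(\frac{\sqrt{-\log\epsilon}+\sqrt{\log m}}{\sqrt{m}}\right)=o(\epsilon^{d}).
\]
Combining the two brackets on the intersection of the two high-probability events, which has probability $1-\mathcal{O}(m^{-2})$ since $m\le n$, and invoking Lemma \ref{unif-bd} once more, yields $C_1\epsilon^{d}/2\le\widehat d_{\textup{ref},\epsilon,n}(x)\le 2C_2\epsilon^{d}$ uniformly in $x$, which is \eqref{Bound of d and dhat prob}.

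The only delicate points are bookkeeping ones: keeping explicit track of the $\epsilon$-dependence of the covering numbers of $\mathcal{K}\cdot\mathcal{K}$ and $\int\mathcal{K}\cdot\mathcal{K}$ so that the fluctuations come out as $\frac{\sqrt{-\log\epsilon}+\sqrt{\log m}}{\sqrt{m}}$ rather than something worse, and checking that the stated scaling hypothesis $\frac{\sqrt{-\log\epsilon}+\sqrt{\log m}}{\sqrt{m}\epsilon^{d}}\to 0$ indeed forces these fluctuations to be $o(\epsilon^{d})$ uniformly (both for the $\widetilde{\mathbb{P}}_m$-sampling over the landmarks and, a fortiori, for the $\mathbb{P}_n$-sampling over the data, because $n\ge m$). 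Everything else is a direct application of results already in hand.
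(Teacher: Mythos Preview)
Your proposal is correct and follows essentially the same approach as the paper: the paper also compares $\widehat d_{\textup{ref},\epsilon,n}$ to $d_{\textup{ref},\epsilon}$ via the same two-term split $\mathbb{P}_n\widetilde{\mathbb{P}}_mK_\epsilon K_\epsilon-\mathbb{P}_n\widetilde{\mathbb{P}}K_\epsilon K_\epsilon$ plus $\mathbb{P}_n\widetilde{\mathbb{P}}K_\epsilon K_\epsilon-\mathbb{P}\widetilde{\mathbb{P}}K_\epsilon K_\epsilon$ (your $[\widehat d-d_{\textup{ref},\epsilon,n}]+[d_{\textup{ref},\epsilon,n}-d_{\textup{ref},\epsilon}]$ in different notation), bounds the first by the $\mathcal{K}\cdot\mathcal{K}$ Glivenko--Cantelli deviation over the landmarks and the second by the $\int\mathcal{K}\cdot\mathcal{K}$ deviation over the data via Lemma~\ref{cant_class_rate}, and then invokes Lemma~\ref{unif-bd} together with the hypothesis $\frac{\sqrt{-\log\epsilon}+\sqrt{\log m}}{\sqrt{m}\epsilon^d}\to 0$. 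The only cosmetic difference is that the paper treats $\widehat d_{\textup{ref},\epsilon,n}$ first and $d_{\textup{ref},\epsilon,n}$ second.
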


Note the difference between this lemma and Lemma \ref{unif-bd}. This lemma says that while usually the kernels involved in the analysis have a wide range, with high probability, the range is well controlled.

\begin{proof}
	By the same calculation (e.g. Step 2 in the proof of Theorem \ref{spec_cong}) we have 
	\begin{align}
	\nonumber&
	\sup_{x}\abs{\widehat{d}_{\textup{ref},\epsilon,n}(x) - d_{\textup{ref},\epsilon}(x)}
	=
	\sup_{x}\abs{\mathbb{P}_{n}\widetilde{\mathbb{P}}_{m}K_{\epsilon}(x,\cdot)K_{\epsilon}(\cdot,\star)-\mathbb{P}\widetilde{\mathbb{P}}K_{\epsilon}(x,\cdot)K_{\epsilon}(\cdot,\star)}
	\\\nonumber \leq&\,
	\sup_{x}\abs{\mathbb{P}_{n}\widetilde{\mathbb{P}}_{m}K_{\epsilon}(x,\cdot)K_{\epsilon}(\cdot,\star)-\mathbb{P}_{n}\widetilde{\mathbb{P}}K_{\epsilon}(x,\cdot)K_{\epsilon}(\cdot,\star)}
	+
	\sup_{x}\abs{\mathbb{P}_{n}\widetilde{\mathbb{P}}K_{\epsilon}(x,\cdot)K_{\epsilon}(\cdot,\star)-\mathbb{P}\widetilde{\mathbb{P}}K_{\epsilon}(x,\cdot)K_{\epsilon}(\cdot,\star)}\,,
	\end{align}
	which is further bounded by taking Lemma \ref{cant_class_rate} into account:
	\begin{align}
	\nonumber &\,
	\sup_{f\in\mathcal{K}\cdot\mathcal{K}}\abs{\widetilde{\mathbb{P}}_{m}f - \widetilde{\mathbb{P}}f} + 
	\sup_{f\in\int\mathcal{K}\cdot\mathcal{K}}\abs{\mathbb{P}_{n}f - {\mathbb{P}}f}
	\\\nonumber =&\,
	\mathcal{O}\left(\frac{\sqrt{-\log\epsilon}+\sqrt{\log m}}{\sqrt{m}}\right)+ \mathcal{O}\left(\frac{\sqrt{-\log\epsilon}+\sqrt{\log n}}{\sqrt{n}}\right) =
	\mathcal{O}\left(\frac{\sqrt{-\log\epsilon}+\sqrt{\log m}}{\sqrt{m}}\right)
	\end{align}
	with probability $1-\mathcal{O}(m^{-2})-\mathcal{O}(n^{-2})=1-\mathcal{O}(m^{-2})$, where we use the fact that $m<n$.
	Then, since $d_{\textup{ref},\epsilon}(x)\geq C_1\epsilon^{d}$ by Lemma \ref{unif-bd}, 
	by the assumption $\frac{\sqrt{-\log\epsilon}+\sqrt{\log m}}{\sqrt{m}\epsilon^{d}}\rightarrow 0$, we have $\widehat{d}_{\textup{ref},\epsilon,n}(x)\geq C_1\epsilon^{d}/2$ with probability $1-\mathcal{O}(m^{-2})$ when $m$ is sufficiently large.
	
	Likewise, we have
	\begin{align}
	\nonumber&
	\sup_{x}\abs{d_{\textup{ref},\epsilon,n}(x) - d_{\textup{ref},\epsilon}(x)}
	=
	\sup_{x}\abs{\mathbb{P}_{n}\widetilde{\mathbb{P}}K_{\epsilon}(x,\cdot)K_{\epsilon}(\cdot,\star)-\mathbb{P}\widetilde{\mathbb{P}}K_{\epsilon}(x,\cdot)K_{\epsilon}(\cdot,\star)}
	\\\nonumber \leq&\,
	\sup_{f\in\int\mathcal{K}\cdot\mathcal{K}}\abs{\mathbb{P}_{n}f - \widetilde{\mathbb{P}}f}=\mathcal{O}\left(\frac{\sqrt{-\log\epsilon}+\sqrt{\log n}}{\sqrt{n}}\right)\,.
	\end{align}
	Again, by the assumption $\frac{\sqrt{-\log\epsilon}+\sqrt{\log m}}{\sqrt{m}\epsilon^{d}}\rightarrow 0$, we have $d_{\textup{ref},\epsilon,n}(x)\geq C_1\epsilon^{d}/2$ with probability $1-\mathcal{O}(n^{-2})$ when $n$ is sufficiently large.
\end{proof}

Next, we control the other terms we need for the spectral convergence rate.

\begin{lemma}\label{Proposition: bound of several operators}
	Suppose $m=n^\beta$ for $\beta\in (0,1]$ and take $\epsilon=\epsilon(n)$ so that $\epsilon\to 0$ when $n\to \infty$. If we further assume that $\frac{\sqrt{-\log\epsilon}+\sqrt{\log n†}}{\sqrt{n}\epsilon^{d}}\rightarrow 0$, we have with probability $1-\mathcal{O}(n^{-2})$ the following bound: $$
	\norm{T_{\textup{ref},\epsilon,n}}\leq \frac{2C_2}{C_1}\,,
	$$
	where $C_1$ and $C_2$ are constants defined in Lemma \ref{unif-bd}.
	Moreover, the following two bounds always hold:
	$$
	\norm{T_{\textup{ref},\epsilon}}\leq 1\,,\,\,\norm{\widehat{T}_{\textup{ref},\epsilon,n}}\leq 1\,.
	$$
\end{lemma}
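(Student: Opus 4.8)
\textbf{Overview.} The three bounds are all of the same flavour: each operator has a kernel of the form $\frac{(\textup{numerator kernel})}{(\textup{degree-type denominator})}$, and the operators act on $C(M)$ (or on $\mathbb{R}^n$ via $\rho_n$) by integrating or averaging against that kernel against the probability measure $p_X\,dV$ (resp.\ the empirical measure $\mathbb{P}_n$). So the plan is to bound the $L^\infty\to L^\infty$ operator norm of each by the supremum over $x$ of the ``total mass'' of its kernel, i.e.\ $\norm{T} \le \sup_x \int_M |M(x,y)|\,p_X(y)\,dV(y)$ for the continuous operators and $\sup_x \frac{1}{n}\sum_i |M(x,x_i)|$ for the discrete one. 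The exact bounds $\norm{T_{\textup{ref},\epsilon}}\le 1$ and $\norm{\widehat T_{\textup{ref},\epsilon,n}}\le 1$ come from the fact that those two operators are \emph{exactly} row-stochastic by construction, while for $T_{\textup{ref},\epsilon,n}$ — whose denominator is $d_{\textup{ref},\epsilon}(x)$ rather than the matching $d_{\textup{ref},\epsilon,n}(x)$ — one must pay a multiplicative error, which is where the probabilistic estimate of Lemma SI.20 enters.

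\textbf{The two deterministic bounds.} For $T_{\textup{ref},\epsilon}$, recall $M_{\textup{ref},\epsilon}(x,y)=K_{\textup{ref},\epsilon}(x,y)/d_{\textup{ref},\epsilon}(x)$ and $d_{\textup{ref},\epsilon}(x)=\int_M K_{\textup{ref},\epsilon}(x,y)p_X(y)\,dV(y)$, so
\[
\norm{T_{\textup{ref},\epsilon}f}_\infty \le \norm{f}_\infty \sup_x \int_M \frac{K_{\textup{ref},\epsilon}(x,y)}{d_{\textup{ref},\epsilon}(x)}p_X(y)\,dV(y) = \norm{f}_\infty\,,
\]
since $K_{\textup{ref},\epsilon}\ge 0$ and the integral is identically $1$. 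For $\widehat T_{\textup{ref},\epsilon,n}$, the same computation works with $\widehat M_{\textup{ref},\epsilon,n}(x,y)=\widehat K_{\textup{ref},\epsilon,n}(x,y)/\widehat d_{\textup{ref},\epsilon,n}(x)$ and $\widehat d_{\textup{ref},\epsilon,n}(x)=\frac1n\sum_i \widehat K_{\textup{ref},\epsilon,n}(x,x_i)$, so $\frac1n\sum_i \widehat M_{\textup{ref},\epsilon,n}(x,x_i)=1$ for every $x$, giving $\norm{\widehat T_{\textup{ref},\epsilon,n}}\le 1$. Both are deterministic and hold for all $n$, $\epsilon$.

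\textbf{The probabilistic bound.} For $T_{\textup{ref},\epsilon,n}f(x)=\frac1n\sum_i M_{\textup{ref},\epsilon}(x,x_i)f(x_i)$, we get
\[
\norm{T_{\textup{ref},\epsilon,n}f}_\infty \le \norm{f}_\infty \sup_x \frac{1}{n}\sum_{i=1}^n \frac{K_{\textup{ref},\epsilon}(x,x_i)}{d_{\textup{ref},\epsilon}(x)} = \norm{f}_\infty \sup_x \frac{d_{\textup{ref},\epsilon,n}(x)}{d_{\textup{ref},\epsilon}(x)}\,,
\]
where $d_{\textup{ref},\epsilon,n}(x)=\frac1n\sum_i K_{\textup{ref},\epsilon}(x,x_i)$ as in Table 1. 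Now invoke Lemma SI.20: under the stated hypotheses ($m=n^\beta$, $\epsilon\to 0$, $\frac{\sqrt{-\log\epsilon}+\sqrt{\log n}}{\sqrt n\,\epsilon^d}\to 0$), with probability $1-\mathcal O(n^{-2})$ one has $d_{\textup{ref},\epsilon,n}(x)\le 2C_2\epsilon^d$ for all $x$, while $d_{\textup{ref},\epsilon}(x)\ge C_1\epsilon^d$ for all $x$ deterministically by Lemma SI.17 (the \texttt{unif-bd} lemma). Dividing gives $\sup_x d_{\textup{ref},\epsilon,n}(x)/d_{\textup{ref},\epsilon}(x)\le 2C_2/C_1$ on that event, hence $\norm{T_{\textup{ref},\epsilon,n}}\le 2C_2/C_1$ with probability $1-\mathcal O(n^{-2})$, as claimed. (Note the hypothesis in the statement is phrased with $\sqrt{n}$ in the denominator, matching the $\mathbb{P}_n$-type deviation that controls $d_{\textup{ref},\epsilon,n}$; the $\widehat d$-bound in Lemma SI.20 needs only the weaker $m$-version, so nothing extra is required.)

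\textbf{Main obstacle.} There is no real obstacle here — the lemma is a bookkeeping consequence of the normalizations plus Lemmas SI.17 and SI.20 already in hand. The only point requiring a moment's care is matching the denominators correctly: $T_{\textup{ref},\epsilon,n}$ uses the ``mismatched'' denominator $d_{\textup{ref},\epsilon}$ (continuous) against the empirical average in the numerator, so its operator norm is a \emph{ratio} $d_{\textup{ref},\epsilon,n}/d_{\textup{ref},\epsilon}$ rather than identically $1$; conflating it with $\widehat T_{\textup{ref},\epsilon,n}$ (which uses the self-consistent $\widehat d$) would incorrectly give the bound $1$. Keeping this distinction straight, and citing the correct one of the two probability estimates in Lemma SI.20, is the entire content of the argument.
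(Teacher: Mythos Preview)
Your proposal is correct and matches the paper's proof essentially line for line: both bounds $\norm{T_{\textup{ref},\epsilon}}\le 1$ and $\norm{\widehat T_{\textup{ref},\epsilon,n}}\le 1$ follow from the row-stochasticity of the respective kernels, and the probabilistic bound on $\norm{T_{\textup{ref},\epsilon,n}}$ is obtained exactly as you describe, by writing the norm as $\sup_x d_{\textup{ref},\epsilon,n}(x)/d_{\textup{ref},\epsilon}(x)$ and combining the upper bound on $d_{\textup{ref},\epsilon,n}$ from Lemma~\ref{d_hat_bd} with the deterministic lower bound on $d_{\textup{ref},\epsilon}$ from Lemma~\ref{unif-bd}. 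Your remark about keeping the mismatched denominator straight is precisely the content of the argument.
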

\begin{proof}
	Take $f\in C(M)$ so that $\|f\|_\infty\leq 1$. Since the kernel is positive, by definition,
	\begin{align}
	|\widehat{T}_{\textup{ref},\epsilon,n}f(x)|\leq \frac{\frac{1}{n}\sum_{i=1}^{n}\widehat{K}_{\textup{ref},\epsilon,n}(x,x_i)|f(x_{i})|}{\widehat{d}_{\textup{ref},\epsilon,n}(x)} \leq \|f\|_\infty\,,\nonumber
	\end{align}
	\begin{equation}
	|T_{\textup{ref},\epsilon}f(x)|\leq \frac{\int_{M}K_{\textup{ref},\epsilon}(x,y)|f(y)|p_{X}(y)\,dV(y)}{|d_{\textup{ref},\epsilon}(x)|}\leq \|f\|_\infty\,.\nonumber
	\end{equation}
	Similarly, by Lemma \ref{d_hat_bd}, with probability $1-\mathcal{O}(n^{-2})$:
	\begin{align}
	|T_{\textup{ref},\epsilon,n}f(x)|\leq \frac{\frac{1}{n}\sum_{i=1}^{n}K_{\textup{ref},\epsilon}(x,x_i)}{d_{\textup{ref},\epsilon}(x)}\|f\|_\infty= \frac{d_{\textup{ref},\epsilon,n}(x)}{d_{\textup{ref},\epsilon}(x)}\leq \frac{2C_2}{C_1} \,.\nonumber
	\end{align}
	We hence finish the proof.
\end{proof}

\begin{lemma}\label{tech_bd}
	Take $g\in C(M)$. 
	Suppose $m=n^\beta$ for $\beta\in (0,1]$ and take $\epsilon=\epsilon(n)$ so that $\epsilon\to 0$ when $n\to \infty$. If we further assume that $\frac{\sqrt{-\log\epsilon}+\sqrt{\log m}}{\sqrt{m}\epsilon^{d}}\rightarrow 0$, when $n$ is sufficiently large, we have with probability $1-\mathcal{O}(m^{-2})$:
	\begin{align}
	\norm{\widehat{T}_{\textup{ref},\epsilon,n}-T_{\textup{ref},\epsilon,n}}&\,\leq \frac{2(\sqrt{-\log \epsilon}+\sqrt{\log m})}{C^2_{1}\sqrt{m}\epsilon^{2d}}\,,\label{Proposition SI3 bound Tnhat and Tn}
	\end{align}
	and with probability $1-\mathcal{O}(n^{-2})$: 
	\begin{align}
	\norm{(T_{\textup{ref},\epsilon}-T_{\textup{ref},\epsilon,n})T_{\textup{ref},\epsilon,n}}&\,\leq C_3\frac{\sqrt{-\log \epsilon}+\sqrt{\log n}}{\sqrt{n}}\,,\\
	\norm{(T_{\textup{ref},\epsilon,n}-T_{\textup{ref},\epsilon})g}_{\infty}&\,\leq C_4\frac{\sqrt{-\log \epsilon}+\sqrt{\log n}}{\sqrt{n}}\,,\nonumber
	\end{align}
	where $C_3>0$ is a constant depending on the kernel, the curvature of the manifold and the minima of $p_X$ and $p_Y$, and the constants shown in the entropy bound shown in Theorem \ref{entropy_bd}, and $C_4>0$ is a constant depending on the kernel, the curvature of the manifold and the minima of $p_X$ and $p_Y$, $\|g\|_\infty$ and the constants shown in the entropy bound shown in Theorem \ref{entropy_bd}.
\end{lemma}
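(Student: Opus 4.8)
The plan is to reduce all three estimates to uniform deviation bounds over the Glivenko--Cantelli classes of Definition \ref{Definition different classes}, controlled quantitatively by Lemma \ref{cant_class_rate}, while the denominators $\widehat d_{\textup{ref},\epsilon,n}$ and $d_{\textup{ref},\epsilon}$ are kept away from zero by Lemma \ref{unif-bd} and its high-probability refinement Lemma \ref{d_hat_bd}. Two facts are used throughout: the kernel being Gaussian gives $\norm{K_\epsilon}_\infty=1$; and $m=n^\beta<n$ makes the deviations of landmark averages dominate those of data averages. The last of the three bounds already shows the template: $(T_{\textup{ref},\epsilon,n}-T_{\textup{ref},\epsilon})g(x)=\mathbb P_n[g(\cdot)M_{\textup{ref},\epsilon}(x,\cdot)]-\mathbb P[g(\cdot)M_{\textup{ref},\epsilon}(x,\cdot)]$, so $\norm{(T_{\textup{ref},\epsilon,n}-T_{\textup{ref},\epsilon})g}_\infty\le\sup_{F\in g\cdot\mathcal M}\abs{\mathbb P_nF-\mathbb PF}$, which Lemma \ref{cant_class_rate} with $u=g$ bounds by $\mathcal O\big((\sqrt{-\log\epsilon}+\sqrt{\log n})/\sqrt n\big)$ with probability $1-\mathcal O(n^{-2})$; the implied constant inherits the dependence on $\norm g_\infty$, on $d$, on the constants of Lemma \ref{unif-bd}, and on the entropy constants, which is exactly $C_4$.

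For the first bound, the key observation is that $\widehat T_{\textup{ref},\epsilon,n}$ and $T_{\textup{ref},\epsilon,n}$ average over the \emph{same} data points, so for $\norm f_\infty\le1$ we have $\abs{(\widehat T_{\textup{ref},\epsilon,n}-T_{\textup{ref},\epsilon,n})f(x)}\le\sup_{x,y}\abs{\widehat M_{\textup{ref},\epsilon,n}(x,y)-M_{\textup{ref},\epsilon}(x,y)}$, and the difference of normalized kernels splits as $\widehat M_{\textup{ref},\epsilon,n}-M_{\textup{ref},\epsilon}=(\widehat K_{\textup{ref},\epsilon,n}-K_{\textup{ref},\epsilon})/\widehat d_{\textup{ref},\epsilon,n}+K_{\textup{ref},\epsilon}\,(d_{\textup{ref},\epsilon}-\widehat d_{\textup{ref},\epsilon,n})/(\widehat d_{\textup{ref},\epsilon,n}d_{\textup{ref},\epsilon})$. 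I would lower-bound $\widehat d_{\textup{ref},\epsilon,n}\ge C_1\epsilon^d/2$ (Lemma \ref{d_hat_bd}) and $d_{\textup{ref},\epsilon}\ge C_1\epsilon^d$, $K_{\textup{ref},\epsilon}\le1$ (Lemma \ref{unif-bd}); then $\abs{\widehat K_{\textup{ref},\epsilon,n}(x,y)-K_{\textup{ref},\epsilon}(x,y)}\le\sup_{F\in\mathcal K\cdot\mathcal K}\abs{\widetilde{\mathbb P}_mF-\widetilde{\mathbb P}F}$ and, after inserting $\mathbb P_n\widetilde{\mathbb P}$, $\abs{d_{\textup{ref},\epsilon}(x)-\widehat d_{\textup{ref},\epsilon,n}(x)}\le\sup_{F\in\mathcal K\cdot\mathcal K}\abs{\widetilde{\mathbb P}_mF-\widetilde{\mathbb P}F}+\sup_{F\in\int\mathcal K\cdot\mathcal K}\abs{\mathbb P_nF-\mathbb PF}$. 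By Lemma \ref{cant_class_rate}, used for the size-$m$ landmark sample in the first supremum and the size-$n$ data sample in the second, both are $\mathcal O\big((\sqrt{-\log\epsilon}+\sqrt{\log m})/\sqrt m\big)$ with probability $1-\mathcal O(m^{-2})$; dividing by the $\epsilon^{2d}$-order denominator and keeping the dominant term yields \eqref{Proposition SI3 bound Tnhat and Tn}.

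The second bound is the delicate point, and it is the reason the class $\mathcal M\cdot\mathcal M$ is in Definition \ref{Definition different classes}. One cannot simply put $g=T_{\textup{ref},\epsilon,n}f$ into the template above, because $g$ is random: both its sup-norm control (Lemma \ref{Proposition: bound of several operators}) and the class $g\cdot\mathcal M$ would then move with the sample. Instead I would expand the composition explicitly: since $T_{\textup{ref},\epsilon,n}f(y)=\frac1n\sum_j M_{\textup{ref},\epsilon}(y,x_j)f(x_j)$, interchanging the sum and the integral gives $(T_{\textup{ref},\epsilon}-T_{\textup{ref},\epsilon,n})T_{\textup{ref},\epsilon,n}f(x)=\frac1n\sum_j f(x_j)\big(\mathbb P[M_{\textup{ref},\epsilon}(x,\cdot)M_{\textup{ref},\epsilon}(\cdot,x_j)]-\mathbb P_n[M_{\textup{ref},\epsilon}(x,\cdot)M_{\textup{ref},\epsilon}(\cdot,x_j)]\big)$, so for $\norm f_\infty\le1$ the left-hand side is at most $\sup_{x,z}\abs{\mathbb P_n[M_{\textup{ref},\epsilon}(x,\cdot)M_{\textup{ref},\epsilon}(\cdot,z)]-\mathbb P[M_{\textup{ref},\epsilon}(x,\cdot)M_{\textup{ref},\epsilon}(\cdot,z)]}\le\sup_{F\in\mathcal M\cdot\mathcal M}\abs{\mathbb P_nF-\mathbb PF}$, now a deviation over a \emph{deterministic} function class; Lemma \ref{cant_class_rate} bounds it by $\mathcal O\big((\sqrt{-\log\epsilon}+\sqrt{\log n})/\sqrt n\big)$ with probability $1-\mathcal O(n^{-2})$, with $C_3$ absorbing $C_1,C_2$ and the entropy constants. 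The one genuinely non-routine step is this factorization --- recognizing that $f(x_j)$ pulls out of the supremum so that the remaining empirical-process term runs over a fixed class rather than over something that depends on the data; everything else is the routine bookkeeping of splitting ratios, bounding denominators, and inserting intermediate empirical measures, and if all three events are wanted simultaneously one takes their intersection, still of probability $1-\mathcal O(m^{-2})$.
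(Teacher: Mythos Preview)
Your proposal is correct and follows essentially the same approach as the paper: the same algebraic splitting of $\widehat M_{\textup{ref},\epsilon,n}-M_{\textup{ref},\epsilon}$ (the paper names the intermediate kernel $\widehat M^{(d)}_{\textup{ref},\epsilon,n}=K_{\textup{ref},\epsilon}/\widehat d_{\textup{ref},\epsilon,n}$, but it is the same two-term fraction decomposition you wrote), the same factorization of $(T_{\textup{ref},\epsilon}-T_{\textup{ref},\epsilon,n})T_{\textup{ref},\epsilon,n}f$ that pulls $f(x_j)$ outside and reduces to $\sup_{F\in\mathcal M\cdot\mathcal M}|\mathbb P_nF-\mathbb PF|$, and the same direct reduction of the third estimate to $g\cdot\mathcal M$. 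Your explicit remark about why one cannot set $g=T_{\textup{ref},\epsilon,n}f$ in the template is a nice clarification that the paper leaves implicit.
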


Note that the bound for $\norm{\widehat{T}_{\textup{ref},\epsilon,n}-T_{\textup{ref},\epsilon,n}}$ is dominated by $\epsilon^{-2d}$ since we need to control the term $\widehat{d}_{\textup{ref},\epsilon,n}(x)$. Also, the condition $\frac{\sqrt{-\log\epsilon}+\sqrt{\log m}}{\sqrt{m}\epsilon^{d}}\to 0$ does not imply that $\norm{\widehat{T}_{\textup{ref},\epsilon,n}-T_{\textup{ref},\epsilon,n}}\to 0$.

\begin{proof}
	Take $f\in C(M)$ so that $\|f\|_\infty\leq 1$. 
	By the same calculation (e.g. Step 2 in the proof of Theorem \ref{spec_cong}) and Lemma \ref{d_hat_bd}, we have
	\begin{align}
	\nonumber&\, \norm{(\widehat{T}_{\textup{ref},\epsilon,n}-T_{\textup{ref},\epsilon,n})f}_{\infty}
	=\max_{x\in M}\abs{\mathbb{P}_{n}\widehat{M}_{\textup{ref},\epsilon,n}(x,\cdot)f(\cdot) - \mathbb{P}_{n}M_{\textup{ref},\epsilon}(x,\cdot)f(\cdot)}
	\\\nonumber\leq&\,\max_{x\in M}\abs{\mathbb{P}_{n}\widehat{M}_{\textup{ref},\epsilon,n}(x,\cdot)f(\cdot) -
		\mathbb{P}_{n}\widehat{M}^{(d)}_{\textup{ref},\epsilon,n}(x,\cdot)f(\cdot)}
	+
	\max_{x\in M}\abs{\mathbb{P}_{n}\widehat{M}^{(d)}_{\textup{ref},\epsilon,n}(x,\cdot)f(\cdot)-\mathbb{P}_{n}M_{\textup{ref},\epsilon}(x,\cdot)f(\cdot)}\,,
	\end{align}
	where $\widehat{M}^{(d)}_{\textup{ref},\epsilon,n}(x,y):=\frac{K_{\textup{ref},\epsilon}(x,y)}{\widehat{d}_{\textup{ref},\epsilon,n}(x)}$.
	By \eqref{Bound of d and dhat prob}, with probability $1-\mathcal{O}(m^{-2})$, we have
	\begin{align}
	\nonumber&\max_{x\in M}\abs{\mathbb{P}_{n}\widehat{M}_{\textup{ref},\epsilon,n}(x,\cdot)f(\cdot) -
		\mathbb{P}_{n}\widehat{M}^{(d)}_{\textup{ref},\epsilon,n}(x,\cdot)f(\cdot)}\leq \frac{2\|f\|_\infty}{C_{1}\epsilon^{d}}\max_{x\in M}\frac{1}{n}\sum_{l=1}^n|K_{\textup{ref},\epsilon}(x,x_l)-\widehat K_{\textup{ref},\epsilon}(x,x_l)|\\
	\nonumber= &\, \frac{2\|f\|_\infty}{C_{1}\epsilon^{d}}\frac{1}{n}\sum_{l=1}^n\max_{x\in M}|\mathbb{P}_{m}K_{\epsilon}(x,\cdot)K_{\epsilon}(\cdot,x_l)-\widehat{\mathbb{P}}_{m}K_{\epsilon}(x,\cdot)K_{\epsilon}(\cdot,x_l) |\,.
	\end{align}
	We bound the right hand side by Lemma \ref{cant_class_rate}; that is, with probability $1-\mathcal{O}(m^{-2})$, we have
	\begin{align}
	\nonumber&\, \frac{2\|f\|_\infty}{C_{1}\epsilon^{d}}\sup_{F\in\mathcal{K}\cdot\mathcal{K}}\abs{\widetilde{\mathbb{P}}_{m}F - \widetilde{\mathbb{P}}F}\leq \frac{2C_3\|f\|_\infty }{C_{1}\epsilon^{d}}\frac{\sqrt{-\log \epsilon}+\sqrt{\log m}}{\sqrt{m}}\,,
	\end{align}
	where $C_3$ is the implied constant in Lemma \ref{cant_class_rate}.
	For the other term, with probability $1-\mathcal{O}(m^{-2})$, we have
	\begin{align}
	\nonumber&\sup_{x}\abs{\mathbb{P}_{n}\widehat{M}^{(d)}_{\textup{ref},\epsilon,n}(x,\cdot)f(\cdot)-\mathbb{P}_{n}M_{\textup{ref},\epsilon}(x,\cdot)f(\cdot)}\\
	\nonumber\leq\,& \|f\|_\infty \max_{x\in M}\frac{1}{n}\sum_{l=1}^n\frac{K_{\textup{ref},\epsilon}(x,x_l)}{d_{\textup{ref},\epsilon}(x)\hat d_{\textup{ref},\epsilon,n}(x)}\abs{\hat d_{\textup{ref},\epsilon,n}(x)-d_{\textup{ref},\epsilon}(x)}\leq   \frac{2\|f\|_\infty(\sqrt{-\log \epsilon}+\sqrt{\log m})}{C^2_{1}\sqrt{m}\epsilon^{2d}}\,,
	\end{align}
	where we use the fact the $K_\epsilon$ is positive, $\|K_{\textup{ref},\epsilon}\|_\infty=1$, and Lemma \ref{d_hat_bd}. 
	As a result, by combining the above two bounds with a union probability bound, we conclude that when $m$ is sufficiently large, with probability $1-\mathcal{O}(m^{-2})$,
	\begin{align}
	\norm{\widehat{T}_{\textup{ref},\epsilon,n}-T_{\textup{ref},\epsilon,n}} \nonumber\leq
	\frac{2(\sqrt{-\log \epsilon}+\sqrt{\log m})}{C^2_{1}\sqrt{m}\epsilon^{2d}}\,.
	\end{align}

	The second statement follows the same argument, but with more terms to control:
	\begin{align}
	\nonumber&\quad
	\norm{(T_{\textup{ref},\epsilon}-T_{\textup{ref},\epsilon,n})T_{\textup{ref},\epsilon,n}f}_{\infty}
	\\\nonumber&=
	\norm{T_{\textup{ref},\epsilon}\left(\frac{1}{n}\sum_{i=1}^{n}M_{\textup{ref},\epsilon}(x,x_{i})f(x_{i})\right)-T_{\textup{ref},\epsilon,n}\left(\frac{1}{n}\sum_{i=1}^{n}M_{\textup{ref},\epsilon}(x,x_{i})f(x_{i})\right)}_{\infty}
	\\\nonumber&=
	\norm{\int M_{\textup{ref},\epsilon}(y,z)\left(\frac{1}{n}\sum_{i=1}^{n}M_{\textup{ref},\epsilon}(z,x_{i})f(x_{i})\right)d\mathbb{P}(z)\right.\\
		&\nonumber\left.\qquad\qquad\qquad-\frac{1}{n}\sum_{j=1}^{n}M_{\textup{ref},\epsilon}(y,z_{j})\left(\frac{1}{n}\sum_{i=1}^{n}M_{\textup{ref},\epsilon}(z_{j},x_{i})f(x_{i})\right)}_{\infty}\,,
	\end{align}
	which, by noting that the term $\frac{1}{n}\sum_{i=1}^{n}f(x_{i})$ can be isolated, can be bounded by
	\begin{align}
	\nonumber&\quad
	\norm{f}_\infty\sup_{y}\abs{\int M_{\textup{ref},\epsilon}(y,z)M_{\textup{ref},\epsilon}(z,x_{i})d\mathbb{P}(z)-\frac{1}{n}\sum_{j=1}^{n}M_{\textup{ref},\epsilon}(y,z_{j})M_{\textup{ref},\epsilon}(z_{j},x_{i})}
	\\\nonumber&\leq
	\|f\|_\infty \sup_{F\in\mathcal{M}\cdot\mathcal{M}}\abs{\mathbb{P}F-\mathbb{P}_{n}F}
	\leq 
	C_3 \|f\|_\infty \frac{\sqrt{-\log \epsilon}+\sqrt{\log n}}{\sqrt{n}}
	\end{align}
	with probability $1-\mathcal{O}(n^{-2})$, where the first inequality comes from the fact that $\frac{1}{n}\sum_{i=1}^{n}\abs{f(x_{i})}\leq 1$. 
	
	The final statement is by a direct bound: 
	\begin{align}
	\nonumber\norm{(T_{\textup{ref},\epsilon,n}-T_{\textup{ref},\epsilon})g}_{\infty} &= \norm{\mathbb{P}_{n}M_{\textup{ref},\epsilon}(x,\cdot)f(\cdot) - \mathbb{P}M_{\textup{ref},\epsilon}(x,\cdot)f(\cdot)}_{\infty}
	\leq
	\sup_{F\in g\cdot\mathcal{M}}\abs{\mathbb{P}_{n}F - \mathbb{P}F}\,,
	\end{align}
	which leads to the conclusion by Lemma \ref{cant_class_rate}. Note that due to the finite sampling, we cannot control the error simply by the $\|g\|_\infty$.
\end{proof}

Based on the above preparation, the following proposition describes the spectral convergence of the operator {$\widehat{T}_{\textup{ref},\epsilon,n}$ } to the operator $T_{\textup{ref},\epsilon}$.  Denote $\lambda_{i,\epsilon}$ is the $i$-th smallest eigenvalue of $\frac{I-T_{\textup{ref},\epsilon}}{\epsilon}$,
and denote $u_{\epsilon,i}$ to be the associated eigenfunction.
Clearly, $\frac{I-T_{\textup{ref},\epsilon}}{\epsilon}$ and $T_{\textup{ref},\epsilon}$ share the same eigenfunctions.
Similarly, denote $\lambda_{\epsilon,n,i}$ to be the $i$-th smallest eigenvalue of {  $\frac{I-\widehat{T}_{\textup{ref},\epsilon,n}}{\epsilon}$}, and denote $u_{\epsilon, n,i}$ to be the associated eigenfunction. 
We assume that both $u_{\epsilon,i}$ and $u_{\epsilon, n,i}$ are normalized in the $L^2$ norm.

\begin{proposition}\label{Proposition: Tepsn conv to Teps with rate}
	Fix $K\in \mathbb{N}$. Assume that the eigenvalues of $\Delta$ are simple. Take $m=n^\beta$, where $\beta\in (0,1)$. Suppose $\epsilon=\epsilon(n)$ so that $\epsilon \to 0$ and $\frac{\sqrt{-\log\epsilon}+\sqrt{\log m}}{\sqrt{m}\epsilon^{d}} \rightarrow 0$, as $n \rightarrow \infty$, and $\epsilon \leq \mathcal{K}_1 \min \Bigg(\bigg(\frac{\min(\Gamma_K,1)}{\mathcal{K}_2+\lambda_K^{d/2+5}}\bigg)^2, \frac{1}{(2+\lambda_K^{d+1})^2}\Bigg)$, where $\Gamma_K$, $\mathcal{K}_1$ and $\mathcal{K}_2>1$  are introduced in Proposition \ref{T epsilon and Delta}, then there is a sequence $a_n \in \{1,-1\}$ such that with probability $1-\mathcal{O}(n^{-2})$,  for all $i < K$, 
	we have
	\begin{align}
	& |\lambda_{\epsilon,n,i}-\lambda_{\epsilon,i}|\leq  3\mathcal{K}_3\frac{\sqrt{-\log \epsilon}+\sqrt{\log m}}{\sqrt{m}\epsilon^{2d+2}}\,, \nonumber  \\
	& \|a_nu_{\epsilon,n,i}-u_{\epsilon,i}\|_{\infty} \leq \mathcal{K}_3\frac{\sqrt{-\log \epsilon}+\sqrt{\log m}}{\sqrt{m}\epsilon^{2d+3/2}}\,. \nonumber 
	\end{align}
	where $\mathcal{K}_3$ is a constant depending on the kernel, the curvature of $M$, $p_X$ and $p_Y$. 
\end{proposition}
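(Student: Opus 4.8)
The plan is to read the proposition as a quantitative instance of the collectively-compact spectral perturbation theory: Theorem \ref{atkinson1967bound} (Atkinson's bound) and Lemma \ref{cong_1_dim_proj} for the eigenfunctions, Lemma \ref{ratio of eigenvalues}(3) for the eigenvalues, all fed by the operator estimates of Lemmas \ref{d_hat_bd}, \ref{Proposition: bound of several operators} and \ref{tech_bd}. I would compare the eigenstructure of the finite-sample operator $\widehat T_{\textup{ref},\epsilon,n}$ (the operator attached to the Roseland transition matrix via Proposition \ref{one to one corresp}; I keep the notation $\lambda_{\epsilon,n,i},u_{\epsilon,n,i}$ of the statement for its eigenpairs, equivalently those of $(I-\,\cdot\,)/\epsilon$) with that of $T_{\textup{ref},\epsilon}$, working on $(C(M),\|\cdot\|_\infty)$ for the eigenfunction bounds and on the weighted $L^2$ space in which the symmetrized operator is self-adjoint for the eigenvalue bounds.

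First I would fix $i<K$ and extract from Proposition \ref{T epsilon and Delta} the inputs it provides: $\lambda_{\epsilon,i}$ is simple and lies within $\epsilon^{4/3}$ of $\lambda_i$, so for small $\epsilon$ the separation of $\lambda_{\epsilon,i}$ from $\sigma\big(\tfrac{I-T_{\textup{ref},\epsilon}}{\epsilon}\big)\setminus\{\lambda_{\epsilon,i}\}$ is at least $\tfrac12\Gamma_K$, i.e. the eigenvalue $1-\epsilon\lambda_{\epsilon,i}$ of $T_{\textup{ref},\epsilon}$ is separated from the rest of $\sigma(T_{\textup{ref},\epsilon})$ by $\asymp\epsilon\Gamma_K$. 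This pins down the contour radius $r\asymp\epsilon\Gamma_K$ and the resolvent norm $\|R_z(T_{\textup{ref},\epsilon})\|\asymp(\epsilon\Gamma_K)^{-1}$ on $\Gamma_r$ used in Theorem \ref{atkinson1967bound}. I would also bound $\|u_{\epsilon,i}\|_\infty$ by plugging $u_i$ into the bias analysis (Theorem \ref{Proof: Theorem1: bias analysis}), Sobolev embedding, Hörmander's inequality (Lemma \ref{lemma hormander}) and Weyl's law (Lemma \ref{lemma weyls law}); this is where $\mathcal{K}_3$ acquires its dependence on $\lambda_K$, the curvature, $p_X$ and $p_Y$.

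Next I would estimate the perturbation $E:=\widehat T_{\textup{ref},\epsilon,n}-T_{\textup{ref},\epsilon}$ by inserting $T_{\textup{ref},\epsilon,n}$. By Lemma \ref{tech_bd}, eq.~\eqref{Proposition SI3 bound Tnhat and Tn}, $\|\widehat T_{\textup{ref},\epsilon,n}-T_{\textup{ref},\epsilon,n}\|=O\!\big(\tfrac{\sqrt{-\log\epsilon}+\sqrt{\log m}}{\sqrt m\,\epsilon^{2d}}\big)$ on an event of probability $1-\mathcal{O}(m^{-2})$, while $\|(T_{\textup{ref},\epsilon,n}-T_{\textup{ref},\epsilon})u_{\epsilon,i}\|_\infty$ and $\|(T_{\textup{ref},\epsilon}-T_{\textup{ref},\epsilon,n})T_{\textup{ref},\epsilon,n}\|$ are $O\!\big(\tfrac{\sqrt{-\log\epsilon}+\sqrt{\log n}}{\sqrt n}\big)$, negligible since $m=n^\beta<n$; the cross terms of the form $\|(T_{\textup{ref},\epsilon,n}-T_{\textup{ref},\epsilon})\,\cdot\,\|\,\|\widehat T_{\textup{ref},\epsilon,n}-T_{\textup{ref},\epsilon,n}\|$ are disposed of using the uniform operator-norm bounds $\|T_{\textup{ref},\epsilon}\|,\|\widehat T_{\textup{ref},\epsilon,n}\|\le 1$, $\|T_{\textup{ref},\epsilon,n}\|\le 2C_2/C_1$ of Lemma \ref{Proposition: bound of several operators} and the degree control of Lemma \ref{d_hat_bd}. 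Feeding these into Theorem \ref{atkinson1967bound} with $T=T_{\textup{ref},\epsilon}$, $T_n=\widehat T_{\textup{ref},\epsilon,n}$, $x=u_{\epsilon,i}$ bounds $\|u_{\epsilon,i}-\textup{Pr}_{\lambda_{\epsilon,n,i}}u_{\epsilon,i}\|_\infty$, and Lemma \ref{cong_1_dim_proj} turns this into $\|a_nu_{\epsilon,n,i}-u_{\epsilon,i}\|_\infty$ for a sign $a_n$. For the eigenvalues, Lemma \ref{ratio of eigenvalues}(3) applied to $A=\tfrac{I-T_{\textup{ref},\epsilon}}{\epsilon}$, $B=\tfrac{I-\widehat T_{\textup{ref},\epsilon,n}}{\epsilon}$ gives $|\lambda_{\epsilon,i}-\lambda_{\epsilon,n,i}|\le\|Eu_{\epsilon,n,i}\|_2/(\epsilon|(u_{\epsilon,i},u_{\epsilon,n,i})|)$, with $|(u_{\epsilon,i},u_{\epsilon,n,i})|\ge 1-o(1)$ from the eigenfunction bound just proved (and $\mathrm{vol}(M)<\infty$); the leading constant $3\mathcal{K}_3$ comes from collecting the three triangle-inequality pieces in $\|Eu_{\epsilon,n,i}\|_2$. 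A union bound over the finitely many $i<K$ and over the data/landmark randomness preserves the failure probability $1-\mathcal{O}(m^{-2})$.

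The main obstacle is the bookkeeping of two competing $\epsilon$-scales: the gaps of $T_{\textup{ref},\epsilon}$ collapse like $\epsilon$ (its spectrum accumulates at $1$), forcing $\|R_z(T_{\textup{ref},\epsilon})\|\asymp\epsilon^{-1}$ in the Atkinson bound, while the empirical landmark-kernel $\widehat K_{\textup{ref},\epsilon,n}$ approximates $K_{\textup{ref},\epsilon}$ only up to a prefactor $\epsilon^{-2d}$ that does \emph{not} improve with $m,n$ at fixed $\epsilon$ (exactly the effect discussed after Lemma \ref{tech_bd}). Multiplying these, together with the extra $\epsilon^{-1}$ incurred in passing from the eigenpairs of $T_{\textup{ref},\epsilon}$ and $\widehat T_{\textup{ref},\epsilon,n}$ to those of $(I-\,\cdot\,)/\epsilon$, is what produces the stated $\epsilon^{-(2d+2)}$; verifying that nothing worse appears, and that the prescribed $\epsilon=\epsilon(n)$ still drives the whole bound to zero, is the delicate accounting step. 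A secondary point is that the high-probability events on which Lemmas \ref{d_hat_bd} and \ref{tech_bd} hold depend on $\epsilon=\epsilon(n)$ through the Gaussian covering numbers at bandwidth $\epsilon$ (Lemma \ref{cover_num_gau}) and on $\|u_{\epsilon,i}\|_\infty$, so one must respect the order of quantifiers: fix $\epsilon$ and $u_{\epsilon,i}$ first, then invoke the entropy bound, then take suprema over $x\in M$.
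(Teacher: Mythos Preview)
Your eigenfunction argument is essentially the paper's: feed Lemmas \ref{tech_bd} and \ref{Proposition: bound of several operators} into Theorem \ref{atkinson1967bound} (with $r\asymp\epsilon\Gamma_K$, $\|R_z\|\asymp(\epsilon\Gamma_K)^{-1}$, $\min_{z\in\Gamma_r}|z|\ge 1/2$), then Lemma \ref{cong_1_dim_proj}. The paper also splits $\|(T_{\textup{ref},\epsilon}-\widehat T_{\textup{ref},\epsilon,n})\widehat T_{\textup{ref},\epsilon,n}\|$ into four triangle-inequality pieces and absorbs the $\|u_{\epsilon,i}\|_\infty$ and $\Gamma_K^{-1}$ factors via the single crude inequality $\max\{\|u_{\epsilon,i}\|_\infty,\|u_{\epsilon,i}\|_\infty/\Gamma_K,1/\Gamma_K\}\le\epsilon^{-1}$, which follows from the assumed upper bound on $\epsilon$; this is how the second $\epsilon^{-1}$ enters and yields the final $\epsilon^{-(2d+2)}$.

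Your eigenvalue argument, however, diverges from the paper and has a gap. Lemma \ref{ratio of eigenvalues} requires $A$ and $B$ to be compact self-adjoint operators on the \emph{same} Hilbert space $H$. But $T_{\textup{ref},\epsilon}$ is (similar to) self-adjoint only after conjugation by $d_{\textup{ref},\epsilon}^{1/2}$ on $L^2(M,p_X dV)$, whereas $\widehat T_{\textup{ref},\epsilon,n}$ is a finite-rank operator whose natural symmetrization uses the \emph{random} weight $\widehat d_{\textup{ref},\epsilon,n}^{1/2}$ and the empirical measure. There is no single weighted $L^2$ space on which both are simultaneously self-adjoint, so part (3) of Lemma \ref{ratio of eigenvalues} does not apply directly; controlling the discrepancy between the two inner products would be extra work you have not outlined.

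The paper instead stays entirely in $L^\infty$ for the eigenvalues and bootstraps from the eigenfunction bound. Writing $\bar\lambda=1-\epsilon\lambda$, it inserts $\bar\lambda_{\epsilon,n,i}a_nu_{\epsilon,n,i}=a_n\widehat T_{\textup{ref},\epsilon,n}u_{\epsilon,n,i}$ to get
\[
|\bar\lambda_{\epsilon,i}-\bar\lambda_{\epsilon,n,i}|\,\|u_{\epsilon,i}\|_\infty
\le \|T_{\textup{ref},\epsilon}u_{\epsilon,i}-a_n\widehat T_{\textup{ref},\epsilon,n}u_{\epsilon,n,i}\|_\infty
+|\bar\lambda_{\epsilon,n,i}|\,\|a_nu_{\epsilon,n,i}-u_{\epsilon,i}\|_\infty,
\]
and splits the first term as $\|(T_{\textup{ref},\epsilon}-\widehat T_{\textup{ref},\epsilon,n})u_{\epsilon,i}\|_\infty+\|\widehat T_{\textup{ref},\epsilon,n}\|\,\|u_{\epsilon,i}-a_nu_{\epsilon,n,i}\|_\infty$. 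All three pieces are now controlled by the eigenfunction bound just proved together with the same operator estimates (Lemmas \ref{Proposition: bound of several operators} and \ref{tech_bd}); dividing by $\|u_{\epsilon,i}\|_\infty\ge c_M>0$ (the eigenfunction is $L^2$-normalized on a finite-volume manifold) gives the stated bound with constant $3\mathcal K_3$. No self-adjointness is needed.
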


Note that the imposed conditions, like $\frac{\sqrt{-\log\epsilon}+\sqrt{\log m}}{\sqrt{m}\epsilon^{d}}\to 0$, does not imply that $|\lambda_{\epsilon,n,i}-\lambda_{\epsilon,i}|\to 0$ or $\|a_nu_{\epsilon,n,i}-u_{\epsilon,i}\|_{\infty}\to 0$.

\begin{proof}
	Note that due to Proposition \ref{T epsilon and Delta} and the assumption that the eigenvalues of $\Delta$ are simple, we have that for any $K\in \mathbb{N}$, when $\epsilon>0$ is sufficiently small, the first smallest $K$ eigenvalues of $\frac{I-T_{\textup{ref},\epsilon}}{\epsilon}$ are simple. Specifically, it is shown in the proof of \cite[Proposition 1 (SI.20)]{DunsonWuWu2019} that when $\epsilon$ satisfies the assumption, for each $i<K$, we have
	\begin{equation}\label{Spectral gap of I-Teps/eps}
	\gamma_i\Big(\frac{I-T_{\textup{ref},\epsilon}}{\epsilon}\Big)\geq \frac{1}{12}\Gamma_K\,,
	\end{equation}
	where $\gamma_i$ is defined in \eqref{Proof proposition 1 Definition gamma i}. 
	
	Fix $i<K$. Take 
	\begin{equation}
	r=\frac{\Gamma_K}{24}\epsilon\,.
	\end{equation}
	We now quantify how the sequence $\{u_{\epsilon,n,i}\}_{n=1}^\infty$ converges to $u_{\epsilon,i}$. It is clear that $\frac{I-T_{\textup{ref},\epsilon}}{\epsilon}$ and $T_{\textup{ref},\epsilon}$ share the same eigenfunctions, with the eigenvalues directly related. Denote $\bar{\lambda}_{\epsilon,i}=1-\lambda_{\epsilon,i}\epsilon$ to be the $i$-th largest eigenvalue of $T_{\textup{ref},\epsilon}$. Similarly, this relationship holds for { $\frac{I-\widehat{T}_{\textup{ref},\epsilon,n}}{\epsilon}$ and $\widehat{T}_{\textup{ref},\epsilon,n}$}, and we denote $\bar{\lambda}_{\epsilon,n,i}=1-\lambda_{\epsilon,n,i}\epsilon$ to be the $i$-th largest eigenvalue of { $\widehat{T}_{\textup{ref},\epsilon,n}$}. Therefore, we can directly compare $T_{\textup{ref},\epsilon}$ and { $\widehat{T}_{\textup{ref},\epsilon,n}$}.

	By Proposition \ref{cong_1_dim_proj}, to control $\norm{a_nu_{\epsilon,n,i}-u_{\epsilon,i}}$, we need to bound $\norm{u_{\epsilon,i}-\textup{Pr}_{u_{\epsilon,n,i}}(u_{\epsilon,i})}$. 
	Since $T_{\textup{ref},\epsilon,n}$ converges to $T_{\textup{ref},\epsilon}$ collectively compactly a.e. by Lemma \ref{Proposition: Tepsn conv to Teps compactly}, we apply Theorem \ref{atkinson1967bound} to control 
	$\norm{u_{\epsilon,i}-\textup{Pr}_{u_{\epsilon,n,i}}(u_{\epsilon,i})}$. To apply Theorem \ref{atkinson1967bound}, we need to control  
	$\max_{z\in \Gamma_r(\bar\lambda_{\epsilon,i})} \|R_{z}(T_{\textup{ref},\epsilon})\|$, $\min_{z\in \Gamma_r(\bar\lambda_{\epsilon,i})}|z|$,
	$\norm{(\widehat{T}_{\textup{ref},\epsilon,n}-T_{\textup{ref},\epsilon})u_{\epsilon,i}}$ and $\norm{(T_{\textup{ref},\epsilon}-\widehat{T}_{\textup{ref},\epsilon,n})\widehat{T}_{\textup{ref},\epsilon,n}}$. 
	First, according to \eqref{Spectral gap of I-Teps/eps}, the spectral gap of the $i$-th largest eigenvalue of $T_{\textup{ref},\epsilon}$ is bounded from below by $\frac{\Gamma_K}{12}\epsilon$. Therefore, by the basic bound of the resolvent (see, for example, \cite[Lemma SI.16]{DunsonWuWu2019}) and the chosen $r$, we have
	\begin{equation}\label{Proof spectral rate bound1}
	\max_{z\in \Gamma_r(\bar\lambda_{\epsilon,i})} \|R_{z}(T_{\textup{ref},\epsilon})\|\leq \frac{1}{r}=\frac{24}{\Gamma_K \epsilon}\,.
	\end{equation}
	{
	By Proposition \ref{T epsilon and Delta}, under the assumption, we have $\lambda_{\epsilon,i}\leq \lambda_i+\epsilon^{3/4}$, and hence if $\epsilon$ is sufficiently small, we have}
	
	\begin{equation}\label{Proof spectral rate bound2}
	\min_{z\in \Gamma_r(\bar\lambda_{\epsilon,i})}|z|\geq 1/2\,.
	\end{equation}
	For the remaining terms, by a direct triangular inequality, we have
	\begin{align}
	\nonumber\norm{(\widehat{T}_{\textup{ref},\epsilon,n}-T_{\textup{ref},\epsilon})u_{\epsilon,i}}_\infty&\,\leq
	\norm{(\widehat{T}_{\textup{ref},\epsilon,n}-T_{\textup{ref},\epsilon,n})u_{\epsilon,i}}_\infty+\norm{(T_{\textup{ref},\epsilon,n}-T_{\textup{ref},\epsilon})u_{\epsilon,i}}_\infty\\
	\nonumber&\,\leq \norm{\widehat{T}_{\textup{ref},\epsilon,n}-T_{\textup{ref},\epsilon,n}}\norm{u_{\epsilon,i}}_\infty+\norm{(T_{\textup{ref},\epsilon,n}-T_{\textup{ref},\epsilon})u_{\epsilon,i}}_\infty\,.
	\end{align}
	Moreover, with probability $1-\mathcal{O}(m^{-2})$,
	\begin{align}
	\nonumber&\norm{(T_{\textup{ref},\epsilon}-\widehat{T}_{\textup{ref},\epsilon,n})\widehat{T}_{\textup{ref},\epsilon,n}}
	\\\nonumber\leq&\,
	\norm{T_{\textup{ref},\epsilon}\widehat{T}_{\textup{ref},\epsilon,n}-T_{\textup{ref},\epsilon}T_{\textup{ref},\epsilon,n}} + \norm{T_{\textup{ref},\epsilon}T_{\textup{ref},\epsilon,n}-T_{\textup{ref},\epsilon,n}T_{\textup{ref},\epsilon,n}}
	\\\nonumber&\qquad+\,
	\norm{T_{\textup{ref},\epsilon,n}T_{\textup{ref},\epsilon,n}-T_{\textup{ref},\epsilon,n}\widehat{T}_{\textup{ref},\epsilon,n}} + \norm{T_{\textup{ref},\epsilon,n}\widehat{T}_{\textup{ref},\epsilon,n}-\widehat{T}_{\textup{ref},\epsilon,n}\widehat{T}_{\textup{ref},\epsilon,n}}
	\\\nonumber\leq&\,
	\left(\norm{T_{\textup{ref},\epsilon}}+\norm{T_{\textup{ref},\epsilon,n}}+\norm{\widehat{T}_{\textup{ref},\epsilon,n}}\right)\norm{T_{\textup{ref},\epsilon,n}-\widehat{T}_{\textup{ref},\epsilon,n}}+\norm{(T_{\textup{ref},\epsilon}-T_{\textup{ref},\epsilon,n})T_{\textup{ref},\epsilon,n}}
	\\\nonumber\leq&\,
	C_{7}\norm{T_{\textup{ref},\epsilon,n}-\widehat{T}_{\textup{ref},\epsilon,n}}+\norm{(T_{\textup{ref},\epsilon}-T_{\textup{ref},\epsilon,n})T_{\textup{ref},\epsilon,n}}\,,
	\end{align}
	for $C_7>0$, where the last bound comes from Lemma \ref{Proposition: bound of several operators}.
	By Theorem \ref{atkinson1967bound}, we have:
	\begin{align}
	&\nonumber\quad\norm{u_{\epsilon,i}-\textup{Pr}_{u_{\epsilon,n,i}}u_{\epsilon,i}}
	\leq \max_{z\in \Gamma_r(\lambda_{\epsilon,i})} \frac{2r\|R_{z}(T_{\textup{ref},\epsilon})\|}{\min_{z\in \Gamma_r(\lambda_{\epsilon,i})}|z|}\Big(\norm{(\widehat{T}_{\textup{ref},\epsilon,n}-T_{\textup{ref},\epsilon})u_{\epsilon,i}} 
	\\\nonumber&\qquad\qquad\qquad\qquad+\|R_{z}(T_{\textup{ref},\epsilon})u_{\epsilon,i}\|_\infty\norm{(T_{\textup{ref},\epsilon}-\widehat{T}_{\textup{ref},\epsilon,n})\widehat{T}_{\textup{ref},\epsilon,n}}\Big)
	\\&\leq 4\Big(\norm{(\widehat{T}_{\textup{ref},\epsilon,n}-T_{\textup{ref},\epsilon})\frac{u_{\epsilon,i}}{\norm{u_{\epsilon,i}}_\infty}} +\frac{24}{\Gamma_K \epsilon}\norm{(T_{\textup{ref},\epsilon}-\widehat{T}_{\textup{ref},\epsilon,n})\widehat{T}_{\textup{ref},\epsilon,n}}\Big)\norm{u_{\epsilon,i}}_\infty  \nonumber \,, %
	\end{align}
	where we plug in \eqref{Proof spectral rate bound1} and \eqref{Proof spectral rate bound2}.
	Therefore, by plugging the bounds of $\norm{(\widehat{T}_{\textup{ref},\epsilon,n}-T_{\textup{ref},\epsilon})u_{\epsilon,i}}$ and $\norm{(T_{\textup{ref},\epsilon}-\widehat{T}_{\textup{ref},\epsilon,n})\widehat{T}_{\textup{ref},\epsilon,n}}$, with probability $1-\mathcal{O}(m^{-2})$, we have:
	\begin{align}
	\nonumber&\norm{u_{\epsilon,i}-\textup{Pr}_{u_{\epsilon,n,i}}u_{\epsilon,i}}\\
	\nonumber\leq&\, 4\bigg[\Big(\frac{2}{C^2_{1}}+\frac{24C_7}{\Gamma_K\epsilon}\Big)\frac{\sqrt{-\log \epsilon}+\sqrt{\log m}}{\sqrt{m}\epsilon^{2d}} +C_3\Big(1+\frac{24}{\Gamma_K }\Big)\frac{\sqrt{-\log \epsilon}+\sqrt{\log n}}{\sqrt{n}\epsilon}\bigg]\norm{u_{\epsilon,i}}_\infty\,.
	\end{align}
	Note that as discussed after Lemma \ref{cant_class_rate}, when we apply Lemma \ref{cant_class_rate}, the bound depends on the eigenfunction.
	To control $\norm{u_{\epsilon,i}}_\infty$, note that by Proposition \ref{T epsilon and Delta} and Lemma \ref{lemma hormander}, we have
		\begin{equation}
		\|u_{\epsilon,i}\|_\infty\leq \|u_i\|_\infty+\epsilon^{1/2} \leq C_1\lambda_K^{(d-1)/4}+\epsilon\leq 2C_1\lambda_K^{(d-1)/4}\,,
		\end{equation}
	where the last inequality comes from the assumption of $\epsilon$. Moreover, by the assumption of $\epsilon$, we have 
		\begin{equation}\label{controls of u infty u/Gamma and 1/Gamma}
		\max\Big\{\|u_{\epsilon,i}\|_\infty,\frac{\|u_{\epsilon,i}\|_\infty}{\Gamma_K},\frac{1}{\Gamma_K}\Big\}\leq \epsilon^{-1/2}\,.
		\end{equation}
	As a result, with probability $1-\mathcal{O}(m^{-2})$, we can find $a_n\in \{1,-1\}$ so that
		\begin{align}
		&\norm{a_nu_{\epsilon,n,i}-u_{\epsilon,i}}\leq 2\norm{u_{\epsilon,i}-\textup{Pr}_{u_{\epsilon,n,i}}u_{\epsilon,i}}_\infty
		\leq 192C_7\frac{\sqrt{-\log \epsilon}+\sqrt{\log m}}{\sqrt{m}\epsilon^{2d+3/2}}\,.\label{uepsni and uepsi difference L infty}
		\end{align}
	By setting $\mathcal{K}_3:=192C_7$, we get the claim for the eigenvectors.
	For eigenvalues, we have
	\begin{align}
	\nonumber&\abs{\bar\lambda_{\epsilon,i}-\bar\lambda_{\epsilon,n,i}}\norm{u_{\epsilon,i}}_\infty 
	= \nonumber\norm{\bar\lambda_{\epsilon,i}u_{\epsilon,i}-\bar\lambda_{\epsilon,n,i}u_{\epsilon,i}}_\infty
	\\\nonumber\leq&\,
	\norm{\bar\lambda_{\epsilon,i}u_{\epsilon,i}-\bar\lambda_{\epsilon,n,i}a_{n}u_{\epsilon,n,i}}_\infty + \abs{\bar\lambda_{\epsilon,n,i}}\norm{a_{n}u_{\epsilon,n,i}-u_{\epsilon,i}}_\infty
	\\\nonumber=&\,
	\norm{T_{\textup{ref},\epsilon}u_{\epsilon,i}-a_{n}\widehat{T}_{\textup{ref},\epsilon,n}u_{\epsilon,n,i}}_\infty + \abs{\bar\lambda_{\epsilon,n,i}}\norm{a_{n}u_{\epsilon,n,i}-u_{\epsilon,i}}_\infty\,,
	\end{align}
	which we further bound by
	\begin{align}
	\nonumber\quad&\,
	\norm{T_{\textup{ref},\epsilon}u_{\epsilon,i}-\widehat{T}_{\textup{ref},\epsilon,n}u_{\epsilon,i}}+\norm{\widehat{T}_{\textup{ref},\epsilon,n}u_{\epsilon,i}-a_{n}\widehat{T}_{\textup{ref},\epsilon,n}u_{\epsilon,n,i}}_\infty + \abs{\bar\lambda_{\epsilon,n,i}}\norm{a_{n}u_{\epsilon,n,i}-u_{\epsilon,i}}_\infty
	\\\nonumber\leq &\,
	\norm{(T_{\textup{ref},\epsilon,n}-\widehat{T}_{\textup{ref},\epsilon,n})u_{\epsilon,i}}_\infty +\norm{(T_{\textup{ref},\epsilon}-{T}_{\textup{ref},\epsilon,n})u_{\epsilon,i}}_\infty +\left(\norm{\widehat{T}_{\textup{ref},\epsilon,n}}+\abs{\bar\lambda_{\epsilon,n,i}}\right)\norm{a_{n}u_{\epsilon,n,i}-u_{\epsilon,i}}_\infty
	\\\nonumber\leq &\,
	\Big(\norm{T_{\textup{ref},\epsilon,n}-\widehat{T}_{\textup{ref},\epsilon,n}}+\norm{(T_{\textup{ref},\epsilon}-{T}_{\textup{ref},\epsilon,n})\frac{u_{\epsilon,i}}{\norm{u_{\epsilon,i}}_\infty}}_\infty\Big) \norm{u_{\epsilon,i}}_\infty+2\norm{\widehat{T}_{\textup{ref},\epsilon,n}}\norm{a_{n}u_{\epsilon,n,i}-u_{\epsilon,i}}_\infty\,,
	\end{align}
	where we use the fact that $\frac{u_{\epsilon,i}}{\norm{u_{\epsilon,i}}_\infty}$ has the $L^\infty$ norm $1$, and $\abs{\bar\lambda_{\epsilon,n,i}}\leq \norm{\widehat{T}_{\textup{ref},\epsilon,n}}$. 
	
	{
		As a result, by Lemmas \ref{Proposition: bound of several operators} and \ref{tech_bd}, \eqref{controls of u infty u/Gamma and 1/Gamma} and \eqref{uepsni and uepsi difference L infty}, with probability $1-\mathcal{O}(m^{-2})$,
		$\abs{\bar\lambda_{\epsilon,i}-\bar\lambda_{\epsilon,n,i}}$ is bounded by $4\frac{\sqrt{-\log \epsilon}+\sqrt{\log m}}{C_1^2\sqrt{m}\epsilon^{2d}}+ 2\mathcal{K}_3\frac{\sqrt{-\log \epsilon}+\sqrt{\log m}}{\sqrt{m}\epsilon^{2d+1}}$. Hence $|\lambda_{\epsilon,n,i}-\lambda_{\epsilon,i}|=\abs{\bar\lambda_{\epsilon,i}-\bar\lambda_{\epsilon,n,i}}/\epsilon\leq 3\mathcal{K}_3\frac{\sqrt{-\log \epsilon}+\sqrt{\log m}}{\sqrt{m}\epsilon^{2d+2}}$. We thus finish the proof.
		
	}
	
\end{proof}

\subsection{Finish the proof of Theorem \ref{spec_cong}}

With the above preparation, we are ready to prove the main theorem.
\begin{proof}[Proof of Theorem \ref{spec_cong}]
	
	By Lemma \ref{Proposition: Tepsn conv to Teps compactly}, $\widehat{T}_{\textup{ref},\epsilon,n}$
	converges to $T_{\textup{ref},\epsilon}$ compactly $\textup{a.s.}$ as $n\rightarrow\infty$. Therefore, Proposition \ref{Proposition: Tepsn conv to Teps with rate} leads to the spectral convergence of $\widehat{T}_{\textup{ref},\epsilon,n}$ to $T_{\textup{ref},\epsilon}$ with the rate.
	Next, we link $T_{\textup{ref},\epsilon}$ to $-\Delta$. By Theorem \ref{Proof: Theorem1: bias analysis}, when $p_Y$ is properly chosen so that $\frac{2\nabla p_{X}(x)}{p_{X}(x)}+\frac{\nabla p_{Y}(x)}{p_{Y}(x)}=0$, we have the convergence of the eigenvalue/eigenfunction of $\frac{1-T_{\textup{ref},\epsilon}}{\epsilon}$ to those of $-\Delta$. 
	Thus, by Proposition \ref{T epsilon and Delta}, we have the spectral convergence of $\frac{1-T_{\textup{ref},\epsilon}}{\epsilon}$ to $-\Delta$ with the rate.
	Finally, we put all the above together and finish the spectral convergence proof. 
	
\end{proof}

\section{Proof of Theorems \ref{robust thm roseland} -- robustness} \label{robust proof}

We start by preparing some generic lemmas.

\begin{lemma}\label{D-1/2W 2 norm general}
	Let $W$ and $\widetilde{W}$ be $n\times m$ matrices, whose entries are $W_{ik}$ and $\widetilde{W}_{ik}$ respectively. Let $D$ and $\widetilde{D}$ be two $n\times n$ diagonal matrices with entries
	$D_{ii} = \sum_{j=1}^{n}\sum_{k=1}^{m}W_{ik}W_{jk}$ and $\widetilde{D}_{ii}=\sum_{j=1}^{n}\sum_{k=1}^{m}\widetilde{W}_{ik}\widetilde{W}_{jk}$.
	Assume $D^{-1/2}$ and $\widetilde{D}^{-1/2}$ both exist. 
	Suppose $\sup_{i,k}\abs{W_{ik}-\widetilde{W}_{ik}}\leq\delta$, $0\leq W_{ik}\leq C$ for some constant $C>0$, and $\inf_{i}D_{ii}/mn>\gamma$, such that $\gamma> 2C\delta+\delta^2$. We have 
	\begin{align}
	\norm{D^{-1/2}W-\widetilde{D}^{-1/2}\widetilde{W}}_{2}\leq \frac{\delta}{\sqrt{\gamma}}+\frac{(2C\delta+\delta^2)(C+\delta)}{\gamma\sqrt{\gamma-2C\delta-\delta^2}+\sqrt{\gamma}(\gamma-2C\delta-\delta^2)}\,\nonumber
	\end{align}  
	and
	\begin{align}
	\norm{D^{-1/2}W-\widetilde{D}^{-1/2}\widetilde{W}}_{F}\leq \frac{\delta}{\sqrt{\gamma}}+\frac{(2C\delta+\delta^2)(C+\delta)}{\gamma\sqrt{\gamma-2C\delta-\delta^2}+\sqrt{\gamma}(\gamma-2C\delta-\delta^2)}\,.\label{D-1/2W F norm general}
	\end{align}  
\end{lemma}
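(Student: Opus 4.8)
\textbf{Proof proposal for Lemma \ref{D-1/2W 2 norm general}.}

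The plan is to split the difference via the triangle inequality into a ``matrix'' part and a ``normalization'' part:
\begin{align}
\norm{D^{-1/2}W-\widetilde{D}^{-1/2}\widetilde{W}}
&\leq \norm{D^{-1/2}(W-\widetilde{W})} + \norm{(D^{-1/2}-\widetilde{D}^{-1/2})\widetilde{W}}\,,\nonumber
\end{align}
and this works verbatim for both the spectral norm and the Frobenius norm since both are submultiplicative with respect to multiplication by a diagonal matrix. For the first term I would use $\norm{D^{-1/2}(W-\widetilde{W})}\leq \norm{D^{-1/2}}_2\,\norm{W-\widetilde{W}}$, bound the entrywise perturbation $\norm{W-\widetilde{W}}\leq\delta$ (here I need to be slightly careful: for the spectral norm I would actually want to route through the Frobenius norm or an entrywise-to-operator bound — but the cleanest uniform statement uses $\abs{W_{ik}-\widetilde W_{ik}}\le\delta$ and the fact that the relevant matrices are thin), and bound $\norm{D^{-1/2}}_2 = (\min_i D_{ii})^{-1/2}\leq (mn\gamma)^{-1/2}$. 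Combined with the $\sqrt{mn}$ factor that appears when passing from entrywise to norm bounds, this produces the leading $\delta/\sqrt{\gamma}$ term.

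For the second term, the key is to control $\norm{D^{-1/2}-\widetilde{D}^{-1/2}}_2$, which since both are diagonal equals $\max_i \abs{D_{ii}^{-1/2}-\widetilde D_{ii}^{-1/2}}$. I would first estimate the perturbation of the degrees: since $D_{ii}=\sum_{j,k}W_{ik}W_{jk}$ and $\widetilde D_{ii}=\sum_{j,k}\widetilde W_{ik}\widetilde W_{jk}$, writing $\widetilde W = W + (\widetilde W - W)$ and expanding the bilinear form gives $\abs{D_{ii}-\widetilde D_{ii}}\leq mn(2C\delta+\delta^2)$ using $0\le W_{ik}\le C$ and $\abs{\widetilde W_{ik}-W_{ik}}\le\delta$. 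This also yields the lower bound $\widetilde D_{ii}/mn \geq \gamma - 2C\delta - \delta^2 > 0$, which is exactly why the hypothesis $\gamma > 2C\delta+\delta^2$ is imposed — it guarantees $\widetilde D^{-1/2}$ is well-defined with a usable bound. Then I apply the elementary inequality $\abs{a^{-1/2}-b^{-1/2}} = \frac{\abs{a-b}}{\sqrt a\sqrt b(\sqrt a + \sqrt b)}$ with $a=D_{ii}/mn$, $b=\widetilde D_{ii}/mn$, using $a\ge\gamma$ and $b\ge\gamma-2C\delta-\delta^2$, which produces the denominator $\gamma\sqrt{\gamma-2C\delta-\delta^2}+\sqrt\gamma(\gamma-2C\delta-\delta^2)$ in the stated bound. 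Finally $\norm{\widetilde W}\leq \sqrt{mn}\,(C+\delta)$ (entrywise bound $\abs{\widetilde W_{ik}}\le C+\delta$), and the $mn$ and $\sqrt{mn}$ factors cancel against those from $\abs{D_{ii}-\widetilde D_{ii}}\le mn(2C\delta+\delta^2)$ and $\norm{D^{-1/2}-\widetilde D^{-1/2}}_2$, leaving the second summand $\frac{(2C\delta+\delta^2)(C+\delta)}{\gamma\sqrt{\gamma-2C\delta-\delta^2}+\sqrt\gamma(\gamma-2C\delta-\delta^2)}$.

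The main obstacle — and the reason the bound has the specific algebraic shape above — is bookkeeping the $mn$ normalization factors consistently so that the spectral and Frobenius bounds come out identical, and making sure the entrywise-to-operator-norm step is valid for $\norm{\cdot}_2$ (not just $\norm{\cdot}_F$). The cleanest route is to prove the Frobenius bound \eqref{D-1/2W F norm general} first, where $\norm{A}_F^2 = \sum_{i,k}A_{ik}^2$ makes every estimate transparently entrywise, and then observe that $\norm{A}_2\leq\norm{A}_F$ is too lossy — so instead one repeats the argument using $\norm{D^{-1/2}}_2 = \max_i D_{ii}^{-1/2}$ and $\norm{\widetilde W}_2 \le \sqrt{mn}(C+\delta)$ directly, which gives the \emph{same} numerical bound for $\norm{\cdot}_2$. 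Both chains of inequalities terminate in the identical right-hand side, which is what the lemma asserts.
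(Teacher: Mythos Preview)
Your proposal is correct and follows essentially the same approach as the paper: the same triangle-inequality split $D^{-1/2}(W-\widetilde W)+(D^{-1/2}-\widetilde D^{-1/2})\widetilde W$, the same degree perturbation bound $|D_{ii}-\widetilde D_{ii}|\le mn(2C\delta+\delta^2)$, the same identity $|a^{-1/2}-b^{-1/2}|=|a-b|/(\sqrt{ab}(\sqrt a+\sqrt b))$, and the same $\sqrt{mn}$ bookkeeping. The paper in fact does route the spectral-norm estimates through the Frobenius norm on the pieces (e.g.\ $\|W/\sqrt{mn}-\widetilde W/\sqrt{mn}\|_2\le\|W/\sqrt{mn}-\widetilde W/\sqrt{mn}\|_F\le\delta$), which is exactly the ``entrywise-to-operator'' step you flagged as the point to be careful about.
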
	

\begin{proof}
	Firstly we have
	\begin{align}
	\abs{\frac{D_{ii}}{mn}-\frac{\widetilde{D}_{ii}}{mn}}=&\frac{1}{mn}\abs{\sum_{j,k}W_{ik}W_{jk}-\sum_{j,k}\widetilde{W}_{ik}\widetilde{W}_{jk}}
	\leq\sup_{j,k}\abs{W_{ik}W_{jk}-\widetilde{W}_{ik}\widetilde{W}_{jk}}\nonumber
	\\\leq&\sup_{j,k}|W_{ik}||W_{jk}-\widetilde{W}_{jk}|+\sup_{j,k}|\widetilde{W}_{jk}||W_{ik}-\widetilde{W}_{ik}|
	\leq2C\delta+\delta^2\,,\nonumber
	\end{align}
	where we use the fact that $\sup_{i,k}\abs{W_{ik}-\widetilde{W}_{ik}}\leq\delta$ implies $\sup_{ik}\widetilde W_{ik}\leq C+\delta$.
	In particular, $\abs{D_{ii}-\widetilde{D}_{ii}}\leq mn(2C\delta+\delta^2)$ and $D_{ii}-mn(2C\delta+\delta^2)\leq\widetilde{D}_{ii}\leq D_{ii}+mn(2C\delta+\delta^2).$ Next, by the assumption we have $D_{ii}/mn>\gamma$, so $D_{ii}> mn\gamma$, $\widetilde{D}_{ii}> mn(\gamma-2C\delta-\delta^2),$ and $\norm{(D/mn)^{-1/2}}_{2}< 1/\sqrt{\gamma}$. Thus, we know 
	\begin{align}
	&\abs{\left(\frac{D_{ii}}{mn}\right)^{-1/2}-\left(\frac{\widetilde{D}_{ii}}{mn}\right)^{-1/2}} =\sqrt{mn}\abs{\frac{D_{ii}-\widetilde{D}_{ii}}{\sqrt{D_{ii}\widetilde{D}_{ii}}(\sqrt{D}_{ii}+\sqrt{\widetilde{D}}_{ii})}}\nonumber
	\\\leq&\,\sqrt{mn}\frac{mn(2C\delta+\delta^2)}{(mn)^{3/2}(\gamma\sqrt{\gamma-2C\delta-\delta^2}+\sqrt{\gamma}(\gamma-2C\delta-\delta^2))}
	=\frac{2C\delta+\delta^2}{\gamma\sqrt{\gamma-2C\delta-\delta^2}+\sqrt{\gamma}(\gamma-2C\delta-\delta^2)}\,.\nonumber
	\end{align}
	Also,
	\begin{align}
	\norm{\frac{W}{\sqrt{mn}}-\frac{\widetilde{W}}{\sqrt{mn}}}_{2}\leq&\,\norm{\frac{W}{\sqrt{mn}}-\frac{\widetilde{W}}{\sqrt{mn}}}_{F}=\sqrt{\frac{1}{mn}\sum_{i,k}\abs{W_{ik}-\widetilde{W}_{ik}}^{2}}\nonumber
	\leq\sqrt{\sup_{i,k}\abs{W_{ik}-\widetilde{W}_{ik}}^{2}}
	\leq \delta\,.\nonumber
	\end{align}
	Hence, $\norm{\widetilde{W}/\sqrt{mn}}_{2}\leq\norm{\widetilde{W}/\sqrt{mn}}_{F}\leq\norm{W/\sqrt{mn}}_{F}+\delta\leq C+\delta$.
	Finally, we conclude the operator norm bound by putting everything together:
	\begin{align}
	&\norm{D^{-1/2}W-\widetilde{D}^{-1/2}\widetilde{W}}_{2}
	\leq \norm{D^{-1/2}(W-\widetilde{W})}_{2}+\norm{(D^{-1/2}-\widetilde{D}^{-1/2})\widetilde{W}}_{2}\nonumber
	\\=&\,\norm{\left(\frac{D}{mn}\right)^{-\frac{1}{2}}\left(\frac{W}{\sqrt{mn}}-\frac{\widetilde{W}}{\sqrt{mn}}\right)}_{2} + \norm{\left(\left(\frac{D}{mn}\right)^{-\frac{1}{2}}-\left(\frac{\widetilde{D}}{mn}\right)^{-\frac{1}{2}}\right)\frac{\widetilde{W}}{\sqrt{mn}}}_{2}\nonumber
	\\\leq&\,\frac{\delta}{\sqrt{\gamma}}+\frac{(2C\delta+\delta^2)(C+\delta)}{\gamma\sqrt{\gamma-2C\delta-\delta^2}+\sqrt{\gamma}(\gamma-2C\delta-\delta^2)}\,.\nonumber
	\end{align}
	
	For the Frobenius norm, since $-C-\delta\leq\widetilde{W}_{ik}\leq C+\delta$ by assumption, we have
	\begin{align}
	\sup_{i,k}\left(\left(\left(\frac{D}{mn}\right)^{-\frac{1}{2}}-\left(\frac{\widetilde{D}}{mn}\right)^{-\frac{1}{2}}\right)\frac{\widetilde{W}}{\sqrt{mn}}\right)_{ik}\leq&\sup_{ii}\abs{\left(\frac{D_{ii}}{mn}\right)^{-1/2}-\left(\frac{\widetilde{D}_{ii}}{mn}\right)^{-1/2}}\sup_{i,k}\abs{\frac{\widetilde{W}_{ik}}{\sqrt{mn}}}\nonumber
	\\\leq& \frac{2C\delta+\delta^2}{\gamma\sqrt{\gamma-2C\delta-\delta^2}+\sqrt{\gamma}(\gamma-2C\delta-\delta^2)}\left(\frac{C+\delta}{\sqrt{mn}}\right)\,.\nonumber
	\end{align} 
	Next, note that
	\begin{align}
	\sup_{i,k}\abs{(D^{-1/2}(W-\widetilde{W}))_{ik}}\leq \sup_{i}(D^{-1/2})_{ii}\sup_{i,k}\abs{W_{ik}-\widetilde{W}_{ik}}
	\leq\frac{\delta}{\sqrt{mn\gamma}}\,.\nonumber
	\end{align}
	Finally, we conclude the Frobenius norm by putting everything together:
	\begin{align}
	&\norm{D^{-1/2}W-\widetilde{D}^{-1/2}\widetilde{W}}_{F}
	\leq \norm{D^{-1/2}(W-\widetilde{W})}_{F}+\norm{(D^{-1/2}-\widetilde{D}^{-1/2})\widetilde{W}}_{F}\nonumber
	\\=&\,\norm{D^{-1/2}(W-\widetilde{W})}_{F} + \norm{\left(\left(\frac{D}{mn}\right)^{-\frac{1}{2}}-\left(\frac{\widetilde{D}}{mn}\right)^{-\frac{1}{2}}\right)\frac{\widetilde{W}}{\sqrt{mn}}}_{F}\nonumber
	\leq\frac{\delta}{\sqrt{\gamma}}+\frac{(2C\delta+\delta^2)(C+\delta)}{\gamma\sqrt{\gamma-2C\delta-\delta^2}+\sqrt{\gamma}(\gamma-2C\delta-\delta^2)}\,.\nonumber
	\end{align}
\end{proof}

\begin{corollary}\label{D-1/2W 2 norm general corollary}
	Let $W$ and $\widetilde{W}$ be $n\times m$ matrices, whose entries are $W_{ik}$ and $\widetilde{W}_{ik}$ respectively. Let $D$ and $\widetilde{D}$ be two $n\times n$ diagonal matrices with entries $D_{ii} = \sum_{j=1}^{n}\sum_{k=1}^{m}W_{ik}W_{jk}$ and $\widetilde{D}_{ii}=\sum_{j=1}^{n}\sum_{k=1}^{m}\widetilde{W}_{ik}\widetilde{W}_{jk}.$
	Assuming $D^{-1/2}$ and $\widetilde{D}^{-1/2}$ both exist. 
	Suppose there exists $f>0$ so that
	$\sup_{i,k}\abs{W_{ik}-\frac{\widetilde{W}_{ik}}{f}}\leq\delta$,
	$0\leq W_{ik}\leq C$ for some constant $C>0$, and $\inf_{i}D_{ii}/mn>\gamma$, where $\gamma> 2C\delta+\delta^2$. We have 
	\begin{align}
	\norm{D^{-1/2}W-\widetilde{D}^{-1/2}\widetilde{W}}_{2}\leq\frac{\delta}{\sqrt{\gamma}}+\frac{(2C\delta+\delta^2)(C+\delta)}{\gamma\sqrt{\gamma-2C\delta-\delta^2}+\sqrt{\gamma}(\gamma-2C\delta-\delta^2)}\,\nonumber
	\end{align}  
	and
	\begin{align}
	\norm{D^{-1/2}W-\widetilde{D}^{-1/2}\widetilde{W}}_{F}\leq\frac{\delta}{\sqrt{\gamma}}+\frac{(2C\delta+\delta^2)(C+\delta)}{\gamma\sqrt{\gamma-2C\delta-\delta^2}+\sqrt{\gamma}(\gamma-2C\delta-\delta^2)}\,.\nonumber
	\end{align} 
\end{corollary}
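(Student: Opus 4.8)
The plan is to reduce the Corollary to Lemma \ref{D-1/2W 2 norm general} by exploiting the scale-invariance of the normalization $\widetilde{W}\mapsto\widetilde{D}^{-1/2}\widetilde{W}$. The crucial observation is that the degree matrix $\widetilde{D}$ is built from the row sums of $\widetilde{W}\widetilde{W}^\top$, and is therefore \emph{quadratically} homogeneous in $\widetilde{W}$, which precisely cancels the linear scaling in the numerator.

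First I would introduce the rescaled matrix $\widehat{W}:=\widetilde{W}/f$, together with its associated diagonal matrix $\widehat{D}$ defined by $\widehat{D}_{ii}:=\sum_{j=1}^{n}\sum_{k=1}^{m}\widehat{W}_{ik}\widehat{W}_{jk}$. A one-line computation gives $\widehat{D}_{ii}=f^{-2}\widetilde{D}_{ii}$, hence $\widehat{D}^{-1/2}=f\,\widetilde{D}^{-1/2}$, and therefore
\begin{align}
\widehat{D}^{-1/2}\widehat{W}=f\,\widetilde{D}^{-1/2}\cdot f^{-1}\widetilde{W}=\widetilde{D}^{-1/2}\widetilde{W}\,.\nonumber
\end{align}
In particular $\widehat{D}^{-1/2}$ exists whenever $\widetilde{D}^{-1/2}$ does, and $\|D^{-1/2}W-\widetilde{D}^{-1/2}\widetilde{W}\|_{2}=\|D^{-1/2}W-\widehat{D}^{-1/2}\widehat{W}\|_{2}$, and likewise for the Frobenius norm.

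Next I would check that the hypotheses of Lemma \ref{D-1/2W 2 norm general} hold for the pair $(W,\widehat{W})$ with the \emph{same} constants $C$, $\gamma$, $\delta$. Indeed, $\sup_{i,k}|W_{ik}-\widehat{W}_{ik}|=\sup_{i,k}|W_{ik}-\widetilde{W}_{ik}/f|\leq\delta$ by assumption, $0\leq W_{ik}\leq C$ is unchanged, and $\inf_i D_{ii}/(mn)>\gamma$ with $\gamma>2C\delta+\delta^2$ is also unchanged since these conditions involve only $W$, $D$, $C$, $\gamma$, $\delta$. Applying Lemma \ref{D-1/2W 2 norm general} to $(W,\widehat{W})$ then yields the stated bounds on $\|D^{-1/2}W-\widehat{D}^{-1/2}\widehat{W}\|_{2}$ and $\|D^{-1/2}W-\widehat{D}^{-1/2}\widehat{W}\|_{F}$, and by the identity above these are exactly the bounds on $\|D^{-1/2}W-\widetilde{D}^{-1/2}\widetilde{W}\|_{2}$ and $\|D^{-1/2}W-\widetilde{D}^{-1/2}\widetilde{W}\|_{F}$.

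There is essentially no obstacle here: the only thing to be careful about is verifying the homogeneity $\widehat{D}_{ii}=f^{-2}\widetilde{D}_{ii}$ and confirming that none of Lemma \ref{D-1/2W 2 norm general}'s hypotheses secretly depend on $\widetilde{W}$ itself (they do not — all of them are phrased in terms of the perturbation size $\delta$, the bound $C$ on $W$, and the lower bound $\gamma$ on $D_{ii}/mn$). Thus the proof is a short deduction and requires no new estimates beyond Lemma \ref{D-1/2W 2 norm general}.
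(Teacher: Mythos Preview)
Your proposal is correct and follows essentially the same approach as the paper: introduce the rescaled matrix $\widetilde{W}/f$, observe that the normalized quantity $\widetilde{D}^{-1/2}\widetilde{W}$ is invariant under this rescaling (because the degree matrix is quadratically homogeneous), and then apply Lemma~\ref{D-1/2W 2 norm general} directly. Your write-up is in fact slightly more explicit than the paper's in verifying that the lemma's hypotheses transfer unchanged, but the argument is the same.
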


\begin{proof}
	Let $\widetilde{W}_{f}$ be the matrix with entries $\widetilde{W}_{ij}/f$. By Lemma \ref{D-1/2W 2 norm general} we have
	\begin{align}
	\norm{D^{-1/2}W-\widetilde{D}_f^{-1/2}\widetilde{W}_{f}}_{2}\leq\frac{\delta}{\sqrt{\gamma}}+\frac{(2C\delta+\delta^2)(C+\delta)}{\gamma\sqrt{\gamma-2C\delta-\delta^2}+\sqrt{\gamma}(\gamma-2C\delta-\delta^2)}\,,
	\end{align}
	where $(\widetilde{D}_f)_{ii}=\sum_{j=1}^{n}\sum_{k=1}^{m}(\widetilde{W}_f)_{ik}(\widetilde{W}_f)_{jk}.$
	We then conclude by noticing $\widetilde{D}_f^{-1/2}\widetilde{W}_{f} = \widetilde{D}^{-1/2}\widetilde{W}$. Similar arguments apply to the Frobenius norm case.
\end{proof}

We need the following technical Lemma.

\begin{lemma}[\cite{el2016graph}]\label{general rv quad form}
	Suppose $Z_{1},\cdots,Z_{n}$ are random vectors in $\mathbb{R}^{q}$, with $Z_{i}=\Sigma_{i}X_{i}$, where $X_{i}$ has $0$ mean and covariance matrix $I_{q\times q}$. $Z_{i}$'s are possibly dependent. 
	Further assume for all convex $1$-Lipschitz function $f$, $\mathbb{P}(|f(X_{i})-m_{f(X_{i})}|>t)\leq 2\exp(-c_{i}t^{2})$, where $m_{f(X_{i})}$ is the median of $f(X_{i})$ and $c_{i}>0$. Let $\{Q_{i}\}$ be $q\times q$ positive definite matrices. Then we have
	\begin{align}
	\sup_{1\leq i\leq n}\abs{\sqrt{Z_{i}'Q_{i}Z_{i}}-\mathbb{E}\left(\sqrt{Z_{i}'Q_{i}Z_{i}}\right)}=\mathcal{O}_{P}\left(\sup_{i}\sqrt{\norm{Q_{i}\Sigma_{i}/c_{i}}}\sqrt{\log n}\right)\,.
	\end{align}
	This implies that if $\sup_{i}\sqrt{\norm{Q_{i}\Sigma_{i}/c_{i}}}\sqrt{\log n}\to 0$, we have
	\begin{align}
	\sup_{1\leq i\leq n}\abs{Z_{i}'Q_{i}Z_{i}-\textup{trace}(\Sigma_{i}Q_i)}=\mathcal{O}_{P}\left(\sup_{i}\sqrt{\norm{Q_{i}\Sigma_{i}/c_{i}}}\sqrt{\log n}\left(\sup_{i}\sqrt{\textup{trace}(\Sigma_{i}Q_{i})}\vee 1\right)\right)\,.
	\end{align}
	
\end{lemma}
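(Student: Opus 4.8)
The plan is to identify $\sqrt{Z_i'Q_iZ_i}$ as a convex Lipschitz function of the ``clean'' vector $X_i$, transfer the hypothesized concentration, and then close with a union bound over $i=1,\dots,n$. Write $Z_i=\Sigma_i^{1/2}X_i$, reading $\Sigma_i$ as the covariance of $Z_i$ so that $\textup{Cov}(X_i)=I_q$ (with the obvious rewording if $\Sigma_i$ already denotes the square-root factor). Define $\phi_i:\mathbb{R}^q\to\mathbb{R}$ by $\phi_i(x):=\lVert Q_i^{1/2}\Sigma_i^{1/2}x\rVert_{\mathbb{R}^q}$, so that $\phi_i(X_i)=\sqrt{Z_i'Q_iZ_i}$. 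Being a Euclidean norm precomposed with a linear map, $\phi_i$ is convex and Lipschitz with constant $L_i:=\lVert Q_i^{1/2}\Sigma_i^{1/2}\rVert_2$; since $Q_i\Sigma_i$ is similar to the symmetric positive semidefinite matrix $\Sigma_i^{1/2}Q_i\Sigma_i^{1/2}$, one has $L_i^2=\lVert Q_i\Sigma_i\rVert$ under this identification. Thus $\phi_i/L_i$ is convex and $1$-Lipschitz, and the assumption yields $\mathbb{P}(|\phi_i(X_i)-m_i|>t)\le 2\exp(-c_it^2/L_i^2)$ for the median $m_i$; equivalently $\phi_i(X_i)$ is sub-Gaussian with proxy of order $L_i/\sqrt{c_i}=\sqrt{\lVert Q_i\Sigma_i/c_i\rVert}$.

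Next I would replace median by mean and record the variance. Integrating the tail gives $|m_i-\mathbb{E}\phi_i(X_i)|\lesssim L_i/\sqrt{c_i}$ and $\textup{Var}(\phi_i(X_i))\lesssim L_i^2/c_i$ with absolute constants, hence $\mathbb{P}(|\phi_i(X_i)-\mathbb{E}\phi_i(X_i)|>s)\le C\exp(-c\,s^2c_i/L_i^2)$ for all $s>0$. Applying a union bound over $i$ with $s=M\bigl(\sup_i L_i/\sqrt{c_i}\bigr)\sqrt{\log n}$ bounds the failure probability by $Cn^{1-cM^2}$, which is $o(1)$ once $M$ is large. This is precisely $\sup_{1\le i\le n}|\sqrt{Z_i'Q_iZ_i}-\mathbb{E}\sqrt{Z_i'Q_iZ_i}|=\mathcal{O}_P\bigl(\sup_i\sqrt{\lVert Q_i\Sigma_i/c_i\rVert}\,\sqrt{\log n}\bigr)$.

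For the quadratic-form statement, set $a_i:=\phi_i(X_i)$ and $b_i:=\sqrt{\textup{trace}(\Sigma_iQ_i)}$, and note $\mathbb{E}(Z_i'Q_iZ_i)=\textup{trace}(\Sigma_i^{1/2}Q_i\Sigma_i^{1/2})=\textup{trace}(\Sigma_iQ_i)=b_i^2=\mathbb{E}(a_i^2)$. Then $Z_i'Q_iZ_i-\textup{trace}(\Sigma_iQ_i)=a_i^2-b_i^2=(a_i-b_i)(a_i+b_i)$. From $b_i^2=\mathbb{E}(a_i^2)\ge(\mathbb{E}a_i)^2$ and $b_i^2-(\mathbb{E}a_i)^2=\textup{Var}(a_i)\lesssim L_i^2/c_i$ we get $|b_i-\mathbb{E}a_i|\le\sqrt{\textup{Var}(a_i)}\lesssim L_i/\sqrt{c_i}$, so together with the previous step $\sup_i|a_i-b_i|=\mathcal{O}_P\bigl(\sup_i\sqrt{\lVert Q_i\Sigma_i/c_i\rVert}\sqrt{\log n}\bigr)$. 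Under the standing hypothesis $\sup_i\sqrt{\lVert Q_i\Sigma_i/c_i\rVert}\sqrt{\log n}\to0$ this forces $\sup_i|a_i+b_i|=\mathcal{O}_P\bigl(\sup_i b_i\vee1\bigr)=\mathcal{O}_P\bigl(\sup_i\sqrt{\textup{trace}(\Sigma_iQ_i)}\vee1\bigr)$, and multiplying the two factors yields the claimed rate.

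The argument has no genuine obstacle, only two points that need care. First, one must verify that $\phi_i$ really is convex so that the hypothesized convex-Lipschitz concentration applies with the sharp constant $\sqrt{\lVert Q_i\Sigma_i\rVert}$; this is exactly where the linear structure $Z_i=\Sigma_i^{1/2}X_i$ is used, and a less structured dependence of $Z_i$ on $X_i$ would break it. Second, because the $Z_i$ may be dependent, one cannot improve on a crude union bound, which is what produces (and is responsible for the necessity of) the $\sqrt{\log n}$ factor. The passage from the deviation of $\sqrt{Z_i'Q_iZ_i}$ to that of $Z_i'Q_iZ_i$ via $a^2-b^2=(a-b)(a+b)$ is where the smallness assumption $\sup_i\sqrt{\lVert Q_i\Sigma_i/c_i\rVert}\sqrt{\log n}\to0$ is indispensable, keeping the $(a_i+b_i)$ factor of order $\sqrt{\textup{trace}(\Sigma_iQ_i)}\vee1$.
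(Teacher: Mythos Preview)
The paper does not supply its own proof of this lemma; it is quoted verbatim from \cite{el2016graph} and used as a black box. Your argument is correct and is essentially the standard route one finds in that reference: recognise $x\mapsto\lVert Q_i^{1/2}\Sigma_i^{1/2}x\rVert$ as a convex Lipschitz function with Lipschitz constant $\sqrt{\lVert Q_i\Sigma_i\rVert}$, invoke the assumed concentration, swap median for mean at the cost of an absolute constant, take a union bound over the $n$ indices, and then pass from $\sqrt{Z_i'Q_iZ_i}$ to $Z_i'Q_iZ_i$ via the factorisation $a^2-b^2=(a-b)(a+b)$ together with $|b_i-\mathbb{E}a_i|\le\sqrt{\textup{Var}(a_i)}$. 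Your handling of the notational ambiguity (whether $\Sigma_i$ denotes the covariance or its square root) is appropriate, and your closing remarks correctly identify the two places where the hypotheses are actually used.
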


The following theorem is the main theorem toward the robustness property of Roseland. It essentially says that the distance between two noisy points is a biased estimate of 
the associated clean distance,
and the difference is well controlled. Lemma \ref{general rv quad form} is essential for this theorem. 

\begin{theorem}\label{noisy d - clean d prob bound general}
	
	Assume the point clouds $\{x_{i}\}_{i=1}^{n}$ and $\{y_{j}\}_{j=1}^{m}\subseteq\mathbb{R}^{q}$ are i.i.d. sampled from the high dimensional model satisfying Assumption \ref{Assumption manifoldH}. 
	Let $\tilde{x_{i}} = x_{i} + \xi_{i}$ and $\tilde{y}_{j} = y_{j} + \eta_{j},$ where the noises $\xi_i$ and $\eta_j$ satisfy Assumption \ref{Assumption Noise} and are independent of $x_i$ and $y_j$. Assume $\sup_{i,j}\sqrt{\|(\Sigma_{i}+\overline{\Sigma}_{j})/c_{ij}\|_2}\sqrt{\log nm}\to 0$.
	As a result, we have
	\begin{align}
	\sup_{i,j}\abs{\tilde{d}_{ij}^{2}-d_{ij}^{2}-\textup{trace}(\Sigma_{i}+\overline{\Sigma}_{j})}=&\,\mathcal{O}_{P}\left( \delta_q
	\right)\,,
	\end{align}
	where $\delta_q$ is defined in \eqref{Assumption:CorollarySI34}
\end{theorem}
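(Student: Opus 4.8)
The plan is to expand the noisy squared distance around the clean one and then control the two random corrections uniformly over all $nm$ index pairs. Writing $\tilde d_{ij}^2=\|\tilde x_i-\tilde y_j\|^2=\|(x_i-y_j)+(\xi_i-\eta_j)\|^2$ and expanding the square gives
\[
\tilde d_{ij}^2-d_{ij}^2=2\langle x_i-y_j,\,\xi_i-\eta_j\rangle+\|\xi_i-\eta_j\|^2\,.
\]
Since $\xi_i$ and $\eta_j$ are independent and centered, $\mathbb{E}\|\xi_i-\eta_j\|^2=\textup{trace}(\Sigma_i+\overline{\Sigma}_j)$, so the statement reduces to showing that both $\sup_{i,j}\bigl|\|\xi_i-\eta_j\|^2-\textup{trace}(\Sigma_i+\overline{\Sigma}_j)\bigr|$ and $\sup_{i,j}\bigl|\langle x_i-y_j,\,\xi_i-\eta_j\rangle\bigr|$ are $\mathcal{O}_P(\delta_q)$, after which the claim follows from the triangle inequality.

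For the quadratic term I would relabel the $nm$ vectors $Z_{ij}:=\xi_i-\eta_j$ as a single sequence; although these are dependent (they share $\xi_i$ across $j$ and $\eta_j$ across $i$), Lemmas \ref{general rv quad form} and \ref{gaussian quad form} explicitly allow dependence. Applying Lemma \ref{general rv quad form} with $Q_{ij}=I$, using $\|(\Sigma_i+\overline{\Sigma}_j)/c_{ij}\|_2\le(\sigma_q^2+\overline{\sigma}_q^2)/c_{ij}$ and $\textup{trace}(\Sigma_i+\overline{\Sigma}_j)\le q(\sigma_q^2+\overline{\sigma}_q^2)$, and noting that the standing hypothesis $\sup_{i,j}\sqrt{(\sigma_q^2+\overline{\sigma}_q^2)/c_{ij}}\sqrt{\log nm}\to 0$ is exactly what the Lemma requires, yields a bound of order $\sqrt{\log nm}\,\sup_{i,j}\sqrt{c_{ij}^{-1}}\sqrt{\sigma_q^2+\overline{\sigma}_q^2}\,(\sqrt{q(\sigma_q^2+\overline{\sigma}_q^2)}\vee 1)$, which is the first bracketed term of $\delta_q$ in \eqref{Assumption:CorollarySI34}. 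In the Gaussian regime I would use Lemma \ref{gaussian quad form} instead, together with $\textup{trace}((\Sigma_i+\overline{\Sigma}_j)^2)\le q(\sigma_q^2+\overline{\sigma}_q^2)^2$, to obtain a bound absorbed into the $\sqrt{\log nm}\sqrt{\sigma_q^2+\overline{\sigma}_q^2}\sqrt{q(\sigma_q^2+\overline{\sigma}_q^2)}$ term of \eqref{AssumptionG:CorollarySI34}.

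For the cross term I would condition on the clean points. For a fixed pair, $z\mapsto\langle x_i-y_j,z\rangle$ is linear, hence convex, with Euclidean Lipschitz constant $\|x_i-y_j\|\le K$ and mean $0$. Under the Gaussian model it is exactly $\mathcal{N}\bigl(0,(x_i-y_j)^\top(\Sigma_i+\overline{\Sigma}_j)(x_i-y_j)\bigr)$ with variance at most $K^2(\sigma_q^2+\overline{\sigma}_q^2)$, so a Gaussian tail estimate and a union bound over the $nm$ pairs give $\sup_{i,j}|\langle x_i-y_j,\xi_i-\eta_j\rangle|=\mathcal{O}_P(K\sqrt{\sigma_q^2+\overline{\sigma}_q^2}\sqrt{\log nm})$, matching the $K$-term of \eqref{AssumptionG:CorollarySI34}. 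Under the general model I would invoke Assumption \ref{Assumption manifoldH}: since $x_i-y_j=\bar{\iota}_q(w)$ for some $w\in\mathbb{R}^D$ with $\|w\|\le K$, write $\langle x_i-y_j,\xi_i-\eta_j\rangle=\sum_{k=1}^{D}w_k\langle u_k,\xi_i-\eta_j\rangle$ with $u_k=\iota_q(e_k)$; the near-uniform entry bound $|u_k(l)|=q^{-1/2}+O(1/q)$ and the entrywise independence of the noise make each $\langle u_k,\xi_i-\eta_j\rangle$ a centered linear form of variance at most $\sigma_q^2+\overline{\sigma}_q^2$ satisfying the convex-Lipschitz concentration, so that $\max_{k}\sup_{i,j}|\langle u_k,\xi_i-\eta_j\rangle|=\mathcal{O}_P(\sqrt{\sigma_q^2+\overline{\sigma}_q^2}\sqrt{\log nm})$ after a union bound over the $nmD$ terms; Cauchy--Schwarz in $w$ then contributes the factor $\sqrt{D}K$ appearing in \eqref{Assumption:CorollarySI34}. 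A minor point to check is that the median in the concentration hypothesis differs from the mean $0$ by a quantity of the same lower order, which follows from the same tail bound.

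I expect the main obstacle to be this cross term in the non-Gaussian case: one cannot simply treat $\langle x_i-y_j,\xi_i-\eta_j\rangle$ as a scalar with Lipschitz constant $K$ in $\mathbb{R}^q$, since that ignores the high-dimensional cancellation and would not reproduce the correct $\sqrt{D}$ scaling. It is precisely here that the embedding structure of Assumption \ref{Assumption manifoldH} together with the entrywise independence of the noise must be used to reduce the $q$-dimensional inner product to a $D$-dimensional object controllable by Lemma \ref{general rv quad form} (or the convex-Lipschitz concentration directly). Once both suprema are in hand, plugging in $\|\Sigma_i+\overline{\Sigma}_j\|_2\le\sigma_q^2+\overline{\sigma}_q^2$, $\textup{trace}(\Sigma_i+\overline{\Sigma}_j)\le q(\sigma_q^2+\overline{\sigma}_q^2)$ and $\|x_i-y_j\|\le K$ reconstitutes exactly $\delta_q$ as in \eqref{AssumptionG:CorollarySI34} and \eqref{Assumption:CorollarySI34}, finishing the proof.
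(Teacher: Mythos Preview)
Your overall structure---expansion of $\tilde d_{ij}^2-d_{ij}^2$, then separate control of the quadratic piece via Lemmas~\ref{gaussian quad form} and~\ref{general rv quad form} and of the cross piece---is exactly the paper's approach, and your treatment of the quadratic term and of the Gaussian cross term is correct and matches the paper.

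There is, however, a genuine gap in your handling of the cross term $\langle x_i-y_j,\xi_i-\eta_j\rangle$ under the general noise model. You correctly decompose $x_i-y_j=\sum_{k=1}^D w_k u_k$ using Assumption~\ref{Assumption manifoldH} and correctly compute that $\langle u_k,\xi_i-\eta_j\rangle$ has variance at most $\sigma_q^2+\overline\sigma_q^2$. But the convex--Lipschitz concentration hypothesis in Assumption~\ref{Assumption Noise} is stated for $\xi_i-\eta_j$ itself with constant $c_{ij}$; since $z\mapsto\langle u_k,z\rangle$ is $1$-Lipschitz (not $\sqrt{\sigma_q^2+\overline\sigma_q^2}$-Lipschitz), that assumption only yields a tail $\exp(-c_{ij}t^2)$ and hence a maximum of order $\sqrt{\log(nm)}\,\sup_{i,j}\sqrt{c_{ij}^{-1}}$, \emph{not} $\sqrt{\log(nm)}\sqrt{\sigma_q^2+\overline\sigma_q^2}$. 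Having the right variance does not by itself produce a sub-Gaussian tail with that variance; that is precisely what is false without an extra input. Your bound would give a cross-term contribution $\sqrt{D}K\sqrt{c_{ij}^{-1}}\sqrt{\log nm}$, which is not absorbed by $\delta_q$ in~\eqref{Assumption:CorollarySI34} (whose $\sqrt{D}K$ term carries no $c_{ij}^{-1/2}$).

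The paper closes this gap with a different tool: it views $\max_{i,j}\langle u_k,\xi_i-\eta_j\rangle$ as $\max_{k\le nm}q^{-1/2}\sum_{l=1}^q z_l(k)$ for a sequence of \emph{entrywise independent} vectors $z_l\in\mathbb{R}^{nm}$, and invokes the high-dimensional Gaussian approximation for maxima of Chernozhukov--Chetverikov--Kato (2013). This is where the moment conditions (E.1)/(E.2) and the entrywise independence in Assumption~\ref{Assumption Noise} are actually used: they are the hypotheses of that approximation theorem, which transfers the problem to a Gaussian vector with the same covariance and then yields the variance-based bound $\sqrt{\log nm}\sqrt{\sigma_q^2+\overline\sigma_q^2}$ that matches~\eqref{Assumption:CorollarySI34}. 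You flagged the right ``main obstacle'' but picked the wrong remedy; neither Lemma~\ref{general rv quad form} (which is for quadratic, not linear, forms and again scales with $c_{ij}^{-1/2}$) nor the raw Lipschitz concentration suffices here.
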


\begin{proof}
	By a direct expansion, we have
	\begin{align}
	\nonumber \norm{\tilde{x}_{i}-\tilde{y}_{j}}^{2}-\norm{x_{i}-y_{j}}^{2} = \norm{\xi_{i}-\eta_{j}}^{2}+2\langle x_{i}-y_{j}, \xi_{i}-\eta_{j}\rangle\,.
	\end{align}
	Clearly, since $\xi_i$ and $\eta_j$ are independent, $\xi_{i}-\eta_{j}$ has mean $0$ and covariance $\Sigma_i+\overline{\Sigma}_j$. Below we control $\norm{\xi_{i}-\eta_{j}}^{2}$ and $\langle x_{i}-y_{j}, \xi_{i}-\eta_{j}\rangle$ separately.
	
	We apply Lemma \ref{general rv quad form} with $Q_{i}=I_{q\times q}$ and get
	\begin{align}
	\max_{i,j}\abs{\norm{\xi_{i}-\eta_{j}}^{2}-\textup{trace}(\Sigma_{i}+\overline{\Sigma}_{j})}=\mathcal{O}_{P}\left(\sup_{i,j}\sqrt{\|(\Sigma_{i}+\overline{\Sigma}_{j})/c_{ij}\|_{2}}\sqrt{\log nm}\left(\sup_{i,j}\sqrt{\textup{trace}(\Sigma_{i}+\overline{\Sigma}_{j})}\vee 1\right)\right)\,.\nonumber
	\end{align}
	
	Next, we control $\langle \iota_q(x_{i})-\iota_q(y_{j}), \xi_{i}-\eta_{j}\rangle$ when $n\to \infty$. 
	It is trivial to know that $\langle \iota_q(x_{i})-\iota_q(y_{j}), \xi_{i}-\eta_{j}\rangle$ is sub-Gaussian
	with the variance $\gamma^{2}_{ij}:=(\iota_q(x_{i})-\iota_q(y_{j}))^\top(\Sigma_{i}+\overline{\Sigma}_{j})(\iota_q(x_{i})-\iota_q(y_{j}))$, where 
	$\gamma_{ij}^{2}\leq K^2\|\Sigma_{i}+\overline{\Sigma}_{j}\|_{2}$.
	By the maximal inequality, we have
	\begin{align}
	\sup_{i,j}|\langle x_{i}-y_{j}, \xi_{i}-\eta_{j}\rangle|=\mathcal O_{P}\left(\sqrt{\log nm}K\sup_{i,j}\sqrt{\|\Sigma_{i}+\overline{\Sigma}_{j}\|_{2}}\right)\,.
	\end{align}
	By putting two terms together, we conclude the bound of
	$\sup_{i,j}\abs{\tilde{d}_{ij}^{2}-d_{ij}^{2}-\textup{trace}(\Sigma_{i}+\overline{\Sigma}_{j})}$ by
	\begin{align}
	\nonumber &\sup_{i,j}\abs{\tilde{d}_{ij}^{2}-d_{ij}^{2}-\textup{trace}(\Sigma_{i}+\overline{\Sigma}_{j})}
	\\\nonumber\leq\,&\max_{i,j}\abs{\norm{\xi_{i}-\eta_{j}}^{2}-\textup{trace}(\Sigma_{i}+\overline{\Sigma}_{j})} + 2\sup_{i,j}|\langle x_{i}-y_{j}, \xi_{i}-\eta_{j}\rangle|
	\\\nonumber=\,&\mathcal{O}_{P}\left(\sup_{i,j}\sqrt{\|(\Sigma_{i}+\overline{\Sigma}_{j})/c_{ij}\|_{2}}\sqrt{\log nm}\left(\sup_{i,j}\sqrt{\textup{trace}(\Sigma_{i}+\overline{\Sigma}_{j})}\vee 1\right)\right)+\mathcal O_{P}\left(\sqrt{\log nm} K\sup_{i,j}\sqrt{\|\Sigma_{i}+\overline{\Sigma}_{j}\|_{2}}\right)
	\\\nonumber=\,&\mathcal{O}_{P}\left(\sqrt{\log nm}\sqrt{\sigma_{q}^{2}+\overline{\sigma}_{q}^{2}}\left[   \sup_{i,j}  \sqrt{c^{-1}_{ij}}\left(\sqrt{q(\sigma_{q}^{2}+\overline{\sigma}_{q}^{2})}\vee 1\right)+
	 K\right]\right)\,.
	\end{align}
	
\end{proof}

With the above theorem and Corollary \ref{D-1/2W 2 norm general corollary}, we immediately have the following corollary controlling the operator and Frobenius norms of ${D}^{-1/2}{W}-\widetilde{D}^{-1/2}\widetilde{W}$ coming from the Roseland algorithm.

\begin{corollary}\label{robust of operator norm general} 
	We follow the notation used in Theorem \ref{noisy d - clean d prob bound general}, and assume that $\{x_i\}_{i=1}^n$ and $\{y_j\}_{j=1}^m$  are i.i.d. sampled from the high dimensional model satisfying Assumption \ref{Assumption manifoldH}.	Let $\tilde{x_{i}} = x_{i} + \xi_{i}$ and $\tilde{y}_{j} = y_{j} + \eta_{j},$ where the noises $\xi_i$ and $\eta_j$ satisfy Assumption \ref{Assumption Noise} and are independent of $x_i$ and $y_j$.
	Let $W_{ij}=\exp(-d_{ij}^{2}/\epsilon)$ and $\widetilde{W}_{ij}=\exp(-\tilde{d}_{ij}^{2}/\epsilon)$, where $\epsilon>0$. Also, let $D$ and $\widetilde{D}$ be two $n\times n$ diagonal matrices with entries $D_{ii} = \sum_{j=1}^{n}\sum_{k=1}^{m}W_{ik}W_{jk}$ and $\widetilde{D}_{ii}=\sum_{j=1}^{n}\sum_{k=1}^{m}\widetilde{W}_{ik}\widetilde{W}_{jk}$. 
	Assume $\sup_{i,j}\sqrt{(\sigma^2_{q}+\overline{\sigma}^2_{q})/c_{ij}}\sqrt{\log nm}\to 0$.
	As a result, we have
	\begin{align}
	\norm{D^{-1/2}W-\widetilde{D}^{-1/2}\widetilde{W}}_{2}=\mathcal{O}_{P}\left(
	\frac{\delta_q}{\epsilon^{3d/2+1}}
	\right)
	\quad\mbox{and}\quad
	\norm{D^{-1/2}W-\widetilde{D}^{-1/2}\widetilde{W}}_{F}=\mathcal{O}_{P}\left(\frac{\delta_q}{\epsilon^{3d/2+1}}\right)\,.\nonumber
	\end{align}
\end{corollary}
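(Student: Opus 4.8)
The plan is to reduce Corollary~\ref{robust of operator norm general} to the deterministic perturbation bound of Corollary~\ref{D-1/2W 2 norm general corollary}, feeding it the entrywise control supplied by Theorem~\ref{noisy d - clean d prob bound general} and the degree lower bound supplied by Lemmas~\ref{unif-bd} and~\ref{d_hat_bd}.

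\textbf{Step 1 (factor out the trace bias).} First I would observe that under both Assumption~\ref{Assumption NoiseG} and Assumption~\ref{Assumption Noise} the noise covariances do not depend on the sample index, so $\tau:=\textup{trace}(\Sigma+\overline\Sigma)$ is one fixed constant. Writing $\tilde d_{ij}^2=d_{ij}^2+\tau+r_{ij}$, Theorem~\ref{noisy d - clean d prob bound general} gives $\sup_{i,j}|r_{ij}|=\mathcal O_P(\delta_q)$, with $\delta_q$ as in \eqref{AssumptionG:CorollarySI34} or \eqref{Assumption:CorollarySI34}. Setting $f:=e^{\tau/\epsilon}>0$ one then has
$$\frac{\widetilde W_{ij}}{f}=\exp\!\Big(-\frac{\tilde d_{ij}^2-\tau}{\epsilon}\Big)=W_{ij}\,e^{-r_{ij}/\epsilon},$$
so $W_{ij}-\widetilde W_{ij}/f=W_{ij}\,(1-e^{-r_{ij}/\epsilon})$. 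Since $0\le W_{ij}\le1$ and, on the event $\sup_{i,j}|r_{ij}|/\epsilon\le1$ (which may be assumed, as otherwise the asserted rate is uninformative), $|1-e^{-x}|\le2|x|$, this yields
$$\delta:=\sup_{i,j}\Big|W_{ij}-\frac{\widetilde W_{ij}}{f}\Big|=\mathcal O_P\!\Big(\frac{\delta_q}{\epsilon}\Big).$$

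\textbf{Step 2 (lower bound the degree).} Next I would lower bound $\gamma:=\inf_i D_{ii}/(mn)$. Since $\tfrac1{mn}D_{ii}=\tfrac1m\sum_k W_{ik}\big(\tfrac1n\sum_j W_{jk}\big)$ is the empirical analogue of $d_{\textup{ref},\epsilon}(x_i)$, the deterministic bound $d_{\textup{ref},\epsilon}\ge C_1\epsilon^d$ from Lemma~\ref{unif-bd} together with the grid-sampling concentration behind Lemma~\ref{d_hat_bd} gives $\gamma\ge\tfrac12C_1\epsilon^d$ with probability $1-\mathcal O(m^{-2})$ once $n$ is large; in particular $\gamma\asymp\epsilon^d$. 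As $\delta=\mathcal O_P(\delta_q/\epsilon)$, the hypothesis $\gamma>2C\delta+\delta^2$ of Corollary~\ref{D-1/2W 2 norm general corollary} holds eventually with $C=1$ — and if it fails then $\delta_q\not\ll\epsilon^{d+1}$, hence a fortiori $\delta_q/\epsilon^{3d/2+1}$ is not small and the conclusion carries no content.

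\textbf{Step 3 (assemble).} Finally I would apply Corollary~\ref{D-1/2W 2 norm general corollary} with $C=1$ and the $\delta,\gamma$ above, which bounds both $\|D^{-1/2}W-\widetilde D^{-1/2}\widetilde W\|_2$ and the Frobenius norm by
$$\frac{\delta}{\sqrt\gamma}+\frac{(2\delta+\delta^2)(1+\delta)}{\gamma\sqrt{\gamma-2\delta-\delta^2}+\sqrt\gamma(\gamma-2\delta-\delta^2)}=\mathcal O_P\!\Big(\frac{\delta_q/\epsilon}{\epsilon^{d/2}}+\frac{\delta_q/\epsilon}{\epsilon^{3d/2}}\Big)=\mathcal O_P\!\Big(\frac{\delta_q}{\epsilon^{3d/2+1}}\Big),$$
the second term dominating because $\epsilon<1$; this is exactly the claimed rate. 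The main obstacle is Step~2, namely showing the random double sum $D_{ii}/(mn)$ cannot drop below its deterministic order $\epsilon^d$, which requires the dependent-sampling concentration machinery already developed for the Glivenko--Cantelli classes; the rest is bookkeeping, the one genuinely new idea being the global rescaling by $f=e^{\tau/\epsilon}$ in Step~1, which turns the additive trace bias of the noisy squared distances into a harmless constant factor that the degree normalization annihilates.
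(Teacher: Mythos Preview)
Your proposal is correct and follows essentially the same route as the paper: rescale by $f=e^{\textup{trace}(\Sigma+\overline\Sigma)/\epsilon}$, invoke Theorem~\ref{noisy d - clean d prob bound general} to get $\sup_{i,j}|W_{ij}-\widetilde W_{ij}/f|=\mathcal O_P(\delta_q/\epsilon)$, lower bound $\inf_i D_{ii}/(mn)\gtrsim\epsilon^d$ via concentration, and plug into Corollary~\ref{D-1/2W 2 norm general corollary}. The only cosmetic difference is that for the degree lower bound the paper cites the Bernstein-type argument behind \eqref{denom_LF} rather than Lemma~\ref{d_hat_bd}, but both deliver the same $\epsilon^d$ order.
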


\begin{proof}
	It is clearly that $D^{-1/2}$ and $\widetilde{D}^{-1/2}$ exist. 
	Let $f=\exp(-\textup{trace}(\Sigma+\overline{\Sigma})/\epsilon)$. Thus, we have
	\begin{align}
	W_{ij}-\frac{\widetilde{W}_{ij}}{f}=&\,\exp(-d_{ij}^{2}/\epsilon) - \frac{\exp(-\tilde{d}_{ij}^{2}/\epsilon)}{f}\nonumber
	\\=&\,\exp(-d_{ij}^{2}/\epsilon)\left(1-\exp\left(-(\tilde{d}_{ij}^{2}-d_{ij}^{2}-\textup{trace}(\Sigma+\overline{\Sigma}))/\epsilon\right)\right)\,.\nonumber
	\end{align}
	By the fact that $\lim_{x\to 0}(1-\exp(-x))/x\to 1$ and the assumptions about $\Sigma$, $\overline{\Sigma}$ and $\sup_{i,j}\norm{x_{i}-y_{j}}$, we know
	\begin{align}
	\sup_{i,j}\abs{W_{ij}-\frac{\widetilde{W}_{ij}}{f}}=\mathcal{O}_{P}\left(\frac{\delta_q}{\epsilon}\right)\,,\label{Proof:Wij tildeWij/f difference}
	\end{align}
	where we use the fact that $\textup{trace}(\Sigma_{i}+\overline{\Sigma}_{j})\leq q(\sigma^2_q+\overline{\sigma}^2_q)$ and $W_{ij}=\exp(-d_{ij}^{2}/\epsilon)\leq 1=:C$.
	Denote 
	{ 
	\begin{eqnarray}\label{z0 def}
	z_0:=\frac{1}{2}\min_{x\in M}p_X(x)p_Y(x)
	\end{eqnarray}
	}
	By the same argument as that of \eqref{denom_LF}, we have with probability higher than $1-n^{-2}$ that 
	{
	\begin{align}\label{control of lower bound of D}
	\inf_{i}D_{ii}/mn\geq \epsilon^d\min_{x\in M}p_X(x)p_Y(x)/2=\epsilon^d z_0
	\end{align}
	}
	when $n$ is sufficiently large.  
	By the assumption that $\delta_q=o_P(1)$ as $q\to \infty$, we know 
	{ $\epsilon^dz_0>2\delta_q/\epsilon+\delta_q^2/\epsilon^2$ } holds in probability when $n\to \infty$. We can thus conclude the result by applying Corollary \ref{D-1/2W 2 norm general corollary}, where we use the fact that $\delta_q\to 0$ when $n\to \infty$.
\end{proof}

Finally, we are ready to prove the main theorem. We need the following result to control the Roseland embedding.

\begin{theorem}(\cite{yu2015useful})\label{singular vec discrepancy bd} Let $A,\hat{A}\in\mathbb{R}^{p\times q}$ have singular values $s_{1}\geq\dots\geq s_{\min(p,q)}$ and $\hat{s}_{1}\geq\dots\geq\hat{s}_{\min(p,q)}$ respectively. Fix $1\leq r\leq l\leq\textup{rank}(A)$ and assume that $\min(s^{2}_{r-1}-s^{2}_{r},s^{2}_{l}-s^{2}_{l+1})>0$, where $s^{2}_{0}\coloneqq\infty$ and $s^{2}_{\textup{rank}(A)+1}\coloneqq -\infty.$ Let $d\coloneqq l-r+1,$ and let $V=(v_{r},\dots,v_{l})\in\mathbb{R}^{q\times d}$
	and $\hat{V}=(\hat{v}_{r},\dots,\hat{v}_{l})\in\mathbb{R}^{q\times d}$ have orthonormal columns satisfying $Av_{j}=s_{j}u_{j}$ and $\hat{A}\hat{v}_{j}=\hat{s}_{j}\hat{u}_{j}$ for $j=r,\dots,l$. Then
	\begin{align}
	\nonumber\norm{\sin\Theta(\hat{V},V)}_{F}\leq \frac{2(2s_{1}+\|\hat{A}-A\|_{\textup{op}})\min(d^{1/2}\|\hat{A}-A\|_{\textup{op}},\|\hat{A}-A\|_{F})}{\min(s^{2}_{r-1}-s^{2}_{r},s^{2}_{l}-s^{2}_{l+1})}\,.
	\end{align}
	Moreover, there exists an orthogonal matrix $\hat{O}\in\mathbb{R}^{d\times d}$ such that
	\begin{align}
	\nonumber\norm{\hat{V}\hat{O}-V}_{F}\leq \frac{2^{3/2}(2s_{1}+\|\hat{A}-A\|_{\textup{op}})\min(d^{1/2}\|\hat{A}-A\|_{\textup{op}},\|\hat{A}-A\|_{F})}{\min(s^{2}_{r-1}-s^{2}_{r},s^{2}_{l}-s^{2}_{l+1})}\,.
	\end{align}
	Identical bounds also hold if $\hat{V}$ and $V$ are replaced with the left singular vectors $\hat{U},U\in\mathbb{R}^{p\times d}$ accordingly.
\end{theorem}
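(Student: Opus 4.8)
The plan is to reduce this singular-vector statement to a symmetric eigenspace perturbation statement and then invoke a Davis--Kahan $\sin\Theta$ bound phrased with the \emph{unperturbed} (population) spectral gap; this is exactly the route taken in \cite{yu2015useful}. The starting observation is that the right singular vectors $v_r,\dots,v_l$ of $A$ are orthonormal eigenvectors of the symmetric positive semidefinite matrix $M:=A^\top A$ for the eigenvalues $s_r^2\ge\cdots\ge s_l^2$, and likewise $\hat v_r,\dots,\hat v_l$ are eigenvectors of $\hat M:=\hat A^\top\hat A$ for $\hat s_r^2\ge\cdots\ge\hat s_l^2$. Therefore $\sin\Theta(\hat V,V)$ is precisely the principal-angle matrix between the two $d$-dimensional invariant subspaces of $M$ and $\hat M$ singled out by the eigenvalue window $[s_l^2,s_{r-1}^2]$, whose distance to the rest of the spectrum is $\delta:=\min(s_{r-1}^2-s_r^2,\ s_l^2-s_{l+1}^2)>0$ by hypothesis. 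The left-singular-vector version will follow verbatim after replacing $M,\hat M$ by $AA^\top$ and $\hat A\hat A^\top$, which share the nonzero spectra and gaps of $A^\top A$ and $\hat A^\top\hat A$.

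Second, I would bound the perturbation $E:=\hat M-M$ in both operator and Frobenius norm. From $E=\hat A^\top(\hat A-A)+(\hat A-A)^\top A$ and submultiplicativity, $\|E\|_{\mathrm{op}}\le(\|\hat A\|_{\mathrm{op}}+\|A\|_{\mathrm{op}})\|\hat A-A\|_{\mathrm{op}}$, and Weyl's inequality gives $\|\hat A\|_{\mathrm{op}}=\hat s_1\le s_1+\|\hat A-A\|_{\mathrm{op}}$, hence $\|E\|_{\mathrm{op}}\le(2s_1+\|\hat A-A\|_{\mathrm{op}})\|\hat A-A\|_{\mathrm{op}}$. Keeping the Frobenius norm on the difference factor instead yields $\|E\|_{F}\le(2s_1+\|\hat A-A\|_{\mathrm{op}})\|\hat A-A\|_{F}$. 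Since the Davis--Kahan estimate only ever sees $E$ compressed to a $d$-dimensional subspace, one may use whichever of the two bounds is smaller after paying a factor $d^{1/2}$ to convert the operator bound into a Frobenius bound on a $d$-block; this produces the factor $(2s_1+\|\hat A-A\|_{\mathrm{op}})\min(d^{1/2}\|\hat A-A\|_{\mathrm{op}},\|\hat A-A\|_{F})$ in the statement.

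Third comes the symmetric Davis--Kahan bound with the population gap $\delta$, which one obtains from the classical $\sin\Theta$ theorem by a short dichotomy. If $\|E\|_{\mathrm{op}}$ is small relative to $\delta$, Weyl's inequality guarantees that the corresponding gap in the spectrum of $\hat M$ is at least a constant fraction of $\delta$, so the classical Davis--Kahan theorem applies with the perturbed gap and, after enlarging the denominator back to $\delta$, gives $\|\sin\Theta(\hat V,V)\|_F\le 2(2s_1+\|\hat A-A\|_{\mathrm{op}})\min(d^{1/2}\|\hat A-A\|_{\mathrm{op}},\|\hat A-A\|_{F})/\delta$. If $\|E\|_{\mathrm{op}}$ is not small relative to $\delta$, then the claimed right-hand side already dominates the trivial bound $\|\sin\Theta(\hat V,V)\|_F\le d^{1/2}$, so the inequality holds for free; the constant $2$ is what makes the two regimes fit together. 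Finally, the ``moreover'' part about an aligning orthogonal matrix follows from the orthogonal-Procrustes identity $\min_{O\in O(d)}\|\hat V O-V\|_F^2=2\bigl(d-\|\hat V^\top V\|_{*}\bigr)\le 2\|\sin\Theta(\hat V,V)\|_F^2$, where $\|\hat V^\top V\|_{*}$ denotes the nuclear norm; this supplies the extra factor $2^{1/2}$.

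The part I expect to be delicate is the constant bookkeeping in steps two and three: pinning down exactly which compression of $E$ enters the Davis--Kahan estimate so that the $\min(d^{1/2}\|\cdot\|_{\mathrm{op}},\|\cdot\|_F)$ structure emerges with coefficient $2$ (resp.\ $2^{3/2}$), and verifying that the ``large perturbation'' branch of the dichotomy really yields a bound no weaker than the claimed one rather than merely of the right order. The reduction to the symmetric problem, the Weyl estimates, and the Procrustes step are all routine.
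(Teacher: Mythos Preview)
Your proposal is a correct sketch of the proof and follows the approach of the cited reference \cite{yu2015useful}: reduce to a symmetric eigenvector problem via $A^\top A$ and $\hat A^\top\hat A$, bound the perturbation $E=\hat A^\top\hat A-A^\top A$ by $(2s_1+\|\hat A-A\|_{\mathrm{op}})\|\hat A-A\|$ in either norm, invoke the population-gap Davis--Kahan variant through the large/small perturbation dichotomy, and finish with the Procrustes inequality. Note, however, that the present paper does not supply its own proof of this statement: the theorem is imported from \cite{yu2015useful} as a black box and applied directly in the proof of Theorem~\ref{robust thm roseland}, so there is no in-paper argument to compare against.
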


Now, we prove Theorem \ref{robust thm roseland}.

\begin{proof}[Proof of Theorem \ref{robust thm roseland}]
	Let $f=\exp(-\textup{trace}(\Sigma+\overline{\Sigma})/\epsilon)$ and $\widetilde{W}_{f}$ be the $n\times n$ matrix with the $(i,j)$-th entry $\widetilde{W}_{ij}/f$. Note that $\widetilde{D}^{-1/2}\widetilde{W}=\widetilde{D}_{f}^{-1/2}\widetilde{W}_{f}$.
	Recall the Roseland algorithm in Section \ref{roseland alg} and consider $\Phi=D^{-1/2}U\in \mathbb{R}^{n\times q'}$ and  $\widetilde{\Phi}=\widetilde{D}_{f}^{-1/2}\widetilde{U}\in \mathbb{R}^{n\times q'}$, where $U\in \mathbb{R}^{n\times q'}$ and $\widetilde{U}\in \mathbb{R}^{n\times q'}$ are the top $q'$ non-trivial left singular vectors corresponding to the top $q'$ non-trivial  singular values of $D^{-1/2}W$ and $\widetilde{D}^{-1/2}\widetilde{W}$ respectively. We square the top $q'$ non-trivial singular values and put them in diagonal $q'\times q'$ matrices $L$ and $\widetilde{L}$ respectively. Then, for $t>0$ and an orthogonal matrix $O$ that we will choose later, we have
	\begin{align}
	\nonumber\norm{\Phi OL^{t} - \widetilde{\Phi}\widetilde{L}^{t}}_{F}=&\, \norm{D^{-1/2}UOL^{t} -\widetilde{D}_{f}^{-1/2}\widetilde{U}\widetilde{L}^{t}}_{F}
	\\\label{embedding frob norm}\leq&\norm{D^{-1/2}-\widetilde{D}_{f}^{-1/2}}_{F}\norm{UOL^{t}}_{F}+\norm{\widetilde{D}_{f}^{-1/2}}_{F}\norm{UOL^{t}-\widetilde{U}\widetilde{L}^{t}}_{F}\,.
	\end{align}
	Note $\|UOL^{t}\|_{F}=\sqrt{\textup{Tr}((UOL^{t})^{\top}UOL^{t})}=\sqrt{\textup{Tr}(L^{t}O^{\top}U^{\top}UOL^{t})}=\mathcal{O}\left(\sqrt{q's_{2}^{4t}}\right)$, where $0<s_{2}<1$. 
	By \eqref{Proof:Wij tildeWij/f difference}, we know
	\begin{align}
	\sup_{i,j}\abs{W_{ij}-(\widetilde{W}_{f})_{ij}}=\mathcal{O}_{P}\left(\frac{\delta_q}{\epsilon}\right)\,.\nonumber
	\end{align}
	Follow the same lines in the proof of Corollary \ref{robust of operator norm general}, 
	we know with probability greater than $1-n^{-2}$, {$\inf_iD_{ii}\geq mn\epsilon^dz_0$, where $z_0$ is defined in \eqref{z0 def},}
	and hence  {$\inf_i(\widetilde{D}_f)_{ii}\geq mn(\epsilon^dz_0-2\delta_q/\epsilon)$} in probability when $n\to \infty$. As a result, we have
	\begin{align} \label{D_ii tilde D_ii diff}
	\abs{D_{ii}^{-1/2}-(\widetilde{D}_{f})_{ii}^{-1/2}}=\mathcal{O}_{P}\left(\frac{\delta_q}{\epsilon^{3d/2+1}  \sqrt{nm}}\right)\,. 	\nonumber
	\end{align}
	Hence, for the first term in \eqref{embedding frob norm}, we have
	\begin{align}
	\norm{D^{-1/2}-\widetilde{D}_{f}^{-1/2}}_{F}\norm{UOL^{t}}_{F}
	=\mathcal{O}_{P}\left(\frac{\delta_q\sqrt{q's_{2}^{4t}}}{\epsilon^{3d/2+1}  \sqrt{m}}\right)\,.
	\end{align}
	Next, we control the second term in \eqref{embedding frob norm}. By \eqref{D_ii tilde D_ii diff}, we have
	\begin{align}
	(\widetilde{D}_{f})_{ii}^{-1/2}=&\,\mathcal{O}_{P}\left( D_{ii}^{-1/2}+\frac{\delta_q}{\epsilon^{3d/2+1}  \sqrt{nm}}\right)
	=\mathcal{O}_{P}\left( \frac{1}{\epsilon^{d/2} \sqrt{nm}}\right) \nonumber \,.
	\end{align}
	Hence, we have
	\begin{align}
	\norm{\widetilde{D}_{f}^{-1/2}}_{F}=\mathcal{O}_{P}\left( \frac{1}{\epsilon^{d/2} \sqrt{m}}\right).
	\end{align}
	For the last piece,
	\begin{align}
	\nonumber\norm{UOL^{t}-\widetilde{U}\widetilde{L}^{t}}_{F}\leq\norm{UO-\widetilde{U}}_{F}\norm{L^{t}}_{F}+\norm{\widetilde{U}}_{F}\norm{L^{t}-\widetilde{L}^{t}}_{F}\,,
	\end{align}
	where $\norm{L^{t}}_{F}=\mathcal{O}\left(\sqrt{q's_{2}^{4t}}\right)$ and $\|\widetilde{U}\|_{F}=\mathcal{O}(\sqrt{q'})$. 
	Next, we bound $\norm{L^{t}-\widetilde{L}^{t}}_{F}$. Recall that $L$ (resp. $\widetilde{L}$) is a diagonal matrice with positive eigenvalues $1>s^{2}_{2}\geq\cdots\geq s^{2}_{q'+1}$ (resp. $1>\tilde{s}^{2}_{2}\geq\cdots\geq \tilde{s}^{2}_{q'+1}$) of the positive definite matrice $D^{-1/2}WW^{\top}D^{-1/2}$ (resp. $\widetilde{D}_{f}^{-1/2}\widetilde{W}_{f}\widetilde{W}^{\top}_{f}\widetilde{D}_{f}^{-1/2}$). By a direct expansion, we have 
	\begin{align*}
	&\norm{D^{-1/2}WW^{\top}D^{-1/2} - \widetilde{D}_{f}^{-1/2}\widetilde{W}_{f}\widetilde{W}^{\top}_{f}\widetilde{D}_{f}^{-1/2}}_{2}\\\leq&\,\norm{D^{-1/2}W}_{2}\norm{W^{\top}D^{-1/2}-\widetilde{W}^{\top}_{f}\widetilde{D}_{f}^{-1/2}}_{2}+\norm{D^{-1/2}W-\widetilde{D}_{f}^{-1/2}\widetilde{W}_{f}}_{2}\norm{\widetilde{W}^{\top}_{f}\widetilde{D}_{f}^{-1/2}}_{2}\,.
	\end{align*}
	Since $\norm{D^{-1/2}W}_{2}\leq 1$ and $\norm{\widetilde{W}^{\top}_{f}\widetilde{D}_{f}^{-1/2}}_{2}\leq 1$, we have
	\begin{align*}
	&\norm{D^{-1/2}WW^{\top}D^{-1/2} - \widetilde{D}_{f}^{-1/2}\widetilde{W}_{f}\widetilde{W}^{\top}_{f}\widetilde{D}_{f}^{-1/2}}_{2}\leq 2\|D^{-1/2}W-\widetilde{D}^{-1/2}_{f}\widetilde{W}_{f}\|_{2} 
	=\mathcal{O}_{P}\left(\frac{\delta_q}{\epsilon^{d/2+1}}\right)
	\end{align*}
	by Corollary \ref{D-1/2W 2 norm general corollary}. Hence, the Weyl's inequality \cite{weyl1912asymptotische} tells us
	\begin{align}
	\abs{s_{i}^{2}-\tilde{s}_{i}^{2}} = \mathcal{O}_{P}\left(\frac{\delta_q}{\epsilon^{3d/2+1}}\right)
	\end{align}
	for all $i=2,\cdots,q'+1$. 
	Now, by a slight modification of the proof of Theorem \ref{spec_cong}, when $\epsilon$ is sufficiently small and $n$ is sufficiently large, the first $q'$ singular values are away from zero with probability higher than $1-n^{-2}$. 
	So, for a fixed $t>0$, by the binomial approximation, when $n\to \infty$, we have
	\begin{align}
	\abs{s_{i}^{2t}-\tilde{s}_{i}^{2t}} = \mathcal{O}_{P}\left(ts_i^{2t-2}\frac{\delta_q}{\epsilon^{3d/2+1}}\right) 
	\end{align}
	for all $i=2,\cdots,q'+1$. Thus, 
	\begin{align}
	\norm{L^{t}-\widetilde{L}^{t}}_{F}=\mathcal{O}_{P}\left(\sqrt{q'}ts_2^{2t-2}\frac{\delta_q}{\epsilon^{3d/2+1}}\right)\,.
	\end{align}
	Finally, for $\|UO-\widetilde{U}\|_{F}$, we apply Theorem \ref{singular vec discrepancy bd} and choose the orthogonal matrix $O$ so that
	\begin{align}
	\nonumber&\norm{UO-\widetilde{U}}_{F}\\
	\leq&\,\frac{2^{3/2}(2s_{1}+\|D^{-1/2}W-\widetilde{D}^{-1/2}_{f}\widetilde{W}_{f}\|_{2})\min(\sqrt{q'}\|D^{-1/2}W-\widetilde{D}^{-1/2}_{f}\widetilde{W}_{f}\|_{2},\,\|D^{-1/2}W-\widetilde{D}^{-1/2}_{f}\widetilde{W}_{f}\|_{F})}{\min(s^{2}_{1}-s^{2}_{2},s^{2}_{q'+1}-s^{2}_{q'+2})}\,,\nonumber
	\end{align}
	where $s_{1}=1$ and by assumption and Proposition \ref{Proposition: Tepsn conv to Teps with rate},  $\min(s^{2}_{1}-s^{2}_{2},s^{2}_{q'+1}-s^{2}_{q'+2})\asymp 1$ when $n$ is sufficiently large.  By Proposition \ref{robust of operator norm general},
	we have
	\begin{align}
	\norm{UO-\widetilde{U}}_{F}
	=\mathcal{O}_{P}\left(\frac{\delta_q}{\epsilon^{3d/2+1}}\right)\,.\label{Bound of UO-tildeU}
	\end{align}
	Thus,
	\begin{align}
	\nonumber&\norm{UOL^{t}-\widetilde{U}\widetilde{L}^{t}}_{F} = \mathcal{O}_{P}\left(\frac{\delta_q}{\epsilon^{3d/2+1}}\left(q'ts_2^{2t-2}+\sqrt{q'}s_2^{2t}\right) \right).
	\end{align}
	By putting all together and a simplification, we conclude that
	\begin{align}
	\nonumber&\norm{\Phi OL^{t} - \widetilde{\Phi}\widetilde{L}^{t}}_{F} =\mathcal{O}_{P}\left(\frac{\delta_q}{\sqrt{m}} \frac{q'ts_2^{2t-2}+\sqrt{q'}s_2^{2t}}{\epsilon^{2d+1} }\right)\,.\nonumber
	\end{align}
	{
	Finally, to control $\norm{\Phi L^{t}O - \widetilde{\Phi}\widetilde{L}^{t}}_{F} $, note that we have $\norm{\Phi L^{t}O - \widetilde{\Phi}\widetilde{L}^{t}}_{F}\leq  \norm{\Phi L^{t}O - \Phi OL^{t}}_{F} +
\norm{\Phi OL^{t} - \widetilde{\Phi}\widetilde{L}^{t}}_{F}
$. Since $\norm{\Phi OL^{t} - \widetilde{\Phi}\widetilde{L}^{t}}_{F}$ has been controlled above, it is sufficient to control $\norm{\Phi L^{t}O - \Phi OL^{t}}_{F}$. By definition of $\Phi$ and \eqref{control of lower bound of D}, we have
	\[
	\norm{\Phi L^{t}O - \Phi OL^{t}}_{F}\leq \frac{1}{\sqrt{m}\epsilon^{d/2}}\|U(OL^t-L^tO)\|_F\leq \frac{\sqrt{q'}}{\sqrt{m}\epsilon^{d/2}}\|OL^t-L^tO\|_F
	\]
	To control $\|OL^t-L^tO\|_F$, note that 
	by Theorem \ref{singular vec discrepancy bd} and the assumption that $\lambda_i$ is simple,  with one eigenvector at one time and the bound of the error shown in \eqref{Bound of UO-tildeU}, we know that $|e_i^\top{U}^\top \widetilde Ue_j-\delta_{ij}|=\mathcal{O}_{P}\left(\frac{\delta_q}{\epsilon^{3d/2+1}}\right)$, where $e_j$ is the unit vector with the $j$-th entry $1$ and $\delta_{ij}$ is the Kronecker delta. Thus, we have $U^\top \widetilde U=I+E\in \mathbb{R}^{q'\times q'}$, where each entry of $E$ is controlled by $\mathcal{O}_{P}\left(\frac{\delta_q}{\epsilon^{3d/2+1}}\right)$. On the other hand, since $\norm{{U}{O}- \widetilde U}_{F}=\norm{{O}- U^\top\widetilde U}_{F}=\mathcal{O}_{P}\left(\frac{\delta_q}{\epsilon^{3d/2+1}}\right) $ by \eqref{Bound of UO-tildeU}, we have $O=I+E'$, where each entry of $E'$ is controlled by $\mathcal{O}_{P}\left(\frac{\delta_q}{\epsilon^{3d/2+1}}\right)$. Since $\|OL^t-L^tO\|_F=\|OL^tO^\top-L^t\|_F=\|{E'} L^t+L^t{E'}^\top+E' L^t{E'}^\top\|_F=\mathcal{O}_{P}\left(\frac{{q'}\delta_q}{\epsilon^{3d/2+1}}\right)$, we conclude that
	\[
	\norm{\Phi L^{t}O - \Phi OL^{t}}_{F}\leq \frac{\sqrt{q'}}{\sqrt{m}\epsilon^{d/2}}\|OL^t-L^tO\|_F=\mathcal{O}\left(\frac{{q'}^{3/2}\delta_q}{\sqrt{m}\epsilon^{2d+1}} \right)\,,
	\]
	and hence the proof.
	}
	
\end{proof}

{
\section{Proof of \eqref{kernel ref convergence finite variance}} \label{Roseland kernel discrepancy proof}
Fix some $x_i,x_j\in M^d$, and define a random variable
	\begin{eqnarray}\label{roseland kernel rv}
	F\coloneqq\epsilon^{-d/2}K_{\epsilon}(x_{i},Y)K_{\epsilon}(Y,x_j)\,.
	\end{eqnarray}
Denote by $F_{k}$ one realization of $F$ when the realization of the random variable $Y$ is   $y_{k}$; in other words, $F_{k}=\epsilon^{-d/2}K_{\epsilon}(x_{i},y_{k})K_{\epsilon}(y_{k},x_{j})$. To get the convergence rate of 
\begin{eqnarray}\label{top_rate}
	\frac{1}{m}\sum_{k=1}^{m}F_{k}\longrightarrow\mathbb{E}(F)\,,
\end{eqnarray}
we follow the same lines in the variance theorem proof in Appendix \ref{biasproof}. We have 
\begin{align}
\nonumber\mathbb{E}(F)\, &= \epsilon^{-d/2}\int_{M}K_{\epsilon}(x_{i},y)K_{\epsilon}(y,x_j)p_{Y}(y)\,dV(y)= e^{-\|x_i-x_j\|^2/2\epsilon}c(x_i,x_j)
\end{align}
by \eqref{expansion landmark kernel2} and similarly
\begin{align}
\nonumber\mathbb{E}(F^{2}) \,&= \epsilon^{-d}\int_{M}K_{\epsilon}^{2}(x_{i},y)K_{\epsilon}^{2}(y,x_j)p_{Y}(y)\,dV(y)= \epsilon^{-d/2}e^{-\|x_i-x_j\|^2/\epsilon}\tilde c(x_i,x_j) \,,
\end{align}
where $\tilde{c}$ is smooth, depends on $x_i$ and $x_j$ and is of order $1$.
Therefore, when $\epsilon>0$ is sufficiently small, $\mathbb{E}(F^{2})\asymp \epsilon^{-d/2}$.  
Since $[\mathbb{E}(F)]^2 =\mathcal{O}(1)$, we have $\textup{Var}(F)\asymp \epsilon^{-d/2}$.
By the Bernstein inequality, for all $t>0$, we have
\begin{eqnarray}
\nonumber\mathbb{P}\Big(\frac{1}{m}\sum_k F_k-\mathbb{E}(F)> t\Big)\leq\textup{exp}\left(-\frac{mt^2}{2\epsilon^{-d/2}+\frac{2}{3}\epsilon^{-d/2}t}\right)\,.
\end{eqnarray}
Recall we ask $\frac{t}{\mathbb{E}(F)}\to 0$ when $m\to \infty$. Since $\mathbb{E}(F)$ is of order $1$, with such $t$, the exponent becomes
\begin{eqnarray}
	\nonumber\frac{mt^2}{2\epsilon^{-d/2}+\frac{2}{3}\epsilon^{-d/2}t}\geq \frac{mt^2}{3\epsilon^{-d/2}}\,.
\end{eqnarray}
Then if we choose $m$ such that $\frac{mt^2}{3\epsilon^{-d/2}} = 2\log (nm)$, we have
\begin{eqnarray}
\nonumber t=\sqrt{6(1+\beta)} \frac{\sqrt{\log n}}{n^{\beta/2}\epsilon^{d/4}}\,,
\end{eqnarray}
which goes to $0$ by Assumption \eqref{variancethm}.
As a result
\begin{eqnarray}
\nonumber\mathbb{P}\Big(\frac{1}{m}\sum_k F_k-\mathbb{E}(F)> t\Big)\leq\textup{exp}(-2\log (nm)) = \frac{1}{n^2m^2}\,.
\end{eqnarray}
To have the above bound holds for all pairs of $i,j=1,2,\cdots, n$, we take the simple union bound and get
\begin{eqnarray}
\nonumber\mathbb{P}\Big(\frac{1}{m}\sum_k F_k-\mathbb{E}(F)> t; \forall i, j \Big)\leq  \frac{n^2}{n^2m^2} = \frac{1}{m^2}\,.
\end{eqnarray}
}

}

\end{document}